\definecolor{answercolor}{RGB}{0, 112, 48}
\newcommand{\wt}{\tilde{w}}
\newcommand{\com}{\color{green}  }
\tikzstyle directed=[postaction={decorate,decoration={markings, mark=at position #1 with {\arrow[scale=1]{>}}}}]
\tikzstyle rdirected=[postaction={decorate,decoration={markings, mark=at position #1 with {\arrow[scale=1]{<}}}}]
\newcommand{\hackcenter}[1]{
 \xy (0,0)*{#1}; \endxy}
\tikzset{->-/.style={decoration={
  markings,
  mark=at position #1 with {\arrow{>}}},postaction={decorate}}}
\tikzset{middlearrow/.style={
        decoration={markings,
            mark= at position 0.5 with {\arrow{#1}} ,
        },
        postaction={decorate}
    }
}
\newcommand{\bbullet}{
\begin{tikzpicture}
  \draw[fill=black] circle (0.55ex);
\end{tikzpicture}
}
\def\bbi{\text{\boldmath$i$}}
\def\bbj{\text{\boldmath$j$}}
\def\bbk{\text{\boldmath$k$}}
\def\bbm{\text{\boldmath$m$}}
\def\bba{\text{\boldmath$a$}}
\def\bbep{\text{\boldmath$\varepsilon$}}
\def\bbde{\text{\boldmath$\delta$}}
\def\bbb{\text{\boldmath$b$}}
\def\bbc{\text{\boldmath$c$}}
\def\bbd{\text{\boldmath$d$}}
\newcommand{\tsigma}{\sigma}
\newcommand{\tomega}{\omega}
\newcommand{\tpsi}{\psi}
\newcommand{\Ucas}{\cal{C}}
\newcommand{\UupD}{\cal{E}}
\newcommand{\UdownD}{\cal{F}}
\newcommand{\ogam}{\xi^+}
\newcommand{\odelt}{\hat{\xi}^+}
\newcommand{\gam}{\xi^{-}}
\newcommand{\delt}{\hat{\xi}^{-}}
\newcommand{\gams}{\xi^{-}_{\tsigma}}
\newcommand{\delts}{\hat{\xi}^{-}_{\tsigma}}
\newcommand{\ogams}{\xi^+_{\tsigma}}
\newcommand{\odelts}{\hat{\xi}^+_{\tsigma}}
\newcommand{\gamp}{\xi'^{-}}
\newcommand{\SL}{\mathrm{SL}}
\def\W{W_{1+\infty}}
\def\tW{\widetilde{W}_{1+\infty}}
\def\id{\mathrm{id}}
\def\adj{\mathrm{adj}}
\newcommand{\ob}[1]{\mathsf{#1}}
\crefname{theorem}{Theorem}{Theorems}
\crefname{fact}{Fact}{Facts}
\crefname{note}{Note}{Notes}
\crefname{lemma}{Lemma}{Lemmas}
\crefname{alg}{Algorithm}{Algorithms}
\crefname{remark}{Remark}{Remarks}
\crefname{example}{Example}{Examples}
\crefname{prop}{Proposition}{Propositions}
\crefname{conj}{Conjecture}{Conjectures}
\crefname{cor}{Corollary}{Corollaries}
\crefname{definition}{Definition}{Definitions}
\crefname{Relation}{Relation}{Relations}
\crefname{equation}{\!\!}{\!\!} 
\newcommand{\creflastconjunction}{, and\nobreakspace}
\newcommand{\END}{{\rm END}}
\newcommand{\Gr}{\cat{Flag}_{N}}
\newcommand{\Grn}[1]{\cat{Flag}_{#1}}
\theoremstyle{plain}
\newtheorem{theorem}{Theorem}[subsection]
\newtheorem{corollary}[theorem]{Corollary}
\newtheorem{proposition}[theorem]{Proposition}
\newtheorem{lemma}[theorem]{Lemma}
\theoremstyle{plain}
\newtheorem{example}[theorem]{Example}
\newtheorem{definition}[theorem]{Definition}
\newtheorem{conjecture}[theorem]{Conjecture}
\theoremstyle{definition}
\newtheorem{remark}[theorem]{Remark}
\newcommand{\sym}{{\rm Sym}}
\newcommand{\maps}{\colon}
\newcommand{\und}[1]{\underline{#1}}
\newcommand{\xsum}[2]{
  \xy
  (0,.4)*{\sum};
  (0,3.7)*{\scs #2};
  (0,-2.9)*{\scs #1};
  \endxy
}
\newcommand{\refequal}[1]{\xy {\ar@{=}^{#1}
(-1,0)*{};(1,0)*{}};
\endxy}
\newcommand{\cat}[1]{\ensuremath{\mbox{\bfseries {\upshape {#1}}}}}
\newcommand{\numroman}{\renewcommand{\labelenumi}{\roman{enumi})}}
\newcommand{\numarabic}{\renewcommand{\labelenumi}{\arabic{enumi})}}
\newcommand{\numAlph}{\renewcommand{\labelenumi}{\Alph{enumi}.}}
\newcommand{\To}{\Rightarrow}
\newcommand{\TO}{\Rrightarrow}
\newcommand{\Hom}{{\rm Hom}}
\newcommand{\HOM}{{\rm HOM}}
\renewcommand{\to}{\rightarrow}
\def\Res{{\mathrm{Res}}}
\def\Ind{{\mathrm{Ind}}}
\def\lra{{\longrightarrow}}
\def\dmod{{\mathrm{-mod}}}   
\def\fmod{{\mathrm{-fmod}}}   
\def\pmod{{\mathrm{-pmod}}}  
\def\rk{{\mathrm{rk}}}
\def\mc{\mathcal}
\def\mf{\mathfrak}
\def\Web{{\mathbf{Web}}}
\def\pWeb{\mathfrak{p}\textup{-}\mathbf{Web}}
\def\aWeb{\mathfrak{gl}\textup{-}\mathbf{Web}}
\numberwithin{equation}{section}
\def\TL#1{\textcolor[rgb]{1.00,1.00,1.00}{[TL: #1]}}%
\def\AL#1{\textcolor[rgb]{1.00,0.00,0.00}{[AL: #1]}}%
\def\PS#1{\textcolor[rgb]{0.00,0.49,0.25}{[PS: #1]}}%
\def\JS#1{\textcolor[rgb]{0.40,0.00,0.90}{[JS: #1]}}%
\def\SC#1{\textcolor[rgb]{1.00,0.50,0.00}{[SC: #1]}}%
\def\b{$\blacktriangleright$}
\def\e{$\blacktriangleleft$}
\def\new#1{\b #1\e}%
\let\hat=\widehat
\let\tilde=\widetilde
\let\epsilon=\varepsilon
\def\C{{\mathbb{C}}}
\def\N{{\mathbbm N}}
\def\R{{\mathbbm R}}
\def\Z{{\mathbbm Z}}
\def\H{{\mathcal{H}}}
\def\G{{\mathcal{G}}}
\def\cal#1{\mathcal{#1}}%
\def\1{\mathbbm{1}}%
\def\ev{\mathrm{ev}}%
\def\coev{\mathrm{coev}}%
\def\tr{\mathrm{tr}}%
\def\gr{\mathrm{gr}}%
\def\st{\mathrm{st}}%
\def\nn{\notag}
\newcommand{\ontop}[2]{\genfrac{}{}{0pt}{2}{\scriptstyle #1}{\scriptstyle #2}}
\def\la{\langle}
\def\ra{\rangle}
\def\cal#1{\mathcal{#1}}
\def \k {\mathbbm{k}}
\def \Z {\mathbbm{Z}}
\def \D {\mathbbm{D}}
\def \N {\mathbbm{N}}
\def \E {\mathcal{E}}
\def \F {\mathcal{F}}
\def \U {\mathcal{U}}
\def \T {\mathcal{T}}
\def \B {\mathcal{B}}
\def \Tr{\operatorname{Tr}}
\def \TTr{\operatorname{\mathbf{Tr}}}
\def \Span{\operatorname{Span}}
\def \Ob{\operatorname{Ob}}
\def \Set{\mathbf{Set}}
\def \Cat{\mathbf{Cat}}
\def \HH{\operatorname{HH}}
\def \min {{\rm min}}
\def \i {{\rm i}}
\renewcommand{\labelitemiii}{$\circ$}
\newcommand{\xto}[1]{{\overset{#1}{\longrightarrow}}}
\def\bH{\mathbb{H}}
\def\bP{\mathbb{P}}
\def\bQ{\mathbb{Q}}
\newcommand\nc{\newcommand}
\nc\rnc{\renewcommand}
\nc\Kar{\operatorname{Kar}}
\nc\modQ {{\mathbb Q}}
\nc\modZ {{\mathbb Z}}
\nc\simeqto{\overset{\simeq}{\longrightarrow }}
\nc\K{\mathcal {K}}
\nc\CC{\mathbf{C}}
\nc\qh{\mathcal{H}}
\nc\hbm{\mathcal{B}}
\nc\bu{\mathbf{u}}
\nc\bk{\mathbf{\kappa}}
\nc\bZ{\mathbf{Z}}
\nc\theirs{\mathrm{theirs}}
\nc\ours{\mathrm{ours}}
\nc\hE{\mathcal{\hat E}}
\nc\bK{\mathbf{K}}
\nc\bw{\mathbf{w}}
\nc\bh{\mathbf{h}}
\nc\tE{\tilde{E} }
\nc\ba{\mathbf{a}}
\nc\bb{\mathbf{b}}
\nc\tA{\tilde{A}}
\nc\tB{\tilde{B}}
\nc\tC{\tilde{C}}
\nc\ta{\tilde{a}}
\nc\tb{\tilde{b}}
\nc\tc{\tilde{c}}
\nc\te{\tilde{e}}
\nc\tf{\tilde{f}}
\def\th{\tilde{h}}
\nc\tH{\tilde{H}}
\nc\tF{\tilde{F}}
\nc{\urcap}{\textrm{uRCap}}
\nc{\urcup}{\textrm{uRCup}}
\nc{\ulcap}{\textrm{uLCap}}
\nc{\ulcup}{\textrm{uLCup}}
\nc{\drcap}{\textrm{dRCap}}
\nc{\drcup}{\textrm{dRCup}}
\nc{\dlcap}{\textrm{dRCap}}
\nc{\dlcup}{\textrm{dRCup}}
\newcommand{\dah}{{\rm DH}}
\newcommand{\ah}{{\rm AH}}
\nc{\cl}{\mathrm{cl}}
\nc{\cala}{\mathcal{A}}
\nc{\calb}{\mathcal{B}}
\nc{\calc}{\mathcal{C}}
\nc{\bx}{\mathbf{x}}
\nc{\by}{\mathbf{y}}
\nc{\bE}{\mathbbm{E}}
\nc{\bElong}{{\mathbbm E^{\bullet, \geq}}}
\nc{\Sk}{\mathrm{Sk}}
\nc{\Hilb}{\mathrm{Hilb}}
\newcommand{\scs}{\scriptstyle}
\newcommand{\eqr}[1]{{\overset{#1}{=}}}
\nc\col{\colon\thinspace}
\newcommand{\clr}{rgb:black,1;blue,4;red,1}
\newcommand{\p}[1]{|#1|}
\newcommand{\0}{\bar{0}}
\renewcommand{\1}{\bar{1}}
\newcommand{\ZZ}{\ensuremath{\mathbb{Z}}}
\newcommand{\fp}{\ensuremath{\mathfrak{p}}}
\newcommand{\fgl}{\mathfrak{gl}}
\newcommand{\fg}{\mathfrak{g}}
\newcommand{\fq}{\mathfrak{q}}
\newcommand{\fh}{\mathfrak{h}}
\newcommand{\fn}{\mathfrak{n}}
\newcommand{\fb}{\mathfrak{b}}
\newcommand{\pmodS}{\ensuremath{\fp(n)\textup{-mod}_{\mathcal{S}}}}
\newcommand{\pmodSS}{\ensuremath{\fp(n)\textup{-mod}_{\mathcal{S, S^{*}}}}}
\newcommand{\mods}{\mathfrak{p}(n)\text{-}\text{mod}_{\mathcal{S}}}
\newcommand{\modsupdown}{\mathfrak{p}(n)\text{-}\text{mod}_{\mathcal{S},\mathcal{S}^*}}
\newcommand{\Id}{\operatorname{Id}}
\newcommand{\mer}{\ensuremath{\operatorname{mer}}}
\newcommand{\spl}{\ensuremath{\operatorname{spl}}}
\newcommand{\Q}{\mathbb{Q}}
\newcommand{\End}{\operatorname{End}}
\newcommand{\pnWeb}{\ensuremath{\fp (n)\textup{-}\mathbf{Web}}}
\newcommand{\gl}{\mathfrak{gl}}
\newcommand{\tfp}{\ensuremath{\tilde{\mathfrak{p}}}}
\newcommand{\Udot}{\dot{U}}
\newcommand{\Udotm}{\ensuremath{\Udot(\tfp(m))}}
\newcommand{\bUdot}{\dot{\mathbf{U}}}
\newcommand{\bUdotm}{\ensuremath{\bUdot(\tfp(m))}}
\newcommand{\con}{\ensuremath{con}}
\renewcommand{\exp}{\ensuremath{exp}}
\title{Webs of Type P
}
\begin{document}
\setcounter{tocdepth}{2}

\author{Nicholas Davidson}
\email{davidsonnj@cofc.edu}
\address{Department of Mathematics\\ College of Charleston \\ Charleston, SC}

\author{Jonathan R. Kujawa}
\email{kujawa@ou.edu}
\address{Department of Mathematics\\ University of Oklahoma \\ Norman, OK}
\thanks{The second author was supported in part by Simons Collaboration Grant for Mathematicians No.\ 525043.}

\author{Robert Muth}
\email{muthr@duq.edu}
\address{ Department of Mathematics and Computer Science \\ Duquesne University \\ Pittsburgh, PA}

\date{\today}

\begin{abstract}  This paper introduces type P web supercategories. They are defined as diagrammatic monoidal $\k$-linear supercategories via generators and relations.  We study the structure of these categories and provide diagrammatic bases for their morphism spaces.  We also prove these supercategories provide combinatorial models for the monoidal supercategory generated by the symmetric powers of the natural module and their duals for the Lie superalgebra of type P.
\end{abstract}

\maketitle

\section{Introduction}

\subsection{Background}\label{SS:background}   This paper introduces certain diagrammatic supercategories via generators and relations.  These supercategories provide a combinatorial model of certain monoidal supercategories of representations for the type P Lie superalgebra $\fp (n)$.  The prefix ``super'' means there is a $\Z/2\Z$-grading and definitions include signs according to the grading.  For example, a supercategory is a category enriched in the category of $\Z_2$-graded vector spaces, while a monoidal supercategory is additionally equipped with a monoidal structure satisfying a graded analogue of the interchange law.  
Recall that the type P Lie superalgebra is one of the so-called strange families which appears in Kac's classification of the simple complex Lie superalgebras \cite{Kac}.  It has no direct analogue in the classical world and its representation theory is still relatively mysterious.  One reason for this is that many classical techniques used to study Lie algebras cannot be directly adapted to the study of $\fp(n)$, e.g.  its enveloping superalgebra has trivial center, so the tools of central characters cannot be used.

In \cite{Moon} Moon gave a generators and relations presentation for the endomorphism algebras of the tensor powers of the natural supermodule for $\fp (n)$. Due to their similarity to Brauer's algebras for the orthogonal and symplectic Lie algebras they are variously called $X$ Brauer algebras where $X \in \left\{\text{marked}, \text{odd}, \text{periplectic} \right\}$.  Building on Moon's work, the second author and Tharp introduced a diagrammatic supercategory that describes the full sub-supercategory of $\fp (n)$-supermodules which are tensor products of the natural supermodule \cite{KT} (see also \cite{Serganova}).   The objects of this supercategory are nonnegative integers and the morphisms are $\k$-linear combinations of Brauer diagrams that are subject to signed versions of Brauer's original relations.  The endomorphism algebras in this supercategory give a diagrammatic realization of Moon's algebra.  Since then there has been substantial work applying diagrammatic, combinatorial, and categorical techniques to the study of $\fp(n)$ and Moon's algebra;   see \cite{WINART1,WINART2,BK,CCP,CP,Coulembier1,CEIII,CEII} and references therein.  The present paper further develops this approach to the representation theory of $\fp (n)$.  

Because we allow all (not just grading-preserving) homomorphisms, the category of finite-dimensional $\fp (n)$-supermodules is a supercategory.  As this supercategory is closed under taking tensor products and duals, this structure makes the supercategory of finite-dimensional $\fp (n)$-supermodules into a rigid monoidal supercategory in the sense of \cite{BE}.   In this paper we introduce and study diagrammatic supercategories which completely describe certain natural monoidal sub-supercategories of $\fp (n)$-supermodules. 

\subsection{Main Results on Webs}\label{SS:mainresults}  
For the discussion in this subsection we assume that $\k$ is an integral domain where two is invertible.  In \cref{S:pWebdef} we use generators and relations to define a $\k$-linear monoidal supercategory called $\pWeb$.  The objects of this supercategory are finite tuples of nonnegative integers.  Morphisms in this supercategory are $\k$-linear combinations of \emph{webs}, which are diagrams obtained by vertically and horizontally concatenating the generating diagrams (explained below in \cref{D:pWebdf}).   In this paper we use the convention that diagrams are read from bottom to top.  For example, given any integer $a > 1$, the following sum of webs is a morphism in $\pWeb$ from $(1,a,1)$ to $(a)$:
\[
\hackcenter{
\begin{tikzpicture}[scale=0.7]
  \draw[thick, color=\clr] (0.8, -1.5)--(0.8,-1.3)   .. controls ++(0,0.35) and ++(0,-0.35) ..    
  (0.4,-0.7)--(0.4,-0.1) .. controls ++(0,0.35) and ++(0,-0.35) .. (-0.2,0.7)--(-0.2,1.1);
  \draw[thick, color=\clr] (0, -1.5)--(0,-1.3)   .. controls ++(0,0.35) and ++(0,-0.35) ..    
  (0.4,-0.7)--(0.4,-0.6);
    \draw[thick, color=\clr] (-0.8,-1.5)--(-0.8,-0.1) .. controls ++(0,0.35) and ++(0,-0.35) ..
  (-0.2,0.7)--(-0.2,1.1); 
  \draw[thick, color=\clr] (0.4,-0.7) .. controls ++(0,0.35) and ++(0,-0.35) ..
  (-0.2,-0.1)--(-0.2,0); 
       \node[shape=coordinate](DOT) at (-0.2,0) {};
     \filldraw  (DOT) circle (2.5pt);
         \node[below] at (0.8,-1.5) {$\scriptstyle 1$};
      \node[below=1pt] at (0,-1.5) {$\scriptstyle a$};
       \node[below] at (-0.8,-1.5) {$\scriptstyle 1$};
        \node[above] at (-0.2,1.1) {$\scriptstyle a$};
\end{tikzpicture}}
+
\;
 \hackcenter{
\begin{tikzpicture}[scale=0.7]
   \draw[thick, color=\clr] (0,0)--(0,0.2) .. controls ++(0,0.35) and ++(0,-0.35)  .. (-0.4,0.8)--(-0.4,1)
    .. controls ++(0,0.35) and ++(0,-0.35)  .. (0,1.6)--(0,1.8);
   \draw[thick, color=\clr] (-0.8,0)--(-0.8,0.2) .. controls ++(0,0.35) and ++(0,-0.35)  .. (-0.4,0.8)--(-0.4,1)
   .. controls ++(0,0.35) and ++(0,-0.35)  .. (-0.8,1.6)--(-0.8,2.6);
   \draw[thick, color=\clr] (0.8,0)--(0.8,1.8);
   \draw (0.8,1.8) arc (0:180:0.4cm) [thick, color=\clr];
     \node[below] at (-0.8,0) {$\scriptstyle 1$};
      \node[below=1pt] at (0,0) {$\scriptstyle a$};
       \node[below] at (0.8,0) {$\scriptstyle 1$};
        \node[above] at (-0.8,2.6) {$\scriptstyle a$};
          \node at (0.4,2.2) {$\scriptstyle\blacklozenge$};
 \end{tikzpicture}}.
\]
Compared to webs which have previous appeared in the literature, experienced readers will notice that our webs contain (unoriented) cups and caps on strings of thickness one which are decorated by beads.  These are odd morphisms in the category, and correspond to the fact that the object 1 is self-dual.  The bead is used to distinguish these unoriented morphisms from the oriented cups and caps drawn in the oriented version of the web category (see below).  The one-valent vertex, called an \textit{antenna}, is a shorthand used to represent the composition of a beaded cap with the split.  See \cref{E:antenna}.

In \cref{S:Pwebupdowndefinition} we introduce an oriented version of $\pWeb$ which we call $\pWeb_{\uparrow \downarrow}$.   Again, this is a monoidal supercategory defined by generators and relations.  The objects of $\pWeb_{\uparrow \downarrow}$ are finite words in the symbols 
\[
\left\{\uparrow_{a}, \downarrow_{a} \mid a \in \Z_{\geq 0} \right\}.
\]  As before, morphisms are $\k$-linear combinations of diagrams obtained by vertical and horizontal concatenation of generating diagrams.  Any such diagram is called an \emph{oriented web}.  For example, given any integer $a \geq 1$, the following sum of oriented webs is a morphism in $\pWeb_{\uparrow \downarrow}$ from \(\uparrow_1 \, \uparrow_a \, \downarrow_1 \) to \(\uparrow_1 \, \uparrow_{a-1}\):
\[
\hackcenter{
\begin{tikzpicture}[scale=0.7]
    \draw[thick, color=\clr,<->] (0.8,-1.5)--(0.8,-0.8) .. controls ++(0,0.45) and ++(0,-0.45) ..
  (0,0.4)--(0,1.1); 
   \draw[thick, color=\clr,>-] (0, -1.5)--(0, -0.4);
    \draw[thick, color=\clr,>-<] (-0.8, -1.5)--(-0.8, -0.4);
      \draw[thick, color=\clr,>-] (-0.8, -1.5)--(-0.8, -0.4);
    \draw[thick, color=\clr,>->] (0,-1.5)--(0,-0.8) .. controls ++(0,0.45) and ++(0,-0.45) ..
  (0.8,0.4)--(0.8,1.1); 
     \draw (0,-0.4) arc (0:180:0.4cm) [thick, color=\clr];
         \node[below] at (-0.8,-1.5) {$\scriptstyle 1$};
      \node[below=1pt] at (0,-1.5) {$\scriptstyle a$};
       \node[below] at (0.8,-1.5) {$\scriptstyle 1$};
        \node[above] at (0.8,1.1) {$\scriptstyle a-1$};
 \node[shape=coordinate](DOT) at (-0.8,-0.9) {};
  \draw[thick, color=\clr, fill=yellow]  (DOT) circle (1mm);
       \node[above] at (0,1.1) {$\scriptstyle 1$};
 \node[shape=coordinate](DOT) at (0,0.6) {};
  \draw[thick, color=\clr, fill=blue]  (DOT) circle (1mm);
\end{tikzpicture}}
+
\;
 \hackcenter{
\begin{tikzpicture}[scale=0.7]
   \draw[thick, color=\clr,>-] (0,0)--(0,0.2) .. controls ++(0,0.35) and ++(0,-0.35)  .. (-0.4,0.8)--(-0.4,1)
    .. controls ++(0,0.35) and ++(0,-0.35)  .. (0,1.6)--(0,1.6);
   \draw[thick, color=\clr,>->] (-0.8,0)--(-0.8,0.2) .. controls ++(0,0.35) and ++(0,-0.35)  .. (-0.4,0.8)--(-0.4,1)
   .. controls ++(0,0.35) and ++(0,-0.35)  .. (-0.8,1.6)--(-0.8,2.6);
    \draw[thick, color=\clr,->] (-0.8,1.8) .. controls ++(0,0.35) and ++(0,-0.35)  .. (0,2.4)--(0,2.6);
   \draw[thick, color=\clr,<-] (0.8,0)--(0.8,1.6);
   \draw (0.8,1.6) arc (0:180:0.4cm) [thick, color=\clr];
     \node[below] at (-0.8,0) {$\scriptstyle 1$};
      \node[below=1pt] at (0,0) {$\scriptstyle a$};
       \node[below] at (0.8,0) {$\scriptstyle 1$};
        \node[above] at (-0.8,2.6) {$\scriptstyle 1$};
           \node[above] at (0,2.6) {$\scriptstyle a-1$};
 \end{tikzpicture}}.
\]
Experienced webslingers will also note that these webs are decorated with yellow and blue dots, which reverse the orientation of the strand.  These represent odd morphisms in this category.  The yellow dot encodes an isomorphism $\uparrow_1 \to \downarrow_1$, while the blue dot encodes its inverse.

Our first set of results concern the structure of these categories. In \cref{C:pwebbraiding,C:pwebupdownbraiding} we prove that both $\pWeb$ and $\pWeb_{\uparrow\downarrow}$ are symmetric braided categories and that $\pWeb_{\uparrow\downarrow}$ is rigid.  This is perhaps not surprising since these categories are constructed to provide diagrammatic models of categories of $\fp(n)$-supermodules which have these properties.   In \cref{BasisThm} we give a $\k$-linear `stable basis' for the morphism spaces in $\pWeb$ in terms of web diagrams (see \cref{SS:BasisForPWebs,SS:PuttingTogetherPWeb} for details).  By applying standard techniques (\cref{SS:IsomorphicHoms}) we extend our arguments to prove a basis theorem for the morphisms in $\pWeb_{\uparrow\downarrow}$.  

We also describe relationships among these categories that could be predicted by readers familiar with webs in other settings.  Let $\pWeb_{1}$ denote the full subcategory of $\pWeb$ consisting of all objects which are sequences of ones, and let $\pWeb_{\uparrow}$ denote the full subcategory of $\pWeb_{\uparrow\downarrow}$ consisting of all objects which are finite sequences of upward oriented arrows labeled by nonnegative integers.  In \cref{Bwebfun} we demonstrate $\pWeb_{1}$ is isomorphic to the marked Brauer supercategory introduced in \cite{KT}.  In \cref{T:pWebtopWebup} we prove that $\pWeb$ and $\pWeb_{\uparrow}$ are isomorphic monoidal supercategories.

\subsection{Main Results on Representations of \texorpdfstring{$\fp (n)$}{p(n)}}\label{SS:MainResults2}

Our second set of results require that the ground ring $\k$ is an algebraically closed field of characteristic zero.   The results explain how the categories $\pWeb$ and $\pWeb_{\uparrow\downarrow}$ are combinatorial models for certain natural subcategories of $\fp (n)$-supermodules.  

Let $V_{n}$ denote the natural $\fp (n)$-supermodule coming from its usual matrix representation, and for $a \geq 0$ let $S^a(V_n)$ and $\bigwedge^a(V_n)$ denote its $a$-th symmetric and exterior powers.   Write $\fp (n)\text{-mod}_{V}$ for the full subcategory of $\fp (n)$-modules consisting of all finite tensor powers of $V_n$, and let $\mods$ and $\modsupdown$ denote the full subcategory consisting of tensor products of  $S^{a}(V_{n})$ for various $a \geq 0$, and tensor products of $S^{a}(V_{n})$ and its dual $S^{a}(V_{n})^{*}$ for various $a \geq 0$, respectively.  We remark that since $S^{a}(V_{n})^{*} \cong \bigwedge^{a}(V_{n})$ for all $a \geq 1$, the category $\modsupdown$ also, up to isomorphism, includes exterior powers.

We can now describe the main results of the paper.  For each $n \geq 1$ we demonstrate that certain categories of $\fp(n)$-modules are equivalent to a quotients of the aforementioned web categories obtained by imposing one additional relation (which depends on $n$).  That is, the \emph{single} diagrammatic category $\pWeb$ can be used to describe the category $\mods$ for \textit{all} $n \geq 1$.  Similarly, $\pWeb_{1}$ describes $\fp (n)\text{-mod}_{V}$ and $\pWeb_{\uparrow\downarrow}$ describes $\modsupdown.$

To be precise, for every $n \geq 1$ we show in \cref{T:KTFunctor,Gthm,OrGthm} that there are essentially surjective functors of $\k$-linear, monoidal supercategories
\begin{align*}
F &: \pWeb_{1} \to \fp (n)\text{-mod}_{V}, \\
G &: \pWeb \to \mods, \\
G_{\uparrow\downarrow} &: \pWeb_{\uparrow\downarrow} \to \modsupdown.
\end{align*}  In \cref{T:KTFunctor,HomIsomPlus,OrHomIsomPlus} we show these functors are full.  It is worth noting that fullness can fail in positive characteristic.   See \cref{FailFull} for an example.

Next, using results of \cite{Coulembier1}, we define a certain morphism
\[
f_{n+1} \in \End_{\pWeb_{1}}\left([\ell], [\ell] \right) \cong \End_{\pWeb}\left(1^{\ell}, 1^{\ell} \right) \cong  \End_{\pWeb_{\uparrow \downarrow}}\left(\uparrow_{1}^{\ell}, \uparrow_{1}^{\ell}  \right),
\] where $\ell = (n+1)(n+2)/2$, and $1^{\ell}$ and $\uparrow_{1}^{\ell}$ denote an $\ell$-tuple of ones and $\uparrow_{1}$'s, respectively.   The definition of this morphism is subtle, and it does not seem to admit a nice diagrammatic description.  We define $\pnWeb$ to be the monoidal supercategory given by the same generators and relations as  $\pWeb$, along with the single additional relation 
\begin{equation}\label{E:fnzero}
f_{n+1}=0.  
\end{equation}
The monoidal supercategories $\pnWeb_{1}$ and $\pnWeb_{\uparrow \downarrow}$ are defined similarly.

  In \cref{T:CategoryEquivalences} it is shown that the functors $F$, $G$, and $G_{\uparrow \downarrow}$ induce equivalences of monoidal supercategories:
\begin{align*}
F &: \pnWeb_{1} \xrightarrow{\cong} \fp (n)\text{-mod}_{V}, \\
G &: \pnWeb \xrightarrow{\cong} \mods, \\
G_{\uparrow\downarrow} &: \pnWeb_{\uparrow\downarrow} \xrightarrow{\cong} \modsupdown.
\end{align*}
It is worth noting that these categories of $\fp (n)$-supermodules are not semisimple, unlike some of the more familiar contexts where web categories are used.

\subsection{Future Work}\label{SS:FutureWork}

Cautis--Kamnitzer--Morrison \cite{CKM} illustrated that Howe-type dualities give rise to web-like categories, but it is also sometimes the case that a Howe duality can be deduced from the existence of web-like categories (see \cite{QS,ST}).  In a sequel to this paper we use the results herein to construct a Howe duality between $\fp (m)$ and $\fp (n)$ \cite{DKM}.

In \cite{WINART2}, the authors introduce the  affine VW-supercategory. This can be regarded as an extension of $\pWeb_{1}$ given by including an additional even morphism $1 \to 1$ which defines a subalgebra of $\End_{\pWeb_1}(1^d)$ isomorphic to a polynomial ring in $d$ variables.   This diagrammatic supercategory admits a functor to the category of endofunctors of $\fp(n)\text{-mod}$ of the form $V^{\otimes d} \otimes -$ where the additional generator acts via a Casimir-like element. It would be interesting to define and study affine versions of $\pWeb$ and $\pWeb_{\uparrow \downarrow}$, as well.

In \cite{AGG} Ahmed--Grantcharov--Guay introduced a quantum superalgebra of type P via the FRT formalism.  As an outcome of the construction they obtain a Hopf superalgebra with a quantum analogue of the natural representation and an action of the braid group on its tensor powers.  We expect one can also define quantum analogues of Moon's algebra and the supercategories $\pWeb_{1}$, $\pWeb$, and $\pWeb_{\uparrow\downarrow}$.

\subsection{Conventions}\label{SS:conventions}

Throughout the paper, we will write $\k$ for our ground ring.  Our requirements for $\k$ will vary so we will endeavor to make clear what is assumed in each section.  At a minimum $\k$ will always be a commutative ring with identity.   \

We assume the reader is familiar with  monoidal categories, defining them by generators and relations, and in using diagrammatics to represent morphisms in such categories.  See \ref{SS:aWeb} for a brief discussion and \cite{Kas,Turaev} for further background.  To set our conventions, we read diagrams bottom to top with vertical concatenation corresponding to composition of morphisms.  Horizontal concatenation corresponds to the monoidal (or, tensor) product of morphisms.

This paper investigates mathematical objects in the ``super'' (i.e., $\Z_{2}=\Z/2\Z$-graded) setting.  To establish nomenclature, we say an element has \emph{parity} $r$ if it is homogenous and of degree $r \in \Z_{2}$.  We write $\p{w}$ for the parity of a homogeneous element, and we say that $w$ is \textit{even} (resp. \textit{odd}) if $\p{w} = \0$ (resp. $\p{w} = \1$).   We view $\k$ as a superalgebra concentrated in parity $\0$.

In particular, the context of this work is $\k$-linear monoidal \emph{super}categories.   As with $\k $-linear monoidal categories, they can be studied using a graphical calculus.  One difference is there is now a graded version of the interchange law.  Diagrammatically, this so-called super-interchange law introduces a sign whenever two odd morphisms are isotopied past each other in the vertical direction:
\begin{equation}\label{super-interchange}
    \begin{tikzpicture}[baseline = 19pt,scale=0.5,color=\clr,inner sep=0pt, minimum width=11pt]
        \draw[-,thick] (0,0) to (0,3);
        \draw[-,thick] (2,0) to (2,3);
        \draw (0,2) node[circle,draw,thick,fill=white]{$f$};
        \draw (2,1) node[circle,draw,thick,fill=white]{$g$};
    \end{tikzpicture}
    \quad ~= \quad (-1)^{\p{f}\p{g}}~
    \begin{tikzpicture}[baseline = 19pt,scale=0.5,color=\clr,inner sep=0pt, minimum width=11pt]
        \draw[-,thick] (0,0) to (0,3);
        \draw[-,thick] (2,0) to (2,3);
        \draw (0,1) node[circle,draw,thick,fill=white]{$f$};
        \draw (2,2) node[circle,draw,thick,fill=white]{$g$};
    \end{tikzpicture}
    ~.
\end{equation}
 Because of this, whenever two diagrams are horizontally concatenated the left diagram should be understood to be drawn above the right diagram:
 \begin{equation}\label{super-interchange}
    \begin{tikzpicture}[baseline = 19pt,scale=0.5,color=\clr,inner sep=0pt, minimum width=11pt]
        \draw[-,thick] (0,0) to (0,3);
        \draw[-,thick] (2,0) to (2,3);
        \draw (0,1.5) node[circle,draw,thick,fill=white]{$f$};
        \draw (2,1.5) node[circle,draw,thick,fill=white]{$g$};
    \end{tikzpicture} \quad := \quad  
    \begin{tikzpicture}[baseline = 19pt,scale=0.5,color=\clr,inner sep=0pt, minimum width=11pt]
        \draw[-,thick] (0,0) to (0,3);
        \draw[-,thick] (2,0) to (2,3);
        \draw (0,2) node[circle,draw,thick,fill=white]{$f$};
        \draw (2,1) node[circle,draw,thick,fill=white]{$g$};
    \end{tikzpicture}
    ~.
    \end{equation}
See \cite[Section 1]{BE} for details.   

In what follows, we assume all modules, categories, and functors are $\k$-linear.  We also assume that everything is $\Z_{2}$-graded and so will sometimes omit the prefix ``super''.

\subsection{ArXiv Version}\label{SS:ArxivVersion}  We chose to relegate a number of the more lengthy but straightforward calculations to the {\tt arXiv} version of the paper.  The reader interested in seeing these additional details can download the \LaTeX~source file from the \texttt{arXiv} and find a toggle near the beginning of the file which allows one to compile the paper with these calculations included.

\subsection{Acknowledgments}\label{SS:ArxivVersion} The authors thank the anonymous referee for their detailed and helpful comments.

\section{The  \texorpdfstring{$\aWeb$}{a-Web} Category}

\subsection{Definition of  \texorpdfstring{$\aWeb$}{a-Web}}\label{SS:aWeb}
Let \(\k\) be a commutative ring with identity. 
The definitions and results in this section are generally well-known and we record them for convenience. 

Here and below we will define combinatorial $\k$-linear strict monoidal (super)categories by generators and relations.  This method of construction is well-known and we only briefly describe how this works in our setting.  See, for example, \cite[Section XII.1]{Kas} or \cite[Section I.3]{Turaev} for a careful treatment in the classical case. The objects will be words from some set (e.g., $\Z_{\geq 1}$ or $\left\{\uparrow_{a}, \downarrow_{a} \mid a \in \Z_{\geq 1} \right\}$) with the monodial product given by concatenation of words.  This set of objects will evidently generate the set of all objects under the monoidal product and the empty word will be the monoidal unit object.

Morphisms will be given by providing a set of generating morphisms.  A general morphism will be constructed from these generators (and identity morphisms) by a finite sequence of compositions, monoidal products, and $\k$-linear combinations.  Since composition is given by vertical concatenation and the monoidal product is given by horizontal concatenation, a general morphism will be a $\k$-linear combination of diagrams with the same objects along the top and bottom, and where each diagrams was obtained by a finite sequence of vertical and horizontal concatenations of generating morphisms and identities.  To define the category we impose relations on the morphisms.  These relations are local in the sense that if two morphisms are identical other than in some small region where they differ by an imposed relation, then the morphisms are equal in the category.  Finally, in the cases when we have a supercategory the generating morphisms and defining relations will be homogenous in the $\Z_{2}$-grading and this will provide the grading on the morphism spaces.

%
%
%
%

\begin{definition}
 Let \(\aWeb\) denote the strict monoidal \(\k\)-linear category given by generators and relations as follows. The objects are sequences of non-negative integers.  The morphisms are generated by the diagrams:
 
 \begin{align*}
\hackcenter{
{}
}
\hackcenter{
\begin{tikzpicture}[scale=0.8]
  \draw[thick, color=\clr] (0,0)--(0,0.2) .. controls ++(0,0.35) and ++(0,-0.35) .. (-0.4,0.9)--(-0.4,1);
  \draw[thick, color=\clr] (0,0)--(0,0.2) .. controls ++(0,0.35) and ++(0,-0.35) .. (0.4,0.9)--(0.4,1);
      \node[above] at (-0.4,1) {$\scriptstyle a$};
      \node[above] at (0.4,1) {$\scriptstyle b$};
      \node[above] at (0,-.5) {$\scriptstyle a+b$};
\end{tikzpicture}}  : a + b \to (a,b),
\qquad
\qquad
\hackcenter{
\begin{tikzpicture}[scale=0.8]
  \draw[thick, color=\clr] (-0.4,0)--(-0.4,0.1) .. controls ++(0,0.35) and ++(0,-0.35) .. (0,0.8)--(0,1);
\draw[thick, color=\clr] (0.4,0)--(0.4,0.1) .. controls ++(0,0.35) and ++(0,-0.35) .. (0,0.8)--(0,1);
      \node[above] at (-0.4,-0.47) {$\scriptstyle a$};
      \node[above] at (0.4,-0.47) {$\scriptstyle b$};
      \node[above] at (0,1) {$\scriptstyle a+b$};
\end{tikzpicture}}: (a,b) \to a + b,
\end{align*}
where $a,b \in \Z_{\geq 0}$.  We call these morphisms {\em split} and {\em merge}, respectively.  The identity morphism of the object $(a_{1}, \dotsc , a_{k})$ will be depicted by $k$ vertical strands labelled in order by $a_{1}, \dotsc , a_{k}$.


On the morphisms in $\aWeb$ we impose the following relations for all $a,b,c \in \Z_{\geq 0}$:\\

{ Web-associativity:}
\begin{align}\label{AssocRel}
\hackcenter{
{}
}
\hackcenter{
\begin{tikzpicture}[scale=0.8]
  \draw[thick, color=\clr] (0,0)--(0,0.2) .. controls ++(0,0.35) and ++(0,-0.35) .. (-0.4,0.9)--(-0.4,1) 
  .. controls ++(0,0.35) and ++(0,-0.35) .. (0,1.7)--(0,1.8); 
    \draw[thick, color=\clr] (0,0)--(0,0.2) .. controls ++(0,0.35) and ++(0,-0.35) .. (-0.4,0.9)--(-0.4,1) 
  .. controls ++(0,0.35) and ++(0,-0.35) .. (-0.8,1.7)--(-0.8,1.8); 
  \draw[thick, color=\clr] (0,0)--(0,0.2) .. controls ++(0,0.5) and ++(0,-0.5) .. (0.8,1.5)--(0.8,1.8);
      \node[above] at (-0.8,1.8) {$\scriptstyle a$};
      \node[above] at (0,1.8) {$\scriptstyle b$};
      \node[above] at (0.8,1.8) {$\scriptstyle c$};
      \node[above] at (0,-.5) {$\scriptstyle a+b+c$};
      \node[left] at (-0.4,0.8) {$\scriptstyle a+b$};
\end{tikzpicture}}
=
\hackcenter{
\begin{tikzpicture}[scale=0.8]
  \draw[thick, color=\clr] (0,0)--(0,0.2) .. controls ++(0,0.35) and ++(0,-0.35) .. (0.4,0.9)--(0.4,1) 
  .. controls ++(0,0.35) and ++(0,-0.35) .. (0,1.7)--(0,1.8); 
    \draw[thick, color=\clr] (0,0)--(0,0.2) .. controls ++(0,0.35) and ++(0,-0.35) .. (0.4,0.9)--(0.4,1) 
  .. controls ++(0,0.35) and ++(0,-0.35) .. (0.8,1.7)--(0.8,1.8); 
  \draw[thick, color=\clr] (0,0)--(0,0.2) .. controls ++(0,0.5) and ++(0,-0.5) .. (-0.8,1.5)--(-0.8,1.8);
      \node[above] at (-0.8,1.8) {$\scriptstyle a$};
      \node[above] at (0,1.8) {$\scriptstyle b$};
      \node[above] at (0.8,1.8) {$\scriptstyle c$};
      \node[above] at (0,-.5) {$\scriptstyle a+b+c$};
      \node[right] at (0.4,0.8) {$\scriptstyle b+c$};
\end{tikzpicture}},
\qquad
\qquad
\hackcenter{
\begin{tikzpicture}[scale=0.8]
  \draw[thick, color=\clr] (0,0)--(0,-0.2) .. controls ++(0,-0.35) and ++(0,0.35) .. (-0.4,-0.9)--(-0.4,-1) 
  .. controls ++(0,-0.35) and ++(0,0.35) .. (0,-1.7)--(0,-1.8); 
    \draw[thick, color=\clr] (0,0)--(0,-0.2) .. controls ++(0,-0.35) and ++(0,0.35) .. (-0.4,-0.9)--(-0.4,-1) 
  .. controls ++(0,-0.35) and ++(0,0.35) .. (-0.8,-1.7)--(-0.8,-1.8); 
  \draw[thick, color=\clr] (0,0)--(0,-0.2) .. controls ++(0,-0.5) and ++(0,0.5) .. (0.8,-1.5)--(0.8,-1.8);
      \node[above] at (-0.8,-2.25) {$\scriptstyle a$};
      \node[above] at (0,-2.25) {$\scriptstyle b$};
      \node[above] at (0.8,-2.25) {$\scriptstyle c$};
      \node[above] at (0,0) {$\scriptstyle a+b+c$};
      \node[left] at (-0.4,-0.8) {$\scriptstyle a+b$};
\end{tikzpicture}}
=
\hackcenter{
\begin{tikzpicture}[scale=0.8]
  \draw[thick, color=\clr] (0,0)--(0,-0.2) .. controls ++(0,-0.35) and ++(0,0.35) .. (0.4,-0.9)--(0.4,-1) 
  .. controls ++(0,-0.35) and ++(0,0.35) .. (0,-1.7)--(0,-1.8); 
    \draw[thick, color=\clr] (0,0)--(0,-0.2) .. controls ++(0,-0.35) and ++(0,0.35) .. (0.4,-0.9)--(0.4,-1) 
  .. controls ++(0,-0.35) and ++(0,0.35) .. (0.8,-1.7)--(0.8,-1.8); 
  \draw[thick, color=\clr] (0,0)--(0,-0.2) .. controls ++(0,-0.5) and ++(0,0.5) .. (-0.8,-1.5)--(-0.8,-1.8);
      \node[above] at (-0.8,-2.25) {$\scriptstyle a$};
      \node[above] at (0,-2.25) {$\scriptstyle b$};
      \node[above] at (0.8,-2.25) {$\scriptstyle c$};
      \node[above] at (0,0) {$\scriptstyle a+b+c$};
      \node[right] at (0.4,-0.8) {$\scriptstyle b+c$};
\end{tikzpicture}};
\end{align}

{Rung swap:}
\begin{align}\label{DiagSwitchRel}
\hackcenter{}
\hackcenter{
\begin{tikzpicture}[scale=0.8]
\draw[thick, color=\clr] (0,0)--(0,2);
\draw[thick, color=\clr] (1.4,0)--(1.4,2);
  \draw[thick, color=\clr] (0,0)--(0,0.2) .. controls ++(0,0.35) and ++(0,-0.35)  .. (1.4,0.8)--(1.4,2); 
    \draw[thick, color=\clr] (1.4,0)--(1.4,1.2) .. controls ++(0,0.35) and ++(0,-0.35)  .. (0,1.8)--(0,2); 
     \node[above] at (0,-0.4) {$\scriptstyle a$};
     \node[above] at (1.4,-0.4) {$\scriptstyle b$};
     \node[above] at (0,2) {$\scriptstyle a-s+r$};
      \node[above] at (1.4,2) {$\scriptstyle b+s-r$};
      \node[left] at (0,1) {$\scriptstyle a-s$};
      \node[right] at (1.4,1) {$\scriptstyle b+s$};
      \node[above] at (0.7,1.5) {$\scriptstyle r$};
      \node[below] at (0.7,0.5) {$\scriptstyle s$};
\end{tikzpicture}}
\;
=
\;
\sum_{t \in \Z_{\geq 0}}
\binom{a-b+r-s}{t}
\hackcenter{
\begin{tikzpicture}[scale=0.8]
\draw[thick, color=\clr] (0,0)--(0,2);
\draw[thick, color=\clr] (1.4,0)--(1.4,2);
  \draw[thick, color=\clr] (1.4,0)--(1.4,0.2) .. controls ++(0,0.35) and ++(0,-0.35)  .. (0,0.8)--(0,2); 
    \draw[thick, color=\clr] (0,0)--(0,1.2) .. controls ++(0,0.35) and ++(0,-0.35)  .. (1.4,1.8)--(1.4,2); 
     \node[above] at (0,-0.4) {$\scriptstyle a$};
     \node[above] at (1.4,-0.4) {$\scriptstyle b$};
     \node[above] at (0,2) {$\scriptstyle a-s+r$};
      \node[above] at (1.4,2) {$\scriptstyle b+s-r$};
      \node[left] at (0,1) {$\scriptstyle a+r-t$};
      \node[right] at (1.4,1) {$\scriptstyle b-r+t$};
      \node[above] at (0.7,1.5) {$\scriptstyle s-t$};
      \node[below] at (0.7,0.5) {$\scriptstyle r-t$};
\end{tikzpicture}}.
\end{align}
\end{definition}

%

Going forward, and in the relations defined above, we use the following conventions:
\begin{itemize}
\item Strands labeled by `0' are to be deleted;
\item Diagrams containing a negatively-labeled strand are to be read as zero;
\item We will sometimes choose to omit labels on strands when the label is clear from context.
\end{itemize}
For brevity, we also adopt the convention in calculations that when an equality follows from a previous result, this fact is indicated by placing the relevant equation number over the equals sign in question.  We also adopt the convention that we sometimes write $0$ for the monoidal unit object.


\begin{remark}
When $\k$ is a field \(\aWeb\) can be viewed as a non-quantum version of categories which appear in \cite{QS, ST, TVW}.  It can also be seen to be isomorphic to the Schur category defined in \cite{BEPO}, which appeared as this paper was being prepared.  As explained therein, the Schur category is related to the category introduced in \cite{CKM}.   
\end{remark}

\subsection{Implied Relations for  \texorpdfstring{$\aWeb$}{a-Web}}
We first record a few relations which are implied by the defining relations of $\aWeb$. 
Many of the relations established in the remainder of Section 2 can be inferred from \cite{BEPO, CKM} by using \cite[Remark 4.8, Theorem 4.10]{BEPO}, or may be viewed as analogues of those shown in \cite[\S2]{ST} in the case \(q=1\). Complete proofs of \cref{KnotholeRel,CoxeterWeb,CrossAbsorb,RungsToCross} and \cref{BraidThm} are also available in \cite{MuthNotes}.

\begin{lemma}\label{KnotholeRel}
For all \(a,b \in \ZZ_{\geq 0}\) we have
\begin{align*}
{}
\hackcenter{
}.
\end{equation}
The following follows from \cref{BraidThm}.
\begin{corollary}\label{C:aWeb}  The crossing morphisms defined in \cref{E:generalcrossing} define a symmetric braiding on $\aWeb$.
\end{corollary}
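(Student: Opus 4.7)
The goal is to verify three properties: (a) naturality of the crossings $c_{\bba,\bbb}$ in each variable, (b) the two hexagon axioms relating the crossing to the monoidal product, and (c) symmetry $c_{\bbb,\bba} \circ c_{\bba,\bbb} = \id_{\bba \otimes \bbb}$. My plan is to reduce everything to the elementary identities already established in \cref{BraidThm}, treating the general crossing as a composite of elementary two-strand crossings, which is precisely how \cref{E:generalcrossing} is drawn.

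First, I will observe that the general crossing in \cref{E:generalcrossing} can be written (unambiguously) as any reduced composite of elementary crossings $c_{a_i,b_j}$, the well-definedness being a consequence of the braid relation in the second equation of \cref{B1}. This is the standard procedure for building a symmetric braiding from the data of elementary crossings satisfying Reidemeister II and III. Once this is done, the hexagon axioms $c_{\bba \otimes \bba', \bbb} = (c_{\bba,\bbb} \otimes \id_{\bba'}) \circ (\id_{\bba} \otimes c_{\bba',\bbb})$ and its mirror are literally the recipe used to define the crossings, so they hold by construction.

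Next, I will verify naturality. Since every morphism of $\aWeb$ is built from splits, merges, and identities (together with the $\k$-linear structure), it suffices to check that the crossings slide past a single split and a single merge -- that is, to verify
\[
c_{\bbb,\bba} \circ (f \otimes \id_{\bbb}) = (\id_{\bbb} \otimes f) \circ c_{\bbb,\bba'}
\]
and the analogous identity on the other side, for $f$ a generating split or merge. For a single strand crossing past a split or merge, these four cases are exactly \cref{B3,B4}. For the general case, one expands $c_{\bba,\bbb}$ and $c_{\bba',\bbb}$ into elementary crossings and applies \cref{B3,B4} one elementary strand at a time; the induction is straightforward because each elementary crossing passes across the split/merge independently.

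Finally, symmetry $c_{\bbb,\bba} \circ c_{\bba,\bbb} = \id_{\bba \otimes \bbb}$ is obtained by repeated application of the first equation of \cref{B1} (the elementary involution identity) together with the naturality just established: one writes the composite as an iterated product of elementary two-strand crossings and cancels adjacent inverse pairs after using the braid relation in the second equation of \cref{B1} to move them into position. The main obstacle I anticipate is purely bookkeeping in this last step -- making precise the reduction of the double crossing of two blocks into a cascade of cancellable elementary pairs -- but no new relation beyond those in \cref{BraidThm} should be required. With (a), (b), and (c) in hand, the crossings equip $\aWeb$ with a symmetric braiding, proving the corollary.
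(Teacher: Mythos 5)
Your proposal is correct and follows exactly the route the paper intends: the paper dispenses with the proof by noting it is a straightforward verification "via \cref{BraidThm}", and your reduction of naturality, the hexagon axioms, and symmetry to \cref{B1,B3,B4} is precisely that verification spelled out. No gap; nothing further is needed.
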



\section{The  \texorpdfstring{$\pWeb$}{p-Web} Category} \label{S:pWebdef}

\subsection{Definition of  \texorpdfstring{$\pWeb$}{p-Web}}
From now on we assume $\k$ is an integral domain where $2$ is invertible.  For example, $\k$ could be $\Z[\frac{1}{2}]$.   We again define a diagrammatic $\k$-linear monoidal supercategories by generators and relations as discussed in \cref{SS:aWeb}.


\begin{definition} \label{D:pWebdf}  Let \(\pWeb\) be the strict $\k$-linear monoidal supercategory given by generators and relations as follows.  The objects given by sequences of non-negative integers.

The generating morphisms:
\begin{align*}
\hackcenter{
{}
}
\hackcenter{
\begin{tikzpicture}[scale=0.8]
  \draw[thick, color=\clr] (0,0)--(0,0.2) .. controls ++(0,0.35) and ++(0,-0.35) .. (-0.4,0.9)--(-0.4,1);
  \draw[thick, color=\clr] (0,0)--(0,0.2) .. controls ++(0,0.35) and ++(0,-0.35) .. (0.4,0.9)--(0.4,1);
      \node[above] at (-0.4,1) {$\scriptstyle a$};
      \node[above] at (0.4,1) {$\scriptstyle b$};
      \node[above] at (0,-.5) {$\scriptstyle a+b$};
\end{tikzpicture}},
\qquad
\qquad
\hackcenter{
\begin{tikzpicture}[scale=0.8]
  \draw[thick, color=\clr] (-0.4,0)--(-0.4,0.1) .. controls ++(0,0.35) and ++(0,-0.35) .. (0,0.8)--(0,1);
\draw[thick, color=\clr] (0.4,0)--(0.4,0.1) .. controls ++(0,0.35) and ++(0,-0.35) .. (0,0.8)--(0,1);
      \node[above] at (-0.4,-0.47) {$\scriptstyle a$};
      \node[above] at (0.4,-0.47) {$\scriptstyle b$};
      \node[above] at (0,1) {$\scriptstyle a+b$};
\end{tikzpicture}},
\qquad
\qquad
\hackcenter{
\begin{tikzpicture}[scale=0.8]
 \draw(0,0) arc (0:180:0.4cm) [thick, color=\clr];
    \node[below] at (0,0) {$\scriptstyle 1$};
      \node[below] at (-0.8,0) {$\scriptstyle 1$};
      \node at (-0.4,0.4) {$\scriptstyle\blacklozenge$};
\end{tikzpicture}},
\qquad
\qquad
\hackcenter{
\begin{tikzpicture}[scale=0.8]
 \draw[decoration={
    markings,
    mark=at position 0.5 with {\arrow{>}}}] (0,0) arc (0:-180:0.4cm) [thick, color=\clr];
    \node[above] at (0,0) {$\scriptstyle 1$};
      \node[above] at (-0.8,0) {$\scriptstyle 1$};
       \node at (-0.4,-0.4) {$\scriptstyle\blacklozenge$};
\end{tikzpicture}},
\end{align*}
for \(a,b \in \Z_{\geq 0}\). We call these morphisms {\em split, merge, cap, and cup}, respectively. The \(\Z_2\)-grading is given by declaring splits and merges to have parity \(\bar 0\), and caps and cups to have parity \(\bar 1\).  The identity morphism of the object $(a_{1}, \dotsc , a_{k})$ will be depicted by $k$ vertical strands labelled in order by $a_{1}, \dotsc , a_{k}$.

To describe the imposed relations it will be convenient to first define an additional odd morphism,
\begin{equation} \label{E:antenna}
\hackcenter{}
\hackcenter{
\begin{tikzpicture}[scale=0.8]
 \draw[thick, color=\clr] (0,-0.1)--(0,0.3);
 \draw[thick, color=\clr] (0,-0.1)--(0,0.8);
    \node[below] at (0,-0.1) {$\scriptstyle 2$};
    \node[shape=coordinate](DOT) at (0,0.8) {};
     \filldraw  (DOT) circle (2.5pt);
      \node[white] at (0,1.4) {$\scriptstyle\blacklozenge$};
\end{tikzpicture}}
:=
\left(\frac{1}{2} \right)
\,
\hackcenter{
\begin{tikzpicture}[scale=0.8]
\draw[thick, color=\clr] (0,-0.1)--(0,0.2);
  \draw[thick, color=\clr] (0,0)--(0,0.2) .. controls ++(0,0.35) and ++(0,-0.35) .. (-0.4,0.9)--(-0.4,1);
  \draw[thick, color=\clr] (0,0)--(0,0.2) .. controls ++(0,0.35) and ++(0,-0.35) .. (0.4,0.9)--(0.4,1);
      \node[below] at (0,-0.1) {$\scriptstyle 2$};
       \draw(0.4,1) arc (0:180:0.4cm) [thick, color=\clr];
         \node at (0,1.4) {$\scriptstyle\blacklozenge$};
\end{tikzpicture}},\,
\end{equation}
which we call the {\em antenna}.    Here and below, when we scale a diagram by an element of $\k$, we often write the scalar in parentheses to make clear it is not an edge label.


The defining relations of $\pWeb$ are \cref{AssocRel,DiagSwitchRel} along with the following relations for all $a,b \in \Z_{\geq 0}$:\\

{\em Straightening:}
\begin{align}\label{StraightRel}
\hackcenter{}
\hackcenter{
\begin{tikzpicture}[scale=0.8]
  \draw[thick, color=\clr] (0,0)--(0,-0.8); 
 \draw (0,0) arc (0:180:0.4cm) [thick, color=\clr];
   \draw[thick, color=\clr] (-0.8,0)--(-0.8,-0.2); 
  \draw (-0.8,-0.2) arc (0:-180:0.4cm) [thick, color=\clr];
   \draw[thick, color=\clr] (-1.6,-0.2)--(-1.6,0.6); 
    \node[above] at (-1.6,0.6) {$\scriptstyle 1$};
     \node[below] at (0,-0.8) {$\scriptstyle 1$};
      \node at (-1.2,-0.6) {$\scriptstyle\blacklozenge$};
       \node at (-0.4,0.4) {$\scriptstyle\blacklozenge$};
\end{tikzpicture}}
\;
=
\;
\hackcenter{
\begin{tikzpicture}[scale=0.8]
   \draw[thick, color=\clr] (0,-0.8)--(0,0.6); 
      \node[above,white] at (0,0.6) {$\scriptstyle 1$};
     \node[below] at (0,-0.8) {$\scriptstyle 1$};
\end{tikzpicture}}
\;
=
\;
-
\;
\hackcenter{
\begin{tikzpicture}[scale=0.8]
   \draw[thick, color=\clr] (-0.8,0)--(-0.8,-0.8); 
 \draw (0,0) arc (0:180:0.4cm) [thick, color=\clr];
   \draw[thick, color=\clr] (0,0)--(0,-0.2); 
  \draw (0.8,-0.2) arc (0:-180:0.4cm) [thick, color=\clr];
   \draw[thick, color=\clr] (0.8,-0.2)--(0.8,0.6); 
        \node[above] at (0.8,0.6) {$\scriptstyle 1$};
     \node[below] at (-0.8,-0.8) {$\scriptstyle 1$};
         \node at (-0.4,0.4) {$\scriptstyle\blacklozenge$};
       \node at (0.4,-0.6) {$\scriptstyle\blacklozenge$};
\end{tikzpicture}};
\end{align}

{\em Antenna retraction:}
\begin{align}\label{AntRel}
\hackcenter{}
\hackcenter{
\begin{tikzpicture}[scale=0.8]
  \draw[thick, color=\clr] (-0.4,0)--(-0.4,0.1) .. controls ++(0,0.35) and ++(0,-0.35) .. (0,0.8)--(0,1.2);
\draw[thick, color=\clr] (0.4,0)--(0.4,0.1) .. controls ++(0,0.35) and ++(0,-0.35) .. (0,0.8)--(0,1.2);
       \node[shape=coordinate](DOT) at (0,1.2) {};
     \filldraw  (DOT) circle (2.5pt);
     \node[left] at (0,0.9) {$\scriptstyle 2$};
         \node[below] at (0.4,0) {$\scriptstyle 1$};
     \node[below] at (-0.4,0) {$\scriptstyle 1$};
\end{tikzpicture}}
\;
=
\;
\hackcenter{
\begin{tikzpicture}[scale=0.8]
\draw[thick, color=\clr] (-0.8,0)--(-0.8,0.1);
\draw[thick, color=\clr] (0,0)--(0,0.1);
 \draw (0,0.1) arc (0:180:0.4cm) [thick, color=\clr];
            \node[white] at (0,1.13) {\(\)};
               \node[below] at (0,0) {$\scriptstyle 1$};
     \node[below] at (-0.8,0) {$\scriptstyle 1$};
     \node at (-0.4,0.5) {$\scriptstyle\blacklozenge$};
     \end{tikzpicture}};
\end{align}

{\em Cap/rung swap:}
\begin{align}\label{CapDiagSwitchRel1}
\hackcenter{}
\hackcenter{
\begin{tikzpicture}[scale=0.8]
\draw[thick, color=\clr] (0,0)--(0,2);
\draw[thick, color=\clr] (1.4,0)--(1.4,2);
  \draw[thick, color=\clr] (0,0)--(0,0.2) .. controls ++(0,0.35) and ++(0,-0.35)  .. (1.4,0.8)--(1.4,2); 
   \draw (1.4,1.2) arc (0:180:0.7cm) [thick, color=\clr];
     \node[above] at (0,-0.4) { $\scriptstyle a$};
     \node[above] at (1.4,-0.4) {$\scriptstyle b$};
     \node[above] at (0,2) {$\scriptstyle a-r-1$};
      \node[above] at (1.4,2) {$\scriptstyle b+r-1$};
      \node[left] at (0,1) {$\scriptstyle a-r$};
      \node[right] at (1.4,1) {$\scriptstyle b+r$};
        \node[below] at (0.7,0.5) {$\scriptstyle r$};
        \node at (0.7,1.9) {$\scriptstyle\blacklozenge$};
\end{tikzpicture}}
\;
&=
\;
\hackcenter{
\begin{tikzpicture}[scale=0.8]
\draw[thick, color=\clr] (0,0)--(0,2);
\draw[thick, color=\clr] (1.4,0)--(1.4,2);
  \draw[thick, color=\clr] (0,0)--(0,1.2) .. controls ++(0,0.35) and ++(0,-0.35)  .. (1.4,1.8)--(1.4,2); 
   \draw (1.4,0.2) arc (0:180:0.7cm) [thick, color=\clr];
     \node[above] at (0,-0.4) { $\scriptstyle a$};
     \node[above] at (1.4,-0.4) {$\scriptstyle b$};
     \node[above] at (0,2) {$\scriptstyle a-r-1$};
      \node[above] at (1.4,2) {$\scriptstyle b+r-1$};
      \node[left] at (0,1) {$\scriptstyle a-1$};
      \node[right] at (1.4,1) {$\scriptstyle b-1$};
         \node[above] at (0.7,1.5) {$\scriptstyle r$};
          \node at (0.7,0.9) {$\scriptstyle\blacklozenge$};
\end{tikzpicture}}
\;
+
\;
(2)
\;
\hspace{-1mm}
\hackcenter{
\begin{tikzpicture}[scale=0.8]
\draw[thick, color=\clr] (0,0)--(0,2);
\draw[thick, color=\clr] (1.4,0)--(1.4,2);
\draw[thick, color=\clr] (0,0)--(0,1.2) .. controls ++(0,0.35) and ++(0,-0.35)  .. (1.4,1.8)--(1.4,2); 
  \draw[thick, color=\clr] (0,0)--(0,0.2) .. controls ++(0,0.35) and ++(0,-0.35)  .. (0.7,0.8)--(0.7,1); 
     \node[above] at (0,-0.4) { $\scriptstyle a$};
     \node[above] at (1.4,-0.4) {$\scriptstyle b$};
     \node[above] at (0,2) {$\scriptstyle a-r-1$};
      \node[above] at (1.4,2) {$\scriptstyle b+r-1$};
          \node[above] at (0.7,0.1) {$\scriptstyle 2$};
           \node[left] at (0,0.8) {$\scriptstyle a-2$};
     \node[shape=coordinate](DOT) at (0.7,1) {};
     \filldraw  (DOT) circle (2.5pt);
       \node[above] at (0.7,1.5) {$\scriptstyle r-1$};
\end{tikzpicture}};
\end{align}
\begin{align}\label{CapDiagSwitchRel2}
\hackcenter{}
\hackcenter{
\begin{tikzpicture}[scale=0.8]
\draw[thick, color=\clr] (0,0)--(0,2);
\draw[thick, color=\clr] (1.4,0)--(1.4,2);
  \draw[thick, color=\clr] (1.4,0)--(1.4,0.2) .. controls ++(0,0.35) and ++(0,-0.35)  .. (0,0.8)--(0,2); 
   \draw (1.4,1.2) arc (0:180:0.7cm) [thick, color=\clr];
     \node[above] at (0,-0.4) { $\scriptstyle a$};
     \node[above] at (1.4,-0.4) {$\scriptstyle b$};
     \node[above] at (0,2) {$\scriptstyle a+r-1$};
      \node[above] at (1.4,2) {$\scriptstyle b-r-1$};
      \node[left] at (0,1) {$\scriptstyle a+r$};
      \node[right] at (1.4,1) {$\scriptstyle b-r$};
        \node[below] at (0.7,0.5) {$\scriptstyle r$};
          \node at (0.7,1.9) {$\scriptstyle\blacklozenge$};
\end{tikzpicture}}
\;
&=
\;
\hackcenter{
\begin{tikzpicture}[scale=0.8]
\draw[thick, color=\clr] (0,0)--(0,2);
\draw[thick, color=\clr] (1.4,0)--(1.4,2);
  \draw[thick, color=\clr] (1.4,0)--(1.4,1.2) .. controls ++(0,0.35) and ++(0,-0.35)  .. (0,1.8)--(0,2); 
   \draw (1.4,0.2) arc (0:180:0.7cm) [thick, color=\clr];
     \node[above] at (0,-0.4) { $\scriptstyle a$};
     \node[above] at (1.4,-0.4) {$\scriptstyle b$};
     \node[above] at (0,2) {$\scriptstyle a+r-1$};
      \node[above] at (1.4,2) {$\scriptstyle b-r-1$};
      \node[left] at (0,1) {$\scriptstyle a-1$};
      \node[right] at (1.4,1) {$\scriptstyle b-1$};
         \node[above] at (0.7,1.5) {$\scriptstyle r$};
           \node at (0.7,0.9) {$\scriptstyle\blacklozenge$};
\end{tikzpicture}}
\;
+
\;
(2)
\;
\hspace{-1mm}
\hackcenter{
\begin{tikzpicture}[scale=0.8]
\draw[thick, color=\clr] (0,0)--(0,2);
\draw[thick, color=\clr] (1.4,0)--(1.4,2);
\draw[thick, color=\clr] (1.4,0)--(1.4,1.2) .. controls ++(0,0.35) and ++(0,-0.35)  .. (0,1.8)--(0,2); 
  \draw[thick, color=\clr] (1.4,0)--(1.4,0.2) .. controls ++(0,0.35) and ++(0,-0.35)  .. (0.7,0.8)--(0.7,1); 
     \node[above] at (0,-0.4) { $\scriptstyle a$};
     \node[above] at (1.4,-0.4) {$\scriptstyle b$};
     \node[above] at (0,2) {$\scriptstyle a+r-1$};
      \node[above] at (1.4,2) {$\scriptstyle b-r-1$};
          \node[above] at (0.7,0.1) {$\scriptstyle 2$};
           \node[right] at (1.4,0.8) {$\scriptstyle b-2$};
     \node[shape=coordinate](DOT) at (0.7,1) {};
     \filldraw  (DOT) circle (2.5pt);
       \node[above] at (0.7,1.5) {$\scriptstyle r-1$};
\end{tikzpicture}};
\end{align}

{\em Cup/rung swap:}
\begin{align}\label{CupDiagSwitchRel1}
\hackcenter{}
\hackcenter{
\begin{tikzpicture}[scale=0.8]
\draw[thick, color=\clr] (0,0)--(0,2);
\draw[thick, color=\clr] (1.4,0)--(1.4,2);
  \draw[thick, color=\clr] (0,0)--(0,1.2) .. controls ++(0,0.35) and ++(0,-0.35)  .. (1.4,1.8)--(1.4,2); 
   \draw (1.4,0.8) arc (0:-180:0.7cm) [thick, color=\clr];
     \node[above] at (0,-0.4) { $\scriptstyle a$};
     \node[above] at (1.4,-0.4) {$\scriptstyle b$};
     \node[above] at (0,2) {$\scriptstyle a-r+1$};
      \node[above] at (1.4,2) {$\scriptstyle b+r+1$};
      \node[left] at (0,1) {$\scriptstyle a+1$};
      \node[right] at (1.4,1) {$\scriptstyle b+1$};
      \node[above] at (0.7,1.5) {$\scriptstyle r$};
        \node at (0.7,0.1) {$\scriptstyle\blacklozenge$};
\end{tikzpicture}}
\;
&=
\;
\hackcenter{
\begin{tikzpicture}[scale=0.8]
\draw[thick, color=\clr] (0,0)--(0,2);
\draw[thick, color=\clr] (1.4,0)--(1.4,2);
  \draw[thick, color=\clr] (0,0)--(0,0.2) .. controls ++(0,0.35) and ++(0,-0.35)  .. (1.4,0.8)--(1.4,2); 
   \draw (1.4,1.8) arc (0:-180:0.7cm) [thick, color=\clr];
     \node[above] at (0,-0.4) { $\scriptstyle a$};
     \node[above] at (1.4,-0.4) {$\scriptstyle b$};
     \node[above] at (0,2) {$\scriptstyle a-r+1$};
      \node[above] at (1.4,2) {$\scriptstyle b+r+1$};
      \node[left] at (0,1) {$\scriptstyle a-r$};
      \node[right] at (1.4,1) {$\scriptstyle b+r$};
       \node[below] at (0.7,0.5) {$\scriptstyle r$};
         \node at (0.7,1.1) {$\scriptstyle\blacklozenge$};
\end{tikzpicture}};
\end{align}
\begin{align}\label{CupDiagSwitchRel2}
\hackcenter{}
\hackcenter{
\begin{tikzpicture}[scale=0.8]
\draw[thick, color=\clr] (0,0)--(0,2);
\draw[thick, color=\clr] (1.4,0)--(1.4,2);
  \draw[thick, color=\clr] (1.4,0)--(1.4,1.2) .. controls ++(0,0.35) and ++(0,-0.35)  .. (0,1.8)--(0,2); 
   \draw (1.4,0.8) arc (0:-180:0.7cm) [thick, color=\clr];
     \node[above] at (0,-0.4) { $\scriptstyle a$};
     \node[above] at (1.4,-0.4) {$\scriptstyle b$};
     \node[above] at (0,2) {$\scriptstyle a+r+1$};
      \node[above] at (1.4,2) {$\scriptstyle b-r+1$};
      \node[left] at (0,1) {$\scriptstyle a+1$};
      \node[right] at (1.4,1) {$\scriptstyle b+1$};
      \node[above] at (0.7,1.5) {$\scriptstyle r$};
        \node at (0.7,0.1) {$\scriptstyle\blacklozenge$};
\end{tikzpicture}}
\;
&=
\;
\hackcenter{
\begin{tikzpicture}[scale=0.8]
\draw[thick, color=\clr] (0,0)--(0,2);
\draw[thick, color=\clr] (1.4,0)--(1.4,2);
  \draw[thick, color=\clr] (1.4,0)--(1.4,0.2) .. controls ++(0,0.35) and ++(0,-0.35)  .. (0,0.8)--(0,2); 
   \draw (1.4,1.8) arc (0:-180:0.7cm) [thick, color=\clr];
     \node[above] at (0,-0.4) { $\scriptstyle a$};
     \node[above] at (1.4,-0.4) {$\scriptstyle b$};
     \node[above] at (0,2) {$\scriptstyle a+r+1$};
      \node[above] at (1.4,2) {$\scriptstyle b-r+1$};
      \node[left] at (0,1) {$\scriptstyle a+r$};
      \node[right] at (1.4,1) {$\scriptstyle b-r$};
       \node[below] at (0.7,0.5) {$\scriptstyle r$};
         \node at (0.7,1.1) {$\scriptstyle\blacklozenge$};
\end{tikzpicture}}.
\end{align}
\end{definition}

\begin{remark} The diamonds which decorate the odd cup and cap morphisms in the definition of $\pWeb$ are used to distinguish them from the even cup and cap morphisms used in the definition of the oriented web category $\pWeb_{\uparrow \downarrow}$  in  \cref{SS:DefinitionofPWebupdown}.   \end{remark}

\begin{remark}  We introduced the antenna in \cref{E:antenna} because this morphism appears frequently when studying applications for $\pWeb$.  It has the disadvantage of requiring $2$ to be invertible in $\k$.   One could define $\pWeb$ over $\Z$ by including the antenna as a generating morphism and instead imposing the relation obtained by scaling \cref{E:antenna} by $2$, along with relations for moving an antenna past other the generating morphisms.  This diagrammatic category would be related to the representation theory of the Kostant $\Z$-form $U(\fp )_{\Z}$ introduced in \cite{DKM}.   We opted not to do this as it would add complexity and was not needed for the applications considered here.
\end{remark}


\subsection{Implied Relations for \texorpdfstring{$\pWeb$}{p-Web}}
We first record a few additional relations which are implied by the defining relations of $\pWeb$. 

\begin{lemma}\label{DotSwitch}
For all \(a \in \Z_{\geq 2}\) we have
\begin{align*}
\hackcenter{}
\hackcenter{
}.
\end{align*}
Applying the straightening relation \cref{StraightRel} to both sides of this equation gives the second equality in \cref{B5}. 

\end{proof}

\cref{BraidThmP} demonstrates that cups and caps are natural with respect to the crossing.  Because the relations in $\aWeb$ also hold in $\pWeb$, the following is an immediate consequence of the above and \cref{BraidThm}.

\begin{corollary}\label{C:pwebbraiding}  The crossing morphisms defined in \cref{E:generalcrossing} define a symmetric braiding on $\pWeb$. 
\end{corollary}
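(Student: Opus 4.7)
The plan is to assemble existing pieces rather than prove anything substantively new. The symmetric braiding on $\aWeb$ established in \cref{C:aWeb} already provides naturality of the elementary crossing with respect to all split and merge generators, the hexagon identities on single strands from \cref{B1}--\cref{B4}, and the involutive identity for the two-strand crossing. Since the defining relations of $\aWeb$ are all among the defining relations of $\pWeb$, these properties transfer verbatim to $\pWeb$.

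The additional generators present in $\pWeb$ are the cap and cup (and the derived antenna). Naturality with respect to these is exactly the content of \cref{BraidThmP}: equation \cref{B5} produces the slide of a cap or cup across a crossing with a strand of arbitrary label, and \cref{DotThruCross} handles the antenna sliding through a crossing. To extend naturality to the general crossing on $\bba \otimes \bbb \to \bbb \otimes \bba$ built from \cref{E:generalcrossing}, I would induct on the number of strands on each side, moving a cap or cup past one elementary two-strand crossing at a time via \cref{B5}; since the general crossing is a composition of elementary crossings, each compatibility extends.

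The remaining structural axioms, namely the hexagons and the involutive identity $c_{\bbb,\bba} \circ c_{\bba,\bbb} = \id_{\bba \otimes \bbb}$ for general objects $\bba,\bbb$, reduce to their single-strand analogues via the standard decomposition of a general crossing into elementary crossings, and those analogues were already established in \cref{BraidThm}. Finally, since the elementary crossing is defined by \cref{CrossDef} as a $\k$-linear combination of compositions of the even generators split and merge, the crossing itself is even, and the super-interchange law introduces no additional signs; thus the braiding is well-defined in the supercategorical sense. The only nontrivial input beyond what was done for $\aWeb$ is the cap/cup/antenna slide supplied by \cref{BraidThmP}, so the corollary follows immediately.
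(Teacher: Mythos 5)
Your proposal is correct and follows essentially the same route as the paper: the authors likewise note that the $\aWeb$ relations hold in $\pWeb$ so the results of \cref{SS:BraidingInAWeb} transfer, reduce everything to showing caps and cups (and hence the antenna) pass through crossings via \cref{BraidThmP}, and then declare the corollary immediate for the general crossings of \cref{E:generalcrossing}. Your extra remarks about decomposing general crossings into elementary ones and about the crossing being even are exactly the routine checks the paper leaves implicit.
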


\subsection{Additional Relations in  \texorpdfstring{$\pWeb$}{p-Web}}\label{SS:AdditionalRelationsInPWeb}  The crossing morphisms allow for the following additional relations on $\pWeb$.

\begin{lemma} For every $a \in \Z_{\geq 0}$, we have:
\begin{align}\label{DotThruCross}
\hackcenter{}
\hackcenter{
}
,
\end{align*}
completing the proof.
\end{proof}

\subsection{A Basis for Morphism Spaces in  \texorpdfstring{$\aWeb$}{a-Web} and  \texorpdfstring{$\pWeb$}{p-Web}}\label{SS:BasisForPWebs}
In this section we construct $\k$-spanning sets for the morphism spaces in $\aWeb$ and \(\pWeb\).  In \cref{SS:BasisTheorems} we will show these are in fact bases.  The bases themselves are diagrammatically analogous to bases defined in a different setting in \cite[Section 5]{SW}.  

We will write multiple splits and merges in the form
\begin{align}\label{E:multisplitmerge}
\hackcenter{}
\hackcenter{
\begin{tikzpicture}[scale=0.8]
 \draw[thick, color=\clr] (0,-0.2)--(0,0);
 \draw[thick, color=\clr] (0,0) .. controls ++(0,0.5) and ++(0,-0.5) .. (-1.2,1);
  \draw[thick, color=\clr] (0,0) .. controls ++(0,0.5) and ++(0,-0.5) .. (-0.6,1);
    \draw[thick, color=\clr] (0,0) .. controls ++(0,0.5) and ++(0,-0.5) .. (0.6,1);
       \draw[thick, color=\clr] (0,0) .. controls ++(0,0.5) and ++(0,-0.5) .. (1.2,1);
          \node at (0,1) {$\scriptstyle \cdots $};
            \node[below] at (0.1,-0.2) {$\scriptstyle a_1 + \cdots + a_{n+1} $};
             \node[above] at (-1.2,1) {$\scriptstyle a_1 $};
              \node[above] at (-0.6,1) {$\scriptstyle a_2 $};
               \node[above] at (0.6,1) {$\scriptstyle a_n$};
                \node[above] at (1.2,0.97) {$\scriptstyle \;a_{n+1} $};
\end{tikzpicture}}
\qquad
\qquad
\textup{and}
\qquad
\qquad
\hackcenter{
\begin{tikzpicture}[scale=0.8]
 \draw[thick, color=\clr] (0,0.2)--(0,0);
 \draw[thick, color=\clr] (0,0) .. controls ++(0,-0.5) and ++(0,0.5) .. (-1.2,-1);
  \draw[thick, color=\clr] (0,0) .. controls ++(0,-0.5) and ++(0,0.5) .. (-0.6,-1);
    \draw[thick, color=\clr] (0,0) .. controls ++(0,-0.5) and ++(0,0.5) .. (0.6,-1);
       \draw[thick, color=\clr] (0,0) .. controls ++(0,-0.5) and ++(0,0.5) .. (1.2,-1);
          \node at (0,-1) {$\scriptstyle \cdots $};
            \node[above] at (0.1,0.2) {$\scriptstyle a_1 + \cdots + a_{n+1} $};
             \node[below] at (-1.2,-1) {$\scriptstyle a_1 $};
              \node[below] at (-0.6,-1) {$\scriptstyle a_2 $};
               \node[below] at (0.6,-1) {$\scriptstyle a_n $};
                \node[below] at (1.2,-1) {$\scriptstyle \;a_{n+1} $};
\end{tikzpicture}},
\end{align}
where the diagram should be interpreted as \(n\) vertically composed splits, or merges, respectively. By \cref{AssocRel} the resulting morphism is independent of the split (or merge) order. It will also be convenient to define: 
\begin{align}\label{greendot}
\hackcenter{}
  \hackcenter{
\begin{tikzpicture}[scale=0.7]
  \draw[thick, color=\clr] (0,0)--(0,2);
     \node[shape=coordinate](DOT) at (0,1) {};
     \filldraw  (DOT) circle (2.5pt);
  \draw[thick, color=\clr, fill=green]  (DOT) circle (5pt);
        \node[below] at (0,0) {$\scriptstyle a $};
         \node[above] at (0,2) {$\scriptstyle a-2 $};
      \end{tikzpicture}}
:=
     \hackcenter{
\begin{tikzpicture}[scale=0.7]
  \draw[thick, color=\clr] (0,0)--(0,2);
   \draw[thick, color=\clr] (0,0)--(0,0.2) .. controls ++(0,0.35) and ++(0,-0.35) .. (0.4,0.8)--(0.4,1);
     \node[below] at (0,0) {$\scriptstyle a$};
      \node[below] at (0.4,0.6) {$\scriptstyle 2$};
      \node[above] at (0,2) {$\scriptstyle a-2 $};
        \node[shape=coordinate](DOT) at (0.4,1) {};
     \filldraw  (DOT) circle (2.5pt);
      \end{tikzpicture}}. 
\end{align}

Given a matrix $A$ let us write $A^{T}$ for the transpose.  For any \(\bba \in \Z_{\geq 0}^t\), \(\bbb \in \Z_{\geq 0}^u\), let \(\chi(\bba, \bbb)\) be the set of tuples \((A,B,C,D)\), such that
\begin{align*}
A \in \textup{Mat}_{t \times t}(\{0,1\}), 
\qquad
B \in \textup{Mat}_{u \times u}(\{0,1\}), 
\qquad
C \in \textup{Mat}_{t \times u}(\Z_{\geq 0}), 
\qquad
D \in \{0,1\}^t
\end{align*}
\begin{align*}
A^{T} = A, \qquad B^{T} = B, \qquad A_{ii} = 0\,\, \textup{for all \(i=1, \ldots, t\)}, \qquad B_{ii} = 0\,\,\textup{for all \(i=1, \ldots, u\)},
\end{align*}
\begin{align*}
2D_{i} + \sum_{j=1}^tA_{ij} + \sum_{j=1}^u C_{ij} = a_i \qquad \textup{for all \(i =1, \ldots, t\)},
\end{align*}
\begin{align*}
\sum_{i=1}^uB_{ij} + \sum_{i=1}^t C_{ij} = b_j \qquad \textup{for all \(j =1, \ldots, u\)}.
\end{align*}

For any \((A,B,C,D) \in \chi(\bba, \bbb)\), we define an associated element \(\xi^{(A,B,C,D)} \in \Hom_{\pWeb}(\bba, \bbb)\) via
\begin{align*}
\xi^{(A,B,C,D)}:=
\hackcenter{}
\hackcenter{
\begin{tikzpicture}[scale=0.8]
 \draw[thick, color=\clr] (0,-1)--(0,0);
 \draw[thick, color=\clr] (0,0) .. controls ++(0,0.5) and ++(0,-0.5) .. (-1.2,1)--(-1.2,2);
  \draw[thick, color=\clr] (0,0) .. controls ++(0,0.5) and ++(0,-0.5) .. (-0.2,1)--(-0.2,2);
    \draw[thick, color=\clr] (0,0) .. controls ++(0,0.5) and ++(0,-0.5) .. (0.2,1)--(0.2,2);
       \draw[thick, color=\clr] (0,0) .. controls ++(0,0.5) and ++(0,-0.5) .. (1.2,1)--(1.2,2);
          \node[left] at (-1.2,1.5) {$\scriptstyle A_{11}$};
          \node[left] at (-0.2,1.5) {$\scriptstyle A_{1t}$};
          \node[left] at (-0.3,1) {$\scriptstyle \cdots $};
                     \node[right] at (0.2,1.5) {$\scriptstyle C_{11}$};
            \node[right] at (1.2,1.5) {$\scriptstyle C_{1u}$};
            \node[right] at (0.3,1) {$\scriptstyle \cdots $};
            \node[below] at (0,-1) {$\scriptstyle a_1 $};
            \node[shape=coordinate](DOT) at (0,-0.4) {};
     \draw[thick, color=\clr, fill=green]  (DOT) circle (5pt);
      \node[right] at (0.1,-0.4) {$\scriptstyle D_1 $};
  \node at (2.2,-0.4) {$\scriptstyle \cdots $};
   \node at (2.2,1) {$\scriptstyle \cdots $};
   \draw[thick, color=\clr] (4.4,-1)--(4.4,0);
 \draw[thick, color=\clr] (4.4,0) .. controls ++(0,0.5) and ++(0,-0.5) .. (3.2,1)--(3.2,2);
  \draw[thick, color=\clr] (4.4,0) .. controls ++(0,0.5) and ++(0,-0.5) .. (4.2,1)--(4.2,2);
    \draw[thick, color=\clr] (4.4,0) .. controls ++(0,0.5) and ++(0,-0.5) .. (4.6,1)--(4.6,2);
       \draw[thick, color=\clr] (4.4,0) .. controls ++(0,0.5) and ++(0,-0.5) .. (5.6,1)--(5.6,2);
          \node[left] at (3.2,1.5) {$\scriptstyle A_{t1}$};
          \node[left] at (4.2,1.5) {$\scriptstyle A_{tt}$};
          \node[left] at (4.1,1) {$\scriptstyle \cdots $};
                     \node[right] at (4.6,1.5) {$\scriptstyle C_{t1}$};
            \node[right] at (5.6,1.5) {$\scriptstyle C_{tu}$};
            \node[right] at (4.7,1) {$\scriptstyle \cdots $};
            \node[below] at (4.4,-1) {$\scriptstyle a_t $};
            \node[shape=coordinate](DOT) at (4.4,-0.4) {};
     \draw[thick, color=\clr, fill=green]  (DOT) circle (5pt);
      \node[right] at (4.5,-0.4) {$\scriptstyle D_t $};
   \draw[thick, color=\clr, fill=red] (-1.4,2)--(5.8,2)--(5.8,2.8)--(-1.4,2.8)--(-1.4,2);
    \node at (2.2,2.4) {$\scriptstyle X$};
    \draw[thick, color=\clr] (0,5)--(0,4.8);
 \draw[thick, color=\clr] (0,4.8) .. controls ++(0,-0.5) and ++(0,0.5) .. (-1.2,3.8)--(-1.2,2.8);
  \draw[thick, color=\clr] (0,4.8) .. controls ++(0,-0.5) and ++(0,0.5) .. (-0.2,3.8)--(-0.2,2.8);
    \draw[thick, color=\clr] (0,4.8) .. controls ++(0,-0.5) and ++(0,0.5) .. (0.2,3.8)--(0.2,2.8);
       \draw[thick, color=\clr] (0,4.8) .. controls ++(0,-0.5) and ++(0,0.5) .. (1.2,3.8)--(1.2,2.8);
          \node[left] at (-1.2,3.3) {$\scriptstyle B_{11}$};
          \node[left] at (-0.2,3.3) {$\scriptstyle B_{u1}$};
          \node[left] at (-0.3,3.8) {$\scriptstyle \cdots $};
                     \node[right] at (0.2,3.3) {$\scriptstyle C_{11}$};
            \node[right] at (1.2,3.3) {$\scriptstyle C_{t1}$};
            \node[right] at (0.3,3.8) {$\scriptstyle \cdots $};
            \node[above] at (0,5) {$\scriptstyle b_1 $};
    \draw[thick, color=\clr] (4.4,5)--(4.4,4.8);
 \draw[thick, color=\clr] (4.4,4.8) .. controls ++(0,-0.5) and ++(0,0.5) .. (3.2,3.8)--(3.2,2.8);
  \draw[thick, color=\clr] (4.4,4.8) .. controls ++(0,-0.5) and ++(0,0.5) .. (4.2,3.8)--(4.2,2.8);
    \draw[thick, color=\clr] (4.4,4.8) .. controls ++(0,-0.5) and ++(0,0.5) .. (4.6,3.8)--(4.6,2.8);
       \draw[thick, color=\clr] (4.4,4.8) .. controls ++(0,-0.5) and ++(0,0.5) .. (5.6,3.8)--(5.6,2.8);
          \node[left] at (3.2,3.3) {$\scriptstyle B_{1u}$};
          \node[left] at (4.2,3.3) {$\scriptstyle B_{uu}$};
          \node[left] at (4.1,3.8) {$\scriptstyle \cdots $};
                     \node[right] at (4.6,3.3) {$\scriptstyle C_{1u}$};
            \node[right] at (5.6,3.3) {$\scriptstyle C_{tu}$};
            \node[right] at (4.7,3.8) {$\scriptstyle \cdots $};
            \node[above] at (4.4,5) {$\scriptstyle b_u $};   
  \node at (2.2,4.8) {$\scriptstyle \cdots $};
   \node at (2.2,3.8) {$\scriptstyle \cdots $};
\end{tikzpicture}}
\end{align*}
where \(X\) is any diagram composed only of crossings, cups, and caps, where no cup occurs below any cap, and in which: 
\begin{itemize}
\item the strands labeled by \(A_{ij}\) and \(A_{ji}\) meet in a cap in \(X\);
\item the strands labeled by \(B_{ij}\) and \(B_{ji}\) meet in a cup in \(X\), and;
\item the strand labeled by \(C_{ij}\) at the bottom of \(X\) meets the strand labeled by \(C_{ij}\) at the top of \(X\).
\end{itemize}
All such choices for \(X\) are equivalent up to sign because of \cref{BraidThm}.

It should be noted that the method of using splits and merges to ``explode'' or ``collapse'' the bottom and top of morphisms as done here can be found elsewhere in the literature (e.g., see \cite{RT}).

\begin{example}
Let \(\bba = (9,4,8)\), \(\bbb = (9,6)\), and set
\begin{align*}
A=
\begin{bmatrix}
0 & 0 & 1\\
0 & 0 & 1\\
1 & 1 & 0
\end{bmatrix}
\qquad
B=
\begin{bmatrix}
0 & 1\\ 
1 & 0
\end{bmatrix}
\qquad
C=
\begin{bmatrix}
2& 4 \\
3 & 0 \\
3 & 1 
\end{bmatrix}
\qquad
D=
\begin{bmatrix}
1 & 0 & 1
\end{bmatrix}.
\end{align*}
Then \((A,B,C,D) \in \chi(\bba, \bbb)\), and
\begin{align*}
\xi^{(A,B,C,D)}
=
\pm
\hackcenter{}
\hackcenter{
\begin{tikzpicture}[scale=0.8]
 \draw[thick, color=\clr] (0,0) .. controls ++(0,0.5) and ++(0,-0.5) .. (-0.8,0.6);
  \draw[thick, color=\clr] (0,0) .. controls ++(0,0.5) and ++(0,-0.5) .. (0,0.6);
    \draw[thick, color=\clr] (0,0) .. controls ++(0,0.5) and ++(0,-0.5) .. (0.8,0.6);
   \draw[thick, color=\clr] (2,0) .. controls ++(0,0.35) and ++(0,-0.35) .. (1.6,0.6);
  \draw[thick, color=\clr] (2,0) .. controls ++(0,0.35) and ++(0,-0.35) .. (2.4,0.6);
  \draw[thick, color=\clr] (4.4,0) .. controls ++(0,0.5) and ++(0,-0.5) .. (3.2,0.6);
    \draw[thick, color=\clr] (4.4,0) .. controls ++(0,0.35) and ++(0,-0.35) .. (4.8,0.6);
     \draw[thick, color=\clr] (4.4,0) .. controls ++(0,0.5) and ++(0,-0.5) .. (5.6,0.6);
     \draw[thick, color=\clr] (-0.8,0.6)--(-0.8,1.1) .. controls ++(0,1) and ++(0,1) .. (3.2,1.1)--(3.2,0.6);
      \draw[thick, color=\clr] (1.6,0.6) .. controls ++(0,1.2) and ++(0,1)  .. (4.4,0);
       \draw[thick, color=\clr] (1.2,3.6) .. controls ++(0,-0.5) and ++(0,0.5) .. (0,3);
        \draw[thick, color=\clr] (1.2,3.6) .. controls ++(0,-0.35) and ++(0,0.35) .. (0.8,3);
        \draw[thick, color=\clr] (1.2,3.6) .. controls ++(0,-0.35) and ++(0,0.35) .. (1.6,3);
        \draw[thick, color=\clr] (1.2,3.6) .. controls ++(0,-0.5) and ++(0,0.5) .. (2.4,3);
             \draw[thick, color=\clr] (4,3.6) .. controls ++(0,-0.5) and ++(0,0.5) .. (3.2,3);
        \draw[thick, color=\clr] (4,3.6) .. controls ++(0,-0.5) and ++(0,0.5) .. (4,3);
        \draw[thick, color=\clr] (4,3.6) .. controls ++(0,-0.5) and ++(0,0.5) .. (4.8,3);
         \draw[thick, color=\clr] (0,3) .. controls ++(0,-1) and ++(0,-1) .. (3.2,3);
         \draw[thick, color=\clr] (0,0.6) .. controls ++(0,.5) and ++(0,-.5) .. (-0.4,1.6)
                .. controls ++(0,0.5) and ++(0,-0.5) .. (0.8,3);
         \draw[thick, color=\clr] (0.8,0.6) .. controls ++(0,1) and ++(0,-1) .. (4,3);
         \draw[thick, color=\clr] (5.6,0.6) .. controls ++(0,1) and ++(0,-1) .. (4.8,3);
          \draw[thick, color=\clr] (4.8,0.6) .. controls ++(0,1) and ++(0,-1) .. (2.4,3);
           \draw[thick, color=\clr] (2.4,0.6) .. controls ++(0,.5) and ++(0,-.5) .. (0.4,1.6)
                .. controls ++(0,0.5) and ++(0,-0.5) .. (1.6,3);
                \node at (1.6,2.25) {$\scriptstyle\blacklozenge$};
                \node at (1.3,1.85) {$\scriptstyle\blacklozenge$};
                \node at (2.45,1.2) {$\scriptstyle\blacklozenge$};
               \draw[thick, color=\clr] (0,0)--(0,-0.8);
               \draw[thick, color=\clr] (2,0)--(2,-0.8);
                 \draw[thick, color=\clr] (4.4,0)--(4.4,-0.8);
                    \draw[thick, color=\clr] (1.2,3.6)--(1.2,3.8);
               \draw[thick, color=\clr] (4,3.6)--(4,3.8);
                  \draw[thick, color=\clr] (0,-0.7) .. controls ++(0,0.35) and ++(0,-0.35) .. (0.4,-0.1);
                    \node[shape=coordinate](DOT) at (0.4,-0.1) {};
     \filldraw  (DOT) circle (2.5pt);
     \draw[thick, color=\clr] (4.4,-0.7) .. controls ++(0,0.35) and ++(0,-0.35) .. (4.8,-0.1);
                    \node[shape=coordinate](DOT) at (4.8,-0.1) {};
     \filldraw  (DOT) circle (2.5pt);
       \node[below] at (0,-0.8) {$\scriptstyle 9 $};   
       \node[below] at (2,-0.8) {$\scriptstyle 4 $};   
       \node[below] at (4.4,-0.8) {$\scriptstyle 8$};   
        \node[above] at (1.2,3.8) {$\scriptstyle 9 $};  
         \node[above] at (4,3.8) {$\scriptstyle 6 $};  
           \node at (-1,0.7) {$\scriptstyle 1 $}; 
         \node at (-0.22,0.7) {$\scriptstyle 2 $};
          \node at (2.6,0.7) {$\scriptstyle 3 $};  
            \node at (1.4,0.7) {$\scriptstyle 1 $};  
          \node at (0.6,0.7) {$\scriptstyle 4 $}; 
           \node at (5,0.7) {$\scriptstyle 3 $}; 
           \node at (3.4,0.7) {$\scriptstyle 1 $}; 
           \node at (4.3,0.7) {$\scriptstyle 1 $}; 
            \node at (5.8,0.7) {$\scriptstyle 1 $}; 
              \node at (0.4,-0.5) {$\scriptstyle 2 $}; 
               \node at (4.8,-0.5) {$\scriptstyle 2 $}; 
       \end{tikzpicture}}.
\end{align*}
\end{example}

\begin{proposition}\label{webspan}
The set 
\begin{align*}
\mathscr{B}:=\left\{ \left.  \xi^{(0,0,C,0)}\; \right|  (0,0,C,0) \in \chi(\bba, \bbb)\right\}
\end{align*}
is a \(\k\)-spanning set for \(\Hom_{\aWeb}(\bba,\bbb)\).

The set 
\begin{align*}
\mathscr{B}:=\left\{ \left. \xi^{(A,B,C,D)}\;   \right| (A,B,C,D) \in \chi(\bba, \bbb) \right\}
\end{align*}
is a \(\k\)-spanning set for \(\Hom_{\pWeb}(\bba,\bbb)\).

\end{proposition}
\begin{proof}
We will focus primarily on the statement for the spanning set of morphisms in $\pWeb$.   The statement for $\aWeb$ is simpler due to the lack of cups and caps, and should be considered known (e.g., see \cite[Theorem 3.11]{SW} or \cite[Lemma 4.9]{BEPO}).

Let \(f\) be a diagram in \(\Hom_{\pWeb}(\bba,\bbb)\). 
By inducting on the number of ``out of place'' parts, we may apply the defining relations \cref{AssocRel,CupDiagSwitchRel2} of \(\pWeb\),  \cref{BraidThm}, and \cref{RungsToCross}, to rewrite \(f\) as a linear combination of diagrams consisting of splits, merges, cups, caps, antennas, and crossings, where:
\begin{itemize}
\item no merge occurs below any split;
\item no cup occurs below any cap;
\item no crossing occurs above any merge or below any split;
\item no antenna occurs above any merge, cup, cap, or crossing.
\end{itemize}
Any diagram which satisfies all of the above is equivalent to a constant multiple of some diagram with the following form:
\begin{align}\label{spandiag1}
\hackcenter{}
\hackcenter{
\begin{tikzpicture}[scale=0.8]
 \draw[thick, color=\clr] (0,-0.2)--(0,0.4);
 \draw[thick, color=\clr] (0,0.4) .. controls ++(0,0.35) and ++(0,-0.35) .. (-0.4,1)--(-0.4,1.4);
  \draw[thick, color=\clr] (0,0.4) .. controls ++(0,0.35) and ++(0,-0.35) .. (0.4,1)--(0.4,1.4);
          \node[left] at (-0.4,1) {$\scriptstyle x^{(1)}_{1}$};
          \node  at (0,1) {$\scriptstyle \cdots $};
                     \node[right] at (0.4,1) {$\scriptstyle x^{(1)}_{r_1}$};
            \node[below] at (0,-0.2) {$\scriptstyle a_1 $};
            \node[shape=coordinate](DOT) at (0,0.2) {};
     \draw[thick, color=\clr, fill=green]  (DOT) circle (5pt);
      \node[right] at (0.1,0.2) {$\scriptstyle d_1 $};
     \node  at (1.3,0.2) {$\scriptstyle \cdots $};
 \draw[thick, color=\clr] (2.6,-0.2)--(2.6,0.4);
 \draw[thick, color=\clr] (2.6,0.4) .. controls ++(0,0.35) and ++(0,-0.35) .. (2.2,1)--(2.2,1.4);
  \draw[thick, color=\clr] (2.6,0.4) .. controls ++(0,0.35) and ++(0,-0.35) .. (3,1)--(3,1.4);
          \node[left] at (2.25,1) {$\scriptstyle x^{(t)}_{1}$};
          \node  at (2.6,1) {$\scriptstyle \cdots $};
                     \node[right] at (3,1) {$\scriptstyle x^{(t)}_{r_t}$};
            \node[below] at (2.6,-0.2) {$\scriptstyle a_t $};
            \node[shape=coordinate](DOT) at (2.6,0.2) {};
     \draw[thick, color=\clr, fill=green]  (DOT) circle (5pt);
      \node[right] at (2.7,0.2) {$\scriptstyle d_t $};
   \draw[thick, color=\clr, fill=red] (-0.6,1.4)--(3.2,1.4)--(3.2,2.2)--(-0.6,2.2)--(-0.6,1.4);
    \node at (1.3,1.8) {$\scriptstyle X$};
     \draw[thick, color=\clr] (0,3.4)--(0,3.2);
 \draw[thick, color=\clr] (0,3.2) .. controls ++(0,-0.35) and ++(0,0.35) .. (-0.4,2.6)--(-0.4,2.2);
  \draw[thick, color=\clr] (0,3.2) .. controls ++(0,-0.35) and ++(0,0.35) .. (0.4,2.6)--(0.4,2.2);
          \node[left] at (-0.4,2.6) {$\scriptstyle y^{(1)}_{1}$};
          \node  at (0,2.6) {$\scriptstyle \cdots $};
                     \node[right] at (0.4,2.6) {$\scriptstyle y^{(1)}_{s_1}$};
            \node[above] at (0,3.4) {$\scriptstyle b_1 $};
     \node  at (1.3,3.4) {$\scriptstyle \cdots $};
 \draw[thick, color=\clr] (2.6,3.4)--(2.6,3.2);
 \draw[thick, color=\clr] (2.6,3.2) .. controls ++(0,-0.35) and ++(0,0.35) .. (2.2,2.6)--(2.2,2.2);
  \draw[thick, color=\clr] (2.6,3.2) .. controls ++(0,-0.35) and ++(0,0.35) .. (3,2.6)--(3,2.2);
          \node[left] at (2.25,2.6) {$\scriptstyle y^{(u)}_{1}$};
          \node  at (2.6,2.6) {$\scriptstyle \cdots $};
                     \node[right] at (3,2.6) {$\scriptstyle y^{(u)}_{s_u}$};
            \node[above] at (2.6,3.4) {$\scriptstyle b_u $};
      \end{tikzpicture}},
\end{align}
for some labels \(d_i, x_j^{(i)}, y_j^{(i)} \in \Z_{\geq 0}\), and \(X\) is a diagram composed only of crossings, cups, and caps, where no cup occurs below any cap.

Now we consider \cref{spandiag1}. Note the following:
\begin{enumerate}
\item If two strands which split from \(a_i\) meet in a cap in \(X\), then by \cref{CapDiagSwitchRel1} the diagram can be rewritten by adding one to \(d_i\), deleting the strand, and multiplying by 2.
\item If two strands which merge in \(b_i\) meet in a cup in \(X\), then by \cref{CupDiagSwitchRel2} the diagram is zero.
\item If for some \(i \neq j\), there is more than one instance of a strand which splits from \(a_i\) and a strand which splits from \(a_j\) meeting in a cap in \(X\), then the diagram is zero by  \cref{ZeroLem}.
\item If for some \(i \neq j\), there is more than one instance of a strand which merges into \(b_i\) and a strand which merges into \(b_j\) meeting in a cup in \(X\), then the diagram is zero by  \cref{ZeroLem}.
\item If \(d_i >1\), then the diagram is zero by  \cref{ZeroLem}.
\item If there is more than one strand in \(X\) which splits from \(a_i\) and merges into \(b_j\), then by \cref{DiagSwitchRel}, the diagram can be written with a single strand which splits from \(a_i\) and merges into \(b_j\), multiplied by some constant.
 \end{enumerate}
 
After rewriting as above, we have via  \cref{CrossAbsorb} that \cref{spandiag1} is equivalent to a constant multiple of some diagram of the form \(\xi^{(A,B,C,D)}\), completing the proof for $\pWeb$.

An entirely analogous argument applies for $\aWeb$. Since there are no cups, caps or antennas it is easier and we leave it to the reader.
\end{proof}

\subsection{Generating Sets for the Morphism Spaces of  \texorpdfstring{$\aWeb$}{a-Web} and \texorpdfstring{$\pWeb$}{p-Web}}  In this section we describe generating sets for the morphism spaces of $\aWeb$ and $\pWeb$ using only the operations of composition and $\k$-linear combinations, and not the monoidal product.  These generators (and the relations among them, given in \cref{L:relationsforgenerators}) are used to establish a Howe duality in \cite{DKM}.

Let \(\pWeb_m\) be the full subcategory of \(\pWeb\) consisting of objects \(\ba \in \Z^m_{\geq 0}\).   We emphasize that the monoidal product in $\pWeb$ does not preserve the subcategory $\pWeb_m$.  Hence, $\pWeb_{m}$ is a supercategory and does not inherit a monoidal structure.  For \(t \in \Z_{\geq 0}\), \(\ba \in \Z_{\geq 0}^m\), \(1 \leq r < s \leq m\), we define the following morphisms in \(\pWeb_m\):
\begin{align*}
e^{(t)}_{[r,s],\ba}
&:=
\hackcenter{}
\hackcenter{
\begin{tikzpicture}[scale=0.8]
\draw[thick, color=\clr] (0,0)--(0,1);
\node at (0.5, 0.5) {$\scriptstyle \cdots $};
\draw[thick, color=\clr] (1,0)--(1,1);
\draw[thick, color=\clr] (1.8,0)--(1.8,1);
\node at (2.3, 0.25) {$\scriptstyle \cdots $};
\node at (2.3, 0.75) {$\scriptstyle t $};
\draw[thick, color=\clr] (2.8,0)--(2.8,1);
\draw[thick, color=\clr] (3.6,0)--(3.6,1);
\node at (4.1, 0.5) {$\scriptstyle \cdots $};
\draw[thick, color=\clr] (4.6,0)--(4.6,1);
  \draw[thick, color=\clr] (3.6,0.1)--(3.6,0.2) .. controls ++(0,0.35) and ++(0,-0.35) .. (1,0.8)--(1,0.9);
  \node[below] at (0, 0) {$\scriptstyle a_1 $};
    \node[below] at (1, 0) {$\scriptstyle a_r $};
        \node[below] at (1.8, 0) {$\scriptstyle a_{r+1} $};
            \node[below] at (2.8, 0) {$\scriptstyle a_{s-1} $};
               \node[below] at (3.6, 0) {$\scriptstyle a_s $};
               \node[below] at (4.6, 0) {$\scriptstyle a_m $};
                 \node[above] at (0, 1) {$\scriptstyle a_1 $};
    \node[above] at (1,1) {$\scriptstyle a_r +t$\;\;};
        \node[above] at (1.8, 1) {$\scriptstyle a_{r+1} $};
            \node[above] at (2.8, 1) {$\scriptstyle a_{s-1} $};
               \node[above] at (3.6, 1) {\;\;$\scriptstyle a_s -t$};
               \node[above] at (4.6, 1) {$\scriptstyle a_m $};
\end{tikzpicture}},
\qquad
&f^{(t)}_{[r,s],\ba}
:=
\hackcenter{}
\hackcenter{
\begin{tikzpicture}[scale=0.8]
\draw[thick, color=\clr] (0,0)--(0,1);
\node at (0.5, 0.5) {$\scriptstyle \cdots $};
\draw[thick, color=\clr] (1,0)--(1,1);
\draw[thick, color=\clr] (1.8,0)--(1.8,1);
\node at (2.3, 0.25) {$\scriptstyle \cdots $};
\node at (2.3, 0.75) {$\scriptstyle t $};
\draw[thick, color=\clr] (2.8,0)--(2.8,1);
\draw[thick, color=\clr] (3.6,0)--(3.6,1);
\node at (4.1, 0.5) {$\scriptstyle \cdots $};
\draw[thick, color=\clr] (4.6,0)--(4.6,1);
  \draw[thick, color=\clr] (1,0.1)--(1,0.2) .. controls ++(0,0.35) and ++(0,-0.35) .. (3.6,0.8)--(3.6,0.9);
  \node[below] at (0, 0) {$\scriptstyle a_1 $};
    \node[below] at (1, 0) {$\scriptstyle a_r $};
        \node[below] at (1.8, 0) {$\scriptstyle a_{r+1} $};
            \node[below] at (2.8, 0) {$\scriptstyle a_{s-1} $};
               \node[below] at (3.6, 0) {$\scriptstyle a_s $};
               \node[below] at (4.6, 0) {$\scriptstyle a_m $};
                 \node[above] at (0, 1) {$\scriptstyle a_1 $};
    \node[above] at (1,1) {$\scriptstyle a_r -t$\;\;};
        \node[above] at (1.8, 1) {$\scriptstyle a_{r+1} $};
            \node[above] at (2.8, 1) {$\scriptstyle a_{s-1} $};
               \node[above] at (3.6, 1) {\;\;$\scriptstyle a_s +t$};
               \node[above] at (4.6, 1) {$\scriptstyle a_m $};
\end{tikzpicture}},\\
b_{[r,s],\ba}
&:=
\hackcenter{}
\hackcenter{
\begin{tikzpicture}[scale=0.8]
\draw[thick, color=\clr] (0,0)--(0,1);
\node at (0.5, 0.5) {$\scriptstyle \cdots $};
\draw[thick, color=\clr] (1,0)--(1,1);
\draw[thick, color=\clr] (1.8,0)--(1.8,1);
\node at (2.3, 0.25) {$\scriptstyle \cdots $};
\draw[thick, color=\clr] (2.8,0)--(2.8,1);
\draw[thick, color=\clr] (3.6,0)--(3.6,1);
\node at (4.1, 0.5) {$\scriptstyle \cdots $};
\draw[thick, color=\clr] (4.6,0)--(4.6,1);
  \draw[thick, color=\clr] (1,0.1)--(1,0.2) .. controls ++(0,0.6) and ++(0,0.6) .. (3.6,0.2)--(3.6,0.1);
  \node[below] at (0, 0) {$\scriptstyle a_1 $};
    \node[below] at (1, 0) {$\scriptstyle a_r $};
        \node[below] at (1.8, 0) {$\scriptstyle a_{r+1} $};
            \node[below] at (2.8, 0) {$\scriptstyle a_{s-1} $};
               \node[below] at (3.6, 0) {$\scriptstyle a_s $};
               \node[below] at (4.6, 0) {$\scriptstyle a_m $};
                 \node[above] at (0, 1) {$\scriptstyle a_1 $};
    \node[above] at (1,1) {$\scriptstyle a_r -1$\;\;};
        \node[above] at (1.8, 1) {$\scriptstyle a_{r+1} $};
            \node[above] at (2.8, 1) {$\scriptstyle a_{s-1} $};
               \node[above] at (3.6, 1) {\;\;$\scriptstyle a_s -1$};
               \node[above] at (4.6, 1) {$\scriptstyle a_m $};
                  \node at (2.3,0.65) {$\scriptstyle\blacklozenge$};
\end{tikzpicture}},
\qquad
&c_{[r,s],\ba}
:=
\hackcenter{}
\hackcenter{
\begin{tikzpicture}[scale=0.8]
\draw[thick, color=\clr] (0,0)--(0,1);
\node at (0.5, 0.5) {$\scriptstyle \cdots $};
\draw[thick, color=\clr] (1,0)--(1,1);
\draw[thick, color=\clr] (1.8,0)--(1.8,1);
\node at (2.3, 0.75) {$\scriptstyle \cdots $};
\draw[thick, color=\clr] (2.8,0)--(2.8,1);
\draw[thick, color=\clr] (3.6,0)--(3.6,1);
\node at (4.1, 0.5) {$\scriptstyle \cdots $};
\draw[thick, color=\clr] (4.6,0)--(4.6,1);
  \draw[thick, color=\clr] (1,0.9)--(1,0.8) .. controls ++(0,-0.6) and ++(0,-0.6) .. (3.6,0.8)--(3.6,0.9);
  \node[below] at (0, 0) {$\scriptstyle a_1 $};
    \node[below] at (1, 0) {$\scriptstyle a_r $};
        \node[below] at (1.8, 0) {$\scriptstyle a_{r+1} $};
            \node[below] at (2.8, 0) {$\scriptstyle a_{s-1} $};
               \node[below] at (3.6, 0) {$\scriptstyle a_s $};
               \node[below] at (4.6, 0) {$\scriptstyle a_m $};
                 \node[above] at (0, 1) {$\scriptstyle a_1 $};
    \node[above] at (1,1) {$\scriptstyle a_r +1$\;\;};
        \node[above] at (1.8, 1) {$\scriptstyle a_{r+1} $};
            \node[above] at (2.8, 1) {$\scriptstyle a_{s-1} $};
               \node[above] at (3.6, 1) {\;\;$\scriptstyle a_s +1$};
               \node[above] at (4.6, 1) {$\scriptstyle a_m $};
                  \node at (2.3,0.35) {$\scriptstyle\blacklozenge$};
\end{tikzpicture}}.
\end{align*}

For \(1 \leq u \leq m\), we define the additional morphism
\begin{align*}
b_{[u],\ba}:=\hackcenter{}
\hackcenter{
\begin{tikzpicture}[scale=0.8]
\draw[thick, color=\clr] (0,0)--(0,1);
\node at (0.5, 0.5) {$\scriptstyle \cdots $};
\draw[thick, color=\clr] (1,0)--(1,1);
\node at (1.5, 0.5) {$\scriptstyle \cdots $};
\draw[thick, color=\clr] (2,0)--(2,1);
     \node[shape=coordinate](DOT) at (1,0.5) {};
     \filldraw  (DOT) circle (2.5pt);
  \draw[thick, color=\clr, fill=green]  (DOT) circle (5pt);
  \node[below] at (0, 0) {$\scriptstyle a_1 $};
    \node[below] at (1, 0) {$\scriptstyle a_u $};
        \node[below] at (2, 0) {$\scriptstyle a_m $};
          \node[above] at (0, 1) {$\scriptstyle a_1 $};
    \node[above] at (1, 1) {$\scriptstyle a_u-2 $};
        \node[above] at (2, 1) {$\scriptstyle a_m $};
\end{tikzpicture}},
\end{align*}
where again the dot denotes a combination of a split and antenna, as in \cref{greendot}.

To remove some of the clutter in the calculations which follow, we will sometimes write products for compositions (e.g., $fg = f \circ g$) and will occasionally omit the label \(\ba\) when the domain is clear from context (e.g., writing \(e_{[r,s]}^{(t)}\) instead of \(e_{[r,s],\ba}^{(t)}\)).  Let us write $1_{\ba}$ for the identity morphism of $\ba$.  Pre- or post-composing by these gives a convenient alternate method for specifying the domain or range of a morphism.  For example, $e_{[r,s],\ba}^{(t)}=e_{[r,s]}^{(t)} 1_{\ba}$.

In order to establish a generating set for the morphisms in $\pWeb_m$, we need the following technical lemma.  Because we are no longer in a monoidal supercategory, we only use composition and $\k$-linear combinations when generating morphisms in this category.

\begin{lemma}\label{efLem}
Label the set of morphisms:
\begin{align*}
Y_{\{e,f\}}(m):=
\left\{ \left.  e_{[r,s],\ba}^{(t)}, f_{[r,s],\ba}^{(t)} \; \right|  t \in \Z_{\geq 0}, 1 \leq r < s \leq m, \ba \in \Z_{\geq 0}^m \right\}.
\end{align*}
Let \(S_{\{e,f\}}(m)\) represent the \(\k\)-linear subcategory of \(\pWeb_m\) consisting of all objects in \(\pWeb_m\), with morphisms generated by \(Y_{\{e,f\}}(m)\). 
Then for all \(\ba, \bb \in \Z_{\geq 0}^m\), \(k \in \Z_{\geq 0}\) and \(i,j \in [1,m]\), and morphisms \(y \in S_{\{e,f\}}(m)\), the morphism
\begin{align*}
M=
\hackcenter{}
\hackcenter{
\begin{tikzpicture}[scale=0.8]
 \draw[thick, color=\clr] (0,0)--(0,1);
  \node at (0.5, 0.5) {$\scriptstyle \cdots $};
 \draw[thick, color=\clr] (1,0)--(1,1);
 \node at (2.5, 0.25) {$\scriptstyle \cdots $};
  \draw[thick, color=\clr] (3,0)--(3,1);
   \draw[thick, color=\clr] (0,2)--(0,3);
  \node at (1, 2.5) {$\scriptstyle \cdots $};
 \draw[thick, color=\clr] (2,2)--(2,3);
 \node at (2.5, 2.75) {$\scriptstyle \cdots $};
  \draw[thick, color=\clr] (3,2)--(3,3);
   \draw[thick, color=\clr, fill=red] (-0.2,1)--(3.2,1)--(3.2,2)--(-0.2,2)--(-0.2,1);
    \node at (1.5,1.5) {$\scriptstyle y$};
    \draw[thick, color=\clr] (1,0.1)--(1,0.2) .. controls ++(0,0.6) and ++(0,-0.6) .. (3.6,1)--(3.6,2)
    .. controls ++(0,0.6) and ++(0,-0.6) .. (2,2.8)--(2,2.9);
   \node[below] at (0, 0) {$\scriptstyle a_1 $};
    \node[below] at (1, 0) {$\scriptstyle a_i $};
    \node[below] at (3, 0) {$\scriptstyle a_m $};
    \node[above] at (0, 3) {$\scriptstyle b_1 $};
    \node[above] at (2, 3) {$\scriptstyle b_j $};
    \node[above] at (3, 3) {$\scriptstyle b_m $}; 
     \node[right] at (3.6, 1.5) {$\scriptstyle k $};  
      \end{tikzpicture}}
\end{align*}
is also in \(S_{\{e,f\}}(m)\).
\end{lemma}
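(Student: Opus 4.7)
My plan is to proceed by induction on the length of $y$ as a composition of generators in $Y_{\{e,f\}}(m)$. Since $S_{\{e,f\}}(m)$ is $\k$-linear, I may assume $y = g_n \circ \cdots \circ g_1$ is a single such composition.

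For the base case ($n=0$), $y$ is the identity, so $M$ reduces to a single $k$-rung joining position $i$ at the bottom to position $j$ at the top of the identity on $\ba$. When $i < j$ this rung is the generator $f^{(k)}_{[i,j],\ba}$; when $i > j$ it is $e^{(k)}_{[j,i],\ba}$; and when $i = j$ the rung is a knothole which by \cref{KnotholeRel} equals $\binom{a_i}{k}\cdot 1_{\ba}$. In all cases $M$ lies in $S_{\{e,f\}}(m)$.

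For the inductive step I would write $y = g \circ y'$ where $g$ is the topmost generator, say a rung of size $t$ between positions $r$ and $s$. Let $M(y')$ denote the morphism obtained from $M$ by replacing $y$ with $y'$; this has the same shape as the lemma's diagram. If $j \notin \{r,s\}$, then $g$ and the top merge of the $k$-rung act on disjoint positions of the ambient object and so commute, yielding $M = g \circ M(y')$; the inductive hypothesis applied to $y'$ gives $M(y') \in S_{\{e,f\}}(m)$, and composition with $g \in Y_{\{e,f\}}(m)$ keeps us in $S_{\{e,f\}}(m)$. If $j \in \{r,s\}$, say $j = s$, then the $t$-split portion of $g$ and the $k$-merge meet at the shared position $s$; the rung-swap identity \cref{DiagSwitchRel}---combined with the Coxeter-type identity \cref{CoxeterWeb} when the third position $r$ (or $i$) needs to be involved---allows me to rewrite the top composite ``$g$ followed by merge'' as a $\k$-linear combination of morphisms in which the $k$-merge has been pushed below $g$, with sizes shifted by the binomial correction terms. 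Each resulting summand then takes the form $\gamma_\ell \circ M(y')_\ell$ with $\gamma_\ell \in S_{\{e,f\}}(m)$ and $M(y')_\ell$ fitting the lemma's diagram for the shorter $y'$, so the inductive hypothesis closes the argument.

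The principal difficulty will be the case $j \in \{r,s\}$: the rung-swap relation \cref{DiagSwitchRel} is formulated for two complete rungs between the same pair of positions, whereas in $M$ only the top-merge half of the $k$-rung is locally present near $g$ (its split sits far below, at position $i$). The argument therefore requires careful virtual pairing of the $k$-merge with an auxiliary split or identity strand so that \cref{DiagSwitchRel} and \cref{CoxeterWeb} can legitimately be invoked, followed by attentive bookkeeping to confirm that every binomial correction term in the resulting sum is itself of the form $\gamma_\ell \circ M(y')_\ell$ fitting the induction hypothesis so that the induction truly descends.
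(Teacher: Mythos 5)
Your overall strategy (peel off one generator of $y$ and commute or swap the long $k$-strand past it) is the right family of ideas, and your base case is fine, but the induction as set up does not close. You induct only on the length of $y$, and the case you yourself flag as the principal difficulty—the peeled generator sharing a strand with the attachment point of the $k$-strand—does not reduce to a $\k$-linear combination of terms of the form $\gamma_\ell \circ M(y')_\ell$ with $y'$ shorter. When you resolve that local configuration (the paper does it by expanding via the crossing \cref{CrossDef} and then applying \cref{AssocRel} and \cref{DiagSwitchRel}; your proposed use of \cref{DiagSwitchRel} and \cref{CoxeterWeb} leads to the same phenomenon), the binomial-correction terms include diagrams in which the long strand has been broken into \emph{two} strands of sizes $k'$ and $k''$ with $k'+k''=k$, each wrapping around the shorter word $y'$. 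Such a diagram is not an instance of the lemma's shape for $y'$, so the length-only induction hypothesis says nothing about it. Worse, to absorb the inner $k'$-strand you must apply the lemma to $y'$, obtaining merely ``some morphism of $S_{\{e,f\}}(m)$'' with no control on its length, and then absorb the outer $k''$-strand around \emph{that} morphism—so the second application needs the statement for arbitrary (not shorter) $y$ but strictly smaller label.

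This is why the paper runs a \emph{nested} induction: the outer induction is on $k$ (with the hypothesis quantified over all $y\in S_{\{e,f\}}(m)$), and the inner induction is on the number $u$ of generators in $y$. In the interaction case, if one of $k',k''$ is zero the inner induction applies; if both are positive, one invokes the $k$-induction twice, first for the $k'$-strand and then for the $k''$-strand. Your argument can be repaired by adopting exactly this double induction (peeling the top generator instead of the bottom one, as you do, is only a mirror-image choice and causes no harm), but as written the single induction on the length of $y$ is insufficient and the proof has a genuine gap at the step you identified.
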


\begin{proof}
We may assume that \(y\) is itself a composition of \(u\) many morphisms in \(Y_{\{e,f\}}(m)\). Our argument will go by nested induction on \(k\) and \(u\). First, note that if \(k=0\), the claim holds trivially, so we now fix \(k>0\) and assume that the claim holds for all \(y\) and \(k'<k\). 

Assume \(u=0\). If \(i=j\), then \(M\) is some multiple of the identity morphism by \cref{DiagSwitchRel}. If \(i>j\), then \(M=e_{j,i}^{(k)}\), and if \(i<j\), then \(M=f_{i,j}^{(k)}\). This proves the claim when \(u=0\). We now assume \(u>0\) and that the claim holds for all \(u'<u\). 

As \(u>0\), we may write \(y=y'e_{[r,s]}^{(t)}\) or \(y=y'f_{[r,s]}^{(t)}\) for some \(r,s,t\). We will assume the former, as the latter case is similar. If \(r \neq i\), then \(e_{[r,s]}^{(t)}\) moves freely below past the split on the \(i\)th strand in \(M\). Then the induction assumption on \(u\) may be used to complete the proof of the claim.

So we now assume that \(y=y'e_{[i,s]}^{(t)}\), so that \(M\) may be written:
\begin{align}\label{lemdiag5}
M=
\hackcenter{}
\hackcenter{
\begin{tikzpicture}[scale=0.8]
 \draw[thick, color=\clr] (0,-1)--(0,1);
  \node at (0.3, 0) {$\scriptstyle \cdots $};
 \draw[thick, color=\clr] (0.6,-1)--(0.6,1);
  \node at (1.5, 0.5) {$\scriptstyle \cdots $};
  \node at (1.5, -0.75) {$\scriptstyle \cdots $};
  \draw[thick, color=\clr] (2.4,-1)--(2.4,1);
  \node at (2.7, -0.5) {$\scriptstyle \cdots $};
  \draw[thick, color=\clr] (3,-1)--(3,1);
   \draw[thick, color=\clr] (0,2)--(0,3);
  \node at (1, 2.5) {$\scriptstyle \cdots $};
 \draw[thick, color=\clr] (2,2)--(2,3);
 \node at (2.5, 2.75) {$\scriptstyle \cdots $};
  \draw[thick, color=\clr] (3,2)--(3,3);
   \draw[thick, color=\clr, fill=red] (-0.2,1)--(3.2,1)--(3.2,2)--(-0.2,2)--(-0.2,1);
    \node at (1.5,1.5) {$\scriptstyle y'$};
    \draw[thick, color=\clr] (0.6,-.8)--(0.6,-0.7) .. controls ++(0,0.6) and ++(0,-1.2) .. (3.6,1.2)--(3.6,2)
    .. controls ++(0,0.6) and ++(0,-0.6) .. (2,2.8)--(2,2.9);
     \draw[thick, color=\clr] (2.4,-.8)--(2.4,-0.7) .. controls ++(0,0.6) and ++(0,-0.6) .. (0.6,0.8)--(0.6,0.9);
   \node[below] at (0, -1) {$\scriptstyle a_1 $};
    \node[below] at (0.6, -1) {$\scriptstyle a_i $};
     \node[below] at (2.4, -1) {$\scriptstyle a_s $};
    \node[below] at (3, -1) {$\scriptstyle a_m $};
    \node[above] at (0, 3) {$\scriptstyle b_1 $};
    \node[above] at (2, 3) {$\scriptstyle b_j $};
    \node[above] at (3, 3) {$\scriptstyle b_m $}; 
     \node[right] at (3.6, 1.5) {$\scriptstyle k $};  
      \end{tikzpicture}}.
\end{align}
Note that for clarity here we are omitting strands between the \(i\)th and \(s\)th strands.  Using \cref{C:pwebbraiding} any morphisms on strands between the $i$th and $s$th strands may be pulled all the way to the right side of the morphism $M$ by introducing crossings.  For this reason any morphisms between the $i$th and $s$th strands will not affect our calculations and can be safely ignored.

Using \cref{CrossDef}, \(M\) may be rewritten as a linear combination of diagrams of the form
\begin{align}\label{M6}
\hackcenter{}
\hackcenter{
\begin{tikzpicture}[scale=0.8]
 \draw[thick, color=\clr] (0,-1)--(0,1);
  \node at (0.3, 0) {$\scriptstyle \cdots $};
 \draw[thick, color=\clr] (0.6,-1)--(0.6,1);
  \node at (1.5, -0.75) {$\scriptstyle \cdots $};
  \draw[thick, color=\clr] (2.4,-1)--(2.4,1);
  \node at (2.7, -0.5) {$\scriptstyle \cdots $};
  \draw[thick, color=\clr] (3,-1)--(3,1);
   \draw[thick, color=\clr] (0,2)--(0,3);
  \node at (1, 2.5) {$\scriptstyle \cdots $};
 \draw[thick, color=\clr] (2,2)--(2,3);
 \node at (2.5, 2.75) {$\scriptstyle \cdots $};
  \draw[thick, color=\clr] (3,2)--(3,3);
   \draw[thick, color=\clr, fill=red] (-0.2,1)--(3.2,1)--(3.2,2)--(-0.2,2)--(-0.2,1);
    \node at (1.5,1.5) {$\scriptstyle y'$};
     \draw[thick, color=\clr] (0.6,-.9) .. controls ++(0,0.3) and ++(0,-0.3) .. (1.2,-0.3)--(1.2,0.4)
     .. controls ++(0,0.3) and ++(0,-0.3) .. (0.6,0.8)--(0.6,0.9);
     \draw[thick, color=\clr] (2.4,-.9) .. controls ++(0,0.3) and ++(0,-0.3) .. (1.8,-0.3)--(1.8,0.4)
     .. controls ++(0,0.6) and ++(0,-0.6) .. (3.6,1.1)--(3.6,2)
    .. controls ++(0,0.6) and ++(0,-0.6) .. (2,2.8)--(2,2.9);
    \draw[thick, color=\clr] (1.8,0) .. controls ++(0,0.3) and ++(0,-0.3) .. (1.2,0.4);
     \draw[thick, color=\clr] (1.2,-0.3) .. controls ++(0,0.3) and ++(0,-0.3) .. (1.8,0.1);
   \node[below] at (0, -1) {$\scriptstyle a_1 $};
    \node[below] at (0.6, -1) {$\scriptstyle a_i $};
     \node[below] at (2.4, -1) {$\scriptstyle a_s $};
    \node[below] at (3, -1) {$\scriptstyle a_m $};
    \node[above] at (0, 3) {$\scriptstyle b_1 $};
    \node[above] at (2, 3) {$\scriptstyle b_j $};
    \node[above] at (3, 3) {$\scriptstyle b_m $}; 
     \node[right] at (3.6, 1.5) {$\scriptstyle k $};  
      \end{tikzpicture}}.
\end{align}
Using \cref{AssocRel,DiagSwitchRel}, any diagram as in \cref{M6} can be written as a linear combination of diagrams of the form 
\begin{align}\label{M7}
\hackcenter{}
\hackcenter{
\begin{tikzpicture}[scale=0.8]
 \draw[thick, color=\clr] (0,-1)--(0,1);
  \node at (0.3, 0) {$\scriptstyle \cdots $};
 \draw[thick, color=\clr] (0.6,-1)--(0.6,1);
  \node at (1.5, -0.75) {$\scriptstyle \cdots $};
  \draw[thick, color=\clr] (2.4,-1)--(2.4,1);
  \node at (2.7, -0.5) {$\scriptstyle \cdots $};
  \draw[thick, color=\clr] (3,-1)--(3,1);
   \draw[thick, color=\clr] (0,2)--(0,3);
  \node at (1, 2.5) {$\scriptstyle \cdots $};
 \draw[thick, color=\clr] (2,2)--(2,3);
 \node at (2.5, 2.75) {$\scriptstyle \cdots $};
  \draw[thick, color=\clr] (3,2)--(3,3);
   \draw[thick, color=\clr, fill=red] (-0.2,1)--(3.2,1)--(3.2,2)--(-0.2,2)--(-0.2,1);
    \node at (1.5,1.5) {$\scriptstyle y'$};
     \draw[thick, color=\clr] (2.4,-.9) .. controls ++(0,0.3) and ++(0,-0.3) .. (1.8,-0.3)--(1.8,0.4)
     .. controls ++(0,0.6) and ++(0,-0.6) .. (3.6,1.1)--(3.6,2)
    .. controls ++(0,0.6) and ++(0,-0.6) .. (2,2.8)--(2,2.9);
    \draw[thick, color=\clr] (1.8,0.2) .. controls ++(0,0.3) and ++(0,-0.3) .. (0.6,0.7);
     \draw[thick, color=\clr] (0.6,-0.6) .. controls ++(0,0.3) and ++(0,-0.3) .. (1.8,-0.1);
   \node[below] at (0, -1) {$\scriptstyle a_1 $};
    \node[below] at (0.6, -1) {$\scriptstyle a_i $};
     \node[below] at (2.4, -1) {$\scriptstyle a_s $};
    \node[below] at (3, -1) {$\scriptstyle a_m $};
    \node[above] at (0, 3) {$\scriptstyle b_1 $};
    \node[above] at (2, 3) {$\scriptstyle b_j $};
    \node[above] at (3, 3) {$\scriptstyle b_m $}; 
     \node[right] at (3.6, 1.5) {$\scriptstyle k $};  
      \end{tikzpicture}}.
\end{align}
An application of \cref{DiagSwitchRel} allows us to write any diagram as in \cref{M7} as a linear combination of diagrams of the form
\begin{align}\label{M8}
\hackcenter{}
\hackcenter{
\begin{tikzpicture}[scale=0.8]
 \draw[thick, color=\clr] (0,-1)--(0,1);
  \node at (0.3, 0) {$\scriptstyle \cdots $};
 \draw[thick, color=\clr] (0.6,-1)--(0.6,1);
  \node at (1.5, -0.75) {$\scriptstyle \cdots $};
  \draw[thick, color=\clr] (2.4,-1)--(2.4,1);
  \node at (2.7, -0.5) {$\scriptstyle \cdots $};
  \draw[thick, color=\clr] (3,-1)--(3,1);
   \draw[thick, color=\clr] (0,2)--(0,3);
  \node at (1, 2.5) {$\scriptstyle \cdots $};
 \draw[thick, color=\clr] (2,2)--(2,3);
 \node at (2.5, 2.75) {$\scriptstyle \cdots $};
  \draw[thick, color=\clr] (3,2)--(3,3);
   \draw[thick, color=\clr, fill=red] (-0.2,1)--(3.2,1)--(3.2,2)--(-0.2,2)--(-0.2,1);
    \node at (1.5,1.5) {$\scriptstyle y'$};
     \draw[thick, color=\clr] (2.4,-.9) .. controls ++(0,0.3) and ++(0,-0.3) .. (1.8,-0.4)--(1.8,0.4)
     .. controls ++(0,0.6) and ++(0,-0.6) .. (3.6,1.1)--(3.6,2)
    .. controls ++(0,0.6) and ++(0,-0.6) .. (2,2.8)--(2,2.9);
    \draw[thick, color=\clr] (0.6,0.1) .. controls ++(0,0.3) and ++(0,-0.3) .. (1.8,0.4);
     \draw[thick, color=\clr] (1.8,-0.4) .. controls ++(0,0.3) and ++(0,-0.3) .. (0.6,-0.1);
   \node[below] at (0, -1) {$\scriptstyle a_1 $};
    \node[below] at (0.6, -1) {$\scriptstyle a_i $};
     \node[below] at (2.4, -1) {$\scriptstyle a_s $};
    \node[below] at (3, -1) {$\scriptstyle a_m $};
    \node[above] at (0, 3) {$\scriptstyle b_1 $};
    \node[above] at (2, 3) {$\scriptstyle b_j $};
    \node[above] at (3, 3) {$\scriptstyle b_m $}; 
     \node[right] at (3.6, 1.5) {$\scriptstyle k $};  
      \end{tikzpicture}}
      \;\;
\substack{ \textup{(\ref{AssocRel})} \\ =\\ \,}
\;\;
      \hackcenter{
\begin{tikzpicture}[scale=0.8]
 \draw[thick, color=\clr] (0,-1)--(0,1);
  \node at (0.3, 0) {$\scriptstyle \cdots $};
 \draw[thick, color=\clr] (0.6,-1)--(0.6,1);
  \node at (1.5, 0) {$\scriptstyle \cdots $};
  \draw[thick, color=\clr] (2.4,-1)--(2.4,1);
  \node at (2.7, -0.5) {$\scriptstyle \cdots $};
  \draw[thick, color=\clr] (3,-1)--(3,1);
   \draw[thick, color=\clr] (0,2)--(0,3);
  \node at (1, 2.5) {$\scriptstyle \cdots $};
 \draw[thick, color=\clr] (2,2)--(2,3);
 \node at (2.5, 2.75) {$\scriptstyle \cdots $};
  \draw[thick, color=\clr] (3,2)--(3,3);
   \draw[thick, color=\clr, fill=red] (-0.2,1)--(3.2,1)--(3.2,2)--(-0.2,2)--(-0.2,1);
    \node at (1.5,1.5) {$\scriptstyle y'$};
     \draw[thick, color=\clr] (0.6,0.1)
     .. controls ++(0,0.6) and ++(0,-0.8) .. (3.6,1.1)--(3.6,1.8)
    .. controls ++(0,0.6) and ++(0,-0.3) .. (2,2.5);
    \draw[thick, color=\clr] (2.4,0)
     .. controls ++(0,0.6) and ++(0,-0.6) .. (4.4,0.9)--(4.4,2)
    .. controls ++(0,0.6) and ++(0,-0.6) .. (2,2.8);
     \draw[thick, color=\clr] (2.4,-0.9) .. controls ++(0,0.6) and ++(0,-0.6) .. (0.6,-0.1);
   \node[below] at (0, -1) {$\scriptstyle a_1 $};
    \node[below] at (0.6, -1) {$\scriptstyle a_i $};
     \node[below] at (2.4, -1) {$\scriptstyle a_s $};
    \node[below] at (3, -1) {$\scriptstyle a_m $};
    \node[above] at (0, 3) {$\scriptstyle b_1 $};
    \node[above] at (2, 3) {$\scriptstyle b_j $};
    \node[above] at (3, 3) {$\scriptstyle b_m $}; 
     \node[right] at (3.6, 1.5) {$\scriptstyle k' $};  
     \node[right] at (4.4, 1.5) {$\scriptstyle k'' $};  
      \end{tikzpicture}},
\end{align}
where \(k'+k''=k\). If \(k' = 0\) or \(k''=0\), then the claim follows by the induction assumption on \(u\). If \(k',k''>0\), then applying the induction assumption on \(k\) to the \(k'\) strand, and subsequently to the \(k''\) strand proves the claim, and completes the proof.
\end{proof}

Given $\bba = (a_{1}, \dotsc , a_{r}) \in \Z^{r}$, we write $|\bba | = \sum_{i=1}^{r} |a_{i}|$.

\begin{lemma}\label{aWebGen}
Morphisms in \(\aWeb_m\) are generated under composition by \(Y_{\{e,f\}}(m)\).
\end{lemma}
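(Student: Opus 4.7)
By \cref{webspan}, $\Hom_{\aWeb_m}(\bba,\bbb)$ is $\k$-spanned by the set of morphisms $\xi^{(0,0,C,0)}$ indexed by $(0,0,C,0)\in\chi(\bba,\bbb)$, so it suffices to show every such $\xi^{(0,0,C,0)}$ lies in the subcategory $S_{\{e,f\}}(m)$ generated under composition and $\k$-linear combinations by $Y_{\{e,f\}}(m)$. The strategy is induction on the ``total weight'' $N(C):=\sum_{i,j}C_{i,j}=|\bba|=|\bbb|$, peeling off one rung-strand at a time and using \cref{efLem} as the inductive glue.

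\textbf{Base case.} When $N(C)=0$, every $a_i$ and $b_j$ equals $0$, and $\xi^{(0,0,C,0)}$ is (a scalar multiple of) the identity morphism $1_{0^{m}}$, which trivially lies in $S_{\{e,f\}}(m)$.

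\textbf{Inductive step.} Suppose $N(C)>0$ and the claim holds for all smaller total weight. Choose an entry $C_{i,j}>0$ and set $k:=C_{i,j}$. Let $C'$ be the matrix obtained from $C$ by replacing the $(i,j)$-entry with $0$; then $(0,0,C',0)\in\chi(\bba',\bbb')$, where $\bba'$ (resp.\ $\bbb'$) is obtained from $\bba$ (resp.\ $\bbb$) by decreasing the $i$-th (resp.\ $j$-th) entry by $k$. By induction $\xi^{(0,0,C',0)}\in S_{\{e,f\}}(m)$. The task is then to realize $\xi^{(0,0,C,0)}$ as a morphism of the shape described in \cref{efLem} with inner box $y=\xi^{(0,0,C',0)}$ and an outer strand of label $k$ connecting bottom position $i$ to top position $j$; once this is done, \cref{efLem} delivers $\xi^{(0,0,C,0)}\in S_{\{e,f\}}(m)$.

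\textbf{Carrying out the reduction to \cref{efLem}.} Starting from the standard presentation of $\xi^{(0,0,C,0)}$ as splits, then crossings, then merges (no cups, caps, or antennas, since $D=0$, $A=0$, $B=0$), I will use \cref{AssocRel} to split off a $k$-labeled edge from the $i$-th bottom strand at the very beginning of the diagram and to fuse a $k$-labeled edge into the $j$-th top strand at the very end. The single $k$-strand connecting these two points passes through the interior of the diagram, crossing over the other strands; by the symmetric braiding on $\aWeb$ established in \cref{C:aWeb} (together with \cref{BraidThm}), I can isotope this $k$-strand around the outside of the remaining diagram without changing the morphism. What remains in the interior, after this isotopy and a reindexing of the split/merge vertices via \cref{AssocRel}, is exactly $\xi^{(0,0,C',0)}$, completing the reduction.

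\textbf{Expected obstacle.} The only nontrivial point is justifying the ``push the $k$-strand to the outside'' step rigorously: one must argue that the braiding moves of \cref{BraidThm} together with web associativity \cref{AssocRel} really do identify the diagram built from $y=\xi^{(0,0,C',0)}$ plus an outer $k$-strand (as drawn in the statement of \cref{efLem}) with the ``straight-through'' form of $\xi^{(0,0,C,0)}$ given by \cref{webspan}. Both diagrams are splits-crossings-merges between the same objects and have the same underlying transfer matrix $C$, so this is ultimately a routine verification using the symmetric braided structure; nevertheless it is the place where one must be most careful.
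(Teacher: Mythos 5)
Your proof is correct and follows essentially the same route as the paper: both arguments induct on the total weight (the paper uses $|\bba|+|\bbb|$, which for $\aWeb$ equals $2N(C)$), peel off the strand corresponding to a single nonzero entry $C_{ij}$ using web-associativity \cref{AssocRel} (with the routing of that strand around the box being immaterial, since any choice of the crossing diagram $X$ in the definition of $\xi^{(0,0,C,0)}$ represents the same morphism), and then invoke \cref{efLem} with the inner box equal to the smaller basis element $\xi^{(0,0,C',0)}$ supplied by induction. The "obstacle" you flag is handled in the paper exactly as you propose, via \cref{AssocRel} and the symmetric braiding of \cref{BraidThm}.
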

\begin{proof}
Let \(\ba,\bb \in \Z^m_{\geq 0}\) and let \(\xi:=\xi^{(0,0,C,0)} \in \mathscr{B}\) be an element in \(\Hom_{\aWeb}(\ba,\bb)\) as in \cref{webspan}. We show by inducting on \(n=|\ba|+|\bb|\) that \(\xi\) belongs to the set  \(S_{\{e,f\}}(m)\) from \cref{efLem}.  Since by \cref{webspan} such elements \(\xi\) span \(\Hom_{\aWeb}(\ba,\bb)\), this will prove the lemma.

If \(n=0\), then \(\xi\) is the identity morphism. Thus we may assume \(n>0\), and that the claim holds for all \(n'<n\). If \(C=0\), then \(\xi\) is the identity morphism, so assume \(C_{ij} >0\) for some \(i,j\). Then, using \cref{AssocRel}, we may write
\begin{align*}
\xi=
\hackcenter{}
\hackcenter{
\begin{tikzpicture}[scale=0.8]
 \draw[thick, color=\clr] (0,0)--(0,1);
  \node at (0.5, 0.5) {$\scriptstyle \cdots $};
 \draw[thick, color=\clr] (1,0)--(1,1);
 \node at (2.5, 0.25) {$\scriptstyle \cdots $};
  \draw[thick, color=\clr] (3,0)--(3,1);
   \draw[thick, color=\clr] (0,2)--(0,3);
  \node at (1, 2.5) {$\scriptstyle \cdots $};
 \draw[thick, color=\clr] (2,2)--(2,3);
 \node at (2.5, 2.75) {$\scriptstyle \cdots $};
  \draw[thick, color=\clr] (3,2)--(3,3);
   \draw[thick, color=\clr, fill=red] (-0.2,1)--(3.2,1)--(3.2,2)--(-0.2,2)--(-0.2,1);
    \node at (1.5,1.5) {$\scriptstyle \xi'$};
    \draw[thick, color=\clr] (1,0.1)--(1,0.2) .. controls ++(0,0.6) and ++(0,-0.6) .. (3.6,1)--(3.6,2)
    .. controls ++(0,0.6) and ++(0,-0.6) .. (2,2.8)--(2,2.9);
   \node[below] at (0, 0) {$\scriptstyle a_1 $};
    \node[below] at (1, 0) {$\scriptstyle a_i $};
    \node[below] at (3, 0) {$\scriptstyle a_m $};
    \node[above] at (0, 3) {$\scriptstyle b_1 $};
    \node[above] at (2, 3) {$\scriptstyle b_j $};
    \node[above] at (3, 3) {$\scriptstyle b_m $}; 
     \node[right] at (3.6, 1.5) {$\scriptstyle C_{ij} $};  
      \end{tikzpicture}},
\end{align*}
for some basis element \(\xi' \in \Hom_{\pWeb}(\ba',\bb')\), where \(|\ba'|<|\ba|\) and \(|\bb'|<|\bb|\). Applying the induction assumption on \(n\), we have that \(\xi' \in S_{\{e,f\}}(m)\). Then, applying \cref{efLem}, it follows that \(\xi \in S_{\{e,f\}}(m)\), as desired.
\end{proof}

\begin{theorem}\label{Ygen}
Morphisms in \(\pWeb_m\) are generated under composition by the set \(\{b_{[1],a} \mid a \in \ZZ_{\geq 0}\}\) when \(m =1\), and by
\begin{align*}
Y(m):=
\left\{ \left. e_{[i,i+1],\ba}^{(t)}, f_{[i,i+1],\ba}^{(t)}, b_{[1,2],\ba}, c_{[1,2],\ba} \; \right| t \in \Z_{\geq 0}, 1 \leq i < m,  \ba \in \Z_{\geq 0}^m \right\},
\end{align*}
when \(m\geq 2\).
\end{theorem}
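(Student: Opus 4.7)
The plan is to apply Proposition~\ref{webspan}, which reduces the theorem to showing that each basis element $\xi^{(A,B,C,D)} \in \Hom_{\pWeb_m}(\ba, \bb)$ lies in the subcategory generated by $Y(m)$ (or by $\{b_{[1],a}\}$ when $m=1$). The case $m=1$ is immediate: the antisymmetry of $A$ and $B$ forces $A=B=0$, and the remaining constraints of $\chi(\ba,\bb)$ leave only $C=(b)$ with $D_1=(a-b)/2\in\{0,1\}$, so every basis element is either the identity or $b_{[1],a}$.

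For $m\ge 2$, I would first observe that since $X$ contains no cup below any cap, the basis element factors, up to an overall sign that is absorbed into the $\k$-linear combination, as
\[
\xi^{(A,B,C,D)} \;=\; \pm\, \psi_B \circ \psi_C \circ \psi_A \circ \psi_D,
\]
where $\psi_D$ is a composition of dots $b_{[i],-}$ (one per $i$ with $D_i=1$), $\psi_A$ a composition of caps $b_{[i,j],-}$ (one per $i<j$ with $A_{ij}=1$), $\psi_B$ a composition of cups $c_{[i,j],-}$ (one per $i<j$ with $B_{ij}=1$), and $\psi_C \in \Hom_{\aWeb_m}$ is the transfer morphism determined by $C$. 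The constraints defining $\chi(\ba,\bb)$ ensure that the intermediate label sequences all lie in $\Z_{\geq 0}^m$, so the composition is a bona fide morphism in $\pWeb_m$. By Lemma~\ref{aWebGen}, $\psi_C$ already lies in the subcategory generated by $Y_{\{e,f\}}(m)$.

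It then suffices to reduce three further families to $Y(m)$: the non-adjacent $e^{(t)}_{[r,s]}, f^{(t)}_{[r,s]}$ appearing in $\psi_C$; the non-adjacent caps and cups $b_{[i,j]}, c_{[i,j]}$ in $\psi_A,\psi_B$; and the dots $b_{[u]}$ in $\psi_D$. For the first, I plan to induct on $s-r$ using the telescoping $e^{(t)}_{[r,s]} = e^{(t)}_{[r,r+1]} \circ e^{(t)}_{[r+1,s]}$ after appropriate label-tracking (justified via \eqref{AssocRel}), and its $f$-analogue. For the second, I would build the adjacent crossing morphism of Equation~\eqref{CrossDef}---itself a $\k$-linear combination of adjacent $e$'s and $f$'s---and then invoke the braid relations of Theorem~\ref{BraidThm} to conjugate $b_{[1,2],-}$ (respectively $c_{[1,2],-}$) into the desired position $[i,j]$. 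For the third, the definition of the green dot~\eqref{greendot} together with the Antenna retraction relation~\eqref{AntRel} expresses $b_{[u],-}$ as a cap between strand $u$ and a neighbour after transferring a single unit of label via $f^{(1)}_{[u,u+1]}$ (or via $e^{(1)}_{[m-1,m]}$ when $u=m$); the resulting adjacent cap is then handled by the previous step.

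The principal obstacle is verifying the factorization $\xi^{(A,B,C,D)}=\pm\psi_B\circ\psi_C\circ\psi_A\circ\psi_D$: the diagrammatic picture for $\xi^{(A,B,C,D)}$ uses many more than $m$ strands in the middle, and realizing it as an honest composition inside $\pWeb_m$ requires a careful choice of ordering for the individual caps, cups, and dots (within the ``no cup below any cap'' constraint) together with some sign-tracking from the odd parities of caps and cups. Since the theorem only asks for generation, any sign ambiguity is absorbed into the $\k$-linear combination furnished by Proposition~\ref{webspan}, so only the up-to-sign identification is needed and the three reductions above become routine applications of the relations already established.
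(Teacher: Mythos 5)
Your overall architecture is the paper's: reduce to the spanning set of \cref{webspan}, factor each $\xi^{(A,B,C,D)}$ (up to sign and scalars) into dots, caps, an $\aWeb$-part and cups, invoke \cref{aWebGen} for the $C$-part, and then push the remaining generators down to $Y(m)$; the paper establishes your factorization step by an induction on $|\ba|+|\bb|$ that peels off one $b_{[i,j]}$, $c_{[i,j]}$ or $b_{[i]}$ at a time via \cref{AssocRel}, and that part of your plan is sound. The genuine gap is in your reduction of the non-adjacent transfer generators: the telescoping identity $e^{(t)}_{[r,s],\ba}=e^{(t)}_{[r,r+1]}\circ e^{(t)}_{[r+1,s]}1_\ba$ is false, and it is not a consequence of \cref{AssocRel}. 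The composite first merges $t$ onto strand $r+1$ and then splits $t$ off that strand again, and by \cref{DiagSwitchRel} this entangles with the original content of strand $r+1$, producing extra terms in which old material of strand $r+1$ is moved to strand $r$. Concretely, for $\ba=(0,1,1)$ and $t=1$, applying $G$ one finds that $e^{(1)}_{[1,2]}\circ e^{(1)}_{[2,3]}1_\ba$ sends $1\otimes y\otimes z$ to a sum of two basis monomials of $V_n\otimes V_n\otimes \k$, whereas $e^{(1)}_{[1,3]}1_\ba$ produces only one of them, so the two morphisms differ already in $\aWeb$. The repair is exactly the mechanism you already propose for $b_{[i,j]}$ and $c_{[i,j]}$, and the one the paper uses: adjacent crossings lie in the subcategory generated by $Y(m)$ by \cref{CrossDef}, and by \cref{BraidThm} the morphisms $e^{(t)}_{[r,s]}$, $f^{(t)}_{[r,s]}$, $b_{[r,s]}$, $c_{[r,s]}$ are obtained from the adjacent (indeed $[1,2]$) versions by pre- and post-composing with crossings.

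A second, smaller inaccuracy is the dot reduction. The plain composite of an adjacent cap with a single unit transfer is not $b_{[u],\ba}$: by \cref{CapDiagSwitchRel1} with $r=1$ one has $b_{[u,u+1]}\circ f^{(1)}_{[u,u+1]}1_\ba = f^{(1)}_{[u,u+1]}\circ b_{[u,u+1]}1_\ba + 2\,b_{[u],\ba}$, so what you actually need is the commutator identity $b_{[u],\ba}=\tfrac12\,[\,b_{[u,u+1]},f^{(1)}_{[u,u+1]}\,]1_\ba$ (and its mirror via \cref{CapDiagSwitchRel2} using $e^{(1)}_{[m-1,m]}$ when $u=m$). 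This follows from the cap/rung swap relations, not from \cref{greendot} and \cref{AntRel} alone, which only rewrite $b_{[u],\ba}$ as a self-cap of strand $u$. With these two repairs your argument closes up and essentially coincides with the paper's proof.
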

\begin{proof}
The statement for \(m=1\) follows directly from \cref{webspan}, so assume \(m \geq 2\).
First, let
\begin{align*}
Y'(m):=
\left\{ \left.  e_{[r,s],\ba}^{(t)}, f_{[r,s],\ba}^{(t)}, b_{[r,s],\ba}, c_{[r,s],\ba}, b_{[u],\ba} \; \right| t \in \Z_{\geq 0}, 1 \leq r < s \leq m, u \in [1,m], \ba \in \Z_{\geq 0}^m \right\}.
\end{align*}
We first prove a preliminary claim; that morphisms in \(\pWeb_m\) are generated under composition by \(Y'(m)\). Write \(S_m\) (resp.\  \(S'_m\)) for the subcategory of \(\pWeb_m\) generated by morphisms in \(Y(m)\) (resp.\  \(Y'(m)\)).
Let \(\ba,\bb \in \Z^m_{\geq 0}\). By \cref{webspan}, \(\Hom_{\pWeb}(\ba,\bb)\) is spanned by elements of the form \(\xi:=\xi^{(A,B,C,D)} \in \mathscr{B}\). We show by induction on \(n=|\ba|+|\bb|\) that \(\xi \in S'_m\). 

If \(n=0\), then \(\xi\) is the identity morphism. Thus we may assume \(n>0\), and that the claim holds for all \(n'<n\). Note the following:
\begin{itemize}
\item If \(A_{ij} >0\) for any \(i<j\), then by \cref{AssocRel}, \(\xi = \xi'b_{[i,j],\ba}\) for some basis element \(\xi' \in \Hom_{\pWeb}(\ba', \bb)\), where \(|\ba'|<|\ba|\). 
\item If \(B_{ij} >0\) for any \(i<j\), then by \cref{AssocRel}, \(\xi = c_{[i,j],\bb'}\xi'\) for some basis element \(\xi' \in \Hom_{\pWeb}(\ba, \bb')\), where \(|\bb'|<|\bb|\). 
\item If \(D_{i} >0\) for any \(i\), then by \cref{AssocRel}, \(\xi = \xi'b_{[i],\ba}\) for some basis element \(\xi' \in \Hom_{\pWeb}(\ba', \bb)\), where \(|\ba'|<|\ba|\). 
\end{itemize}
In any of these cases, applying the induction assumption on \(n\) to \(\xi'\) completes the proof. Therefore we may assume \(A=B=D=0\). Then the preliminary claim follows by \cref{aWebGen}, so morphisms in \(\pWeb_m\) are generated under composition by \(Y'(m)\).

Since \(S_m\) contains \(e_{[i,i+1],\ba}^{(t)}\) and \(f_{[i,i+1],\ba}^{(t)}\) for all \(1 \leq i < m, \ba \in \ZZ_{\geq 0}^m\), it follows by \cref{CrossDef} that \(S_m\) contains all crossing morphisms which transpose neighboring strands. Then we have that \(e_{[r,s],\ba}^{(t)}\), \(f_{[r,s],\ba}^{(t)}\), \(b_{[r,s],\ba}\), \(c_{[r,s],\ba}\) belong to \(S_m\) for all \(1 \leq r < s \leq m\), \(\ba \in \ZZ^m_{\geq 0}\), as one may generate these elements by pre- and post-composing the morphisms \(e_{[1,2],\ba}^{(t)}, f_{[1,2],\ba}^{(t)}, b_{[1,2],\ba}, c_{[1,2],\ba}\) with sequences of crossing morphisms. Finally, we have that \(b_{[u],\ba} = (1/2) [ b_{[u,u+1]}, e_{[u,u+1]}^{(1)}]1_{\ba}\) for \(1 \leq u < m\) and \(b_{[m],\ba} = (1/2) [ b_{[m-1,m]}, f_{[m-1,m]}^{(1)}]1_{\ba}\) by \cref{CapDiagSwitchRel1,CapDiagSwitchRel2}, so it follows that \(S_m' \subseteq S_m\), completing the proof.
\end{proof}

\begin{corollary}
If \(\k\) is a field of characteristic zero, then morphisms in \(\pWeb_m\) are generated under composition by the set \(\{b_{[1],a} \mid a \in \ZZ_{\geq 0}\}\) when \(m =1\), and by
\begin{align*}
Y_0(m):=
\left\{ \left. e_{[i,i+1],\ba}^{(1)}, f_{[i,i+1],\ba}^{(1)}, b_{[1,2],\ba}, c_{[1,2],\ba}\;  \right| t \in \Z_{\geq 0}, 1 \leq i < m,  \ba \in \Z_{\geq 0}^m \right\},
\end{align*}
when \(m \geq 2\).
\end{corollary}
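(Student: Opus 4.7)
The plan is to deduce this from Theorem~\ref{Ygen} by showing that the $t=1$ versions of the $e$ and $f$ morphisms already generate all of the $e^{(t)}$ and $f^{(t)}$. The $m=1$ case is already contained in Theorem~\ref{Ygen}, so I assume $m \geq 2$ and let $S_m^0$ denote the $\k$-linear subcategory of $\pWeb_m$ generated by $Y_0(m)$. Since $Y_0(m) \subseteq Y(m)$, once I show that $e^{(t)}_{[i,i+1],\ba}$ and $f^{(t)}_{[i,i+1],\ba}$ both belong to $S_m^0$ for every $t \geq 0$, the corollary follows immediately from Theorem~\ref{Ygen}.

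The central claim is the identity
\[
\bigl(e^{(1)}_{[i,i+1]}\bigr)^t 1_{\ba} \;=\; t!\cdot e^{(t)}_{[i,i+1],\ba}
\]
in $\pWeb_m$, where the left-hand side denotes the $t$-fold vertical composition of $e^{(1)}_{[i,i+1]}$, each factor taken with the appropriate intermediate object. Since $\operatorname{char}\k = 0$, the factor $t!$ is invertible, so this identity gives $e^{(t)}_{[i,i+1],\ba} = \tfrac{1}{t!}(e^{(1)}_{[i,i+1]})^t 1_{\ba} \in S_m^0$, and an entirely symmetric argument yields the analogous identity for $f^{(t)}_{[i,i+1],\ba}$.

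I would prove the identity by induction on $t$, the case $t=1$ being trivial. For the inductive step it suffices to verify $e^{(1)}_{[i,i+1]} \circ e^{(t-1)}_{[i,i+1],\ba} = t\cdot e^{(t)}_{[i,i+1],\ba}$. Stacking a rung of label $1$ on top of a rung of label $t-1$ produces, on the right strand, two consecutive splits (first splitting off label $t-1$, then label $1$); by web-associativity \eqref{AssocRel} these combine into a split of $a_{i+1}$ into $(a_{i+1}-t, t)$ followed by a split of the $t$-labelled strand into $(t-1, 1)$. Symmetrically, the two consecutive merges on the left strand become a merge of $(t-1,1)$ into a label-$t$ strand, followed by a merge of that $t$-strand into $a_i$. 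The middle piece is then a split of a label-$t$ strand into $(t-1,1)$ followed directly by a merge of $(t-1,1)$ into label $t$, which by Lemma~\ref{KnotholeRel} equals $\binom{t}{t-1} = t$ times the identity of the label-$t$ strand. What remains is exactly $t\cdot e^{(t)}_{[i,i+1],\ba}$.

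There is no real obstacle here; the only subtlety is making sure no negatively-labelled strand appears in the intermediate configurations, and if $t > a_{i+1}$ then both sides of the identity vanish by convention and there is nothing to check. The only place where the characteristic-zero hypothesis is used is the final division by $t!$.
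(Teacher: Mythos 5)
Your proposal is correct and follows essentially the same route as the paper: the paper's proof simply notes that \((e_{[i,i+1]}^{(1)})^t1_\ba\) and \((f_{[i,i+1]}^{(1)})^t1_\ba\) are nonzero multiples of \(e_{[i,i+1],\ba}^{(t)}\) and \(f_{[i,i+1],\ba}^{(t)}\) and then invokes \cref{Ygen}, exactly as you do. Your verification that the multiple is \(t!\), via \cref{AssocRel} and \cref{KnotholeRel}, just makes explicit the detail the paper leaves to the reader.
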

\begin{proof}
It can be deduced from \cref{AssocRel,KnotholeRel} (or alternatively, using (2.9) in \cite{CKM} at $q = 1$) that \((e_{[i,i+1]}^{(1)})^t1_\ba\) and \((f_{[i,i+1]}^{(1)})^t1_\ba\) are nonzero multiples of \(e_{[i,i+1],\ba}^{(t)}\) and \(f_{[i,i+1],\ba}^{(t)}\), respectively. Thus the result follows from \cref{Ygen}.
\end{proof}

\subsection{Relations for Morphisms in \texorpdfstring{\(\pWeb_m\)}{p-Webm}.}
We now establish a number of relations which hold among the generators in \(Y(m)\). While not utilized in this paper, these relations will be key in establishing the Howe duality result in \cite{DKM}.

\begin{lemma} \label{L:relationsforgenerators}
The following relations hold in \(\pWeb_m\), for all valid \(1 \leq i, j \leq m\), and \(\ba \in \ZZ_{\geq 0}^m\). 
\begin{align}
&[e_{[i,i+1]}^{(1)}, f_{[j,j+1]}^{(1)}]1_\ba = \delta_{i,j}(a_i - a_{i+1})1_\ba;\label{ef0}\\
&[e_{[i,i+1]}^{(1)}, e_{[j,j+1]}^{(1)}]1_\ba = 0 &\textup{if } j \neq i \pm 1;\label{ser1}\\
&[e_{[i,i+1]}^{(1)},[e_{[i,i+1]}^{(1)},e_{[j,j+1]}^{(1)}]1_\ba = 0 &\textup{if }j = i \pm 1;\label{ser2}\\
&[f_{[i,i+1]}^{(1)}, f_{[j,j+1]}^{(1)}]1_\ba = 0 &\textup{if } j \neq i \pm 1;\label{ser3}\\
&[f_{[i,i+1]}^{(1)},[f_{[i,i+1]}^{(1)},f_{[j,j+1]}^{(1)}]1_\ba = 0 &\textup{if }j = i \pm 1;\label{ser4}\\
&[b_{[i,i+1]},b_{[j,j+1]}]1_\ba =0;\label{bbrel}\\
&[c_{[i,i+1]},c_{[j,j+1]}]1_\ba =0;\label{ccrel}\\
&[b_{[i,i+1]},c_{[j,j+1]}]1_\ba
=
\begin{cases}
(a_i - a_{i+1})1_\ba & \textup{if \(j=i\)};\\
[e_{[i-1,i]},e_{[i,i+1]}]1_\ba & \textup{if \(j=i-1\)};\\
[f_{[i,i+1]},f_{[i+1,i+2]}]1_\ba &\textup{if \(j=i+1\)};\\
0 &\textup{otherwise}
\end{cases}\label{bcswaprel}\\
&[b_{[i,i+1]},e_{[j,j+1]}^{(1)}]1_\ba = 0 & \textup{if }j \neq i,i+1;\label{beswapeasy}\\
&[b_{[i,i+1]},f_{[j,j+1]}^{(1)}]1_\ba = 0 & \textup{if }j \neq i,i-1; \label{bfswapeasy}\\
&[b_{[i,i+1]},e_{[i,i+1]}^{(1)}]1_\ba =  2b_{[i+1]}    = [b_{[i+1,i+2]},f_{[i+1,i+2]}^{(1)}]1_\ba;
\label{be1rel}
\\
&[b_{[i,i+1]},e_{[i+1,i+2]}^{(1)}]1_\ba = b_{[i,i+2]} =  [b_{[i+1,i+2]},f_{[i,i+1]}^{(1)}]1_\ba; \label{be2rel}\\
&[e_{[j,j+1]}^{(1)},c_{[i,i+1]}]1_\ba = 0 & \textup{if }j \neq i-1; \label{ce1}\\
&[f_{[j,j+1]}^{(1)},c_{[i,i+1]}]1_\ba = 0 & \textup{if }j \neq i+1;\label{cf1}\\
&[e_{[i,i+1]}^{(1)},c_{[i+1,i+2]}]1_\ba =  c_{[i,i+2]}  = [f_{[i+1,i+2]}^{(1)},c_{[i,i+1]}]1_\ba; \label{ec1rel}\\
&[[b_{[i,i+1]},e_{[i,i+1]}^{(1)}],e_{[j,j+1]}^{(1)}]1_\ba =
\begin{cases}
2b_{[i+1,i+2]} 1_\ba & \textup{if }j=i+1;\\
0&\textup{otherwise};
\end{cases}
\label{bee1rel}
\\
&[[b_{[1,2]},e_{[1,2]}^{(1)}], f_{[1,2]}^{(1)}] 1_\ba= 2b_{[1,2]}1_\ba; \label{bef1rel}\\
&[[b_{[1,2]},f_{[1,2]}^{(1)}], f_{[j,j+1]}^{(1)}] 1_\ba= 0; \label{bff1rel} \\
&[[c_{[i+1,i]},e_{[i,i+1]}^{(1)}],e_{[j,j+1]}^{(1)}] 1_\ba = 
\begin{cases}
c_{[i,i+1]}1_\ba & \textup{if }j=i+1;\\
0 & \textup{if }j \neq i+1,i-1.
\end{cases}
\label{cee2rel}
\end{align}
\end{lemma}
\begin{proof}
These are straightforward calculations which follow quickly from relations already known to hold in \(\pWeb\). In particular:
\begin{itemize}
	\item Relation
\cref{ef0} follows from \cref{DiagSwitchRel};
	\item Relations
\cref{ser1,ser2,ser3,ser4} follow from \cref{CoxeterWeb};
	\item Relations 
\cref{bbrel,ccrel,beswapeasy,bfswapeasy} follow from \cref{AssocRel}; 
\item Relation
\cref{bcswaprel} follows from \cref{BCswap,BCE}; 
	\item Relation
\cref{be1rel} follows from \cref{CapDiagSwitchRel1,CapDiagSwitchRel2}; 
\item Relations \cref{be2rel,ec1rel} follow from  \cref{RungsToCross};
	\item Relations
\cref{ce1,cf1} follow from  \cref{CupDiagSwitchRel1,CupDiagSwitchRel2}; 
	\item Relations 
\cref{bee1rel,bef1rel,bff1rel} follow from \cref{be1rel,CapDiagSwitchRel1,CapDiagSwitchRel2,AntRel}; 
	\item Relation \cref{cee2rel} follows from \cref{ec1rel,RungsToCross}. \qedhere
\end{itemize}
\end{proof}


\section{The  \texorpdfstring{$\pWeb_{\uparrow\downarrow}$}{p-Webupdown} Category}\label{S:Pwebupdowndefinition}

\subsection{Definition of  \texorpdfstring{$\pWeb_{\uparrow\downarrow}$}{p-Webupdown}}\label{SS:DefinitionofPWebupdown}  We now introduce an oriented version of $\pWeb$.  In this section we continue to assume $\k$ is an integral domain in which $2$ is invertible.  It will again be a diagrammatic supercategory given by generators and relations as outlined in \cref{SS:aWeb}.   Because many of the constructions and arguments in this section are similar to those given in the previous section, so we will sometimes be brief in explanations.

\begin{definition}
The category \(\pWeb_{\uparrow \downarrow}\) is the $\k$-linear strict monoidal supercategory defined as follows.  Objects are words (including the empty word) from the set
\begin{align*}
\left\{ \uparrow_k, \downarrow_k \mid k \in \Z_{\geq 1}\right\}.
\end{align*}


The morphisms are generated as a $\k$-linear monoidal supercategory by the diagrams:
\begin{align*}
\hackcenter{
{}
}
\hackcenter{
\begin{tikzpicture}[scale=0.8]
\draw[thick, color=\clr,->] (0,-0.1)--(0,0.2);
  \draw[thick, color=\clr,->] (0,0)--(0,0.2) .. controls ++(0,0.35) and ++(0,-0.35) .. (-0.4,0.9)--(-0.4,1);
  \draw[thick, color=\clr,->] (0,0)--(0,0.2) .. controls ++(0,0.35) and ++(0,-0.35) .. (0.4,0.9)--(0.4,1);
      \node[above] at (-0.4,1) {$\scriptstyle a$};
      \node[above] at (0.4,1) {$\scriptstyle b$};
      \node[below] at (0,-0.1) {$\scriptstyle a+b$};
\end{tikzpicture}},
\qquad
\qquad
\hackcenter{
\begin{tikzpicture}[scale=0.8]
 \draw[thick, color=\clr,->] (-0.4,-0.1)--(-0.4,0.2);
  \draw[thick, color=\clr,->] (0.4,-0.1)--(0.4,0.2);
  \draw[thick, color=\clr,->] (-0.4,0)--(-0.4,0.1) .. controls ++(0,0.35) and ++(0,-0.35) .. (0,0.8)--(0,1);
\draw[thick, color=\clr,->] (0.4,0)--(0.4,0.1) .. controls ++(0,0.35) and ++(0,-0.35) .. (0,0.8)--(0,1);
      \node[below] at (-0.4,-0.1) {$\scriptstyle a$};
      \node[below] at (0.4,-0.1) {$\scriptstyle b$};
      \node[above] at (0,1) {$\scriptstyle a+b$};
\end{tikzpicture}},
\qquad
\qquad
\hackcenter{
\begin{tikzpicture}[scale=0.8]
 \draw(0,0) arc (0:180:0.4cm) [thick, color=\clr,->];
    \node[below] at (0,0) {$\scriptstyle a$};
      \node[below] at (-0.8,0) {$\scriptstyle a$};
\end{tikzpicture}},
\qquad
\qquad
\hackcenter{
\begin{tikzpicture}[scale=0.8]
 \draw(0,0) arc (0:-180:0.4cm) [thick, color=\clr,->];
    \node[above] at (0,0) {$\scriptstyle a$};
      \node[above] at (-0.8,0) {$\scriptstyle a$};
\end{tikzpicture}},
\qquad
\qquad
\hackcenter{
\begin{tikzpicture}[scale=0.8]
 \draw[thick, color=\clr,->] (0,0)--(0,0.4);
  \draw[thick, color=\clr,<-] (0,0.6)--(0,1);
    \node[below] at (0,0) {$\scriptstyle 1$};
     \node[above] at (0,1) {$\scriptstyle 1$};
 \node[shape=coordinate](DOT) at (0,0.5) {};
  \draw[thick, color=\clr, fill=yellow]  (DOT) circle (1mm);
\end{tikzpicture}},
\qquad
\qquad
\hackcenter{
\begin{tikzpicture}[scale=0.8]
 \draw[thick, color=\clr,<-] (0,0)--(0,0.4);
  \draw[thick, color=\clr,->] (0,0.6)--(0,1);
    \node[below] at (0,0) {$\scriptstyle 1$};
     \node[above] at (0,1) {$\scriptstyle 1$};
 \node[shape=coordinate](DOT) at (0,0.5) {};
  \draw[thick, color=\clr, fill=blue]  (DOT) circle (1mm);
\end{tikzpicture}},
\qquad
\qquad
\hackcenter{
\begin{tikzpicture}[scale=0.8]
  \draw[thick, color=\clr,<-] (0.4,0)--(0.4,0.1) .. controls ++(0,0.35) and ++(0,-0.35) .. (-0.4,0.9)--(-0.4,1);
  \draw[thick, color=\clr,->] (-0.4,0)--(-0.4,0.1) .. controls ++(0,0.35) and ++(0,-0.35) .. (0.4,0.9)--(0.4,1);
      \node[above] at (-0.4,1) {$\scriptstyle b$};
      \node[above] at (0.4,1) {$\scriptstyle a$};
            \node[below] at (-0.4,0) {$\scriptstyle a$};
      \node[below] at (0.4,0) {$\scriptstyle b$};
\end{tikzpicture}},
\end{align*}
for all \(a,b \in \Z_{\geq 0}\). We call these morphisms {\em upward split}, {\em upward merge}, {\em leftward cap}, {\em leftward cup}, {\em tag-in}, {\em tag-out}, and {\em rightward crossing}, respectively. The parity is given by declaring the tag-in and tag-out morphisms to be odd and the rest of the generating morphisms to be even.


To describe the defining relations it will be convenient to first set a diagrammatic shorthand for certain additional morphisms in \(\pWeb_{\uparrow \downarrow}\). We define morphisms:
\begin{align*}
\hackcenter{}
\hackcenter{
\begin{tikzpicture}[scale=0.8]
 \draw (0,0) arc (0:180:0.4cm) [thick, color=\clr];
   \draw[thick, color=\clr] (-0.8,-0.5)--(-0.8,0);
     \draw[thick, color=\clr] (0,-0.5)--(0,0);
      \draw[thick, color=\clr,->] (-0.8,-0.5)--(-0.8,-0.2);
     \draw[thick, color=\clr,->] (0,-0.5)--(0,-0.2);
         \node[below] at (0,-0.5) {$\scriptstyle 1$};
      \node[below] at (-0.8,-0.5) {$\scriptstyle 1$};
  \node at (-0.4,0.4) {$\scriptstyle\blacklozenge$};
\end{tikzpicture}}
:=
\hackcenter{
\begin{tikzpicture}[scale=0.8]
 \node[white] at (-0.4,0.4) {$\scriptstyle\blacklozenge$};
 \draw(0,0) arc (0:180:0.4cm) [thick, color=\clr,->];
    \node[below] at (0,-0.5) {$\scriptstyle 1$};
      \node[below] at (-0.8,-0.5) {$\scriptstyle 1$};
      \draw[thick, color=\clr,->] (-0.8,-0.5)--(-0.8,-0.2);
      \draw[thick, color=\clr] (0,-0.5)--(0,0);
       \node[shape=coordinate](DOT) at (-0.8,-0.1) {};
  \draw[thick, color=\clr, fill=yellow]  (DOT) circle (1mm);
\end{tikzpicture}},
\qquad
\qquad
\hackcenter{
\begin{tikzpicture}[scale=0.8]
 \draw (0,0) arc (0:-180:0.4cm) [thick, color=\clr];
   \draw[thick, color=\clr] (-0.8,0.5)--(-0.8,0);
     \draw[thick, color=\clr] (0,0.5)--(0,0);
      \draw[thick, color=\clr,<-] (-0.8,0.5)--(-0.8,0.2);
     \draw[thick, color=\clr,<-] (0,0.5)--(0,0.2);
         \node[above] at (0,0.5) {$\scriptstyle 1$};
      \node[above] at (-0.8,0.5) {$\scriptstyle 1$};
  \node at (-0.4,-0.4) {$\scriptstyle\blacklozenge$};
\end{tikzpicture}}
:=
\hackcenter{
\begin{tikzpicture}[scale=0.8]
  \node[white] at (-0.4,-0.4) {$\scriptstyle\blacklozenge$};
 \draw (0,0) arc (0:-180:0.4cm) [thick, color=\clr];
   \draw[thick, color=\clr] (-0.8,0.5)--(-0.8,0);
     \draw[thick, color=\clr] (0,0.5)--(0,0);
      \draw[thick, color=\clr,<-] (-0.8,0.5)--(-0.8,0.2);
     \draw[thick, color=\clr,<-] (0,0.5)--(0,0.2);
         \node[above] at (0,0.5) {$\scriptstyle 1$};
      \node[above] at (-0.8,0.5) {$\scriptstyle 1$};
       \node[shape=coordinate](DOT) at (0,0.1) {};
  \draw[thick, color=\clr, fill=blue]  (DOT) circle (1mm);
\end{tikzpicture}},
\qquad
\qquad
\hackcenter{
\begin{tikzpicture}[scale=0.8]
 \draw[thick, color=\clr,->] (0,-0.1)--(0,0.3);
 \draw[thick, color=\clr] (0,-0.1)--(0,0.8);
    \node[below] at (0,-0.1) {$\scriptstyle 2$};
    \node[shape=coordinate](DOT) at (0,0.8) {};
     \filldraw  (DOT) circle (2.5pt);
      \node[white] at (0,1.4) {$\scriptstyle\blacklozenge$};
\end{tikzpicture}}
:=
\left(\frac{1}{2} \right)
\,
\hackcenter{
\begin{tikzpicture}[scale=0.8]
\draw[thick, color=\clr,->] (0,-0.1)--(0,0.2);
  \draw[thick, color=\clr,->] (0,0)--(0,0.2) .. controls ++(0,0.35) and ++(0,-0.35) .. (-0.4,0.9)--(-0.4,1);
  \draw[thick, color=\clr,->] (0,0)--(0,0.2) .. controls ++(0,0.35) and ++(0,-0.35) .. (0.4,0.9)--(0.4,1);
      \node[below] at (0,-0.1) {$\scriptstyle 2$};
       \draw(0.4,1) arc (0:180:0.4cm) [thick, color=\clr];
         \node at (0,1.4) {$\scriptstyle\blacklozenge$};
\end{tikzpicture}},
\end{align*}
which we call \emph{upward cap}, \emph{upward cup}, and \emph{upward antenna}, respectively. We also define, for all \(a,b \in \Z_{\geq 0}\), \emph{rightward cap} and \emph{rightward cup} morphisms:
\begin{align*}
\hackcenter{}
\hackcenter{
\begin{tikzpicture}[scale=0.8]
 \draw(0,0) arc (0:180:0.4cm) [thick, color=\clr];
 \draw[thick, color=\clr] (-0.8,-0.2)--(-0.8,0);
  \draw[thick, color=\clr,<-] (0,-0.2)--(0,0);
    \node[below] at (0,-0.2) {$\scriptstyle a$};
      \node[below] at (-0.8,-0.2) {$\scriptstyle a$};
\end{tikzpicture}}
:=
\hackcenter{
\begin{tikzpicture}[scale=0.8]
  \draw[thick, color=\clr,<-] (0.4,0)--(0.4,0.1) .. controls ++(0,0.35) and ++(0,-0.35) .. (-0.4,0.9);
  \draw[thick, color=\clr] (-0.4,0)--(-0.4,0.1) .. controls ++(0,0.35) and ++(0,-0.35) .. (0.4,0.9);
   \draw(0.4,0.9) arc (0:180:0.4cm) [thick, color=\clr];
            \node[below] at (-0.4,0) {$\scriptstyle a$};
      \node[below] at (0.4,0) {$\scriptstyle a$};
\end{tikzpicture}},
\qquad
\qquad
\hackcenter{
\begin{tikzpicture}[scale=0.8]
 \draw(0,0) arc (0:-180:0.4cm) [thick, color=\clr];
 \draw[thick, color=\clr] (-0.8,0.2)--(-0.8,0);
  \draw[thick, color=\clr,<-] (0,0.2)--(0,0);
    \node[above] at (0,0.2) {$\scriptstyle a$};
      \node[above] at (-0.8,0.2) {$\scriptstyle a$};
\end{tikzpicture}}
:=
\hackcenter{
\begin{tikzpicture}[scale=0.8]
  \draw[thick, color=\clr,<-] (0.4,0)--(0.4,-0.1) .. controls ++(0,-0.35) and ++(0,0.35) .. (-0.4,-0.9);
  \draw[thick, color=\clr] (-0.4,0)--(-0.4,-0.1) .. controls ++(0,-0.35) and ++(0,0.35) .. (0.4,-0.9);
   \draw(0.4,-0.9) arc (0:-180:0.4cm) [thick, color=\clr];
            \node[above] at (-0.4,0) {$\scriptstyle a$};
      \node[above] at (0.4,0) {$\scriptstyle a$};
\end{tikzpicture}}.
\end{align*}
For all \(a,b \in \Z_{\geq 0}\), define the {\em upward crossing}
\begin{align*}
\hackcenter{}
\hackcenter{
\begin{tikzpicture}[scale=0.8]
  \draw[thick, color=\clr,->] (0.4,0)--(0.4,0.1) .. controls ++(0,0.35) and ++(0,-0.35) .. (-0.4,0.9)--(-0.4,1);
  \draw[thick, color=\clr,->] (-0.4,0)--(-0.4,0.1) .. controls ++(0,0.35) and ++(0,-0.35) .. (0.4,0.9)--(0.4,1);
      \node[above] at (-0.4,1) {$\scriptstyle b$};
      \node[above] at (0.4,1) {$\scriptstyle a$};
            \node[below] at (-0.4,0) {$\scriptstyle a$};
      \node[below] at (0.4,0) {$\scriptstyle b$};
\end{tikzpicture}}
\end{align*}
as in \cref{CrossDef}, with all strands oriented upward. 
We then define the {\em leftward crossing} by composing this with the leftward cap and cup morphisms:
\begin{align*}
\hackcenter{}
\hackcenter{
\begin{tikzpicture}[scale=0.8]
  \draw[thick, color=\clr,->] (0.4,0)--(0.4,0.1) .. controls ++(0,0.35) and ++(0,-0.35) .. (-0.4,0.9)--(-0.4,1);
  \draw[thick, color=\clr,<-] (-0.4,0)--(-0.4,0.1) .. controls ++(0,0.35) and ++(0,-0.35) .. (0.4,0.9)--(0.4,1);
      \node[above] at (-0.4,1) {$\scriptstyle b$};
      \node[above] at (0.4,1) {$\scriptstyle a$};
            \node[below] at (-0.4,0) {$\scriptstyle a$};
      \node[below] at (0.4,0) {$\scriptstyle b$};
\end{tikzpicture}}
:=
\hackcenter{
\begin{tikzpicture}[scale=0.8]
  \draw[thick, color=\clr] (0.4,0)--(0.4,0.1) .. controls ++(0,0.35) and ++(0,-0.35) .. (-0.4,0.9)--(-0.4,1);
  \draw[thick, color=\clr,->] (-0.4,-0.4)--(-0.4,0.1) .. controls ++(0,0.35) and ++(0,-0.35) .. (0.4,0.9)--(0.4,1.4);
    \draw[thick, color=\clr,<-] (-1.2,-0.4)--(-1.2,1);
       \draw[thick, color=\clr] (1.2,0)--(1.2,1.4);
       \draw (-0.4,1) arc (0:180:0.4cm) [thick, color=\clr];
         \draw (1.2,0) arc (0:-180:0.4cm) [thick, color=\clr];
             \node[above] at (0.4,1.4) {$\scriptstyle b$};
      \node[above] at (1.2,1.4) {$\scriptstyle a$};
            \node[below] at (-0.4,-0.4) {$\scriptstyle b$};
      \node[below] at (-1.2,-0.4) {$\scriptstyle a$};
\end{tikzpicture}}.
\end{align*}

With this notation established we can now give the defining relations for $\pWeb_{\uparrow\downarrow}$.  The defining relations of \(\pWeb_{\uparrow \downarrow}\) are:\\

\noindent \emph{Up-arrow relations.} Relations \cref{AssocRel,CupDiagSwitchRel2} hold in \(\pWeb_{\uparrow \downarrow}\), where we interpret the diagrams as having all strands oriented upward.\\

\noindent \emph{Leftward straightening.} For all \(a \in \Z_{\geq 0}\) we have:
\begin{align}\label{OrStraightRel}
\hackcenter{}
\hackcenter{
\begin{tikzpicture}[scale=0.8]
  \draw[thick, color=\clr] (0,0)--(0,-0.8); 
 \draw (0,0) arc (0:180:0.4cm) [thick, color=\clr];
   \draw[thick, color=\clr] (-0.8,0)--(-0.8,-0.2); 
  \draw (-0.8,-0.2) arc (0:-180:0.4cm) [thick, color=\clr];
   \draw[thick, color=\clr,->] (-1.6,-0.2)--(-1.6,0.6); 
     \node[below] at (0,-0.8) {$\scriptstyle a$};
\end{tikzpicture}}
\;
=
\;
\hackcenter{
\begin{tikzpicture}[scale=0.8]
   \draw[thick, color=\clr,->] (0,-0.8)--(0,0.6); 
     \node[below] at (0,-0.8) {$\scriptstyle a$};
\end{tikzpicture}},
\qquad
\qquad
\hackcenter{
\begin{tikzpicture}[scale=0.8]
   \draw[thick, color=\clr,->] (-0.8,0)--(-0.8,-0.8); 
 \draw (0,0) arc (0:180:0.4cm) [thick, color=\clr];
   \draw[thick, color=\clr] (0,0)--(0,-0.2); 
  \draw (0.8,-0.2) arc (0:-180:0.4cm) [thick, color=\clr];
   \draw[thick, color=\clr] (0.8,-0.2)--(0.8,0.6); 
     \node[below] at (-0.8,-0.8) {$\scriptstyle a$};
\end{tikzpicture}}
\;
=
\;
\hackcenter{
\begin{tikzpicture}[scale=0.8]
   \draw[thick, color=\clr,<-] (0,-0.8)--(0,0.6); 
     \node[below] at (0,-0.8) {$\scriptstyle a$};
\end{tikzpicture}}.
\end{align}

\noindent \emph{Left/right crossing inversion.} For all \(a,b \in \Z_{\geq 0}\), we have:
\begin{align}\label{LRCrossRel}
\hackcenter{}
\hackcenter{
\begin{tikzpicture}[scale=0.8]
  \draw[thick, color=\clr,<-] (0,0)--(0,0.2) .. controls ++(0,0.35) and ++(0,-0.35)  .. (0.8,0.8)--(0.8,1) .. controls ++(0,0.35) and ++(0,-0.35)  .. (0,1.6)--(0,1.8); 
    \draw[thick, color=\clr,->] (0.8 ,0)--(0.8, 0.2) .. controls ++(0,0.35) and ++(0,-0.35)  .. (0,0.8)--(0,1) .. controls ++(0,0.35) and ++(0,-0.35)  .. (0.8,1.6)--(0.8,1.8);
     \node[above] at (0,-0.4) {$\scriptstyle a$};
     \node[above] at (0.8,-0.4) {$\scriptstyle b$};
\end{tikzpicture}}
\;
=
\;
\hackcenter{
\begin{tikzpicture}[scale=0.8]
  \draw[thick, color=\clr,<-] (0,0)--(0,1.8); 
    \draw[thick, color=\clr,->] (0.8 ,0)--(0.8,1.8);
     \node[above] at (0,-0.4) {$\scriptstyle a$};
     \node[above] at (0.8,-0.4) {$\scriptstyle b$};
\end{tikzpicture}},
\qquad
\qquad
\hackcenter{
\begin{tikzpicture}[scale=0.8]
  \draw[thick, color=\clr,->] (0,0)--(0,0.2) .. controls ++(0,0.35) and ++(0,-0.35)  .. (0.8,0.8)--(0.8,1) .. controls ++(0,0.35) and ++(0,-0.35)  .. (0,1.6)--(0,1.8); 
    \draw[thick, color=\clr,<-] (0.8 ,0)--(0.8, 0.2) .. controls ++(0,0.35) and ++(0,-0.35)  .. (0,0.8)--(0,1) .. controls ++(0,0.35) and ++(0,-0.35)  .. (0.8,1.6)--(0.8,1.8);
     \node[above] at (0,-0.4) {$\scriptstyle a$};
     \node[above] at (0.8,-0.4) {$\scriptstyle b$};
\end{tikzpicture}}
\;
=
\;
\hackcenter{
\begin{tikzpicture}[scale=0.8]
  \draw[thick, color=\clr,->] (0,0)--(0,1.8); 
    \draw[thick, color=\clr,<-] (0.8 ,0)--(0.8,1.8);
     \node[above] at (0,-0.4) {$\scriptstyle a$};
     \node[above] at (0.8,-0.4) {$\scriptstyle b$};
\end{tikzpicture}}.
\end{align}

\noindent \emph{Bubble annihilation.} For all \(a \in \Z_{>0}\) we have:
\begin{align}\label{BubbleRel}
\hackcenter{}
\hackcenter{
\begin{tikzpicture}[scale=0.8]
   \draw[thick, color=\clr,->] (0.4,-1) arc (0:180:0.4cm) [thick, color=\clr];
    \draw (0.4,-1) arc (0:-180:0.4cm) [thick, color=\clr];
    \node[right] at (0.4,-1) {$\scriptstyle a$};
\end{tikzpicture}}
= 0.
\end{align}
\end{definition}

We remark that including the rightward crossing generator along with the left/right crossing inversion relation is equivalent to imposing the relation that the leftward crossing is invertible.  While this latter approach is sometimes used in the literature, we chose to make the inverse morphism explicitly part of the definition.

Going forward it will be convenient to sometimes write $\uparrow_{0}$ or $\downarrow_{0}$ for the empty word (i.e., the monoidal unit object).

\subsection{Additional Morphisms} We define {\em downward splits} and {\em downward merges} by:
\begin{align*}
\hackcenter{
{}
}
\hackcenter{
\begin{tikzpicture}[scale=0.8]
\draw[thick, color=\clr,->] (0,0.1)--(0,-0.2);
  \draw[thick, color=\clr,->] (0,0)--(0,-0.2) .. controls ++(0,-0.35) and ++(0,0.35) .. (-0.4,-0.9)--(-0.4,-1);
  \draw[thick, color=\clr,->] (0,0)--(0,-0.2) .. controls ++(0,-0.35) and ++(0,0.35) .. (0.4,-0.9)--(0.4,-1);
      \node[below] at (-0.4,-1) {$\scriptstyle a$};
      \node[below] at (0.4,-1) {$\scriptstyle b$};
      \node[above] at (0,0.1) {$\scriptstyle a+b$};
\end{tikzpicture}}
:=
\hackcenter{
\begin{tikzpicture}[scale=0.8]
\draw[thick, color=\clr,->] (0,-1)--(0,-0.8);
\draw[thick, color=\clr,->] (0.8,1.2)--(0.8,0.8);
  \draw[thick, color=\clr,<-] (-0.4,0)--(-0.4,-0.1) .. controls ++(0,-0.35) and ++(0,0.35) .. (0,-0.8)--(0,-1);
\draw[thick, color=\clr,<-] (0.4,0)--(0.4,-0.1) .. controls ++(0,-0.35) and ++(0,0.35) .. (0,-0.8)--(0,-1);
    \draw (-0.4,0) arc (0:180:0.4cm) [thick, color=\clr];
        \draw (0.4,0) arc (0:180:1.2cm) [thick, color=\clr];
          \draw (0.8,-1) arc (0:-180:0.4cm) [thick, color=\clr];
          \draw[thick, color=\clr] (0.8,-1)--(0.8,1.2);
          \draw[thick, color=\clr,<-] (-1.2,-1.4)--(-1.2,0);
            \draw[thick, color=\clr,<-] (-2,-1.4)--(-2,0);
                 \node[below] at (-1.2,-1.4) {$\scriptstyle b$};
      \node[below] at (-2,-1.4) {$\scriptstyle a$};
      \node[above] at (0.8,1.2) {$\scriptstyle a+b$};
\end{tikzpicture}},
\qquad
\qquad
\hackcenter{
\begin{tikzpicture}[scale=0.8]
 \draw[thick, color=\clr,->] (-0.4,0.1)--(-0.4,-0.2);
  \draw[thick, color=\clr,->] (0.4,0.1)--(0.4,-0.2);
  \draw[thick, color=\clr,->] (-0.4,0)--(-0.4,-0.1) .. controls ++(0,-0.35) and ++(0,0.35) .. (0,-0.8)--(0,-1);
\draw[thick, color=\clr,->] (0.4,0)--(0.4,-0.1) .. controls ++(0,-0.35) and ++(0,0.35) .. (0,-0.8)--(0,-1);
      \node[above] at (-0.4,0.1) {$\scriptstyle a$};
      \node[above] at (0.4,0.1) {$\scriptstyle b$};
      \node[below] at (0,-1) {$\scriptstyle a+b$};
\end{tikzpicture}}
:=
\hackcenter{
\begin{tikzpicture}[scale=0.8]
\draw[thick, color=\clr,->] (1.2,1.4)--(1.2,1);
\draw[thick, color=\clr,->] (2,1.4)--(2,1);
\draw[thick, color=\clr,<-] (-0.8,-1.2)--(-0.8,-0.8);
  \draw[thick, color=\clr] (0.4,0)--(0.4,0.1) .. controls ++(0,0.35) and ++(0,-0.35) .. (0,0.8)--(0,1);
\draw[thick, color=\clr,->] (-0.4,0)--(-0.4,0.1) .. controls ++(0,0.35) and ++(0,-0.35) .. (0,0.8)--(0,1);
    \draw (1.2,0) arc (0:-180:0.4cm) [thick, color=\clr,->];
        \draw (2,0) arc (0:-180:1.2cm) [thick, color=\clr,->];
          \draw (0,1) arc (0:180:0.4cm) [thick, color=\clr];
          \draw[thick, color=\clr] (-0.8,1)--(-0.8,-1.2);
          \draw[thick, color=\clr] (1.2,1.4)--(1.2,0);
            \draw[thick, color=\clr] (2,1.4)--(2,0);
                 \node[above] at (1.2,1.4) {$\scriptstyle a$};
      \node[above] at (2,1.4) {$\scriptstyle b$};
      \node[below] at (-0.8,-1.2) {$\scriptstyle a+b$};
\end{tikzpicture}}.
\end{align*}
We define {\em downward crossings} like so:
\begin{align*}
\hackcenter{}
\hackcenter{
\begin{tikzpicture}[scale=0.8]
  \draw[thick, color=\clr,<-] (0.4,0)--(0.4,0.1) .. controls ++(0,0.35) and ++(0,-0.35) .. (-0.4,0.9)--(-0.4,1);
  \draw[thick, color=\clr,<-] (-0.4,0)--(-0.4,0.1) .. controls ++(0,0.35) and ++(0,-0.35) .. (0.4,0.9)--(0.4,1);
      \node[above] at (-0.4,1) {$\scriptstyle b$};
      \node[above] at (0.4,1) {$\scriptstyle a$};
            \node[below] at (-0.4,0) {$\scriptstyle a$};
      \node[below] at (0.4,0) {$\scriptstyle b$};
\end{tikzpicture}}
:=
\hackcenter{
\begin{tikzpicture}[scale=0.8]
  \draw[thick, color=\clr,->] (0.4,0)--(0.4,0.1) .. controls ++(0,0.35) and ++(0,-0.35) .. (-0.4,0.9)--(-0.4,1);
  \draw[thick, color=\clr,->] (-0.4,0)--(-0.4,0.1) .. controls ++(0,0.35) and ++(0,-0.35) .. (0.4,0.9)--(0.4,1);
    \draw[thick, color=\clr,<-] (-1.2,-1.2)--(-1.2,1);
       \draw[thick, color=\clr] (1.2,0)--(1.2,2.2);
           \draw[thick, color=\clr,<-] (-2,-1.2)--(-2,1);
       \draw[thick, color=\clr] (2,0)--(2,2.2);
       \draw (-0.4,1) arc (0:180:0.4cm) [thick, color=\clr];
        \draw (0.4,1) arc (0:180:1.2cm) [thick, color=\clr];
         \draw (1.2,0) arc (0:-180:0.4cm) [thick, color=\clr];
           \draw (2,0) arc (0:-180:1.2cm) [thick, color=\clr];
             \node[above] at (1.2,2.2) {$\scriptstyle b$};
      \node[above] at (2,2.2) {$\scriptstyle a$};
            \node[below] at (-1.2,-1.2) {$\scriptstyle b$};
      \node[below] at (-2,-1.2) {$\scriptstyle a$};
\end{tikzpicture}}.
\end{align*}

\subsection{Implied Relations for \texorpdfstring{$\pWeb_{\uparrow\downarrow}$}{p-webupdown}} The following theorem establishes a number of additional relations which follow from the defining relations of $\pWeb_{\uparrow \downarrow}$.  In particular, they show that diagrams which are the same as oriented graphs (which may have edges with tag-in and tag-out diagrams) are equal, up to a sign.  In particular, up to a sign, tag-in and tag-out morphisms move freely through crossings and along strands.

\begin{theorem}\label{OrBraidThm}
The following equalities hold in \(\pWeb_{\uparrow \downarrow}\) for all \(a,b,c \in \mathbb{Z}_{\geq 0}\) and all admissible strand orientations:
\begin{align}\label{O1}
\hackcenter{}
\hackcenter{
},
\end{align}
which completes the proof.
\end{proof}

For a nonnegative integer $a$ it will be convenient to adopt the notation \(|_{a}:= \uparrow_{a}\) and \(|_{-a}:= \downarrow_{a}\). More generally, for \(r \in \Z_{\geq 0}\) and \(\bba =(a_1, \ldots, a_{r}) \in \Z^{r}\), we will write
\begin{align*}
|_{\bba}:= |_{a_{1}} \cdots |_{a_{r}}
\end{align*} as a shorthand for the latter as an object of $\pWeb_{\uparrow\downarrow}$.  For example, $|_{(6,2,-9)} = \uparrow_{6}\uparrow_{2}\downarrow_{9}$.

The up, down, left, and right crossings can be used to define crossing morphisms,
\begin{align*}
\beta_{|_a |_b}: |_a |_b \to |_b |_a,
\end{align*}
for arbitrary \(a,b \in \Z\).  Just as with $\aWeb$ and $\pWeb$, we can use these morphisms to make oriented versions of \cref{E:generalcrossing} and to verify these make \(\pWeb_{\uparrow \downarrow}\) into a symmetric braided monoidal supercategory.

The relations \cref{OrStraightRel}, \cref{O7} show that $\uparrow_{a}$ and $\downarrow_{a}$ are left and right duals to each other with the cups and caps as the evaluation and coevaluation morphisms.  More generally, using ``rainbows'' constructed from leftward and rightward caps and cups we can also construct evaluation and coevaluation morphisms for general objects in $\pWeb_{\uparrow\downarrow}$.  For example, the evaluation and coevaluation morphisms for $\downarrow_{a}\uparrow_{b}\downarrow_{c}$ are

\begin{equation*}
\hackcenter{
\begin{tikzpicture}[scale=0.35]
		\node   (0) at (-4, 0) {};
		\node  (1) at (-4, -0.5) {$\scriptstyle c$};
		\node   (2) at (-3, 0) {};
		\node  (3) at (-3, -0.5) {$\scriptstyle b$};
		\node   (4) at (-2, 0) {};
		\node  (5) at (-2, -0.5) {$\scriptstyle a$};
		\node   (6) at (2, 0) {};
		\node  (7) at (2, -0.5) {$\scriptstyle a$};
		\node   (8) at (3, 0) {};
		\node  (9) at (3, -0.5) {$\scriptstyle b$};
		\node   (10) at (4, 0) {};
		\node  (11) at (4, -0.5) {$\scriptstyle c$};
		\draw [thick, color=\clr, ->, looseness=1.5]  (4.center) to [out=90,in=90] (6.center);
		\draw [thick, color=\clr, <-, looseness=1.5]  (2.center) to [out=90,in=90] (8.center);
		\draw [thick, color=\clr, ->, looseness=1.5]  (0.center) to [out=90,in=90] (10.center);
\end{tikzpicture}} \hspace{0.25in} \text{ and } \hspace{0.25in}
\hackcenter{
\begin{tikzpicture}[scale=0.35, yscale=-1]
		\node   (0) at (-4, 0) {};
		\node  (1) at (-4, -0.5) {$\scriptstyle a$};
		\node   (2) at (-3, 0) {};
		\node  (3) at (-3, -0.5) {$\scriptstyle b$};
		\node   (4) at (-2, 0) {};
		\node  (5) at (-2, -0.5) {$\scriptstyle c$};
		\node   (6) at (2, 0) {};
		\node  (7) at (2, -0.5) {$\scriptstyle c$};
		\node   (8) at (3, 0) {};
		\node  (9) at (3, -0.5) {$\scriptstyle b$};
		\node   (10) at (4, 0) {};
		\node  (11) at (4, -0.5) {$\scriptstyle a$};
		\draw [thick, color=\clr, ->, looseness=1.5]  (4.center) to [out=90,in=90] (6.center);
		\draw [thick, color=\clr, <-, looseness=1.5]  (2.center) to [out=90,in=90] (8.center);
		\draw [thick, color=\clr, ->, looseness=1.5]  (0.center) to [out=90,in=90] (10.center);
\end{tikzpicture}} \; .
\end{equation*}
Using these we see that $\pWeb_{\uparrow\downarrow}$ is in fact a rigid category.  Altogether we have the following result.

\begin{corollary}\label{C:pwebupdownbraiding}  The oriented crossing morphisms endow  \(\pWeb_{\uparrow \downarrow}\) with the structure of a symmetric braided monoidal supercategory and the evaluation and coevaluation morphisms endow $\pWeb_{\uparrow\downarrow}$ with the structure of a rigid supercategory.
\end{corollary}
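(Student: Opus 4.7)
The plan is to split the corollary into two parts and in each case reduce to relations that have already been packaged for us.

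For the braiding, the strategy is to build the crossing $\beta_{|_\bba,|_\bbb}$ on arbitrary objects via the oriented analogue of Equation \eqref{E:generalcrossing}: for each pair of adjacent single strands one inserts the appropriate up/down/left/right crossing, and the whole diagram is obtained by composing all such single-strand crossings in a standard ``sweep'' pattern. The first thing to check is that this is well defined -- i.e., that any two sweep patterns give the same morphism. This is the content of the braid and triangle identities in \eqref{O1}--\eqref{O4}, which were already established in Theorem \ref{OrBraidThm}; together with the cup/cap compatibilities \eqref{O5}--\eqref{O9} they imply that $\beta$ is natural with respect to all generators (splits, merges, caps, cups, and tags). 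Symmetry $\beta_{|_\bbb,|_\bba}\circ\beta_{|_\bba,|_\bbb}=\mathrm{id}$ follows for the purely upward case from \eqref{B1} applied in $\pWeb_{\uparrow\downarrow}$, and is then extended to arbitrary orientations using \eqref{LRCrossRel} together with the downward version obtained from it by \eqref{O7}.

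For rigidity, observe that on single-strand objects the data is already present: the leftward cap and cup of $\pWeb_{\uparrow\downarrow}$ together with their rightward counterparts serve as $\mathrm{ev}$ and $\mathrm{coev}$ witnessing $\downarrow_a$ as a left (and right) dual of $\uparrow_a$. The required snake identities are precisely \eqref{OrStraightRel} and the further straightening equalities assembled in \eqref{O7}, which are already proved. For a general object $|_{\bba}=|_{a_1}\cdots|_{a_r}$ we define $\mathrm{ev}$ and $\mathrm{coev}$ as nested rainbows built out of single-strand caps and cups. The zigzag identities then follow by induction on $r$: after one outer application of \eqref{O7} the outermost arc cancels and one is left with the zigzag for $|_{a_1}\cdots|_{a_{r-1}}$.

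The main obstacle, such as it is, lies in handling signs coming from the super-interchange law, because caps, cups, and tags are odd. In building the oriented crossing one must push an odd generator on one strand past an odd generator on another, and the order in which this is done affects sign. Fortunately the tag-transport equalities \eqref{O10}--\eqref{O13} and the tag-cancellation identities \eqref{O14} (used together with \eqref{TagToCupCap}) show that any two consistent sign conventions yield the same morphism, so the braiding and rigidity structures are unambiguous.

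Putting these pieces together gives both claims: the oriented crossings equip $\pWeb_{\uparrow\downarrow}$ with a symmetric braiding, and the rainbow evaluation/coevaluation morphisms endow it with a rigid structure compatible with the monoidal supercategory structure.
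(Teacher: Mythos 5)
Your argument is correct and follows essentially the same route as the paper: the general crossing is assembled as in the oriented analogue of \eqref{E:generalcrossing}, its braiding/symmetry properties are read off from \cref{OrBraidThm} (relations \eqref{O1}--\eqref{O9} for all orientations), and rigidity comes from \eqref{OrStraightRel} and \eqref{O7} with nested rainbow caps/cups for general objects. One minor remark: the sign discussion at the end is unnecessary (though harmless), since in $\pWeb_{\uparrow\downarrow}$ only the tag-in/tag-out generators are odd and the oriented crossings are built entirely from even generators, so no super-interchange signs arise in defining the braiding or the evaluation/coevaluation morphisms.
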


\subsection{Isomorphic Morphism Spaces in  \texorpdfstring{$\pWeb_{\uparrow\downarrow}$}{p-Webupdown}}\label{SS:IsomorphicHoms}  We next remind the reader of well-known arguments (e.g. \cite[Proposition 2.10.8]{EGNO}) which use the existence of the braiding morphisms in $\pWeb_{\uparrow\downarrow}$ to define isomorphisms between various morphism spaces.  Entirely analogous results obviously hold by the same arguments for $\aWeb$ and $\pWeb$.

The symmetric group \(\mathfrak{S}_r\) acts on \(\Z^r\) by place permutation:
\begin{align*}
\sigma \cdot \ba = \sigma \cdot (a_1, \ldots, a_r) = (a_{\sigma^{-1}(1)}, \ldots, a_{\sigma^{-1}(r)}).
\end{align*}
 The braiding morphism defines an associated invertible morphism
\begin{align*}
\beta_{\sigma,|_\ba} \in \Hom_{\pWeb_{\uparrow \downarrow}}(|_\ba, |_{\sigma \cdot \ba}),
\end{align*}
where \(\beta^{-1}_{\sigma, |_\ba} = \beta_{\sigma^{-1}, |_{\sigma \cdot \ba}} \). More generally, for \(r_1,r_2 \in \Z_{\geq 0}\), \(\bba \in \Z^{r_1}\), \(\bbb \in \Z^{r_2}\), \(\sigma \in \mathfrak{S}_{r_1}\), and \(\omega \in \mathfrak{S}_{r_2}\), we have an isomorphism of morphism spaces:
\begin{align}\label{SymIsom}
\Hom_{\pWeb_{\uparrow \downarrow}}(|_{\bba}, |_{\bbb}) \xrightarrow{\sim}
\Hom_{\pWeb_{\uparrow \downarrow}}(|_{ \sigma \cdot \bba}, |_{\omega \cdot \bbb}),
\end{align}
given by
\begin{align*}
f \mapsto     \beta_{\omega, |_\bbb} \circ f \circ  \beta_{\sigma^{-1}, |_{\sigma \cdot \bba}}.
\end{align*}

Let \(\omega_0 \in \mathfrak{S}_r\) be the longest element, so that \(\omega_{0}\cdot(a_1, \ldots,a_r) = (a_r, \ldots, a_1)\). Then for \(\bba \in \Z_{\geq 0}^{r_1}\), \(\bbb \in \Z_{\geq 0}^{r_2}\), \(\bbc \in \Z_{\geq 0}^{r_3}\), and \(\bbd \in \Z_{\geq 0}^{r_4}\), 
we have an isomorphism of Hom spaces:
\begin{align}\label{SplitToUp}
\Hom_{\pWeb_{\uparrow \downarrow}}(\downarrow_\bba \uparrow_\bbb, \uparrow_\bbc \downarrow_\bbd)
\xrightarrow{\sim}
\Hom_{\pWeb_\uparrow}(\uparrow_{\bbb} \uparrow_{\omega_0\bbd} ,\uparrow_{\omega_0 \bba} \uparrow_{\bbc}),
\end{align}
given by
\begin{align*}
\hackcenter{}
\hackcenter{
\begin{tikzpicture}[scale=0.8]
\draw[thick, color=\clr] (0,0)--(0,0.5)--(2.2,0.5)--(2.2,0)--(0,0);
 \draw[thick, color=\clr,->] (0.1,0.5)--(0.1,1);
  \draw[thick, color=\clr,->] (0.8,0.5)--(0.8,1);
    \draw[thick, color=\clr,<-] (1.4,0.5)--(1.4,1);
      \draw[thick, color=\clr,<-] (2.1,0.5)--(2.1,1);
      \draw[thick, color=\clr,<-] (0.1,-0.5)--(0.1,0);
  \draw[thick, color=\clr,<-] (0.8,-0.5)--(0.8,0);
    \draw[thick, color=\clr,->] (1.4,-0.5)--(1.4,0);
      \draw[thick, color=\clr,->] (2.1,-0.5)--(2.1,0);
      \node at (1.1,0.25) {$\scriptstyle f$};
         \node at (0.45,0.75) {$\scriptstyle \cdots$};
            \node at (1.75,0.75) {$\scriptstyle \cdots$};
               \node at (0.45,-0.25) {$\scriptstyle \cdots$};
                \node at (1.75,-0.25) {$\scriptstyle \cdots$};
                \node[above] at (0.1,1) {$\scriptstyle c_1$};
                \node[above] at (0.8,0.94) {$\scriptstyle c_{r_3}$};
                 \node[above] at (1.4,1) {$\scriptstyle d_1$};
                \node[above] at (2.1,0.92) {$\scriptstyle d_{r_4}$};
                 \node[below] at (0.1,-0.5) {$\scriptstyle a_1$};
                \node[below] at (0.8,-0.5) {$\scriptstyle a_{r_1}$};
                 \node[below] at (1.4,-0.44) {$\scriptstyle b_1$};
                \node[below] at (2.1,-0.44) {$\scriptstyle b_{r_2}$};
\end{tikzpicture}}
\;
\mapsto
\hackcenter{
\begin{tikzpicture}[scale=0.8]
\draw[thick, color=\clr] (0,0)--(0,0.5)--(2.2,0.5)--(2.2,0)--(0,0);
 \draw[thick, color=\clr,->] (0.1,0.5)--(0.1,2);
  \draw[thick, color=\clr,->] (0.8,0.5)--(0.8,2);
    \draw[thick, color=\clr,<-] (1.4,0.5)--(1.4,1);
      \draw[thick, color=\clr,<-] (2.1,0.5)--(2.1,1);
      \draw[thick, color=\clr] (0.1,-0.5)--(0.1,0);
  \draw[thick, color=\clr] (0.8,-0.5)--(0.8,0);
    \draw[thick, color=\clr,->] (1.4,-1.5)--(1.4,0);
      \draw[thick, color=\clr,->] (2.1,-1.5)--(2.1,0);
          \draw[thick, color=\clr,->] (-1.2,-0.5)--(-1.2,2);
           \draw[thick, color=\clr,->] (-0.5,-0.5)--(-0.5,2);
            \draw[thick, color=\clr] (2.7,-1.5)--(2.7,1);
            \draw[thick, color=\clr] (3.4,-1.5)--(3.4,1);
      \node at (1.1,0.25) {$\scriptstyle f$};
         \node at (0.45,1.25) {$\scriptstyle \cdots$};
            \node at (1.75,0.75) {$\scriptstyle \cdots$};
               \node at (0.45,-0.25) {$\scriptstyle \cdots$};
                \node at (1.75,-0.75) {$\scriptstyle \cdots$};
                 \node at (3.05,0.25) {$\scriptstyle \cdots$};
                 \node at (-0.85,0.25) {$\scriptstyle \cdots$};
                 \draw[thick, color=\clr] (2.7,1) arc (0:180:0.3cm) [thick, color=\clr];
                  \draw[thick, color=\clr] (3.4,1) arc (0:180:1cm) [thick, color=\clr];
                  \draw (0.1,-0.5) arc (0:-180:0.3cm) [thick, color=\clr];
                    \draw (0.8,-0.5) arc (0:-180:1cm) [thick, color=\clr];
                      \node[above] at (0.1,2) {$\scriptstyle c_1$};
                \node[above] at (0.8,1.94) {$\scriptstyle c_{r_3}$};
                 \node[below] at (3.4,-1.5) {$\scriptstyle d_1$};
                \node[below] at (2.7,-1.5) {$\scriptstyle d_{r_4}$};
                 \node[above] at (-0.5,2) {$\scriptstyle a_1$};
                \node[above] at (-1.2,1.94) {$\scriptstyle a_{r_1}$};
                 \node[below] at (1.4,-1.5) {$\scriptstyle b_1$};
                \node[below] at (2.1,-1.5) {$\scriptstyle b_{r_2}$};
\end{tikzpicture}}
\;,
\end{align*}
with inverse given by an entirely similar map, thanks to \cref{OrStraightRel}.

 Let \(\pWeb_{\uparrow}\) (resp.\  \(\pWeb_{\downarrow}\)) be the full subcategory of \(\pWeb_{\uparrow \downarrow}\) consisting of objects of the form \(\uparrow_{\bba}:=\uparrow_{a_1} \cdots \uparrow_{a_r}\) (resp.\   \(\downarrow_{\bba}:= \downarrow_{a_1} \cdots \downarrow_{a_r}\)) for all \(r \in \Z_{\geq 0}\) and \(\bba =(a_1, \ldots, a_r) \in \Z_{\geq 0}^r\).  Combining \cref{SymIsom} and \cref{SplitToUp} yields the following lemma.

\begin{lemma}\label{MixToUp}
Let \(\bba \in \Z^{r_1}\) and  \(\bbb \in \Z^{r_2}\). Then there exists \(\bbc \in \Z_{\geq 0}^{r_3}\), \(\bbd \in \Z_{\geq 0}^{r_4}\) such that \(|\bbc| + |\bbd| = |\bba| + |\bbb|\) and  there is a parity preserving isomorphism of morphism spaces
\begin{align*}
\Phi:\Hom_{\pWeb_{\uparrow \downarrow}}(|_{\bba}, |_{\bbb}) 
\xrightarrow{\sim}
\Hom_{\pWeb_{\uparrow}}(\uparrow_{\bbc}, \uparrow_{\bbd})
\end{align*}
given by
\begin{align*}
\Phi:f \mapsto \varphi_2 \circ f \circ \varphi_1,
\end{align*}
for some invertible morphisms
\begin{align*}
\varphi_1 \in \Hom_{\pWeb_{\uparrow \downarrow}}(\uparrow_{\bbc}, |_{\bba}) 
\qquad
\textup{and}
\qquad
\varphi_2 \in \Hom_{\pWeb_{\uparrow \downarrow}}(|_{\bbb}, \uparrow_{\bbd}).
\end{align*}
\end{lemma}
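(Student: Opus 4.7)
The strategy is to combine the two isomorphisms already established, namely the place-permutation isomorphism in \cref{SymIsom} and the bending isomorphism in \cref{SplitToUp}, to reduce an arbitrary mixed-orientation Hom space to a purely upward-oriented one.

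First, I would use \cref{SymIsom} to sort the arrows. Write $\bba = (a_1,\ldots,a_{r_1})$ and let $\bba^+$ (resp.\ $\bba^-$) be the tuple of $|a_i|$ for those indices $i$ with $a_i \geq 0$ (resp.\ $a_i < 0$), preserving the relative order. Pick $\sigma \in \mathfrak{S}_{r_1}$ so that $\sigma \cdot \bba$ corresponds to the object $\downarrow_{\bba^-}\uparrow_{\bba^+}$, and analogously pick $\omega \in \mathfrak{S}_{r_2}$ so that $\omega \cdot \bbb$ corresponds to $\uparrow_{\bbb^+}\downarrow_{\bbb^-}$. Conjugation by the braidings $\beta_{\sigma,|_\bba}$ and $\beta_{\omega,|_\bbb}$ then yields a parity-preserving isomorphism
\[
\Hom_{\pWeb_{\uparrow \downarrow}}(|_\bba, |_\bbb) \xrightarrow{\sim} \Hom_{\pWeb_{\uparrow \downarrow}}\bigl(\downarrow_{\bba^-}\uparrow_{\bba^+},\, \uparrow_{\bbb^+}\downarrow_{\bbb^-}\bigr).
\]
The braiding morphisms are even (by \cref{C:pwebupdownbraiding}), so this step preserves parity.

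Next I would apply \cref{SplitToUp} directly to the right-hand side with the four tuples $(\bba^-,\bba^+,\bbb^+,\bbb^-)$ playing the roles of $(\bba,\bbb,\bbc,\bbd)$ in that statement. This produces an isomorphism onto $\Hom_{\pWeb_\uparrow}(\uparrow_{\bba^+}\uparrow_{\omega_0\bbb^-},\, \uparrow_{\omega_0\bba^-}\uparrow_{\bbb^+})$, realized by pre- and post-composition with the ``rainbow'' built from leftward caps and cups (combined with upward crossings in the definition of the rightward caps/cups). One then sets
\[
\bbc := (\bba^+,\, \omega_0 \bbb^-), \qquad \bbd := (\omega_0 \bba^-,\, \bbb^+),
\]
and observes $|\bbc|+|\bbd| = |\bba^+|+|\bba^-|+|\bbb^+|+|\bbb^-| = |\bba|+|\bbb|$, as required.

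Composing the two isomorphisms expresses $\Phi$ as $f \mapsto \varphi_2 \circ f \circ \varphi_1$ for invertible morphisms $\varphi_1 \in \Hom_{\pWeb_{\uparrow\downarrow}}(\uparrow_\bbc, |_\bba)$ and $\varphi_2 \in \Hom_{\pWeb_{\uparrow\downarrow}}(|_\bbb, \uparrow_\bbd)$, where $\varphi_1$ and $\varphi_2$ are each built from a braiding followed (or preceded) by the appropriate rainbow of rightward caps/cups. There is essentially no obstacle to this argument; the only point requiring a small sanity check is parity. Since braidings are even and the rightward caps and cups used in the bending are defined without recourse to the odd tag-in/tag-out or antenna generators (they are compositions of upward splits/merges, upward crossings, and leftward caps/cups, all of which are declared even), both $\varphi_1$ and $\varphi_2$ are even. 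Hence $\Phi$ is parity preserving, completing the proof.
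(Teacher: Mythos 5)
Your proposal is correct and follows exactly the route the paper takes: the paper obtains this lemma precisely by combining the permutation isomorphism \cref{SymIsom} with the bending isomorphism \cref{SplitToUp}, just as you do, and your parity check (braidings and rightward caps/cups being even since only tag-in/tag-out are odd) is the right sanity check.
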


Given a supercategory $\mathcal{B}$, let $\mathcal{B}^{\textup{sop}}$ be the category with the same objects and morphisms as $\mathcal{B}$ but with composition given by $\alpha \bullet \beta := (-1)^{\p{\alpha}\p{\beta}}\beta \circ \alpha$ for all homogeneous morphisms $\alpha$ and $\beta$ in $\mathcal{B}$. 
Let \(\textup{refl}:\pWeb_{\uparrow \downarrow} \to \pWeb_{\uparrow \downarrow}\) be the involutive contravariant superfunctor (in the sense of \cite{BE, LEP}) given by \(D \mapsto (-1)^{k(k-1)/2}D'\) on diagrams, where \(D'\) is the reflection of \(D\) along a horizontal axis, and \(k\) is the number of tag-in and tag-out generators in \(D\). It is easily checked that this is well-defined using \cref{OrBraidThm}. The following lemma is immediate.

\begin{lemma}
The contravariant superfunctor \(\textup{refl}\) is an equivalence of supercategories \(\pWeb_{\uparrow \downarrow} \to \pWeb_{\uparrow \downarrow}^{\textup{sop}}\) and restricts to an equivalence \(\pWeb_{\uparrow} \to \pWeb_{\downarrow}^{\textup{sop}}\).
\end{lemma}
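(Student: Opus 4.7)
The plan is to verify three things in turn: that \(\textup{refl}\) is well-defined on morphisms, that it satisfies the super-contravariant functor axiom, and that it is strictly involutive. The equivalence follows immediately from the last two, and the restriction statement will follow from the fact that vertical reflection interchanges upward and downward strands.

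First, I would check well-definedness by inspecting each defining relation of \(\pWeb_{\uparrow\downarrow}\) and confirming that its image under vertical reflection is itself a consequence of the defining relations. Under reflection, upward splits swap with upward merges, leftward caps swap with leftward cups, and tag-in swaps with tag-out (both parity \(\bar 1\)). The relation set is essentially reflection-symmetric: the two parts of \cref{OrStraightRel} and the two parts of \cref{LRCrossRel} are mutual reflections, web associativity \cref{AssocRel} and cup/rung swap \cref{CupDiagSwitchRel2} reflect to variants already on the defining list (or follow from them combined with \cref{OrBraidThm}), and \cref{BubbleRel} is visibly self-reflected. Because every relation has equal tag count \(k\) on its two sides, the prefactor \((-1)^{k(k-1)/2}\) is common to both sides and cancels, so it causes no obstruction at this stage.

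Second, for super-contravariance, I would argue that for composable homogeneous morphisms \(\alpha,\beta\) with \(k_\alpha, k_\beta\) tags respectively, the composite \(\alpha\circ\beta\) has \(k_\alpha+k_\beta\) tags, and the elementary identity
\begin{align*}
\tfrac{(k_\alpha+k_\beta)(k_\alpha+k_\beta-1)}{2} \equiv \tfrac{k_\alpha(k_\alpha-1)}{2} + \tfrac{k_\beta(k_\beta-1)}{2} + k_\alpha k_\beta \pmod{2},
\end{align*}
together with \(|\alpha|\equiv k_\alpha\pmod 2\), delivers
\begin{align*}
\textup{refl}(\alpha\circ\beta) = (-1)^{|\alpha||\beta|}\, \textup{refl}(\beta)\circ\textup{refl}(\alpha),
\end{align*}
which is exactly the super-contravariance identity. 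Compatibility with the monoidal product is immediate from the fact that horizontal reflection of a juxtaposition is the juxtaposition of the reflections, with the tag-count sign again distributing correctly under the super-interchange law.

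Third, reflecting any diagram twice restores the original diagram, and the accumulated sign is \((-1)^{k(k-1)} = 1\); hence \(\textup{refl}\circ\textup{refl} = \textup{id}_{\pWeb_{\uparrow\downarrow}}\), so \(\textup{refl}\) is its own inverse and therefore an equivalence \(\pWeb_{\uparrow\downarrow}\to\pWeb_{\uparrow\downarrow}^{\textup{sop}}\). Because refl sends an object \(\uparrow_a\) to \(\downarrow_a\) and conversely, it restricts to the claimed equivalence \(\pWeb_\uparrow\to\pWeb_\downarrow^{\textup{sop}}\). The main obstacle will be the bookkeeping in the first step: systematically going through each defining relation and its reflection. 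I expect this to be entirely routine, since the relation set was chosen with reflection symmetry in mind, but it is where the actual verification work lives.
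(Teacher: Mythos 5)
Your overall plan (check the defining relations, verify super-contravariance via the binomial identity, note involutivity, conclude) is the same route the paper takes — the paper simply observes well-definedness "using \cref{OrBraidThm}" and calls the rest immediate — and your contravariance computation and the involutivity argument are correct. The problem is the specific claim you use to dispose of the sign in the well-definedness step: it is not true that every defining relation has the same tag count in all of its terms. The up-arrow straightening relation (the all-upward version of \cref{StraightRel}, which is among the up-arrow relations) equates two zigzag diagrams with the identity strand; each upward cap and upward cup is by definition a leftward cap or cup composed with a tag, so the two zigzag terms contain $k=2$ tags while the identity term contains $k=0$. The prefactors $(-1)^{k(k-1)/2}$ are therefore $-1$ and $+1$ respectively and do not cancel. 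In fact the sign is precisely what saves the relation: vertical reflection interchanges the two zigzags, which differ by a sign in \cref{StraightRel}, and the prefactor $-1$ on the reflected zigzags is exactly what makes $\textup{refl}$ of that relation again a relation. So the sign is an essential ingredient of the check, not "no obstruction," and an argument that waves it away on the grounds of equal tag counts would either stall or appear to produce a contradiction at this relation.

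The fix is small but it is where the verification actually lives: for each defining relation you must reflect each term, record its individual prefactor $(-1)^{k(k-1)/2}$, and then use the defining relations together with \cref{OrBraidThm} (in particular the moves \cref{O10}–\cref{O14} letting tags slide along strands and through caps, cups, and crossings, and \cref{O7}) to identify the reflected, signed combination with a known relation. For the relations not involving tags (\cref{AssocRel}, \cref{DiagSwitchRel}, \cref{OrStraightRel}, \cref{LRCrossRel}, \cref{BubbleRel}) your symmetry observation is fine; for the tag-bearing up-arrow relations (\cref{StraightRel}, \cref{AntRel}, \cref{CapDiagSwitchRel1}–\cref{CupDiagSwitchRel2}) the parity bookkeeping has to be done term by term as above. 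With that correction the rest of your argument (super-contravariance, $\textup{refl}\circ\textup{refl}=\mathrm{id}$, and the restriction $\pWeb_\uparrow\to\pWeb_\downarrow^{\textup{sop}}$ since $\textup{refl}$ swaps $\uparrow_a$ and $\downarrow_a$) goes through as you wrote it.
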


\subsection{Connecting  \texorpdfstring{$\pWeb$}{p-Web} to  \texorpdfstring{$\pWeb_{\uparrow \downarrow}$}{p-Webupdown}}

\begin{lemma}\label{UpFun}
There is a well-defined functor of monoidal supercategories
\begin{align*}
\iota_{\uparrow}: \pWeb \to \pWeb_{\uparrow}
\end{align*}
given on objects by \(\iota_\uparrow(k) = \uparrow_k\) and on morphisms by
\begin{align*}
\hackcenter{
{}
}
\hackcenter{
\begin{tikzpicture}[scale=0.8]
  \draw[thick, color=\clr] (0,-0.1)--(0,0.2) .. controls ++(0,0.35) and ++(0,-0.35) .. (-0.4,0.9)--(-0.4,1);
  \draw[thick, color=\clr] (0,-0.1)--(0,0.2) .. controls ++(0,0.35) and ++(0,-0.35) .. (0.4,0.9)--(0.4,1);
      \node[above] at (-0.4,1) {$\scriptstyle a$};
      \node[above] at (0.4,1) {$\scriptstyle b$};
      \node[below] at (0,-0.1) {$\scriptstyle a+b$};
\end{tikzpicture}}
\mapsto 
\hackcenter{
\begin{tikzpicture}[scale=0.8]
\draw[thick, color=\clr,->] (0,-0.1)--(0,0.2);
  \draw[thick, color=\clr,->] (0,0)--(0,0.2) .. controls ++(0,0.35) and ++(0,-0.35) .. (-0.4,0.9)--(-0.4,1);
  \draw[thick, color=\clr,->] (0,0)--(0,0.2) .. controls ++(0,0.35) and ++(0,-0.35) .. (0.4,0.9)--(0.4,1);
      \node[above] at (-0.4,1) {$\scriptstyle a$};
      \node[above] at (0.4,1) {$\scriptstyle b$};
      \node[below] at (0,-0.1) {$\scriptstyle a+b$};
\end{tikzpicture}},
\qquad
\hackcenter{
\begin{tikzpicture}[scale=0.8]
  \draw[thick, color=\clr] (-0.4,-0.1)--(-0.4,0.1) .. controls ++(0,0.35) and ++(0,-0.35) .. (0,0.8)--(0,1);
\draw[thick, color=\clr] (0.4,-0.1)--(0.4,0.1) .. controls ++(0,0.35) and ++(0,-0.35) .. (0,0.8)--(0,1);
      \node[below] at (-0.4,-0.1) {$\scriptstyle a$};
      \node[below] at (0.4,-0.1) {$\scriptstyle b$};
      \node[above] at (0,1) {$\scriptstyle a+b$};
\end{tikzpicture}}
\mapsto
\hackcenter{
\begin{tikzpicture}[scale=0.8]
 \draw[thick, color=\clr,->] (-0.4,-0.1)--(-0.4,0.2);
  \draw[thick, color=\clr,->] (0.4,-0.1)--(0.4,0.2);
  \draw[thick, color=\clr,->] (-0.4,0)--(-0.4,0.1) .. controls ++(0,0.35) and ++(0,-0.35) .. (0,0.8)--(0,1);
\draw[thick, color=\clr,->] (0.4,0)--(0.4,0.1) .. controls ++(0,0.35) and ++(0,-0.35) .. (0,0.8)--(0,1);
      \node[below] at (-0.4,-0.1) {$\scriptstyle a$};
      \node[below] at (0.4,-0.1) {$\scriptstyle b$};
      \node[above] at (0,1) {$\scriptstyle a+b$};
\end{tikzpicture}},
\qquad
\hackcenter{
\begin{tikzpicture}[scale=0.8]
 \draw (0,0) arc (0:180:0.4cm) [thick, color=\clr];
   \draw[thick, color=\clr] (-0.8,-0.3)--(-0.8,0);
     \draw[thick, color=\clr] (0,-0.3)--(0,0);
         \node[below] at (0,-0.3) {$\scriptstyle 1$};
      \node[below] at (-0.8,-0.3) {$\scriptstyle 1$};
  \node at (-0.4,0.4) {$\scriptstyle\blacklozenge$};
\end{tikzpicture}}
\mapsto
\hackcenter{
\begin{tikzpicture}[scale=0.8]
 \draw (0,0) arc (0:180:0.4cm) [thick, color=\clr];
   \draw[thick, color=\clr] (-0.8,-0.3)--(-0.8,0);
     \draw[thick, color=\clr] (0,-0.3)--(0,0);
      \draw[thick, color=\clr,->] (-0.8,-0.3)--(-0.8,0);
     \draw[thick, color=\clr,->] (0,-0.3)--(0,0);
         \node[below] at (0,-0.3) {$\scriptstyle 1$};
      \node[below] at (-0.8,-0.3) {$\scriptstyle 1$};
  \node at (-0.4,0.4) {$\scriptstyle\blacklozenge$};
\end{tikzpicture}},
\qquad
\hackcenter{
\begin{tikzpicture}[scale=0.8]
 \draw (0,0) arc (0:-180:0.4cm) [thick, color=\clr];
   \draw[thick, color=\clr] (-0.8,0.3)--(-0.8,0);
     \draw[thick, color=\clr] (0,0.3)--(0,0);
         \node[above] at (0,0.3) {$\scriptstyle 1$};
      \node[above] at (-0.8,0.3) {$\scriptstyle 1$};
  \node at (-0.4,-0.4) {$\scriptstyle\blacklozenge$};
\end{tikzpicture}}
\mapsto
\hackcenter{
\begin{tikzpicture}[scale=0.8]
 \draw (0,0) arc (0:-180:0.4cm) [thick, color=\clr];
   \draw[thick, color=\clr] (-0.8,0.3)--(-0.8,0);
     \draw[thick, color=\clr] (0,0.3)--(0,0);
      \draw[thick, color=\clr,<-] (-0.8,0.3)--(-0.8,0.2);
     \draw[thick, color=\clr,<-] (0,0.3)--(0,0.2);
         \node[above] at (0,0.3) {$\scriptstyle 1$};
      \node[above] at (-0.8,0.3) {$\scriptstyle 1$};
  \node at (-0.4,-0.4) {$\scriptstyle\blacklozenge$};
\end{tikzpicture}}.
\end{align*}
\end{lemma}

\begin{proof}The theorem follows immediately from the defining relations of $\pWeb$ and $\pWeb_{\uparrow\downarrow}$. 
\end{proof}

\begin{theorem}\label{iotaFull}
The functor \(\iota_\uparrow:\pWeb \to \pWeb_\uparrow\) is full.
\end{theorem}
\begin{proof}
We begin by proving a claim.

{\em Claim:} Let \(\bba \in \Z_{\geq 0}^{r}\), \(\bbb \in \Z_{\geq 0}^s\), with \(a_1 = b_1=k\). If \(f \in \Hom_{\pWeb_\uparrow}(\uparrow_\bba, \uparrow_\bbb)\) is in the image of \(\iota_\uparrow\), then the morphism 
\begin{align*}
\hackcenter{}
\hackcenter{
\begin{tikzpicture}[scale=0.8]
\draw[thick, color=\clr] (0,0)--(0,0.5)--(1.5,0.5)--(1.5,0)--(0,0);
 \draw[thick, color=\clr] (0.1,0.5)--(0.1,0.7);
  \draw[thick, color=\clr,->] (0.5,0.5)--(0.5,1);
    \draw[thick, color=\clr,->] (1.4,0.5)--(1.4,1);
      \draw[thick, color=\clr,->] (0.1,-0.2)--(0.1,0);
  \draw[thick, color=\clr,->] (0.5,-0.5)--(0.5,0);
    \draw[thick, color=\clr,->] (1.4,-0.5)--(1.4,0);
         \draw (0.1,0.7) arc (0:180:0.4cm) [thick, color=\clr];
          \draw (0.1,-0.2) arc (0:-180:0.4cm) [thick, color=\clr];
            \draw[thick, color=\clr] (-0.7,-0.2)--(-0.7,0.7);
      \node at (0.75,0.25) {$\scriptstyle f$};
         \node at (0.95,0.75) {$\scriptstyle \cdots$};
            \node at (0.95,-0.25) {$\scriptstyle \cdots$};
                     \node[left] at (-0.7,0.25) {$\scriptstyle k$};
                \node[above] at (0.5,1) {$\scriptstyle b_{2}$};
                 \node[above] at (1.4,1) {$\scriptstyle b_s$};
                \node[below] at (0.5,-0.5) {$\scriptstyle a_2$};
                 \node[below] at (1.4,-0.5) {$\scriptstyle a_r$};
\end{tikzpicture}}
\end{align*}
is also in the image of \(\iota_\uparrow\).

We prove the Claim by induction on \(k\), with the base case \(k=0\) being trivial. Let \(k>0\) and assume the claim holds for all \(m<k\). By  \cref{BasisThm}, we may assume that \(f\) is of the form \(\iota_\uparrow(\xi)\) for some \(\xi \in \mathscr{B}\). After an isotopy of the strands in \(\iota_\uparrow(\xi)\), we may write
\begin{align}\label{Lk1}
\hackcenter{}
\hackcenter{
\begin{tikzpicture}[scale=0.8]
\draw[thick, color=\clr] (0,0)--(0,0.5)--(1.5,0.5)--(1.5,0)--(0,0);
 \draw[thick, color=\clr] (0.1,0.5)--(0.1,0.7);
  \draw[thick, color=\clr,->] (0.5,0.5)--(0.5,1);
    \draw[thick, color=\clr,->] (1.4,0.5)--(1.4,1);
      \draw[thick, color=\clr,->] (0.1,-0.2)--(0.1,0);
  \draw[thick, color=\clr,->] (0.5,-0.5)--(0.5,0);
    \draw[thick, color=\clr,->] (1.4,-0.5)--(1.4,0);
         \draw (0.1,0.7) arc (0:180:0.4cm) [thick, color=\clr];
          \draw (0.1,-0.2) arc (0:-180:0.4cm) [thick, color=\clr];
            \draw[thick, color=\clr] (-0.7,-0.2)--(-0.7,0.7);
      \node at (0.75,0.25) {$\scriptstyle f$};
         \node at (0.95,0.75) {$\scriptstyle \cdots$};
            \node at (0.95,-0.25) {$\scriptstyle \cdots$};
                     \node[left] at (-0.7,0.25) {$\scriptstyle k$};
                \node[above] at (0.5,1) {$\scriptstyle b_{2}$};
                 \node[above] at (1.4,1) {$\scriptstyle b_s$};
                \node[below] at (0.5,-0.5) {$\scriptstyle a_2$};
                 \node[below] at (1.4,-0.5) {$\scriptstyle a_r$};
\end{tikzpicture}}
=
\pm 
\hackcenter{}
\hackcenter{
\begin{tikzpicture}[scale=0.8]
\draw[thick, color=\clr, fill=white] (0.6,-1)--(0.6,-0.5)--(2.1,-0.5)--(2.1,-1)--(0.6,-1);
\draw[thick, color=\clr, fill=white] (1,-0.5)--(1,1)--(2.1,1)--(2.1,-0.5)--(1,-0.5);
\draw[thick, color=\clr, fill=white] (0.6,1)--(0.6,1.5)--(2.1,1.5)--(2.1,1)--(0.6,1);
  \draw[thick, color=\clr] (-0.1,-0.5)--(-0.1,-0.3) .. controls ++(0,0.35) and ++(0,-0.35) .. (0.7,0.8)--(0.7,1);
   \draw[thick, color=\clr] (0.7,-0.5)--(0.7,-0.3) .. controls ++(0,0.35) and ++(0,-0.35) .. (-0.1,0.8)--(-0.1,1);
  \draw[thick, color=\clr,->] (0.7,0.8)--(0.7,1);
  \draw[thick, color=\clr,->] (0.7,-0.5)--(0.7,-0.3);
      \draw[thick, color=\clr,->] (0.7,-1.3)--(0.7,-1);
       \draw[thick, color=\clr,->] (2,-1.3)--(2,-1);
             \draw[thick, color=\clr,->] (0.7,1.5)--(0.7,1.8);
       \draw[thick, color=\clr,->] (2,1.5)--(2,1.8);
         \draw (-0.1,1) arc (0:180:0.4cm) [thick, color=\clr];
          \draw (-0.1,-0.5) arc (0:-180:0.4cm) [thick, color=\clr];
            \draw[thick, color=\clr,<-] (-0.9,-0.5)--(-0.9,1);
               \draw[thick, color=\clr] (-0.1,-0.5)--(-0.1,1);
                \draw[thick, color=\clr,->] (-0.1,-0.5)--(-0.1,0.35);
                \node at (1.35,-0.75) {$\scriptstyle f_1$};
      \node at (1.55,0.25) {$\scriptstyle f_2$};
       \node at (1.35,1.25) {$\scriptstyle f_3$};
         \node at (1.35,1.65) {$\scriptstyle \cdots$};
            \node at (1.35,-1.15) {$\scriptstyle \cdots$};
                     \node[left] at (-0.9,0.25) {$\scriptstyle k$};
                     \node[left] at (-0.1,0.25) {$\scriptstyle k'$};
                                 \node[above] at (0.7,1.8) {$\scriptstyle b_{2}$};
                 \node[above] at (2,1.8) {$\scriptstyle b_s$};
                \node[below] at (0.7,-1.3) {$\scriptstyle a_2$};
                 \node[below] at (2,-1.3) {$\scriptstyle a_r$};
\end{tikzpicture}}\,,
\end{align}
for some morphisms \(f_1,f_2, f_3\) in the image of \(\iota_\uparrow\), and some \(k' \leq k\). If \(k'=k\), then the diagram has a bubble, and is thus zero by \cref{BubbleRel}. If \(k'=0\), then the loop can be untwisted, using \cref{OrBraidThm}. So we assume now that \(0<k'<k\). Using \cref{O8,O9}, we may rewrite \cref{Lk1} as:
\begin{align*}
\pm 
\hackcenter{}
\hackcenter{
\begin{tikzpicture}[scale=0.8]
\draw[thick, color=\clr, fill=white] (0.6,-1)--(0.6,-0.5)--(2.1,-0.5)--(2.1,-1)--(0.6,-1);
\draw[thick, color=\clr, fill=white] (1,-0.5)--(1,1)--(2.1,1)--(2.1,-0.5)--(1,-0.5);
\draw[thick, color=\clr, fill=white] (0.6,1)--(0.6,1.5)--(2.1,1.5)--(2.1,1)--(0.6,1);
  \draw[thick, color=\clr] (-0.1,-0.5)--(-0.1,-0.3) .. controls ++(0,0.35) and ++(0,-0.35) .. (0.7,0.8)--(0.7,1);
   \draw[thick, color=\clr] (0.7,-0.5)--(0.7,-0.3) .. controls ++(0,0.35) and ++(0,-0.35) .. (-0.1,0.8)--(-0.1,1);
     \draw[thick, color=\clr] (-0.1,-1.9) .. controls ++(0,0.35) and ++(0,-0.35) ..  (-0.5,-1.3)--(-0.5,-1.1) .. controls ++(0,0.35) and ++(0,-0.35) .. (-0.1,-0.5);
      \draw[thick, color=\clr] (-0.9,-1.9) .. controls ++(0,0.35) and ++(0,-0.35) ..  (-0.5,-1.3)--(-0.5,-1.1) .. controls ++(0,0.35) and ++(0,-0.35) .. (-0.9,-0.5);
        \draw[thick, color=\clr,->] (-0.5,-1.2)--(-0.5,-1.1);
  \draw[thick, color=\clr,->] (0.7,0.8)--(0.7,1);
  \draw[thick, color=\clr,->] (0.7,-0.5)--(0.7,-0.3);
      \draw[thick, color=\clr,->] (0.7,-3.1)--(0.7,-1);
       \draw[thick, color=\clr,->] (2,-3.1)--(2,-1);
             \draw[thick, color=\clr,->] (0.7,1.5)--(0.7,2.2);
       \draw[thick, color=\clr,->] (2,1.5)--(2,2.2);
         \draw (-0.9,1) arc (0:180:0.4cm) [thick, color=\clr];
           \draw (-0.1,1) arc (0:180:1.2cm) [thick, color=\clr];
            \draw (-0.1,-1.9) arc (0:-180:1.2cm) [thick, color=\clr];
          \draw (-0.9,-1.9) arc (0:-180:0.4cm) [thick, color=\clr];
            \draw[thick, color=\clr] (-0.9,-0.5)--(-0.9,1);
                \draw[thick, color=\clr] (-0.9,-0.5)--(-0.9,1);
                \draw[thick, color=\clr,->] (-0.9,-0.5)--(-0.9,0.35);
                \draw[thick, color=\clr,<-] (-1.7,-1.9)--(-1.7,1);
                \draw[thick, color=\clr,<-] (-2.5,-1.9)--(-2.5,1);
                \node at (1.35,-0.75) {$\scriptstyle f_1$};
      \node at (1.55,0.25) {$\scriptstyle f_2$};
       \node at (1.35,1.25) {$\scriptstyle f_3$};
         \node at (1.35,1.85) {$\scriptstyle \cdots$};
            \node at (1.35,-2.1) {$\scriptstyle \cdots$};
                     \node[left] at (-1.7,-1.6) {$\scriptstyle k'$};
                      \node[left] at (-2.5,-1.6) {$\scriptstyle k-k'$};
                     \node[left] at (-0.5,-1.1) {$\scriptstyle k$};
                          \node[above] at (0.7,2.2) {$\scriptstyle b_{2}$};
                 \node[above] at (2,2.2) {$\scriptstyle b_s$};
                \node[below] at (0.7,-3.1) {$\scriptstyle a_2$};
                 \node[below] at (2,-3.1) {$\scriptstyle a_r$};
\end{tikzpicture}}
=
\pm
\hackcenter{
\begin{tikzpicture}[scale=0.8]
\draw[thick, color=\clr] (-0.8,0)--(-0.8,0.5)--(1.5,0.5)--(1.5,0)--(-0.8,0);
 \draw[thick, color=\clr] (0.1,0.5)--(0.1,0.7);
       \draw[thick, color=\clr,->] (0.1,-0.2)--(0.1,0);
        \draw[thick, color=\clr] (-0.7,0.5)--(-0.7,0.7);
       \draw[thick, color=\clr,->] (-0.7,-0.2)--(-0.7,0);
  \draw[thick, color=\clr,->] (0.5,0.5)--(0.5,1.9);
    \draw[thick, color=\clr,->] (1.4,0.5)--(1.4,1.9);
  \draw[thick, color=\clr,->] (0.5,-1.4)--(0.5,0);
    \draw[thick, color=\clr,->] (1.4,-1.4)--(1.4,0);
         \draw (0.1,0.7) arc (0:180:1.2cm) [thick, color=\clr];
          \draw (0.1,-0.2) arc (0:-180:1.2cm) [thick, color=\clr];
              \draw (-0.7,0.7) arc (0:180:0.4cm) [thick, color=\clr];
          \draw (-0.7,-0.2) arc (0:-180:0.4cm) [thick, color=\clr];
            \draw[thick, color=\clr,<-] (-1.5,-0.2)--(-1.5,0.7);
            \draw[thick, color=\clr,<-] (-2.3,-0.2)--(-2.3,0.7);
      \node at (0.35,0.25) {$\scriptstyle f$};
         \node at (0.95,1.2) {$\scriptstyle \cdots$};
            \node at (0.95,-0.7) {$\scriptstyle \cdots$};
                     \node[left] at (-1.5,0.25) {$\scriptstyle k'$};
                         \node[left] at (-2.3,0.25) {$\scriptstyle k-k'$};
                \node[above] at (0.5,1.9) {$\scriptstyle b_{2}$};
                 \node[above] at (1.4,1.9) {$\scriptstyle b_s$};
                \node[below] at (0.5,-1.4) {$\scriptstyle a_2$};
                 \node[below] at (1.4,-1.4) {$\scriptstyle a_r$};
\end{tikzpicture}}\,,
\end{align*}
where \(g\) is a morphism in the image of \(\iota_\uparrow\). Now, applying the inductive assumption for \(k'\), and then for \(k-k'\) gives the result, proving the Claim.

Now we prove the lemma. Let \(f\) be a diagram in \(\pWeb_{\uparrow}\). Using \cref{TagToCupCap}, we may assume \(f\) is composed only of upward splits, upward merges, leftward/rightward/upward cups, leftward/rightward/upward caps, and crossings of all orientations. Let \(c\) be the number of leftward/rightward cups in \(f\). We prove by induction on \(c\) that \(f\) is in the image of \(\iota_\uparrow\).

If \(c=0\), then since the domain and codomain are composed only of up-arrows, it must be that there are no downward strands in \(f\), so \(f\) is composed only of upward splits, upward merges, upward cups, upward caps, and upward crossings, and hence \(f\) is in the image of \(\iota_\uparrow\).

Now, for the induction step, assume \(c>0\). Select any leftward/rightward cup in \(f\). Again, since the domain and codomain are composed only of up-arrows, it must be that the downward strand leaving from the cup must lead into a leftward/rightward cap in \(f\). Then, using  \cref{OrBraidThm}, we may pull the downward strand to the left side of the diagram, giving a diagram of the form
\begin{align*}
\hackcenter{}
\hackcenter{
\begin{tikzpicture}[scale=0.8]
\draw[thick, color=\clr] (0,0)--(0,0.5)--(1.5,0.5)--(1.5,0)--(0,0);
 \draw[thick, color=\clr] (0.1,0.5)--(0.1,0.7);
  \draw[thick, color=\clr,->] (0.5,0.5)--(0.5,1);
    \draw[thick, color=\clr,->] (1.4,0.5)--(1.4,1);
      \draw[thick, color=\clr,->] (0.1,-0.2)--(0.1,0);
  \draw[thick, color=\clr,->] (0.5,-0.5)--(0.5,0);
    \draw[thick, color=\clr,->] (1.4,-0.5)--(1.4,0);
         \draw (0.1,0.7) arc (0:180:0.4cm) [thick, color=\clr];
          \draw (0.1,-0.2) arc (0:-180:0.4cm) [thick, color=\clr];
            \draw[thick, color=\clr] (-0.7,-0.2)--(-0.7,0.7);
      \node at (0.75,0.25) {$\scriptstyle g$};
         \node at (0.95,0.75) {$\scriptstyle \cdots$};
            \node at (0.95,-0.25) {$\scriptstyle \cdots$};
                     \node[left] at (-0.7,0.25) {$\scriptstyle k$};
                \node[above] at (0.5,1) {$\scriptstyle b_{1}$};
                 \node[above] at (1.4,1) {$\scriptstyle b_s$};
                \node[below] at (0.5,-0.5) {$\scriptstyle a_1$};
                 \node[below] at (1.4,-0.5) {$\scriptstyle a_r$};
\end{tikzpicture}}\,,
\end{align*}
where \(g\) is a diagram in \(\pWeb_\uparrow\) with \(c-1\) leftward/rightward cups. By the induction assumption, \(g\) is in the image of \(\iota_\uparrow\). Thus, by the Claim \(f\) itself is in the image of \(\iota_\uparrow\), completing the proof.
\end{proof}
We will show in \cref{T:pWebtopWebup} that $\iota_{\uparrow}$ is also faithful.

\section{The Lie Superalgebra of Type P}\label{S:LieSuperalgebraofTypeP}

\subsection{The Lie Superalgebra of Type P}\label{SS:LSAoftypeP}

In this section let $\k$ be a field of characteristic different from two.  Let $I=I_{n|n}$ be the index set $\left\{1, \dotsc ,n, -1, \dotsc , -n \right\}$ with fixed order $1 < \dotsb < n < -1 < \dotsb < -n$.  Let $\p{ \cdot }: I \to \Z_{2}$ be the function defined by $\p{i} = \0$ if $i >0$ and $\p{i}=\1$ if $i <0$. Let $V=V_{n}$ be the vector space with distinguished basis $\left\{v_{i}  \mid  i \in I \right\}$.  We define a $\Z_{2}$-grading on $V$ by declaring $\p{v_{i}}=\p{i}$ for all $i \in I$.  Let $\gl (V) = \gl(n|n)$ denote the superspace of all linear endomorphisms of $V$.  Then $\gl (V)$ is a Lie superalgebra via the graded version of the commutator bracket: 
\[
[f,g] = f \circ g - (-1)^{\p{f}\; \p{g}}g \circ f
\]
for all homogeneous $f,g \in \gl (V)$.  As done here, we frequently only give a formula for homogeneous elements and leave it understood that the general case is obtained via linearity.

Define an odd supersymmetric nondegenerate bilinear form on $V$ by declaring 
\begin{equation}\label{E:bilinear}
(v_{i},v_{j}) = 
(v_{j}, v_{i}) = \delta_{i,-j}
\end{equation}
for $i,j \in I$.  Here, odd means that the associated linear map $V \otimes V \to \k$ is an odd map of superspaces, while supersymmetric means that $(v,w) = (-1)^{\p{v} \p{w}} (w,v)$ for all homogeneous $v,w \in V$.   

Define a Lie superalgebra $\fg = \fp (n) \subseteq \gl (V)$ consisting of all linear maps which preserve the bilinear form given in \cref{E:bilinear}.  That is, for all homogeneous $v,w \in V$,
\[
\fp(n) = \left\{f \in \gl (V) \, \left| \, (f(v),w) + (-1)^{\p{f}  \p{v}}(v,f(w))=0 \right. \right\}.
\]   The supercommutator restricts to define a Lie superalgebra structure on $\fp (n)$.

With respect to our choice of basis we can describe $\fp(n)$ as the $2n \times 2n$ matrices defined over $\k$ of the form
\begin{equation}\label{E:matrixform}
\fp (n) = \left\{\left(\begin{matrix} A & B \\
                                  C & -A^{t} 
\end{matrix} \right) \right\},
\end{equation}
where $A,B,C$ are $n \times n$ matrices with entries from $\k$ with $B$ symmetric, $C$ skew-symmetric, and where $A^{t}$ denotes the transpose of $A$.  In terms of \cref{E:matrixform} the $\Z_{2}$-grading is given by declaring $\fg_{\0}$ as the subspace of all such matrices where $B=C=0$ and $\fg_{\1 }$ as the subspace of all such matrices where $A=0$.

A (left) $\fp (n)$-supermodule is a $\Z_{2}$-graded $\k$-vector space with a left $\k$-linear action of $\fp (n)$ which respects the $\Z_{2}$-grading and which satisfies graded versions of the usual axioms required of a module for a Lie algebra.  For example, the \emph{natural supermodule} is $V_{n}$ with  $\fp(n)$-supermodule structure given by matrix multiplication.   Since we will only consider supermodules we usually leave the prefix ``super'' implicit going forward. 

We allow for all (not just parity preserving) $\fp(n)$-module homomorphisms.  Consequently, the set of all $\fp(n)$-homomorphisms between two modules is naturally a $\Z_{2}$-graded vector space.  Explicitly, $f: M \to N$ is a homogeneous $\fp (n)$-module homomorphism if $f$ is a linear map which satisfies $f(M_{s}) \subseteq N_{s+\p{f}}$ for $s\in \Z_{2}$ and $f(x.m) = (-1)^{\p{x}\p{f}}x.f(m)$ for all homogeneous $x \in \fp (n)$ and $m\in M$.

Since the enveloping superalgebra $U(\fp (n))$ is a Hopf superalgebra, the category of $\fp (n)$-modules is a monoidal supercategory in the sense of \cite{BE}.  In what follows, we study particular monoidal sub-supercategories of this category.  For every $k \geq 0$ let $S^{k}(V_{n})$ denote the $k$th symmetric power of the natural module $V_{n}$ (by convention, $S^{0}(V_{n}) = \k$, the trivial module).  Let $\pmodS $ denote the full monoidal sub-supercategory of $\fp(n)$-modules generated by $\left\{S^{k}(V_{n}) \mid k \geq 0 \right\}$ and let $\pmodSS$ denote the full monoidal sub-supercategory of $\fp(n)$-modules generated by $\left\{S^{k}(V_{n}), S^{k}(V_{n})^{*} \mid k \geq 0 \right\}$.   That is, $\pmodS$ is the full subcategory of $\fp (n)$-modules consisting of objects of the form
\begin{align*}
S^{a_1}(V_n) \otimes \cdots \otimes S^{a_k}(V_n),
\end{align*}
ranging over all \(k \in \Z_{\geq 0}\) and \(\bba = (a_1, \ldots, a_k) \in \Z_{\geq 0}^k\).  The objects of $\pmodSS$ are similar except some symmetric powers are replaced by their duals.

\subsection{Basic  \texorpdfstring{$\fp(n)$}{p(n)}-Module Maps}\label{SS:BasicHomomorphisms}

To connect our diagrammatic categories to the representation theory of $\fp (n)$ we introduce certain explicit $\fp(n)$-module homomorphisms in the categories $\pmodS$ and $\pmodSS$.

We can view the symmetric bisuperalgebra 
\begin{equation}\label{E:symmalg}
S(V_n) = \bigoplus_{k \in \Z_{\geq 0}}S^k(V_n) 
\end{equation}
as the enveloping superalgebra for the abelian Lie superalgebra $V_{n}$.  This endows $S(V_{n})$ with the structure of a $\Z$-graded Hopf superalgebra.  In particular, it admits an associative product \(m: S(V_n) \otimes S(V_n) \to S(V_n)\) and coassociative coproduct \(\Delta: S(V_n) \to S(V_n) \otimes S(V_n)\).  The product is the usual concatenation product and the coproduct is given on generators $v \in V_{n}$ by $\Delta (v) = v \otimes 1 + 1 \otimes v$.   We stress that the multiplication in $S(V_n)$ is supercommutative, meaning that $v w = (-1)^{\p{v}\p{w}} w v$ for all homogeneous $v,w \in V_n$.  Corresponding to the direct sum decomposition \cref{E:symmalg} there are also projections \(p_k:S(V_n) \to S^k(V_n)\) and inclusions \(\iota_k: S^k(V_n) \to S(V_n)\) for all \(k \in \Z_{\geq 0}\).  Each of the maps \(\Delta, m, p_k, \iota_k\) is a \(U(\fp(n))\)-module homomorphism and we use them to construct several module homomorphisms which will be used in the sequel.

We define the {\em split} \(U(\fp(n))\)-module morphism 
\begin{align*}
\textup{spl}^{a,b}_{a+b}: S^{a+b}(V_n) \to S^a(V_n) \otimes S^b(V_n)
\end{align*}
via \(\textup{spl}^{a,b}_{a+b}:= (p_a \otimes p_b) \circ \Delta \circ \iota_{a+b}\). Explicitly, we have
\begin{align*}
\textup{spl}^{a,b}_{a+b}(x_1 \cdots x_{a+b})= \sum_{\substack{T = \{t_1< \ldots< t_a\} \\ U = \{u_1< \ldots<u_b\} \\ T \cup U = \{1,\ldots, a+b\} }} (-1)^{\varepsilon(T,U)}x_{t_1} \cdots x_{t_a} \otimes x_{u_1} \cdots x_{u_b},
\end{align*}
for all homogeneous \(x_1, \ldots, x_{a+b} \in V_n\), where \(\varepsilon(T,U) \in \Z_2\) is defined by
\begin{align*}
\varepsilon(T,U)=\#\left\{ (t,u) \in T \times U \mid t>u, \,\bar{x}_t = \bar{x}_u = \bar 1\right\}.
\end{align*}

Similarly, define the {\em merge} \(U(\fp(n))\)-module morphism
\begin{align*}
\operatorname{mer}_{a,b}^{a+b}: S^a(V_n) \otimes S^b(V_n) \to S^{a+b}(V_n)
\end{align*}
via \(\textup{mer}_{a,b}^{a+b}:= p_{a+b} \circ m \circ (\iota_a \otimes \iota_b)\), or, explicitly,
\begin{align*}
\operatorname{mer}_{a,b}^{a+b}(x_1 \cdots x_a \otimes y_1 \dotsc y_b) = x_1 \cdots x_a y_1 \cdots y_b,
\end{align*}
for all \(x_1, \ldots, x_a, y_1, \ldots, y_b \in V_n\).  Both the split and merge maps are even (i.e., parity preserving).

As the odd bilinear form used to define $\fp (n)$ is supersymmetric, it factors through to define the odd  \emph{antenna} \(U(\fp(n))\)-module homomorphism \(\textup{ant}{}:S^2(V_n) \to \k\) given by
\begin{align*}
\operatorname{ant} (x_1x_2) = (x_1,x_2)
\end{align*}
for all \(x_1, x_2 \in V_n\). 

For any \(k \in \Z_{\geq 0}\) we have the \emph{evaluation} \(U(\fp(n))\)-module homomorphism 
\begin{gather*}
\operatorname{eval}_k: S^k(V_n)^* \otimes S^k(V_n) \to \k\\
 f \otimes x \mapsto f(x).
\end{gather*}
Dualizing the evaluation map yields the \emph{coevaluation} \(U(\fp(n))\)-module homomorphism 
\[
\textup{coeval}_k: \k \to S^k(V_n) \otimes S^k(V_n)^*.
\]
In particular, we have
\begin{gather*}
\operatorname{coeval}_1: \k \to V_n \otimes V_n^*, \\
 1 \mapsto \sum_{i \in I_m} v_i \otimes v_i^*,
\end{gather*}
where \(\{v_i^* \mid i \in I\}\) is the dual basis for \(V_n^*\) defined by $v_{i}^{*}(v_{j})=\delta_{i,j}$.

The odd nondegenerate bilinear form \((\cdot, \cdot)\) induces an odd \(U(\fp(n))\)-module isomorphism
\begin{gather*}
D:V_n \to V_{n}^*, \\
v_i \mapsto (v_{i}, -)= v_{-i}^*.
\end{gather*}
Using this isomorphism we define the odd \emph{cap} and \emph{cup} \(U(\fp(n))\)-module homomorphisms by
\begin{align*}
\cap:= & \operatorname{eval}_1 \circ (D \otimes \operatorname{id}) : V_n^{\otimes 2} \to \k,\\
\cup:= & (\operatorname{id} \otimes D^{-1})\circ \operatorname{coeval}_1 : \k \to V_n^{\otimes 2}.
\end{align*}
On our basis for $V_{n}$ these maps are given by
\begin{align*}
\cap(v_i \otimes v_j) = \delta_{i,-j}
\qquad
\qquad
\textup{and}
\qquad
\qquad
\cup(1)= \sum_{i \in I_m} (-1)^{\bar i} v_i \otimes v_{-i}.
\end{align*}

Finally, for any two $\fp (n)$-modules $M$ and $N$ we have the even ``tensor swap'' homomorphism
\begin{gather*}
\tau_{M,N}:  M \otimes N \to N \otimes M, \\
m \otimes n  \mapsto (-1)^{\p{m} \p{n}}n \otimes m,
\end{gather*}
for all homogeneous $m\in M$ and $n\in N$. Note that 
\begin{align}\label{tswaprel}
\tau_{V_n,V_n} = (\operatorname{spl}_{2}^{1,1} \circ \operatorname{mer}_{1,1}^2) - (\operatorname{id}_1 \otimes \operatorname{id}_1).
\end{align} Compare with \cref{CrossDef} when $a=b=1$.  

Note that $\spl^{a,b}_{a+b}$, $\mer^{a+b}_{a,b}$, $\ev_{k}$, $\coev_{k}$, and the tensor swap are in fact $\gl (V)$-equivariant.  On the other hand, $\textup{ant}$, the odd cup, and the odd cap are only $\fp (n)$-equivariant. 
\section{From Webs to \texorpdfstring{$\fp(n)$\text{-Modules}}{p(n)-Modules}}\label{SS:WebstoP(n)modules}

\subsection{The Functor  \texorpdfstring{$G: \pWeb \to \fp(n)\text{-mod}_{\mathcal{S}}$}{G: p-Web -> p(n)-mod}}\label{SS:GFunctorWebs}  Unless otherwise stated, in this section $\k$ is a field of characteristic not two.  Recall $\pmodS$ denotes the monoidal supercategory of $\fp (n)$-modules generated by symmetric powers of the natural module \(V_n\). 

\begin{theorem}\label{Gthm}
There is a well-defined functor 
\begin{align*}
G: \pWeb \to \fp(n)\textup{-mod}_{\mathcal{S}}
\end{align*}
given on objects by \(G(k) = S^k(V_n)\) and on morphisms by
\begin{align*}
\hackcenter{
{}
}
\hackcenter{
\begin{tikzpicture}[scale=0.8]
  \draw[thick, color=\clr] (0,0)--(0,0.2) .. controls ++(0,0.35) and ++(0,-0.35) .. (-0.4,0.9)--(-0.4,1);
  \draw[thick, color=\clr] (0,0)--(0,0.2) .. controls ++(0,0.35) and ++(0,-0.35) .. (0.4,0.9)--(0.4,1);
      \node[above] at (-0.4,1) {$\scriptstyle a$};
      \node[above] at (0.4,1) {$\scriptstyle b$};
      \node[above] at (0,-.5) {$\scriptstyle a+b$};
\end{tikzpicture}}
\mapsto 
\textup{spl}_{a+b}^{a,b}
\qquad
\hackcenter{
\begin{tikzpicture}[scale=0.8]
  \draw[thick, color=\clr] (-0.4,0)--(-0.4,0.1) .. controls ++(0,0.35) and ++(0,-0.35) .. (0,0.8)--(0,1);
\draw[thick, color=\clr] (0.4,0)--(0.4,0.1) .. controls ++(0,0.35) and ++(0,-0.35) .. (0,0.8)--(0,1);
      \node[above] at (-0.4,-0.47) {$\scriptstyle a$};
      \node[above] at (0.4,-0.47) {$\scriptstyle b$};
      \node[above] at (0,1) {$\scriptstyle a+b$};
\end{tikzpicture}}
\mapsto
\textup{mer}_{a,b}^{a+b}
\qquad
\hackcenter{
\begin{tikzpicture}[scale=0.8]
 \draw (0,0) arc (0:180:0.4cm) [thick, color=\clr];
    \node[below] at (0,0) {$\scriptstyle 1$};
      \node[below] at (-0.8,0) {$\scriptstyle 1$};
                \node at (-0.4,0.4) {$\scriptstyle\blacklozenge$};
\end{tikzpicture}}
\mapsto
\cap
\qquad
\hackcenter{
\begin{tikzpicture}[scale=0.8]
 \draw (0,0) arc (0:-180:0.4cm) [thick, color=\clr];
    \node[above] at (0,0) {$\scriptstyle 1$};
      \node[above] at (-0.8,0) {$\scriptstyle 1$};
       \node at (-0.4,-0.4) {$\scriptstyle\blacklozenge$};
\end{tikzpicture}}
\mapsto
\cup .
\end{align*}
\end{theorem}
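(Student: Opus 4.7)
The strategy is to check that the assignments on generators respect the defining relations of $\pWeb$ (namely \cref{AssocRel,DiagSwitchRel,StraightRel,AntRel,CapDiagSwitchRel1,CapDiagSwitchRel2,CupDiagSwitchRel1,CupDiagSwitchRel2}) and preserve the $\Z_{2}$-grading on morphisms. The parity check is immediate: $\operatorname{spl}$ and $\operatorname{mer}$ are even because they come from the $\Z_{2}$-graded bialgebra operations $\Delta$ and $\nabla$ on $S(V_n)$, while $\cap$ and $\cup$ inherit their oddness from the odd isomorphism $D\maps V_n \xrightarrow{\sim} V_n^{*}$.

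The ``upward'' relations follow from the structure of the symmetric bisuperalgebra $S(V_n) = \bigoplus_{k\geq 0} S^{k}(V_n)$. Since $\operatorname{spl}_{a+b}^{a,b}$ and $\operatorname{mer}^{a+b}_{a,b}$ are graded components of the associative, coassociative bialgebra operations $\nabla$ and $\Delta$, web-associativity (\cref{AssocRel}) is immediate from coassociativity of $\Delta$ and associativity of $\nabla$. The rung swap (\cref{DiagSwitchRel}) is the graded-component expression of the bialgebra axiom $\Delta\circ\nabla = (\nabla\otimes \nabla)\circ(\id\otimes \tau_{S,S}\otimes \id)\circ(\Delta\otimes \Delta)$; I would verify it by evaluating both sides on a pure product $x_{1}\cdots x_{a}\otimes y_{1}\cdots y_{b}\in S^{a}(V_n)\otimes S^{b}(V_n)$ and matching terms indexed by which $y$'s pass left and which $x$'s pass right, with the binomial $\binom{a-b+r-s}{t}$ counting the overlaps of the two subset choices.

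The odd relations are handled as follows. Antenna retraction (\cref{AntRel}) reduces to the basis computation
\begin{align*}
\cap\circ \operatorname{spl}_{2}^{1,1}(v_i v_j)
= \cap\!\bigl(v_i\otimes v_j + (-1)^{\p{i}\p{j}}\,v_j\otimes v_i\bigr)
= 2\,\delta_{i,-j}
= 2\operatorname{ant}(v_i v_j),
\end{align*}
which is exactly what the factor $\tfrac{1}{2}$ in the definition of the antenna absorbs. Straightening (\cref{StraightRel}) amounts to the two snake identities for the right dual pair $(V_n, V_n^{*}, \operatorname{eval}_1, \operatorname{coeval}_1)$ conjugated through the odd isomorphism $D$. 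A direct calculation on $v_j$ shows $(\id\otimes \cap)\circ(\cup\otimes \id)=\id$ while $(\cap\otimes \id)\circ(\id\otimes \cup)=-\id$; the sign arises from the identity $(-1)^{\p{j}+\p{-j}} = -1$ together with the Koszul sign picked up when commuting the odd morphism $\cap$ past the middle vector.

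The main obstacle will be the four cap/cup--rung swap relations \cref{CapDiagSwitchRel1,CapDiagSwitchRel2,CupDiagSwitchRel1,CupDiagSwitchRel2}, which intertwine the bialgebra structure with the odd form $(\cdot,\cdot)$ and, in the cap case, feature a correction term of the form $(2)\cdot(\text{split--antenna})$. My plan is to expand both sides on a pure tensor, splitting the computation according to whether the ``rung'' strand carries a pair of vectors that the subsequent cap forces to be opposite, i.e.\ indexed by $i$ and $-i$. Once that paired case is isolated, the parity identity $(-1)^{\p{i}+\p{-i}}=-1$ combined with the graded symmetry of $S(V_n)$ produces the antenna correction, while the remaining terms cancel by the unmixed bialgebra identity already used for \cref{DiagSwitchRel}. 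The cup versions \cref{CupDiagSwitchRel1,CupDiagSwitchRel2} run in parallel but have no correction term, and can also be deduced from the cap versions by conjugating through straightening.
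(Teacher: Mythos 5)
Your proposal is correct and follows essentially the same route as the paper's proof: a direct verification of each defining relation on pure tensors, using (co)associativity of the bisuperalgebra structure for \cref{AssocRel}, a term-by-term subset-matching argument (which, when carried out, amounts to the Chu--Vandermonde identity) for \cref{DiagSwitchRel}, explicit basis/sign computations for \cref{StraightRel} and \cref{AntRel}, and an expansion isolating the terms where the cap pairs two vectors from the same split strand to produce the antenna correction in \cref{CapDiagSwitchRel1,CapDiagSwitchRel2}, with the cup relations handled analogously.
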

\begin{proof}
We simply check that images of relations \cref{AssocRel}--\cref{CupDiagSwitchRel2} are preserved by \(G\). This is routine, but requires some care in managing signs. Details are included in the {\tt arXiv} version of this paper, as explained in \cref{SS:ArxivVersion}.
\begin{answer}

We check that images of relations (\ref{AssocRel}--\ref{CupDiagSwitchRel2}) are preserved by \(G\). First, note that we have:
\begin{align*}
\hackcenter{}
G:
\hackcenter{
\begin{tikzpicture}[scale=0.8]
 \draw[thick, color=\clr] (0,0)--(0,0.5);
    \node[below] at (0,0) {$\scriptstyle 2$};
    \node[shape=coordinate](DOT) at (0,0.5) {};
     \filldraw  (DOT) circle (2.5pt);
\end{tikzpicture}}
\mapsto
\textup{ant}.
\end{align*}

{\em Relation \cref{AssocRel}}. Note that since \(\Delta\) is a {\em graded} map, we have 
\begin{align*}
(p_a \otimes p_b) \circ \Delta \circ \iota_{a+b} \circ p_{a+b} = (p_a \otimes p_b) \circ \Delta,
\end{align*}
for all \(a,b \in \Z_{\geq 0}\).
It follows then that the split morphisms satisfy a `coassociativity' condition:
\begin{align*}
(\textup{id}_{S^a(V_n)} \otimes \textup{spl}_{b+c}^{b,c}) \circ \textup{spl}_{a+b+c}^{a,b+c}
&=
(p_a \otimes p_b \otimes p_c) \circ (\textup{id}_{S(V_n)} \otimes (\Delta \circ \iota_{b+c} \circ p_{b+c})) \circ \Delta \circ \iota_{a+b+c}\\
&=
(p_a \otimes p_b \otimes p_c) \circ (\textup{id}_{S(V_n)} \otimes \Delta) \circ \Delta \circ \iota_{a+b+c}\\
&=(p_a \otimes p_b \otimes p_c) \circ (\Delta \otimes \textup{id}_{S(V_n)}) \circ \Delta \circ \iota_{a+b+c}\\
&=(p_a \otimes p_b \otimes p_c) \circ ((\Delta \circ \iota_{a+b} \circ p_{a+b}) \otimes \textup{id}_{S(V_n)}) \circ \Delta \circ \iota_{a+b+c}\\
&=(\textup{spl}_{a+b}^{a,b} \otimes \textup{id}_{S^c(V_n)}) \circ \textup{spl}_{a+b+c}^{a+b,c}.
\end{align*}
Similarly, `associativity' of the merge morphisms,
\begin{align*}
\textup{mer}_{a,b+c}^{a+b+c} \circ (\textup{id}_{S^a(V_n)} \otimes \textup{mer}_{b,c}^{b+c})
=
\textup{mer}_{a+b,c}^{a+b+c} \circ (\textup{mer}_{a,b}^{a+b} \otimes \textup{id}_{S^c(V_n)})
\end{align*}
follows from the associativity of \(m\) in a similar fashion, so we have that \(G\) preserves \cref{AssocRel}.

{\em Relation \cref{StraightRel}}. For all \(i \in I_n\) we have
\begin{align*}
(\textup{id}_1 \otimes \cap) \circ (\cup \otimes \textup{id}_1)(v_i )
&=(\textup{id}_1 \otimes \cap)\left(
\sum_{k \in I_m} (-1)^{\bar k} v_k \otimes v_{-k} \otimes v_i
\right)
=
\sum_{k \in I_m}  \delta_{i,k}v_k
=
v_i,
\end{align*}
and
\begin{align*}
(\cap \otimes \textup{id}_1) \circ (\textup{id}_1 \otimes \cup)(v_i)
&=
(\cap \otimes \textup{id}_1)
\left(
\sum_{k \in I_m} (-1)^{\bar i + \bar k}v_i \otimes v_k \otimes v_{-k}
\right)
=
\sum_{k \in I_m}
(-1)^{\bar i + \bar k}\delta_{k,-i} v_{-k}
=
-v_i.
\end{align*}

{\em Relation \cref{AntRel}}. For all \(i,j \in I_m\) we have
\begin{align*}
\textup{ant}\circ \textup{mer}_{1,1}^2(v_i \otimes v_j)
=
\textup{ant}(v_iv_j)
=
(v_i,v_j)
=
\delta_{i,-j}
=
\cap(v_i \otimes v_j).
\end{align*}

{\em Relation \cref{DiagSwitchRel}}. Write \(A \) (resp.\  \(B\)) for the image of the morphism on the left side (resp.\  right side) of \cref{DiagSwitchRel} under the functor \(G\).
Let \(x_1, \ldots, x_a, y_1, \ldots, y_b\) be homogeneous elements in \(V_n\). Then 
\begin{align*}
A=
(\textup{mer}_{a-s,r}^{a-s+r} \otimes \textup{id}_{S^{b+s-r}(V_n)})
\circ
(\textup{id}_{S^{a-s}(V_n)} \otimes \textup{spl}_{b+s}^{r,b+s-r})
\circ
(\textup{id}_{S^{a-s}(V_n)} \otimes \textup{mer}_{s,b}^{b+s})
\circ
(\textup{spl}_a^{a-s,s} \otimes \textup{id}_{S^b(V_n)})
\end{align*}
maps \(x_1 \cdots x_a \otimes y_{1} \cdots y_{b}\) to
\begin{align}\label{RSum1}
\sum
(-1)^{\varepsilon(T,U) + \varepsilon(L,M) + \varepsilon(P,Q) + \nu(M,P)}
x_{t_1} \cdots x_{t_{a-s}}x_{l_1} \cdots x_{l_c} y_{p_1} \cdots y_{p_{r-c}}\otimes x_{m_1} \cdots x_{m_{s-c}}y_{q_1} \cdots y_{q_{b-r+c}}
\end{align}
where the sum ranges over
\begin{align*}
c\in \Z_{\geq 0}; \;\;\;\;
T=\{t_1 < \cdots < t_{a-s}\}; \;\;\;\;
U=\{u_1 < \cdots < u_{s}\}; \;\;\;\;
\end{align*}
\begin{align*}
L=\{l_1 < \cdots < l_c\}; \;\;\;\;
M=\{m_1 < \cdots < m_{s-c}\}; \;\;\;\;
P=\{p_1 < \cdots < p_{r-c}\}; \;\;\;\;
Q=\{q_1 < \cdots < q_{b-r+c}\};
\end{align*}
\begin{align*}
T\cup U = \{1, \ldots, a\}; \;\;\;\;
L\cup M = U; \;\;\;\;
P \cup Q = \{1, \ldots, b\},
\end{align*}
and \(\nu(M,P)\) is defined by
\begin{align*}
\nu(M,P) = \sum_{\substack{m \in M \\ p \in P}} \bar{x}_m \bar{y}_p.
\end{align*}
Setting \(J = \{j_1, \ldots, j_{a-s+c}\} = T \cup L\), we have that
\begin{align*}
\varepsilon(T,U) + \varepsilon(L,M) &= \varepsilon(T,L \cup M) + \varepsilon(L,M) 
= \varepsilon(T,L)+\varepsilon(T,M) + \varepsilon(L,M) \\
&=\varepsilon(T,L) + \varepsilon(T \cup L, M) 
=   \varepsilon(T,L)+\varepsilon(J,M) , 
\end{align*}
and 
\begin{align*}
 (-1)^{\varepsilon(T,L)}x_{j_1} \cdots x_{j_{a-s+c}} = x_{t_1} \cdots x_{t_{a-s}} x_{l_1} \cdots x_{l_c},
\end{align*}
so it follows that we may rewrite \cref{RSum1} as 
\begin{align}\label{RSum2}
\sum
\binom{a-s+c}{c}
(-1)^{\varepsilon(J,M) + \varepsilon(P,Q)+ \nu(M,P)}
x_{j_1} \cdots x_{j_{a-s+c}} y_{p_1} \cdots y_{p_{r-c}}\otimes x_{m_1} \cdots x_{m_{s-c}}y_{q_1} \cdots y_{q_{b-r+c}},
\end{align}
where the sum ranges over
\begin{align} \label{RSum3}
c\in \Z_{\geq 0}; \;\;\;\;
J=\{j_1< \ldots< j_{a-s+c}\}; \;\;\;\;
M=\{m_1< \ldots< m_{s-c}\};
\end{align}
\begin{align*}
P=\{p_1< \ldots< p_{r-c}\}; \;\;\;\;
Q=\{q_1< \ldots< q_{b-r+c}\}; \;\;\;\;
J\cup M = \{1,\ldots, a\}; \;\;\;\;
P \cup Q = \{1, \ldots, b\}.
\end{align*}

Now, for any \(t \in \Z_{\geq 0}\), we compute in a similar fashion that 
\begin{align*}
(\textup{id}_{a-s+r} \otimes \textup{mer}_{s-t,b-r+t}^{b+s-r})
\circ
(\textup{spl}_{a+r-t}^{a-s+r,s-t} \otimes \textup{id}_{b-r+t})
\circ
(\textup{mer}_{a,r-t}^{a+r-t} \otimes \textup{id}_{b-r+t})
\circ
(\textup{id}_a \otimes \textup{spl}_b^{r-t,b-r+t})
\end{align*}
sends \(x_1 \cdots x_a \otimes y_1 \cdots y_b\) to
\begin{align}\label{RSum4}
\sum
\binom{b-r+t+k}{k}
(-1)^{\varepsilon(J,M)+\varepsilon(P,Q)+ \nu(M,P)}
x_{g_1} \cdots x_{g_{a-s+t+k}} y_{p_1} \cdots y_{p_{r-t-k}}\otimes x_{m_1} \cdots x_{m_{s-t-k}}y_{q_1} \cdots y_{q_{b-r+t+k}},
\end{align}
where the sum ranges over
\begin{align*}
k \in \Z_{\geq 0}; \;\;\;\;
J=\{j_1 < \cdots < j_{a-s+t+k}\}; \;\;\;\;
M=\{m_1 < \cdots < m_{s-t-k}\}; \;\;\;\;
\end{align*}
\begin{align*}
P=\{p_1 < \cdots <p_{r-t-k}\};\;\;\;\;
Q = \{q_1 < \cdots < q_{b-r+t+k}\}\;\;\;\;
J\cup M = \{1, \ldots, a\}\;\;\;\;
P \cup Q = \{1, \ldots, b\}.
\end{align*}
Let \(c,J,M,P,Q\) be  as in \cref{RSum3}. Then by \cref{RSum4}, the coefficient of 
\begin{align*}
x_{j_1} \cdots x_{j_{a-s+c}} y_{p_1} \cdots y_{p_{r-c}}\otimes x_{m_1} \cdots x_{m_{s-c}}y_{q_1} \cdots y_{q_{b-r+c}}
\end{align*}
in \(B(x_1 \cdots x_a \otimes y_1 \cdots y_b)\) is given by
\begin{align*}
(-1)^{\varepsilon(J,M) + \varepsilon(P,Q) + \nu(M,P)}
\sum_{d=0}^{c}
\binom{a-b + r -s}{c-d}
\binom{b-r+c}{d}.
\end{align*}
By the Chu-Vandermonde identity, this is equal to 
\begin{align*}
(-1)^{\varepsilon(J,M) + \varepsilon(P,Q) + \nu(M,P)} \binom{a-s + c}{c},
\end{align*}
which by \cref{RSum2} is the coefficient of the same term in \(A(x_1 \cdots x_a \otimes y_1 \cdots y_b)\). It follows that \(G\) preserves \cref{DiagSwitchRel}.

{\em Relation (\ref{CapDiagSwitchRel1}--\ref{CapDiagSwitchRel2})}.
Thanks to relation \cref{AssocRel}, we only need check that \cref{CapDiagSwitchRel1} holds in the case \(a=r+1\). The map
\(
(\textup{id}_1 \otimes \textup{mer}_{r,b}^{b+r})\circ(\textup{spl}_{r+1}^{1,r} \otimes \textup{id}_{b})
\)
sends \(x_1 \cdots x_{r+1} \otimes y_1 \cdots y_b\) to 
\begin{align*}
\sum_{t=1}^{r+1}
(-1)^{\bar{x}_t(\bar{x}_1 + \cdots + \bar{x}_{t-1})}x_t \otimes x_1 \cdots x_{t-1} x_{t+1} \cdots x_{r+1}y_1 \cdots y_b.
\end{align*}
Applying \(  
 ( \cap \otimes \textup{id}_{b+r-1}  )  \circ (\textup{id}_1 \otimes \textup{spl}_{b+r}^{1,b+r-1})
\)
to this sum gives
\begin{align*}
&\sum_{
\substack{
1 \leq t \leq r+1\\
1 \leq p \leq b
}}
(-1)^{\bar{x}_t(\bar{x}_1 + \cdots + \bar{x}_{t-1})}
(-1)^{\bar{y}_p(\bar{x}_1 + \cdots + \hat{\bar{x}}_t + \cdots + \bar{x}_{r+1} +\bar{y}_1 + \cdots + \bar{y}_{p-1})}
(x_t,y_p) x_1 \cdots \hat{x}_t \cdots x_{r+1}y_1 \cdots \hat{y}_p \cdots y_b\\
& \hspace{5mm}+
\sum_{1 \leq t > u\leq r+1}
(-1)^{\bar{x}_t(\bar{x}_1 + \cdots + \bar{x}_{t-1})}
(-1)^{\bar{x}_u(\bar{x}_1+ \cdots + \bar{x}_{u-1})}
(x_t, x_u) x_1 \cdots \hat{x}_u \cdots \hat{x}_t \cdots x_{r+1} y_1 \cdots y_b\\
& \hspace{10mm}+
\sum_{1 \leq t < u\leq r+1}
(-1)^{\bar{x}_t(\bar{x}_1 + \cdots + \bar{x}_{t-1})}
(-1)^{\bar{x}_u(\bar{x}_1+ \cdots + \hat{\bar{x}}_t + \cdots + \bar{x}_{u-1})}
(x_t, x_u) x_1 \cdots \hat{x}_t \cdots \hat{x}_u \cdots x_{r+1} y_1 \cdots y_b
\end{align*}
Exchanging \(t\) and \(u\) in the second line and using the fact that \((x_u,x_t) = (-1)^{\bar{x}_t \bar{x}_u}(x_t,x_u)\) and \((x,y) = 0\) unless \(\bar y = \bar x + \bar 1\), we may rewrite this as 
\begin{align}\label{RSum11}
&\sum_{
\substack{
1 \leq t \leq r+1\\
1 \leq p \leq b
}}
(-1)^{(\bar{x}_t + \bar 1)(\bar{x}_{t+1} + \cdots + \bar{x}_{r+1})   + (\bar{x}_t + \bar 1)(\bar{y}_1 + \cdots + \bar{y}_{p-1}) + \bar{x}_1 + \cdots + \bar{x}_{t-1}
}
(x_t,y_p) x_1 \cdots \hat{x}_t \cdots x_{r+1}y_1 \cdots \hat{y}_p \cdots y_b\\
& \hspace{5mm}+
2\hspace{-3mm}\sum_{1 \leq t < u\leq r+1}
(-1)^{\bar{x}_t(\bar{x}_{t+1} + \cdots + \bar{x}_{u-1}) + \bar{x}_1 + \cdots + \hat{\bar{x}}_{t} + \cdots + \bar{x}_{u-1})}
(x_t, x_u) x_1 \cdots \hat{x}_t \cdots \hat{x}_u \cdots x_{r+1} y_1 \cdots y_b. \nonumber
\end{align}

On the other hand, the map
\begin{align*}
\textup{mer}_{r,b-1}^{b+r-1} \circ \cap \circ (\textup{spl}_{r+1}^{r,1} \otimes \textup{spl}_{b}^{1,b-1})
\end{align*}
sends \(x_1 \cdots x_{r+1} \otimes y_1 \cdots y_b\) to 
\begin{align*}
\sum_{t,p} (-1)^{\bar{x}_t(\bar{x}_{t+1}+ \cdots + \bar{x}_{r+1})} 
(-1)^{\bar{y}_p(\bar{y}_1 + \cdots + \bar{y}_{p-1})}
(-1)^{\bar{x}_1 + \cdots + \hat{\bar{x}}_t + \cdots + \bar{x}_{r+1}}
(x_t, y_p) x_1 \cdots \hat{x}_t \cdots x_{r+1} y_1 \cdots \hat{y}_p \cdots y_b.
\end{align*}
After rewriting using the fact that \((x_t, y_p) = 0\) unless \(\bar{y}_p = \bar{x}_t + \bar 1\), this is equal to the first line in \cref{RSum11}.

The map 
\begin{align*}
\textup{mer}_{r-1,b}^{b+r-1} \circ  ( \textup{id}_{r-1} \otimes \textup{ant} \otimes \textup{id}_b)  \circ ( \textup{spl}_{r+1}^{r-1,2} \otimes \textup{id}_b)
\end{align*}
sends \(x_1 \cdots x_{r+1} \otimes y_1 \cdots y_b\) to 
\begin{align*}
\sum_{1 \leq t<u \leq r+1}
(-1)^{\mu(t,u)}
(x_t,x_u)x_1 \cdots \hat{x}_t \cdots \hat{x}_u \cdots x_{r+1} y_1 \cdots y_b,
\end{align*}
where
\begin{align*}
\mu(t,u)=
(-1)^{\bar{x}_u(\bar{x}_{u+1} + \cdots + \bar{x}_{r+1})}
(-1)^{\bar{x}_t(\bar{x}_{t+1} + \cdots + \hat{\bar{x}}_u + \cdots + \bar{x}_{r+1})}
(-1)^{\bar{x}_1 + \cdots + \hat{\bar{x}}_t + \cdots + \hat{\bar{x}}_u + \cdots + \bar{x}_{r+1}}
\end{align*}
After rewriting using the fact that \((x_t, x_u) = 0\) unless \(\bar{x}_u = \bar{x}_t + \bar 1\), this is equal to half the second line in \cref{RSum11}. This completes the proof of relation \cref{CapDiagSwitchRel1}. Relation \cref{CapDiagSwitchRel2} is similar.

{\em Relation (\ref{CupDiagSwitchRel1}--\ref{CupDiagSwitchRel2})}. Thanks to relation \cref{AssocRel}, we only need check that \cref{CupDiagSwitchRel1} holds in the case \(b=0\). The map 
\(
(\textup{mer}_{a,1}^{a+1} \otimes \textup{id}_1)\circ (\textup{id}_a \otimes \cup)
\)
sends \(x_1 \cdots x_a\) to 
\begin{align*}
\sum_{i \in I_m} (-1)^{\bar{x}_1 + \cdots + \bar{x}_a}(-1)^{\bar i}x_1 \cdots x_a  v_i\otimes v_{-i}.
\end{align*}
Applying 
\(
(\textup{id}_{a-r+1} \otimes \textup{mer}_{r,1}^{r+1})\circ (\textup{spl}_{a+1}^{a-r+1,r} \otimes \textup{id}_1)
\) to this sum gives
\begin{align}\label{RSum21}
&\sum_{
\substack{
i \in I_m\\
T = \{t_1 < \cdots < t_{a-r}\}\\
U=\{u_1 < \cdots < u_r\}\\
T \cup U = \{1, \ldots ,a\}
}}
(-1)^{\varepsilon(T,U)}(-1)^{\bar{i}(\bar{x}_{u_1} + \cdots + \bar{x}_{u_r})}(-1)^{\bar{x}_1 + \cdots + \bar{x}_a}(-1)^{\bar i}x_{t_1} \cdots x_{t_{a-r}}  v_i\otimes 
x_{u_1} \cdots x_{u_r}
v_{-i}\\
&\hspace{5mm}+
\sum_{
\substack{
i \in I_m \\
T=\{t_1 < \cdots <t_{a-r+1}\}\\
U=\{u_1 < \cdots < u_{r-1}\}\\
T \cup U = \{1, \ldots, a\}
}}
(-1)^{\varepsilon(T,U)}
(-1)^{\bar{x}_1 + \cdots + \bar{x}_a}(-1)^{\bar i}
x_{t_1} \cdots x_{t_{a-r+1}} \otimes x_{u_1} \cdots x_{u_{r-1}} v_i v_{-i}.\nonumber
\end{align}
The second line is zero, since \((-1)^{\bar i}v_i v_{-i} = -(-1)^{\bar{i}+ \bar 1}v_{-i}v_{i}\) for all \(i \in I_m\).

On the other hand, applying
\begin{align*}
(\textup{mer}_{a-r,1}^{a-r+1} \otimes \textup{mer}_{r,1}^{r+1})
\circ
(\textup{id}_{a-r} \otimes \cup \otimes \textup{id}_r )\circ \textup{spl}_a^{a-r,r}
\end{align*}
to \(x_1 \cdots x_a\) yields
\begin{align}\label{RSum22}
\sum_{
\substack{
i \in I_m\\
T = \{t_1 < \cdots < t_{a-r}\}\\
U=\{u_1 < \cdots < u_{r}\}\\
T \cup U = \{1, \ldots, a\}
}}
(-1)^{\varepsilon(T,U)}
(-1)^{\bar{x}_{t_1} + \cdots + \bar{x}_{t_{a-r}}}
(-1)^{\bar i}
x_{t_1} \cdots x_{t_{a-r}}v_i \otimes v_{-i}x_{u_1} \cdots x_{u_r}.
\end{align}
Using the fact that 
\begin{align*}
v_{-i}x_{u_1} \cdots x_{u_r} = (-1)^{(\bar i + \bar 1)(\bar{x}_{u_1} + \cdots + \bar{x}_{u_r})} x_{u_1} \cdots x_{u_r} v_{-i},
\end{align*}
we may rewrite \cref{RSum22} to see that it is equivalent to the first line of \cref{RSum21}, completing the proof of relation \cref{CupDiagSwitchRel1}. The proof of  \cref{CupDiagSwitchRel2} is similar.
\end{answer}
\end{proof}

\subsection{The Crossing Morphism in  \texorpdfstring{$\pmodS$}{p(n)-mods}}  
For short, let 
\[
\tau_{a,b}: S^{a}(V_{n}) \otimes S^{b}(V_{n}) \to  S^{b}(V_{n}) \otimes S^{a}(V_{n})
\] be the tensor swap map introduced in \cref{SS:BasicHomomorphisms}.
\begin{lemma}\label{CrossTwist}
For all \(a,b \in \Z_{\geq 0}\), we have 
\begin{align*}
\hackcenter{}
G \left(
\hackcenter{
\begin{tikzpicture}[scale=0.8]
  \draw[thick, color=\clr] (0.4,0)--(0.4,0.2) .. controls ++(0,0.35) and ++(0,-0.35) .. (-0.4,0.9)--(-0.4,1);
  \draw[thick, color=\clr] (-0.4,0)--(-0.4,0.2) .. controls ++(0,0.35) and ++(0,-0.35) .. (0.4,0.9)--(0.4,1);
      \node[below] at (-0.4,0) {$\scriptstyle a$};
      \node[below] at (0.4,0.05) {$\scriptstyle b$};
      \node[above,white] at (-0.5,1) {$\scriptstyle a$};
      \node[above, white] at (0.5,.1) {$\scriptstyle b$};
\end{tikzpicture}} \right)
=
\tau_{a,b}.
\end{align*}
\end{lemma}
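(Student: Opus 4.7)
The plan is to reduce to the base case $(a,b) = (1,1)$ and then propagate by coherence. First I would verify the base case directly from the definition. In $\aWeb \subset \pWeb$, the crossing expansion \cref{CrossDef} with $a = b = 1$ has only two nonzero summands, corresponding to $(r,s) = (0,0)$ and $(r,s) = (1,1)$. Applying $G$ gives
\begin{equation*}
G\bigl(\text{crossing}_{1,1}\bigr) = -\operatorname{id}_{V_n \otimes V_n} + \operatorname{spl}_{2}^{1,1}\circ\operatorname{mer}_{1,1}^{2},
\end{equation*}
which equals $\tau_{V_n, V_n}$ by the identity \cref{tswaprel}.

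For the general case, I would use equations \cref{B3} and \cref{B4} of \cref{BraidThm}, which express a crossing between an $(a+b)$-strand and a $c$-strand as the composition of a split $(a+b)\to (a,b)$ at the bottom, the resulting two individual crossings with $c$, and a merge $(a,b)\to (a+b)$ at the top (and similarly with the roles reversed). Iterating these relations on both sides of any $(a,b)$-crossing, one may rewrite it in $\aWeb$ purely in terms of $(1,1)$-crossings, splits, and merges. Applying $G$ and using the base case then gives a concrete expression for $G(\text{crossing}_{a,b})$ in terms of $\tau_{1,1}$, $\operatorname{spl}$, and $\operatorname{mer}$.

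On the representation-theoretic side, I would check that $\tau_{a,b}$ satisfies the corresponding decomposition. This follows from two general facts for the symmetric monoidal supercategory of $\fp(n)$-modules: (i) naturality of the graded tensor swap $\tau$ with respect to any $\fp(n)$-equivariant morphisms, applied in particular to $\operatorname{spl}$ and $\operatorname{mer}$; and (ii) the hexagon axiom, $\tau_{M \otimes N, L} = (\tau_{M,L} \otimes \operatorname{id}_N)\circ(\operatorname{id}_M \otimes \tau_{N,L})$, which matches the shape of \cref{B3} under $G$. Combining these, $\tau_{a,b}$ admits exactly the same reduction to $\tau_{1,1}$ conjugated by $G(\operatorname{spl})$'s and $G(\operatorname{mer})$'s. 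Induction on $a+b$ then concludes.

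The main obstacle I anticipate is bookkeeping: matching the signs arising from the $\Z_2$-grading on $V_n$ against the signs $(-1)^{a-s}$ appearing in \cref{CrossDef}, so that the combinatorial decomposition from \cref{B3}, \cref{B4} aligns precisely with the hexagon-plus-naturality decomposition of $\tau_{a,b}$. The supercommutativity of $S(V_n)$ (which is what makes \cref{tswaprel} work in the first place) should take care of this, but the sign accounting will require careful checking when passing rungs across one another in the inductive step. Alternatively, one may give a direct proof by unwinding $G$ applied to \cref{CrossDef} on $x_1 \cdots x_a \otimes y_1 \cdots y_b$ and collapsing the resulting sum over subsets with a Chu--Vandermonde-type identity (in the spirit of the proof of \cref{Gthm}); this bypasses the induction but replaces it with a longer explicit sign calculation.
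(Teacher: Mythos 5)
Your base case is right, and your fallback suggestion is in fact what the paper does: its proof is the direct computation, applying $G$ to \cref{CrossDef}, reusing the explicit formula for the image of a rung diagram obtained in the proof of \cref{Gthm}, and observing that the coefficient of each fixed monomial is an alternating sum $\sum_{c}(-1)^{c}\binom{a-z}{c}$, which vanishes by the binomial theorem unless the entire first factor is transferred, leaving exactly the Koszul sign of $\tau_{a,b}$. That computation is integral, so it proves the lemma over every coefficient ring the paper allows, including fields of odd positive characteristic.

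Your primary, inductive route is genuinely different (structural, via the braiding compatibilities and naturality/hexagon for $\tau$), but one step does not work as literally stated: inside $\aWeb$ you cannot rewrite the $(a,b)$-crossing itself ``purely in terms of $(1,1)$-crossings, splits, and merges.'' Relations \cref{B3,B4} only give identities for the crossing pre- or post-composed with a merge or split; to strip those off you must insert $\mathrm{mer}\circ\mathrm{spl}$ and cancel the scalar coming from \cref{KnotholeRel}, so your ``conjugation by splits and merges'' really produces $a!\,b!$ times the crossing, and likewise $a!\,b!\,\tau_{a,b}$ on the representation side. In characteristic zero you may cancel and your induction closes; over a field of characteristic $p\le\max(a,b)$ it does not, although the lemma still holds there. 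The repair is cheap and preserves your structure: show instead that $G$ of the $(a,b)$-crossing and $\tau_{a,b}$ agree after precomposition with the tensor product of the iterated merges $1^{a}\to a$ and $1^{b}\to b$, using iterated \cref{B3} (an integral identity in $\aWeb$), the base case via \cref{tswaprel}, and naturality plus the hexagon for $\tau$; since the induced map $V_n^{\otimes a}\otimes V_n^{\otimes b}\to S^{a}(V_n)\otimes S^{b}(V_n)$ is surjective, equality follows in all characteristics. With that adjustment your argument is a valid alternative proof; what it buys is conceptual transparency (no sign bookkeeping beyond the $(1,1)$ case), at the cost of invoking surjectivity or invertibility that the paper's one-line cancellation never needs.
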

\begin{proof}
This is well-known $\aWeb$ (e.g., see \cite{TVW} where it is done for quantum $\gl (V)$).  It is also routine to check directly.  Details are included in the {\tt arXiv} version.
\begin{answer}
By \cref{CrossDef} and \cref{RSum2}, we have that \(G\Big(\hackcenter{}\hackcenter{
\begin{tikzpicture}[scale=0.7]
  \draw[thick, color=\clr] (0.2,0)--(0.2,0.1) .. controls ++(0,0.15) and ++(0,-0.15) .. (-0.2,0.45)--(-0.2,0.5);
  \draw[thick, color=\clr] (-0.2,0)--(-0.2,0.1) .. controls ++(0,0.15) and ++(0,-0.15) .. (0.2,0.45)--(0.2,0.5);
      \node[below] at (-0.2,0) {$\scriptstyle a$};
      \node[below] at (0.2,0.05) {$\scriptstyle b$};
\end{tikzpicture}}
\Big)
\)
sends \(x_1 \cdots x_a \otimes y_1 \cdots y_b\) to 
\begin{align}\label{CSum3}
\sum
\binom{a-s+c}{c}
(-1)^{\varepsilon(J,M) + \varepsilon(P,Q)+ \nu(M,P)+a-s}
x_{j_1} \cdots x_{j_{a-s+c}} y_{p_1} \cdots y_{p_{r-c}}\otimes x_{m_1} \cdots x_{m_{s-c}}y_{q_1} \cdots y_{q_{b-r+c}},
\end{align}
where the sum ranges over \(s-r = a-b\) and \cref{RSum3}. Now, letting \(z=s-c\), we have that the coefficient of 
\begin{align*}
x_{j_1} \cdots x_{j_{a-s+c}} y_{p_1} \cdots y_{p_{r-c}}\otimes x_{m_1} \cdots x_{m_{s-c}}y_{q_1} \cdots y_{q_{b-r+c}}
\end{align*}
for a fixed \(J,M,P,Q\) as in \cref{RSum3} is given by
\begin{align*}
(-1)^{\varepsilon(J,M) + \varepsilon(P,Q)+ \nu(M,P)+a+z}\sum_{c=0}^{a-z}(-1)^c
\binom{a-z}{c}.
\end{align*}
By the binomial theorem, this is zero unless \(a=z\), in which case \(J= \varnothing\), \(M= \{1,\ldots, a\}\), \(P=\{1, \ldots, b\}\), \(Q= \varnothing\), \(\varepsilon(J,M) = \varepsilon(P,Q)=0\), and \((-1)^{\nu(M,P)} = (-1)^{(|x_1| + \cdots + |x_a|)(|y_1| + \cdots + |y_b|)}\), so we have that 
\begin{align*}
G\Big(\hackcenter{}\hackcenter{
\begin{tikzpicture}[scale=0.7]
  \draw[thick, color=\clr] (0.2,0)--(0.2,0.1) .. controls ++(0,0.15) and ++(0,-0.15) .. (-0.2,0.45)--(-0.2,0.5);
  \draw[thick, color=\clr] (-0.2,0)--(-0.2,0.1) .. controls ++(0,0.15) and ++(0,-0.15) .. (0.2,0.45)--(0.2,0.5);
      \node[below] at (-0.2,0) {$\scriptstyle a$};
      \node[below] at (0.2,0) {$\scriptstyle b$};
\end{tikzpicture}}
\Big)
(x_1 \cdots x_a \otimes y_1 \cdots y_b)
=
(-1)^{(|x_1| + \cdots + |x_a|)(|y_1| + \cdots + |y_b|)}
y_1 \cdots y_b \otimes x_1 \cdots x_a,
\end{align*}
as desired.
\end{answer}
\end{proof}

\subsection{Basis Theorems for \texorpdfstring{$\aWeb$}{a-Web} and \texorpdfstring{$\pWeb$}{p-Web} }\label{SS:BasisTheorems}  We now prove the sets introduced in \cref{SS:BasisForPWebs} form $\k$-bases for the morphism spaces of $\aWeb$ and $\pWeb$.

\begin{theorem}\label{actind}
Assume \(|\bba| + |\bbb| \leq 2n\). Then 
\begin{align*}
\left\{  \left. G(\xi^{(A,B,C,D)})\;  \right| (A,B,C,D) \in \chi(\bba, \bbb)  \right\}
\end{align*}
is a family of linearly independent morphisms in \(\Hom_{\fp(n)}(S^{\bba}(V_n),S^{\bbb}(V_n))\).
\end{theorem}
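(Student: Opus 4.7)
The plan is to establish linear independence by exhibiting, for each $(A,B,C,D)\in\chi(\bba,\bbb)$, test vectors $x_{(A,B,C,D)}\in S^\bba(V_n)$ and $y_{(A,B,C,D)}\in S^\bbb(V_n)$ that produce a triangular pairing matrix: the coefficient of $y_{(A,B,C,D)}$ in $G(\xi^{(A',B',C',D')})(x_{(A,B,C,D)})$, expanded in the standard monomial basis of $S^\bbb(V_n)$, will be nonzero precisely when $(A',B',C',D')=(A,B,C,D)$. Once this is in place, linear independence is immediate.

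The construction uses the hypothesis $|\bba|+|\bbb|\leq 2n$ essentially. For a fixed $(A,B,C,D)$, I would partition the $|\bba|$ tensor-factor slots on the bottom of $\bba$ according to the combinatorics: $\#A$ cap pairs, $\#D$ antenna pairs, and $\#C$ through-positions. Similarly the $|\bbb|$ slots on top of $\bbb$ split into $\#B$ cup pairs and $\#C$ through-positions. Since $I_n=\{\pm 1,\dots,\pm n\}$ has $2n\geq |\bba|+|\bbb|$ elements, I can assign to each cap pair an index pair $\{k,-k\}$ with $k$ fresh, similarly for each antenna pair and each cup pair, and assign a fresh singleton index to each through-position (the same index at the matching $\bba$- and $\bbb$-end). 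This specifies $x_{(A,B,C,D)}$ and $y_{(A,B,C,D)}$ as products of distinct basis vectors, with the cup and through indices in $\bbb$ inherited from the choices above.

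Next I would compute the diagonal pairing $G(\xi^{(A,B,C,D)})(x_{(A,B,C,D)})$ and read off the coefficient of $y_{(A,B,C,D)}$. Each cap contributes $(v_k,v_{-k})=1$, each antenna contributes $\operatorname{ant}(v_kv_{-k})=1$, each cup contributes the $\ell$-term in $\sum_\ell(-1)^{\bar\ell}v_\ell\otimes v_{-\ell}$, and each through-strand transports its index intact. The splits and merges introduce multinomial-type combinatorial factors which are nonzero in $\k$. After tracking the signs using the conventions of Section \ref{SS:BasicHomomorphisms}, the coefficient of $y_{(A,B,C,D)}$ is a nonzero scalar.

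The main obstacle will be verifying the off-diagonal vanishing: for $(A',B',C',D')\neq(A,B,C,D)$, the coefficient of $y_{(A,B,C,D)}$ in $G(\xi^{(A',B',C',D')})(x_{(A,B,C,D)})$ must vanish. The idea is that $\xi^{(A',B',C',D')}$ routes the input indices according to its own combinatorics; any cap or antenna pairing two input indices whose sum is nonzero contributes $\delta_{j,-k}=0$, and by our generic choice of indices this is possible only if the $A$- and $D$-structures of $(A',B',C',D')$ match those of $(A,B,C,D)$. Once $A'=A$ and $D'=D$, the through-strands must transport the singleton indices to the matching positions in $\bbb$, forcing $C'=C$; then the remaining freedom is consumed by $B'=B$ to produce the chosen pair indices in the target. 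This combinatorial rigidity is precisely what the abundance of indices in $I_n$ guarantees under $|\bba|+|\bbb|\leq 2n$, but the sign-bookkeeping and the case analysis of how $(A',B',C',D')$ can differ from $(A,B,C,D)$ will be the most delicate part. An alternative, which may streamline matters, is to first reduce via total splits and merges to the $\pWeb_1$-setting and invoke the analogous linear-independence result for the marked Brauer supercategory from \cite{KT}; the images of distinct $\xi^{(A,B,C,D)}$ then correspond to distinct marked Brauer diagrams, and transferring their independence back through the surjective splits/injective merges yields the claim.
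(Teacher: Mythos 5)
Your proposal is correct and follows essentially the same strategy as the paper: evaluate each $G(\xi^{(A',B',C',D')})$ on a monomial test vector built from the combinatorics of a fixed $(A,B,C,D)$ (feasible exactly because $|\bba|+|\bbb|\le 2n$ gives $\alpha+\beta+\delta+|C|\le n$ index letters) and extract the coefficient of a prescribed target monomial. The only difference is that your choice of pairwise distinct letters for caps, antennae, cups, and through-strands makes the pairing matrix genuinely diagonal, whereas the paper reuses the same letters for the $A$-cap pairs and $B$-cup pairs and therefore only obtains vanishing for $(A',B',C',D')\not\succeq(A,B,C,D)$ with respect to a partial order, finishing with a maximal-element argument.
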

\begin{proof}
Let \((A,B,C,D) \in \chi(\bba,\bbb)\). For all \(i=1, \ldots, t\) and \(j=1,\ldots,u\), define
\begin{align*}
{\tt r}_i = \sum_{\ell =1}^u C_{i\ell},
\qquad
\qquad
|C| = \sum_{k =1}^t {\tt r}_k,
\qquad
\text{ and }
\qquad
P_{ij} = \sum_{k=1}^{i-1} {\tt r}_k + \sum_{\ell =1 }^{j-1} C_{i \ell}.
\end{align*}
For $X \in \left\{A, B \right\}$ let
\begin{align*}
\left( (i_1^{X}, j_1^{X}), (i_2^{X}, j_2^{X}), \ldots, (i_\alpha^{X}, j_\alpha^{X})  \right)
\end{align*}
be an irredundant list of all pairs of indices \((i,j)\) such that \(i<j\) and \(X_{ij} = 1\).
%
%
Let 
\begin{align*}
(i_1^D, i_2^D, \ldots, i_\delta^D)
\end{align*}
be an irredundant list of all indices \(i\) such that \(D_i = 1\). It follows from the fact that \(|\bba| + |\bbb| \leq 2n\) and the definition of the set \(\chi(\bba,\bbb)\) that \(|C|+\alpha + \beta + \delta \leq n\).

We define the following elements of \(S(V_n)^{\otimes t}\):
\begin{align*}
v^{(A,B,C,D),1}&:=
v_1 \cdots v_{{\tt r}_1}
\otimes 
v_{{\tt r}_1 +1} \cdots v_{{\tt r}_1 + {\tt r}_2}
\otimes 
\cdots 
\otimes
v_{{\tt r}_1 + \cdots + {\tt r}_{t-1}+1} \cdots v_{|C|}
\\
v^{(A,B,C,D),2}&:=
\prod_{k=1}^\alpha 1 \otimes \cdots \otimes 1 \otimes v_{|C|+ k} \otimes 1 \otimes \cdots \otimes 1 \otimes v_{-|C|-k} \otimes 1 \otimes \cdots \otimes 1,
\\
v^{(A,B,C,D),3}&:=
\prod_{k=1}^\delta 1 \otimes \cdots \otimes 1 \otimes v_{|C| + \alpha + k} v_{-|C| - \alpha - k} \otimes 1 \otimes \cdots \otimes 1,
\end{align*}
where the vectors \(v_{|C|+ k}\) and \( v_{-|C|-k}\) appear in the \(i_k^A\)-th and \(j_k^A\)-th slots, respectively, and the term \(v_{|C| + \alpha + k} v_{-|C| - \alpha - k}\) appears in the \(i_k^D\)-th slot. 

We also define the following elements of \(S(V_n)^{\otimes u}\):
\begin{align*}
w^{(A,B,C,D),1}
&:=
\prod_{i=1}^t
\prod_{j=1}^u
1 \otimes \cdots 1 \otimes v_{P_{ij}+1} \cdots v_{P_{ij}+C_{ij}} \otimes 1 \otimes \cdots \otimes 1,
\\
w^{(A,B,C,D),2}
&:=
\prod_{k=1}^\beta 1 \otimes \cdots \otimes 1 \otimes v_{|C|+ k} \otimes 1 \otimes \cdots \otimes 1 \otimes v_{-|C|-k} \otimes 1 \otimes \cdots \otimes 1
\end{align*}
where the term \(v_{P_{ij}+1} \cdots v_{P_{ij}+C_{ij}}\) appears in the \(j\)-th slot, and the vectors \(v_{|C|+ k}\) and \( v_{-|C|-k}\) appear in the \(i_k^B\)-th and \(j_k^B\)-th slots, respectively.

Considering \(S(V_n)^{\otimes t}\) and \(S(V_n)^{\otimes u}\) as associative algebras, we define
\begin{align*}
v^{(A,B,C,D)}&:=v^{(A,B,C,D),1 }\cdot v^{(A,B,C,D),2}\cdot v^{(A,B,C,D),3} \in S^{\bba}(V_n)\\
w^{(A,B,C,D)}&:=w^{(A,B,C,D),1 }\cdot w^{(A,B,C,D),2} \in S^{\bbb}(V_n).
\end{align*}

Since \(S^{\bbb}(V_n)\) has a \(\k\)-basis of tensor products of monomials in \(\{v_i \mid i \in I_n\}\), we may define a linear projection map
\begin{align*}
p_{(A,B,C,D)}: S^{\bbb}(V_n) \to \k\{ w^{(A,B,C,D)}\}.
\end{align*}

We define a partial order \(\succeq\) on \(\chi(\bba,\bbb)\) by setting
\begin{align*}
(A',B',C',D') \succeq (A,B,C,D)
\end{align*}
if and only if
\begin{align*}
A'_{ij} \leq A_{ij},
\qquad
B'_{ij} \leq B_{ij},
\qquad
C'_{ij} \geq C_{ij},
\qquad
D'_{i} \leq D_{i} 
\qquad
\textup{ for all }i,j.
\end{align*}

It is straightforward to check, with the aid of \cref{CrossTwist}, that
\begin{align}\label{actproj}
p_{(A,B,C,D)} \circ G(\xi^{(A',B',C',D')})(v^{(A,B,C,D)})
=
\begin{cases}
\pm w^{(A,B,C,D)} & \textup{if } (A',B',C',D') = (A,B,C,D)\\
0 & \textup{if } (A',B',C',D') \not \succeq (A,B,C,D).
\end{cases}
\end{align}

Now, assume that there exist nontrivial scalars \(c_{(A',B',C',D')} \in \k\) such that 
\begin{align*}
\sum_{(A',B',C',D') \in \chi(\bba,\bbb)} c_{(A',B',C',D')} G(\xi^{(A',B',C',D')}) = 0.
\end{align*}
Let \((A,B,C,D) \in \chi(\bba,\bbb)\) be maximal in the \(\succeq\) order such that \(c_{(A,B,C,D)} \neq 0\). Then we have by \cref{actproj} that
\begin{align*}
0=\sum_{(A',B',C',D') \in \chi(\bba,\bbb)} c_{(A',B',C',D')} p_{(A,B,C,D)} \circ G(\xi^{(A',B',C',D')})(v^{(A,B,C,D)}) = \pm c_{(A,B,C,D)}w^{(A,B,C,D)},
\end{align*}
a contradiction.
\end{proof}

\begin{corollary}\label{BasisThm}
The set 
\begin{align*}
\mathscr{B}:=\left\{ \left. \xi^{(A,B,C,D)} \right| (A,B,C,D) \in \chi(\bba, \bbb) \right\}
\end{align*}
is a \(\k\)-basis for \(\Hom_{\pWeb}(\bba,\bbb)\).
\end{corollary}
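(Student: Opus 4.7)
The plan is to deduce the basis statement by combining two results already established: the spanning result of Proposition \ref{webspan} and the representation-theoretic linear independence of Theorem \ref{actind}. Since Proposition \ref{webspan} already shows $\mathscr{B}$ spans $\Hom_{\pWeb}(\bba,\bbb)$ as a $\k$-module, the only remaining task is to verify linear independence of the family $\{\xi^{(A,B,C,D)} \mid (A,B,C,D)\in\chi(\bba,\bbb)\}$.

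First I would handle the case in which $\k$ is a field of characteristic zero. Given the pair of objects $\bba,\bbb$, one may always choose $n \in \Z_{\geq 1}$ large enough that $|\bba|+|\bbb| \leq 2n$. For such $n$, Theorem \ref{Gthm} provides a $\k$-linear monoidal functor $G \colon \pWeb \to \fp(n)\textup{-mod}_\mathcal{S}$, and Theorem \ref{actind} asserts that the images $G(\xi^{(A,B,C,D)})$ are linearly independent in $\Hom_{\fp(n)}(S^\bba(V_n),S^\bbb(V_n))$. Any putative $\k$-linear relation among the $\xi^{(A,B,C,D)}$ in $\Hom_{\pWeb}(\bba,\bbb)$ would push forward under $G$ to the same relation among their images, forcing all coefficients to vanish. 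Hence $\mathscr{B}$ is $\k$-linearly independent, and combined with \cref{webspan} forms a $\k$-basis.

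Finally, I would extend to an arbitrary integral domain $\k$ in which $2$ is invertible via a base-change argument. The generators and relations defining $\pWeb$ all live over $\Z[1/2]$, so one has the universal version $\pWeb_{\Z[1/2]}$ together with a canonical surjection
\[
\Hom_{\pWeb_{\Z[1/2]}}(\bba,\bbb)\otimes_{\Z[1/2]}\k \twoheadrightarrow \Hom_{\pWeb_\k}(\bba,\bbb),
\]
and $\mathscr{B}$ is defined uniformly in the ground ring. By \cref{webspan} the $|\chi(\bba,\bbb)|$-element set $\mathscr{B}$ surjects onto $\Hom_{\pWeb_{\Z[1/2]}}(\bba,\bbb)$ from the free module $\Z[1/2]^{|\chi(\bba,\bbb)|}$; the field-case argument applied with $\k=\Q$ shows that the kernel of this surjection vanishes after tensoring with $\Q$, hence is a torsion submodule of a free $\Z[1/2]$-module, hence zero. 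Tensoring the resulting isomorphism with $\k$ yields the basis statement in general. The main obstacle has already been cleared by Theorem \ref{actind} (whose proof exhibited explicit dual vectors $v^{(A,B,C,D)}, w^{(A,B,C,D)}$ together with a partial order on $\chi(\bba,\bbb)$ detecting each basis diagram); the remaining base-change step is a routine consequence of torsion-freeness once the field case is in hand.
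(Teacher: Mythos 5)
Your proposal is correct, and its core is exactly the paper's argument: the corollary is deduced by combining the spanning result of \cref{webspan} with the linear independence supplied by \cref{actind} via the functor $G$ of \cref{Gthm}, choosing $n$ so that $|\bba|+|\bbb|\leq 2n$ (the paper's proof is literally this one line, stated under the section's standing hypothesis that $\k$ is a field of characteristic $\neq 2$; note \cref{actind} needs no restriction to characteristic zero, since its explicit vectors and projections only use that $2\neq 0$). Where you diverge is in extending to an arbitrary integral domain with $2$ invertible: the paper does this in \cref{R:Generallinearindependence} by observing that $V_n$, the modules $S^k(V_n)$ (free by Bergman's Diamond Lemma), the maps of \cref{SS:BasicHomomorphisms}, and hence $G$ and the independence argument of \cref{actind} all make sense and go through verbatim over such a ring, so no base change is needed; you instead pass through the universal category over $\Z[1/2]$ and specialize. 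Your route is viable, but as written it has a small jump at the end: from the canonical \emph{surjection} $\Hom_{\pWeb_{\Z[1/2]}}(\bba,\bbb)\otimes_{\Z[1/2]}\k \twoheadrightarrow \Hom_{\pWeb_{\k}}(\bba,\bbb)$ together with freeness over $\Z[1/2]$ you can only recover that $\mathscr{B}$ spans over $\k$ (which \cref{webspan} already gives); to get independence over $\k$ you need this map to be an \emph{isomorphism}. That is true, because hom-spaces of a category presented by generators and relations with coefficients in $\Z[1/2]$ commute with base change (right-exactness of $-\otimes_{\Z[1/2]}\k$ applied to the presentation), but it should be stated explicitly rather than left implicit; with that sentence added, your argument is complete, and the comparison is essentially convenience: the paper's remark-based extension is shorter, while your base-change argument isolates the field case cleanly.
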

\begin{proof}
This follows by  \cref{webspan,actind}.
\end{proof}

The previous corollary along with the results of \cref{SS:IsomorphicHoms,T:pWebtopWebup} provide a basis theorem for $\pWeb_{\uparrow \downarrow}$, as well. The following result is also immediate.

\begin{corollary}\label{BasisThmA}
The set 
\begin{align*}
\mathscr{B}:=\left\{\left. \xi^{(0,0,C,0)} \right| (0,0,C,0) \in \chi(\bba, \bbb) \right\}
\end{align*}
is a \(\k\)-basis for \(\Hom_{\aWeb}(\bba,\bbb)\).
\end{corollary}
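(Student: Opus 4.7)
The plan is straightforward: we already have spanning from Proposition \ref{webspan}, so we only need linear independence, and this will be reduced to the $\pWeb$ case established in Theorem \ref{actind} (which underlies Corollary \ref{BasisThm}).

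First I would observe that there is a well-defined $\k$-linear monoidal functor $\iota \colon \aWeb \to \pWeb$ that is the identity on objects and sends the generating splits and merges of $\aWeb$ to the even splits and merges of $\pWeb$. This is well-posed because the two defining relations of $\aWeb$, namely web-associativity \cref{AssocRel} and rung swap \cref{DiagSwitchRel}, are both among the defining relations of $\pWeb$. Directly from the definition of $\xi^{(0,0,C,0)}$ (which involves only splits, merges, and identity strands, with no cups, caps, antennas, or crossings built into the chosen diagram $X$), we have $\iota(\xi^{(0,0,C,0)}_{\aWeb}) = \xi^{(0,0,C,0)}_{\pWeb}$ for each admissible $C$.

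Next, for linear independence in $\aWeb$, suppose $\sum_{C} c_{C}\, \xi^{(0,0,C,0)} = 0$ in $\Hom_{\aWeb}(\bba,\bbb)$, with scalars $c_{C} \in \k$ indexed by those $C$ with $(0,0,C,0) \in \chi(\bba,\bbb)$. Applying $\iota$ yields $\sum_{C} c_{C}\, \xi^{(0,0,C,0)} = 0$ in $\Hom_{\pWeb}(\bba,\bbb)$. Choose any integer $n$ large enough that $|\bba|+|\bbb| \leq 2n$; then Theorem \ref{actind} tells us that $\{G(\xi^{(0,0,C,0)})\}_{C}$ is a linearly independent family of morphisms in $\Hom_{\fp(n)}(S^{\bba}(V_n), S^{\bbb}(V_n))$. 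Applying the functor $G$ to the previous relation therefore forces every $c_{C}=0$.

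Combining this with the spanning statement from Proposition \ref{webspan} gives the corollary. The only conceptual point requiring a moment of care is the well-definedness of $\iota$, and this is immediate from comparing the defining relations of the two categories; there is no real obstacle.
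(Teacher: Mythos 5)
Your proposal is correct and takes essentially the same route as the paper, which treats this corollary as immediate from \cref{webspan} (spanning) together with \cref{actind} (linear independence of the images in $\Hom_{\fp(n)}(S^{\bba}(V_n),S^{\bbb}(V_n))$ for $n$ large, obtained by passing through the evident functor $\aWeb\to\pWeb$ — well-defined exactly as you say, since the defining relations of $\aWeb$ are among those of $\pWeb$ — and then applying $G$). One cosmetic remark: for $(0,0,C,0)$ the diagram $X$ may in general still contain crossings (needed to match the row-ordering of the $C_{ij}$ strands at the bottom with the column-ordering at the top), but since the crossing morphism is defined by the same formula \cref{CrossDef} in both categories, your identification $\iota(\xi^{(0,0,C,0)})=\xi^{(0,0,C,0)}$ is unaffected.
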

These morphism spaces have been studied in, e.g., \cite{RT,TVW}.  Also, this basis should be compared with the basis of ``reduced chicken foot diagrams'' given in \cite{BEPO}.

\begin{remark}\label{R:Generallinearindependence} The assumption that $\k$ is a field is for convenience and is not required for the basis theorems stated above.  Let $\k$ be an integral domain in which $2$ is invertible and let $V_{n}$ be the free $\k$-supermodule of rank $2n$ with homogenous basis as in \cref{SS:LSAoftypeP}.  A standard argument using Bergman's Diamond Lemma shows that  $S^{k}(V_{n})$ is a free $\k$-supermodule with the obvious basis.  Using this basis, one can verify that the maps given in \cref{SS:BasicHomomorphisms} and the functor $G$ are still defined, and that the above arguments go through without change.
\end{remark}

\subsection{ \texorpdfstring{$\pWeb$}{pWeb} and the Marked Brauer Category}\label{SS:Brauer}  We now explain how the marked Brauer category introduced in \cite{KT} can be viewed as a subcategory of $\pWeb$.  It should be noted that in \cite{KT} diagrams were read top-to-bottom, contrary to the convention here.  However, using the functor $\textup{refl}$ described in \cref{SS:IsomorphicHoms} one can easily translate between the two conventions.  In this section we assume $\k$ is a field of characteristic different from two.

\begin{definition} The marked Brauer category \(\mathcal{B}\) is the $\k$-linear strict monoidal supercategory generated by a single object \(\bullet\).  For \(k \in \Z_{\geq 0}\), we will use the notation \([k]\) to designate the object \(\bullet^{\otimes k}\). The category \(\mathcal{B}\) has generating morphisms:
\begin{align*}
\hackcenter{}
\hackcenter{
\begin{tikzpicture}[scale=0.8]
  \draw[thick, color=\clr] (0.4,0)--(0.4,0.1) .. controls ++(0,0.35) and ++(0,-0.35) .. (-0.4,0.9)--(-0.4,1);
  \draw[thick, color=\clr] (-0.4,0)--(-0.4,0.1) .. controls ++(0,0.35) and ++(0,-0.35) .. (0.4,0.9)--(0.4,1);
\end{tikzpicture}}: [2] \to [2],
\qquad
\qquad
\hackcenter{
\begin{tikzpicture}[scale=0.8]
 \draw (0,0) arc (0:180:0.4cm) [thick, color=\clr]; 
  \node at (-0.4,0.4) {$\scriptstyle\blacklozenge$};
\end{tikzpicture}}: [2] \to [0],
\qquad
\qquad
\hackcenter{
\begin{tikzpicture}[scale=0.8]
 \draw (0,0) arc (0:-180:0.4cm) [thick, color=\clr];
   \node at (-0.4,-0.4) {$\scriptstyle\blacklozenge$};
\end{tikzpicture}}: [0] \to [2].
\end{align*}
We call these morphisms \emph{twist}, \emph{cap}, and \emph{cup}, respectively. The \(\Z_2\)-grading is given by declaring twists to have parity \(\bar 0\), and caps and cups to have parity \(\bar 1\). The defining relations of \(\mathcal{B}\) are:\\
\begin{align}\label{MBstraight}
\hackcenter{}
\hackcenter{
\begin{tikzpicture}[scale=0.8]
  \draw[thick, color=\clr] (0,0)--(0,-0.8); 
 \draw (0,0) arc (0:180:0.4cm) [thick, color=\clr];
   \draw[thick, color=\clr] (-0.8,0)--(-0.8,-0.2); 
  \draw (-0.8,-0.2) arc (0:-180:0.4cm) [thick, color=\clr];
   \draw[thick, color=\clr] (-1.6,-0.2)--(-1.6,0.6); 
     \node at (-1.2,-0.6) {$\scriptstyle\blacklozenge$};
       \node at (-0.4,0.4) {$\scriptstyle\blacklozenge$};
\end{tikzpicture}}
\;
=
\;
\hackcenter{
\begin{tikzpicture}[scale=0.8]
   \draw[thick, color=\clr] (0,-0.8)--(0,0.6); 
\end{tikzpicture}}
=
-
\;
\hackcenter{
\begin{tikzpicture}[scale=0.8]
   \draw[thick, color=\clr] (-0.8,0)--(-0.8,-0.8); 
 \draw (0,0) arc (0:180:0.4cm) [thick, color=\clr];
   \draw[thick, color=\clr] (0,0)--(0,-0.2); 
  \draw (0.8,-0.2) arc (0:-180:0.4cm) [thick, color=\clr];
   \draw[thick, color=\clr] (0.8,-0.2)--(0.8,0.6); 
       \node at (-0.4,0.4) {$\scriptstyle\blacklozenge$};
         \node at (0.4,-0.6) {$\scriptstyle\blacklozenge$};
\end{tikzpicture}},
\end{align}
\begin{align}\label{MBcox}
\hackcenter{}
\hackcenter{
\begin{tikzpicture}[scale=0.8]
  \draw[thick, color=\clr] (0.4,0)--(0.4,0.1) .. controls ++(0,0.35) and ++(0,-0.35) .. (-0.4,0.7)--(-0.4,0.8) 
 .. controls ++(0,0.35) and ++(0,-0.35) .. (0.4,1.4)--(0.4,1.5);
  \draw[thick, color=\clr] (-0.4,0)--(-0.4,0.1) .. controls ++(0,0.35) and ++(0,-0.35) .. (0.4,0.7)--(0.4,0.8)
  .. controls ++(0,0.35) and ++(0,-0.35) .. (-0.4,1.4)--(-0.4,1.5);
\end{tikzpicture}}
\;
=
\;
\hackcenter{
\begin{tikzpicture}[scale=0.8]
  \draw[thick, color=\clr] (0.4,0)--(0.4,1.5);
  \draw[thick, color=\clr] (-0.4,0)--(-0.4,1.5);
\end{tikzpicture}},
\qquad
\qquad
\hackcenter{
\begin{tikzpicture}[scale=0.8]
  \draw[thick, color=\clr] (0.2,0)--(0.2,0.1) .. controls ++(0,0.35) and ++(0,-0.35) .. (-0.4,0.75)
  .. controls ++(0,0.35) and ++(0,-0.35) .. (0.2,1.4)--(0.2,1.5);
  \draw[thick, color=\clr] (-0.4,0)--(-0.4,0.1) .. controls ++(0,0.35) and ++(0,-0.35) .. (0.8,1.4)--(0.8,1.5);
  \draw[thick, color=\clr] (0.8,0)--(0.8,0.1) .. controls ++(0,0.35) and ++(0,-0.35) .. (-0.4,1.4)--(-0.4,1.5);
\end{tikzpicture}}
\;
=
\;
\hackcenter{
\begin{tikzpicture}[scale=0.8]
  \draw[thick, color=\clr] (0.2,0)--(0.2,0.1) .. controls ++(0,0.35) and ++(0,-0.35) .. (0.8,0.75)
  .. controls ++(0,0.35) and ++(0,-0.35) .. (0.2,1.4)--(0.2,1.5);
  \draw[thick, color=\clr] (-0.4,0)--(-0.4,0.1) .. controls ++(0,0.35) and ++(0,-0.35) .. (0.8,1.4)--(0.8,1.5);
  \draw[thick, color=\clr] (0.8,0)--(0.8,0.1) .. controls ++(0,0.35) and ++(0,-0.35) .. (-0.4,1.4)--(-0.4,1.5);
\end{tikzpicture}},
\end{align}
\begin{align}\label{MBpitch}
\hackcenter{}
\hackcenter{
\begin{tikzpicture}[scale=0.8]
\draw[thick, color=\clr] (1.2,-1)--(1.2,0);
  \draw[thick, color=\clr] (0.4,0)--(0.4,-0.1) .. controls ++(0,-0.35) and ++(0,0.35) .. (-0.4,-0.9)--(-0.4,-1);
  \draw[thick, color=\clr] (-0.4,0.4)--(-0.4,-0.1) .. controls ++(0,-0.35) and ++(0,0.35) .. (0.4,-0.9)--(0.4,-1);
   \draw (1.2,0) arc (0:180:0.4cm) [thick, color=\clr];
     \node at (0.8,0.4) {$\scriptstyle\blacklozenge$};
\end{tikzpicture}}
\;
=
\;
\hackcenter{
\begin{tikzpicture}[scale=0.8]
\draw[thick, color=\clr] (-1.2,-1)--(-1.2,0);
  \draw[thick, color=\clr] (0.4,0.4)--(0.4,-0.1) .. controls ++(0,-0.35) and ++(0,0.35) .. (-0.4,-0.9)--(-0.4,-1);
  \draw[thick, color=\clr] (-0.4,0)--(-0.4,-0.1) .. controls ++(0,-0.35) and ++(0,0.35) .. (0.4,-0.9)--(0.4,-1);
   \draw (-0.4,0) arc (0:180:0.4cm) [thick, color=\clr];
     \node at (-0.8,0.4) {$\scriptstyle\blacklozenge$};
\end{tikzpicture}},
\qquad
\qquad
\hackcenter{
\begin{tikzpicture}[scale=0.8]
\draw[thick, color=\clr] (1.2,1)--(1.2,0);
  \draw[thick, color=\clr] (0.4,0)--(0.4,0.1) .. controls ++(0,0.35) and ++(0,-0.35) .. (-0.4,0.9)--(-0.4,1);
  \draw[thick, color=\clr] (-0.4,-0.4)--(-0.4,0.1) .. controls ++(0,0.35) and ++(0,-0.35) .. (0.4,0.9)--(0.4,1);
   \draw (1.2,0) arc (0:-180:0.4cm) [thick, color=\clr];
     \node at (0.8,-0.4) {$\scriptstyle\blacklozenge$};
\end{tikzpicture}}
\;
=
\;
\hackcenter{
\begin{tikzpicture}[scale=0.8]
\draw[thick, color=\clr] (-1.2,1)--(-1.2,0);
  \draw[thick, color=\clr] (0.4,-0.4)--(0.4,0.1) .. controls ++(0,0.35) and ++(0,-0.35) .. (-0.4,0.9)--(-0.4,1);
  \draw[thick, color=\clr] (-0.4,0)--(-0.4,0.1) .. controls ++(0,0.35) and ++(0,-0.35) .. (0.4,0.9)--(0.4,1);
   \draw (-0.4,0) arc (0:-180:0.4cm) [thick, color=\clr];
     \node at (-0.8,-0.4) {$\scriptstyle\blacklozenge$};
\end{tikzpicture}},
\end{align}
\begin{align}\label{MBcupcapcross}
\hackcenter{
\begin{tikzpicture}[scale=0.8]
  \draw[thick, color=\clr] (0.4,0)--(0.4,0.1) .. controls ++(0,0.35) and ++(0,-0.35) .. (-0.4,0.9)--(-0.4,1);
  \draw[thick, color=\clr] (-0.4,0)--(-0.4,0.1) .. controls ++(0,0.35) and ++(0,-0.35) .. (0.4,0.9)--(0.4,1);
   \draw (0.4,0) arc (0:-180:0.4cm) [thick, color=\clr];
     \node at (0,-0.4) {$\scriptstyle\blacklozenge$};
\end{tikzpicture}}
\;
=
\;
-
\hackcenter{
\begin{tikzpicture}[scale=0.8]
   \draw (0.4,1) arc (0:-180:0.4cm) [thick, color=\clr];
     \node at (0,0.6) {$\scriptstyle\blacklozenge$};
\end{tikzpicture}},
\qquad
\qquad
\hackcenter{
\begin{tikzpicture}[scale=0.8]
  \draw[thick, color=\clr] (0.4,0)--(0.4,-0.1) .. controls ++(0,-0.35) and ++(0,0.35) .. (-0.4,-0.9)--(-0.4,-1);
  \draw[thick, color=\clr] (-0.4,0)--(-0.4,-0.1) .. controls ++(0,-0.35) and ++(0,0.35) .. (0.4,-0.9)--(0.4,-1);
   \draw (0.4,0) arc (0:180:0.4cm) [thick, color=\clr];
     \node at (0,0.4) {$\scriptstyle\blacklozenge$};
\end{tikzpicture}}
\;
=
\;
\hackcenter{
\begin{tikzpicture}[scale=0.8]
   \draw (0.4,-1) arc (0:180:0.4cm) [thick, color=\clr];
     \node at (0,-0.6) {$\scriptstyle\blacklozenge$};
\end{tikzpicture}},
\end{align}
\begin{align}\label{MBbubble}
\hackcenter{
\begin{tikzpicture}[scale=0.8]
   \draw (0.4,-1) arc (0:180:0.4cm) [thick, color=\clr];
    \draw (0.4,-1) arc (0:-180:0.4cm) [thick, color=\clr];
      \node at (0,-0.6) {$\scriptstyle\blacklozenge$};
        \node at (0,-1.4) {$\scriptstyle\blacklozenge$};
\end{tikzpicture}}\;
=\;0.
\end{align}
\end{definition}

Let \(\fp(n)\textup{-Mod}_{V}\) denote the monoidal supercategory of \(\fp(n)\)-modules generated by the natural module \(V_n\). That is, \(\fp(n)\textup{-mod}_{V}\) is the full subcategory of \(\fp(n)\textup{-mod}\) consisting of objects of the form \(\left\{ V_{n}^{\otimes k} \mid k \in \Z_{\geq 0}\right\}\).

\begin{theorem}\cite[Theorem 5.2.1]{KT}\label{T:KTFunctor}
There is a well-defined functor of monoidal supercategories
\begin{align*}
F: \mathcal{B} \to \fp(n)\textup{-Mod}_{V}
\end{align*}
given by \(F(\bullet) = V_{n}\) and on morphisms by
\begin{align*}
\hackcenter{}
\hackcenter{
\begin{tikzpicture}[scale=0.8]
  \draw[thick, color=\clr] (0.4,0)--(0.4,0.1) .. controls ++(0,0.35) and ++(0,-0.35) .. (-0.4,0.9)--(-0.4,1);
  \draw[thick, color=\clr] (-0.4,0)--(-0.4,0.1) .. controls ++(0,0.35) and ++(0,-0.35) .. (0.4,0.9)--(0.4,1);
\end{tikzpicture}}
\mapsto \tau ,
\qquad
\qquad
\hackcenter{
\begin{tikzpicture}[scale=0.8]
 \draw (0,0) arc (0:180:0.4cm) [thick, color=\clr];
   \node at (-0.4,0.4) {$\scriptstyle\blacklozenge$};
\end{tikzpicture}}
\mapsto \cap ,
\qquad
\qquad
\hackcenter{
\begin{tikzpicture}[scale=0.8]
 \draw (0,0) arc (0:-180:0.4cm) [thick, color=\clr];
   \node at (-0.4,-0.4) {$\scriptstyle\blacklozenge$};
\end{tikzpicture}}
\mapsto \cup .
\end{align*}
\end{theorem}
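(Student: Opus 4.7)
The strategy is to factor $F$ through the functor $G\colon \pWeb \to \fp(n)\textup{-mod}_{\mathcal{S}}$ established in \cref{Gthm}. Specifically, I would first construct an intermediate functor $\iota\colon \mathcal{B}(0,-1) \to \pWeb_1$ of monoidal supercategories sending $\bullet$ to the object $1$, the cap and cup generators to the corresponding cap and cup morphisms of $\pWeb$ (with all labels equal to $1$), and the twist generator to the crossing morphism from \cref{CrossDef} (with $a=b=1$). Then I would define $F := G \circ \iota$, where $G$ is restricted to the full subcategory $\pWeb_1 \subseteq \pWeb$.

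The task then reduces to verifying that $\iota$ is well-defined, i.e., that each defining relation \cref{MBstraight}--\cref{MBbubble} of $\mathcal{B}(0,-1)$ is satisfied among the chosen images in $\pWeb_1$. Every such check has already been carried out earlier in the paper. The straightening relation \cref{MBstraight} is exactly \cref{StraightRel} with all strand labels equal to $1$. The two equations of \cref{MBcox}---the vanishing of a double crossing and the Yang--Baxter identity for three strands---are precisely the two equations of \cref{B1}. The pitchfork relations \cref{MBpitch} are the specialization of the cup/cap-through-crossing identities \cref{B5} in \cref{BraidThmP} to $a=1$. The relations \cref{MBcupcapcross} and the bubble-vanishing relation \cref{MBbubble} are together exactly the three identities of \cref{TwistMarkBubble}. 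The $\Z_2$-gradings also match, since the crossing morphism in $\pWeb$ is even (being a $\k$-linear combination of compositions of even splits and merges) while the cap and cup are odd in both categories.

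Once $\iota$ is in hand, the composite $F = G \circ \iota$ has the correct behaviour on generators: $F(\bullet) = G(1) = V_n$, and $G$ sends the cap and cup of $\pWeb$ to $\cap$ and $\cup$ respectively by \cref{Gthm}. The one genuinely non-trivial point is that $G$ sends the crossing of $\pWeb$ (which is defined as an explicit sum of splits and merges) to the tensor swap $\tau_{V_n,V_n}$, and this is exactly the content of \cref{CrossTwist}. I do not expect any real obstacle in executing this plan; the proof is essentially organizational, observing that every defining relation of $\mathcal{B}(0,-1)$ has already been derived inside $\pWeb$. If one preferred a direct approach, each relation could instead be verified by an explicit computation on the basis $\{v_i \mid i \in I_{n|n}\}$ of $V_n$, but this would merely reprise the sign-tracking already handled by \cref{Gthm} and \cref{CrossTwist}.
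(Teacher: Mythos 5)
Your proposal is correct, but it takes a different route from the paper: the paper does not prove this statement at all — it imports it as \cite[Theorem 5.2.1]{KT}, where the proof is a direct verification, with explicit sign bookkeeping on the basis $\{v_i\}$ of $V_n$, that $\tau$, $\cap$, $\cup$ satisfy the marked Brauer relations as $\fp(n)$-module maps. What you do instead is define $F$ as the composite $G\circ\iota$, where $\iota$ is (in the paper's later notation) the functor $F'$ of \cref{Bwebfun}, and then read off the values on generators from \cref{Gthm} and \cref{CrossTwist}. Your identification of the relation checks is accurate: \cref{MBstraight} is \cref{StraightRel} with unit labels, \cref{MBcox} is \cref{B1}, \cref{MBpitch} is \cref{B5} at $a=1$ (indeed a more precise attribution than the paper's own reference to \cref{BraidThm} in the proof of \cref{Bwebfun}, since caps are involved), and \cref{MBcupcapcross}, \cref{MBbubble} are \cref{TwistMarkBubble}; and there is no circularity, since none of \cref{Gthm}, \cref{CrossTwist}, \cref{StraightRel}, \cref{BraidThmP}, \cref{TwistMarkBubble} depend on \cref{T:KTFunctor}, and all appear earlier in the paper's logical order. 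In effect you are re-deriving the well-definedness half of \cref{Bwebfun} together with the commutativity of the right-hand triangle of \cref{comdiag} (there stated as $G\circ F'=F$), so your argument buys a proof that avoids repeating the sign computations of [KT] but at the cost of invoking the heavier $\pWeb$ machinery, whereas the cited proof is elementary and independent of the web category (which is why the paper can, and does, treat it as an external input). Note also that only well-definedness follows this way; the fullness of $F$ (\cref{KTthm1}) still requires the external input from [KT]/[DLZ], which you correctly do not claim. One cosmetic slip: the first identity of \cref{MBcox}/\cref{B1} says the double crossing equals the identity, not that it vanishes.
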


\begin{theorem}\label{KTthm1}
If $\k$ is a field of characteristic zero, then the functor $F$ is full.  That is, for all $a,b \in \Z_{\geq 0}$  the induced map of superspaces,
\begin{align*}
F:\Hom_{\mathcal{B}}([a],[b]) \xrightarrow{\sim} \Hom_{\fp(n)}(V_n^{\otimes a}, V_n^{\otimes b}),
\end{align*} is surjective. 
\end{theorem}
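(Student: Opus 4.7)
The plan is to reduce fullness on general Hom-spaces to surjectivity on endomorphism algebras by exploiting the rigid symmetric monoidal structure of both categories, and then invoke Moon's first fundamental theorem for $\fp(n)$ \cite{Moon}. Both $\mathcal{B}(0,-1)$ and $\fp(n)\text{-Mod}_V$ are rigid monoidal supercategories in the sense of \cite{BE}: in the source, $[1]$ is self-dual via the cup and cap generators; in the target, $V_n$ is self-dual through the odd homomorphisms $\cup$ and $\cap$ of \cref{SS:BasicHomomorphisms}, built from the invariant bilinear form. Since $F$ carries the cup and cap generators to $\cup$ and $\cap$ by construction, it intertwines these rigid structures.

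Iterating the standard adjunction $\Hom(X \otimes Y, Z) \xrightarrow{\sim} \Hom(Y, X^* \otimes Z)$ in both categories then yields parity-preserving bijections
\[
\Hom_{\mathcal{B}(0,-1)}([a],[b]) \;\cong\; \End_{\mathcal{B}(0,-1)}([r]), \qquad \Hom_{\fp(n)}(V_n^{\otimes a}, V_n^{\otimes b}) \;\cong\; \End_{\fp(n)}(V_n^{\otimes r}),
\]
where $r = (a+b)/2$ when $a+b$ is even (both sides vanish otherwise by the parity of the bilinear form). As these are intertwined by $F$, fullness of $F$ reduces to surjectivity of $F \colon \End_{\mathcal{B}(0,-1)}([r]) \to \End_{\fp(n)}(V_n^{\otimes r})$ for every $r \geq 0$, which is precisely the content of Moon's theorem \cite[Proposition 4.2]{Moon}. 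Alternatively, this can be recovered from the infrastructure already developed here: \cref{Bwebfun} identifies $\mathcal{B}(0,-1) \cong \pWeb_1$; \cref{BasisThm} gives an explicit $\k$-basis of $\Hom_{\pWeb_1}(1^a, 1^b)$ indexed by Brauer-type matchings on $a+b$ points; and \cref{actind} shows these images remain linearly independent in $\Hom_{\fp(n)}$ when $a+b \leq 2n$, so comparing against the classical count $\dim\End_{\fp(n)}(V_n^{\otimes r}) = (2r-1)!!$ yields surjectivity in that range, after which a stabilization argument in $n$ handles the remaining cases.

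The main obstacle will be managing the Koszul signs and parity shifts introduced by the odd cup and cap morphisms in the rigid structure: because the evaluations and coevaluations used in the adjunction isomorphisms are themselves odd, parity data must be tracked carefully to ensure that the reduction between Hom- and endomorphism-spaces is strictly intertwined by $F$. I would handle these uniformly within the conventions of \cite{BE} for rigid monoidal supercategories, where the only remaining verification is compatibility of $F$ with the co-/evaluation generators, which holds by construction.
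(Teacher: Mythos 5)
Your reduction of fullness to surjectivity on endomorphism algebras via the rigid structure is fine, and it is essentially how the surjectivity is packaged in the literature (the paper itself uses the same Hom-to-End manoeuvre in the proof of \cref{T:KernelofF}). But as a proof of the theorem as stated there is a genuine gap: the theorem asserts fullness over an \emph{arbitrary} field $\k$ of characteristic zero, whereas Moon's centralizer theorem --- and the results of \cite[Section 4.9]{DLZ} and \cite[Theorem 5.2.1]{CEIII} that the paper actually cites for this step --- are proved over $\C$. The entire written content of the paper's proof is the descent argument you omit: using the basis theorem for $\mathcal{B}(0,-1)$ to identify $\C \otimes_{\Q}\Hom_{\mathcal{B}(0,-1)_{\Q}}([a],[b])$ with $\Hom_{\mathcal{B}(0,-1)_{\C}}([a],[b])$, the standard base-change isomorphism for $\Hom_{\fp(n)}$-spaces, and a comparison of image dimensions to conclude that surjectivity over $\C$ forces surjectivity over $\Q$, and then over any characteristic-zero field. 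Without some such argument your proposal establishes at most the $\C$ case. You should also be careful about exactly what you attribute to Moon: what is needed is surjectivity onto $\End_{\fp(n)}(V_n^{\otimes r})$ for \emph{all} $n$ and $r$, not merely the stable range $r\le n$ where the map is an isomorphism; note the paper deliberately cites \cite{DLZ} and \cite{CEIII} rather than \cite{Moon} for this point, so verify that the precise statement you need is in the reference you invoke.

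Your ``alternative'' in-paper route does not work. The count $\dim_{\k}\End_{\fp(n)}(V_n^{\otimes r})=(2r-1)!!$ is not an independently available classical fact: in the range $r\le n$ it is equivalent to combining \cref{actind} with the very surjectivity you are trying to prove, so invoking it is circular, and for $r>n$ it is simply false (the endomorphism algebra is a proper quotient, cf. \cref{P:injectivity}). Moreover $r>n$ is precisely where the content of the first fundamental theorem lies, and it cannot be reached by a ``stabilization argument in $n$'': surjectivity for large $n$ gives no morphism in a useful direction from which to deduce surjectivity for small $n$, and the linear-independence input from \cref{actind} is unavailable there. So either cite a reference proving the FFT over $\C$ for all $n,r$ and then supply the base-change argument (the paper's route), or produce a genuinely new argument for the unstable range --- a dimension comparison does not provide one.
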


\begin{proof} When $\k=\C$ the statement follows from \cite[Section 4.9]{DLZ} (see the proof of \cite[Theorem 5.2.1]{CEIII} for details).  The basis theorem for $\mathcal{B}$ given in \cite[Theorem 2.3.1]{KT} and straightforward base change arguments show the functor is full for an arbitrary characteristic zero field.
\end{proof}

Let \(\pWeb_1\) be the full monoidal sub-supercategory of \(\pWeb\) whose objects are tuples consisting of only ones, including the empty tuple.

\begin{theorem}\label{Bwebfun}
There is an isomorphism of monoidal supercategories
\begin{align*}
F':\mathcal{B} \to \pWeb_1
\end{align*}
given on objects by \(\bullet \mapsto 1\) and on morphisms by
\begin{align*}
\hackcenter{}
\hackcenter{
\begin{tikzpicture}[scale=0.8]
  \draw[thick, color=\clr] (0.4,0)--(0.4,0.1) .. controls ++(0,0.35) and ++(0,-0.35) .. (-0.4,0.9)--(-0.4,1);
  \draw[thick, color=\clr] (-0.4,0)--(-0.4,0.1) .. controls ++(0,0.35) and ++(0,-0.35) .. (0.4,0.9)--(0.4,1);
\end{tikzpicture}}
\mapsto
\hackcenter{
\begin{tikzpicture}[scale=0.8]
  \draw[thick, color=\clr] (0.4,0)--(0.4,0.2) .. controls ++(0,0.35) and ++(0,-0.35) .. (-0.4,0.9)--(-0.4,1);
  \draw[thick, color=\clr] (-0.4,0)--(-0.4,0.2) .. controls ++(0,0.35) and ++(0,-0.35) .. (0.4,0.9)--(0.4,1);
      \node[below] at (-0.4,0) {$\scriptstyle 1$};
      \node[below] at (0.4,0.05) {$\scriptstyle 1$};
         \node[above] at (-0.4,1) {$\scriptstyle 1$};
      \node[above] at (0.4,1) {$\scriptstyle 1$};
      \node[above,white] at (-0.5,1) {\color{white} $\scriptstyle a$};
\end{tikzpicture}},
\qquad
\qquad
\hackcenter{
\begin{tikzpicture}[scale=0.8]
 \draw (0,0) arc (0:180:0.4cm) [thick, color=\clr];
   \node at (-0.4,0.4) {$\scriptstyle\blacklozenge$};
\end{tikzpicture}}
\mapsto
\hackcenter{
\begin{tikzpicture}[scale=0.8]
   \draw (0.4,-1) arc (0:180:0.4cm) [thick, color=\clr];
     \node at (0,-0.6) {$\scriptstyle\blacklozenge$};
      \node[below] at (-0.4,-1) {$\scriptstyle 1$};
       \node[below] at (0.4,-1) {$\scriptstyle 1$};
       \node[above] at (0, -0.4) {\color{white} 1};
\end{tikzpicture}}
\qquad
\qquad
\hackcenter{
\begin{tikzpicture}[scale=0.8]
 \draw (0,0) arc (0:-180:0.4cm) [thick, color=\clr];
   \node at (-0.4,-0.4) {$\scriptstyle\blacklozenge$};
\end{tikzpicture}}
\mapsto
\hackcenter{
\begin{tikzpicture}[scale=0.8]
   \draw (0.4,1) arc (0:-180:0.4cm) [thick, color=\clr];
     \node at (0,0.6) {$\scriptstyle\blacklozenge$};
           \node[above] at (-0.4,1) {$\scriptstyle 1$};
       \node[above] at (0.4,1) {$\scriptstyle 1$};
        \node[below] at (0, 0.4) {\color{white} 1};
\end{tikzpicture}}.
\end{align*}
\end{theorem}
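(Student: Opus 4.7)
The plan is to first check that $F'$ is well-defined as a functor of monoidal supercategories, then to establish that it is an isomorphism by comparing $\k$-bases on the two sides.

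\emph{Step 1 (well-definedness).} To verify that $F'$ descends to the quotient, I would check each of the defining relations \cref{MBstraight,MBcox,MBpitch,MBcupcapcross,MBbubble} in turn. Straightening \cref{MBstraight} is the image of \cref{StraightRel}. The two relations in \cref{MBcox} translate to the involutivity and the three-strand braid relation for the crossing on strands labeled $1$; these are instances of \cref{BraidThm} (and the first is also a special case of the inverse property used to define the crossing via \cref{CrossDef}). The cap- and cup-slide relations \cref{MBpitch} are exactly \cref{B5}. The sign relations \cref{MBcupcapcross} between a cap (or cup) and a crossing are precisely the first two assertions of \cref{TwistMarkBubble}, and the bubble-vanishing \cref{MBbubble} is the third assertion there. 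The parity assignments on generators also match by construction. Thus $F'$ is a well-defined functor of strict monoidal supercategories, and it is bijective on objects.

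\emph{Step 2 (fullness and faithfulness via bases).} By \cref{BasisThm}, the morphism space $\Hom_{\pWeb_1}(1^{\otimes a}, 1^{\otimes b})$ has the $\k$-basis $\{\xi^{(A,B,C,D)} \mid (A,B,C,D)\in \chi(\mathbf{1}^a,\mathbf{1}^b)\}$, where $\mathbf{1}^k=(1,\ldots,1)\in \Z^k$. Specializing the defining conditions of $\chi(\mathbf{1}^a,\mathbf{1}^b)$ to all entries of $\mathbf{a},\mathbf{b}$ being $1$ forces $D=0$ (because $2D_i\le a_i=1$) and forces each row of $A\sqcup C$ and each column of $B\sqcup C$ to contain exactly one nonzero entry, which must itself equal $1$. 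Consequently, a datum $(A,B,C,0)\in\chi(\mathbf{1}^a,\mathbf{1}^b)$ is precisely the combinatorial data of a marked Brauer diagram from $[a]$ to $[b]$: $A$ records caps, $B$ records cups, and $C$ records through-strands. On the other hand, by \cite[Theorem 2.3.1]{KT}, the morphism space $\Hom_{\mathcal{B}(0,-1)}([a],[b])$ has a $\k$-basis indexed by marked Brauer diagrams.

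\emph{Step 3 (matching the bases).} I would next verify that $F'$ sends the basis of marked Brauer diagrams to the $\xi$-basis (up to signs, which are harmless since the $\xi^{(A,B,C,D)}$ are only defined up to sign via \cref{BraidThm}). Given a marked Brauer diagram, one can factor it as a horizontal tensor product of cups and an identity followed by a permutation followed by a horizontal tensor product of caps and an identity; applying $F'$ produces exactly the assembly used to build $\xi^{(A,B,C,0)}$ in \cref{SS:BasisForPWebs}, since upward splits/merges are not needed on strands labeled $1$ and the permutation part is built from the crossings on strands labeled $1$. From this explicit compatibility we conclude that $F'$ carries a $\k$-basis to a $\k$-basis on each Hom space, so $F'$ is full and faithful, hence an isomorphism of monoidal supercategories.

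\emph{Main obstacle.} The essential point is Step 3: carefully matching the marked Brauer basis with the $\xi$-basis in $\pWeb_1$ and tracking the signs. The combinatorics of $\chi$ simplifies dramatically when the source and target are tuples of ones, so the bookkeeping is manageable, but verifying that the isotopy classes of marked Brauer diagrams correspond bijectively to the $\xi^{(A,B,C,0)}$ requires invoking \cref{BraidThm} to identify different choices of the diagram $X$ in the construction of $\xi^{(A,B,C,D)}$. Beyond that, the proof is an application of the basis theorems on both sides.
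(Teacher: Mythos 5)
Your proposal is correct and follows essentially the same route as the paper: verify the defining relations \cref{MBstraight,MBcox,MBpitch,MBcupcapcross,MBbubble} via \cref{StraightRel}, \cref{BraidThm}/\cref{B5}, and \cref{TwistMarkBubble}, and then conclude by observing that $F'$ matches the marked Brauer basis of \cite[Theorem 2.3.1]{KT} with the $\xi$-basis of \cref{BasisThm}. Your Steps 2--3 merely spell out in more detail the basis comparison that the paper states in one sentence, so there is nothing to correct.
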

\begin{proof}
We first check that relations (\ref{MBstraight})--(\ref{MBbubble}) are satisfied in \(\pWeb\). Relation (\ref{MBstraight}) holds by (\ref{StraightRel}). The relations (\ref{MBcox}, \ref{MBpitch}) hold by  \cref{BraidThm}. The relations (\ref{MBcupcapcross}, \ref{MBbubble}) hold by \cref{TwistMarkBubble}.

The functor \(F'\) restricts to an isomorphism on Hom spaces, as \(F'\) sends the basis morphisms described in \cite[Theorem 2.3.1]{KT} to the basis morphisms of \cref{BasisThm}.
\end{proof}

\subsection{The Functor  \texorpdfstring{$G_{\uparrow\downarrow}: \pWeb_{\uparrow\downarrow} \to \pmodSS$}{G:p-Webupdown -> p(n)-mod}}\label{SS:GFunctorOrientedWebs}

\begin{theorem}\label{OrGthm}
There is a well-defined functor of monoidal supercategories
\begin{align*}
G_{\uparrow \downarrow}: \pWeb_{\uparrow \downarrow} \to \pmodSS 
\end{align*}
given on objects by \(G_{\uparrow \downarrow}(\uparrow_a) = S^a(V_n)\) and \(G_{\uparrow \downarrow}(\downarrow_a)=S^a(V_n)^*\). The functor is given on morphisms by
$$
\hackcenter{
{}
}
\hackcenter{
\begin{tikzpicture}[scale=0.8]
\draw[thick, color=\clr,->] (0,-0.1)--(0,0.2);
  \draw[thick, color=\clr,->] (0,0)--(0,0.2) .. controls ++(0,0.35) and ++(0,-0.35) .. (-0.4,0.9)--(-0.4,1);
  \draw[thick, color=\clr,->] (0,0)--(0,0.2) .. controls ++(0,0.35) and ++(0,-0.35) .. (0.4,0.9)--(0.4,1);
      \node[above] at (-0.4,1) {$\scriptstyle a$};
      \node[above] at (0.4,1) {$\scriptstyle b$};
      \node[below] at (0,-0.1) {$\scriptstyle a+b$};
\end{tikzpicture}}
\mapsto 
\textup{spl}_{a+b}^{a,b},
\qquad
\hackcenter{
\begin{tikzpicture}[scale=0.8]
 \draw[thick, color=\clr,->] (-0.4,-0.1)--(-0.4,0.2);
  \draw[thick, color=\clr,->] (0.4,-0.1)--(0.4,0.2);
  \draw[thick, color=\clr,->] (-0.4,0)--(-0.4,0.1) .. controls ++(0,0.35) and ++(0,-0.35) .. (0,0.8)--(0,1);
\draw[thick, color=\clr,->] (0.4,0)--(0.4,0.1) .. controls ++(0,0.35) and ++(0,-0.35) .. (0,0.8)--(0,1);
      \node[below] at (-0.4,-0.1) {$\scriptstyle a$};
      \node[below] at (0.4,-0.1) {$\scriptstyle b$};
      \node[above] at (0,1) {$\scriptstyle a+b$};
\end{tikzpicture}}
\mapsto
\textup{mer}_{a,b}^{a+b},
\qquad
\hackcenter{
\begin{tikzpicture}[scale=0.8]
 \draw(0,0) arc (0:180:0.4cm) [thick, color=\clr,->];
    \node[below] at (0,0) {$\scriptstyle a$};
      \node[below] at (-0.8,0) {$\scriptstyle a$};
\end{tikzpicture}}
\mapsto
\textup{eval}_a,
\qquad
\hackcenter{
\begin{tikzpicture}[scale=0.8]
 \draw(0,0) arc (0:-180:0.4cm) [thick, color=\clr,->];
    \node[above] at (0,0) {$\scriptstyle a$};
      \node[above] at (-0.8,0) {$\scriptstyle a$};
\end{tikzpicture}}
\mapsto
\textup{coeval}_a,
$$
$$
\hackcenter{
\begin{tikzpicture}[scale=0.8]
 \draw[thick, color=\clr,->] (0,0)--(0,0.4);
  \draw[thick, color=\clr,<-] (0,0.6)--(0,1);
    \node[below] at (0,0) {$\scriptstyle 1$};
     \node[above] at (0,1) {$\scriptstyle 1$};
 \node[shape=coordinate](DOT) at (0,0.5) {};
  \draw[thick, color=\clr, fill=yellow]  (DOT) circle (1mm);
\end{tikzpicture}}
\mapsto
D,
\qquad
\hackcenter{
\begin{tikzpicture}[scale=0.8]
 \draw[thick, color=\clr,<-] (0,0)--(0,0.4);
  \draw[thick, color=\clr,->] (0,0.6)--(0,1);
    \node[below] at (0,0) {$\scriptstyle 1$};
     \node[above] at (0,1) {$\scriptstyle 1$};
 \node[shape=coordinate](DOT) at (0,0.5) {};
  \draw[thick, color=\clr, fill=blue]  (DOT) circle (1mm);
\end{tikzpicture}}
\mapsto
D^{-1},
\qquad
\hackcenter{
\begin{tikzpicture}[scale=0.8]
  \draw[thick, color=\clr,<-] (0.4,0)--(0.4,0.1) .. controls ++(0,0.35) and ++(0,-0.35) .. (-0.4,0.9)--(-0.4,1);
  \draw[thick, color=\clr,->] (-0.4,0)--(-0.4,0.1) .. controls ++(0,0.35) and ++(0,-0.35) .. (0.4,0.9)--(0.4,1);
      \node[above] at (-0.4,1) {$\scriptstyle b$};
      \node[above] at (0.4,1) {$\scriptstyle a$};
            \node[below] at (-0.4,0) {$\scriptstyle a$};
      \node[below] at (0.4,0) {$\scriptstyle b$};
\end{tikzpicture}}
\mapsto
\tau_{S^a(V_n),S^b(V_n)^*}.
$$
\end{theorem}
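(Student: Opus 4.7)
The proof is a direct verification: we check that parities agree and that every defining relation of $\pWeb_{\uparrow\downarrow}$ from \cref{SS:DefinitionofPWebupdown} is sent to an identity of $\fp(n)$-module homomorphisms. Parity preservation is immediate, since tag-in and tag-out go to $D$ and $D^{-1}$, which are odd because the form $(\cdot,\cdot)$ is odd, while every other generator is sent to an even map. I would split the relation check into three groups.

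First, the up-arrow-only relations \cref{AssocRel} and \cref{CupDiagSwitchRel2} (read with all strands oriented upward) are preserved because, after expanding the upward cap, upward cup, and upward crossing in terms of leftward caps/cups and tag morphisms as in \cref{SS:DefinitionofPWebupdown}, the composite $G_{\uparrow\downarrow}\circ \iota_\uparrow$ agrees on splits, merges, cups, and caps with the functor $G$ of \cref{Gthm}. These relations have already been verified inside the proof of \cref{Gthm}, so they transport through $\iota_\uparrow$ for free, sparing us from redoing any of the signed binomial and Chu--Vandermonde manipulations.

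Second, the leftward straightening \cref{OrStraightRel} is exactly the snake identity for the duality pair $\bigl(S^a(V_n),S^a(V_n)^*\bigr)$ equipped with $\operatorname{eval}_a$ and $\operatorname{coeval}_a$, and holds because $\operatorname{coeval}_a$ is by construction the dual of $\operatorname{eval}_a$. The left/right crossing inversion \cref{LRCrossRel} then reduces, after unpacking the leftward crossing as a sideways composition of $\tau_{S^a(V_n),S^b(V_n)}$ with leftward caps and cups, to the involutivity $\tau\circ\tau=\operatorname{id}$ together with the snake identity just established; this is a standard calculation valid in any rigid symmetric monoidal supercategory.

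Finally, the bubble annihilation \cref{BubbleRel} is the one step with real content: the image of the bubble is the scalar $\operatorname{sdim} S^a(V_n)\in\k$, and we must show this vanishes for $a>0$. Since $V_n=V_n^{\bar 0}\oplus V_n^{\bar 1}$ with $\dim V_n^{\bar 0}=\dim V_n^{\bar 1}=n$, and $S(V_n)\cong S(V_n^{\bar 0})\otimes \Lambda(V_n^{\bar 1})$ as super vector spaces (the second tensor factor contributing signs to the supertrace), a short generating-function computation gives
\[
\sum_{a\geq 0}\operatorname{sdim} S^a(V_n)\,t^a \;=\; \frac{1}{(1-t)^n}\cdot(1-t)^n \;=\; 1,
\]
so $\operatorname{sdim} S^a(V_n)=0$ for every $a\geq 1$. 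The main nuisance I expect is bookkeeping of Koszul signs in the crossing-inversion step, where the leftward and rightward crossings are defined by different placements of the odd maps $D$ and $D^{-1}$ and must still collapse on the nose after composition; once the conventions of \cref{SS:BasicHomomorphisms} are fixed this is routine but sign-heavy.
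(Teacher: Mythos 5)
Your proposal is correct and follows essentially the same route as the paper: both reduce the up-arrow relations to \cref{Gthm} (via the observation that upward caps, cups, antennae, and crossings are sent to $\cap$, $\cup$, $\operatorname{ant}$, and $\tau$) and then verify \cref{OrStraightRel,LRCrossRel,BubbleRel} directly. The only differences are cosmetic: where the paper checks the straightening and crossing-inversion relations by explicit dual-basis computations, you invoke the standard snake/mate identities of a rigid symmetric monoidal supercategory, and for bubble annihilation you supply the generating-function proof that $\operatorname{sdim}S^a(V_n)=0$ for $a\geq 1$, a fact the paper uses but leaves as an unjustified basis-sum identity.
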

\begin{proof}
Note that upward caps, upward cups, and upward antennas are sent to \(\cup\), \(\cap\), and \(\textup{ant}\), respectively, so the defining up-arrow relations of \(\pWeb_{\uparrow \downarrow}\) are preserved by \(G_{\uparrow \downarrow}\) thanks to  \cref{Gthm}. We now check (\ref{OrStraightRel}--\ref{BubbleRel}).

Let \(B_a\) be a homogeneous \(\k\)-basis for \(S^a(V_n)\), with dual basis \(\{x^* \mid x \in B_a\}\) for \(S^a(V_n)^*\). Then for \(x,y \in B_a\) we have
\begin{align*}
\textup{eval}_a(x^* \otimes y) = \delta_{x,y},
\qquad
\textup{and}
\qquad
\textup{coeval}_a(1) = \sum_{x \in B_a} x \otimes x^*.
\end{align*}
So for all \(x \in B_a\), we have
\begin{align*}
(\textup{id} \otimes \textup{eval}_a) \circ (\textup{coeval}_a \otimes \textup{id})(x)
=
\sum_{y \in B_a}(\textup{id} \otimes \textup{eval}_a)(y \otimes y^* \otimes x)
=
\sum_{y \in B_a} \delta_{y,x}y
=
x,
\end{align*}
and
\begin{align*}
(\textup{eval}_a \otimes \textup{id})\circ (\textup{id} \otimes \textup{coeval}_a)(x^*)
=
\sum_{y \in B_a} (\textup{eval}_a \otimes \textup{id})(x^* \otimes y \otimes y^*)
=
\sum_{y \in B_a} \delta_{x,y}y^*
=
x^*,
\end{align*}
proving (\ref{OrStraightRel}).

By \cref{CrossTwist}, we have
\begin{align*}
\hackcenter{}
G_{\uparrow \downarrow}:
\hackcenter{
\begin{tikzpicture}[scale=0.8]
  \draw[thick, color=\clr,->] (0.4,0)--(0.4,0.1) .. controls ++(0,0.35) and ++(0,-0.35) .. (-0.4,0.8)--(-0.4,1);
  \draw[thick, color=\clr,->] (-0.4,0)--(-0.4,0.1) .. controls ++(0,0.35) and ++(0,-0.35) .. (0.4,0.8)--(0.4,1);
      \node[below] at (-0.4,0) {$\scriptstyle a$};
      \node[below] at (0.4,0.04) {$\scriptstyle b$};
      \node[above] at (-0.4,1) {$\scriptstyle a$};
      \node[above] at (0.4,1) {$\scriptstyle b$};
\end{tikzpicture}}
\mapsto
\tau_{S^a(V_n),S^b(V_n)}.
\end{align*}
From this it is immediate that for all \(x \in B_a\), \(y \in B_b\)
the image of the leftward crossing under \(G_{\uparrow \downarrow}\) is \(\tau_{S^a(V_n)^*, S^b(V_n)}\), and thus we have that relation (\ref{LRCrossRel}) is preserved.

Finally, to check relation (\ref{BubbleRel}), we note that
\begin{align*}
\textup{eval}_a \circ \tau_{S^a(V_n)^*, S^a(V_n)} \circ \textup{coeval}_{a}(1)
&=
\sum_{x \in B_a} \textup{eval}_a \circ \tau_{S^a(V_n)^*, S^a(V_n)}(x \otimes x^* )\\
&=
\sum_{x \in B_a} (-1)^{|x|} \textup{eval}_a (x^* \otimes x)
=
\sum_{x \in B_a} (-1)^{|x|}
=
0,
\end{align*}
completing the proof.
\end{proof}


\subsection{Explosion and Contraction} In this section $\k$ can be an integral domain.  For short, given $k \geq 1 $ we write \(y_k \in \Hom_{\pWeb}(k,1^k)\) and \(z_k \in \Hom_{\pWeb}(1^k,k)\) for the morphisms defined by \cref{E:multisplitmerge}.

\begin{lemma}\label{Lemzy}  For all $k \geq 1$ we have \(z_k \circ y_k = k! \cdot \textup{id}_k\).
\end{lemma}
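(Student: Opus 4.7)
The plan is to prove this by a short induction on $k$ using the web-associativity relation \cref{AssocRel} and the Knothole relation \cref{KnotholeRel}. The intuition is that iterating the binomial coefficient $\binom{a+b}{a}$ arising from merging-after-splitting yields the multinomial coefficient $\binom{k}{1,1,\ldots,1}=k!$.

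The base case $k=1$ is immediate, since $y_1$ and $z_1$ are both $\id_1$ by the convention that $0$-labeled strands are deleted (or simply by reading off the definition in \cref{E:multisplitmerge}).

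For the inductive step, by \cref{AssocRel} I may decompose the multi-split and multi-merge by peeling off the rightmost strand:
\begin{equation*}
y_k \;=\; (y_{k-1}\otimes \id_1)\circ s_k,\qquad z_k \;=\; m_k\circ (z_{k-1}\otimes \id_1),
\end{equation*}
where $s_k\in\Hom_{\pWeb}(k,(k-1,1))$ is the generating split and $m_k\in\Hom_{\pWeb}((k-1,1),k)$ is the generating merge. The super-interchange law is not an issue because every morphism in sight is even. Composing and using functoriality of tensor product,
\begin{equation*}
z_k\circ y_k \;=\; m_k\circ \bigl((z_{k-1}\circ y_{k-1})\otimes \id_1\bigr)\circ s_k.
\end{equation*}
By the inductive hypothesis $z_{k-1}\circ y_{k-1}=(k-1)!\,\id_{k-1}$, so this reduces to $(k-1)!\cdot m_k\circ s_k$. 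Finally, \cref{KnotholeRel} with $a=k-1$ and $b=1$ gives $m_k\circ s_k=\binom{k}{k-1}\id_k=k\,\id_k$, and multiplying yields $k!\,\id_k$ as required.

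The argument is essentially formal; the only thing to watch is that the decomposition of $y_k$ and $z_k$ via \cref{AssocRel} is valid (guaranteed by the remark following \cref{E:multisplitmerge} that such iterated splits/merges are independent of the order of nesting), and that the Knothole relation is applied with the correct labels. No other obstacle is expected.
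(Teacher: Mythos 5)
Your argument is correct. The induction goes through: the decomposition $y_k=(y_{k-1}\otimes\id_1)\circ s_k$, $z_k=m_k\circ(z_{k-1}\otimes\id_1)$ is justified by \cref{AssocRel} (and the order-independence noted after \cref{E:multisplitmerge}), the interchange causes no signs since splits and merges are even, and \cref{KnotholeRel} with $(a,b)=(k-1,1)$ gives $m_k\circ s_k=\binom{k}{k-1}\id_k=k\,\id_k$, so $z_k\circ y_k=(k-1)!\cdot k\cdot\id_k$ as claimed; the base case $k=1$ is trivial. The paper's own proof is a one-line appeal to repeated application of the rung swap relation \cref{DiagSwitchRel}, i.e.\ it peels off strands by directly invoking the special case of \cref{DiagSwitchRel} in which one factor has thickness $1$, rather than routing through \cref{KnotholeRel}. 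The two arguments are close relatives — \cref{KnotholeRel} is itself a consequence of \cref{AssocRel} and \cref{DiagSwitchRel} — but your version is more structured: it isolates the single binomial-coefficient identity needed at each stage and makes the accumulation of $k!$ explicit via induction, at the cost of leaning on an implied relation whose proof the paper delegates to external notes, whereas the paper's route works directly from the defining relations.
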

\begin{proof}
Follows from repeated application of (\ref{DiagSwitchRel}).
\end{proof}

 Let \(k \in \Z_{\geq 0}\) and \(\bba = (a_1, \ldots, a_k) \in \Z_{\geq 0}^k\). We identify \(\bba\) with the object $(a_1, \dotsc,  a_{k})$ of $\pWeb$. We will also use the following associated notation:
\begin{align*}
 |\bba|:=a_1 + \cdots + a_{k},
 \qquad
 \bba!:= a_1! \cdots a_{k}!
 \qquad
 y_\ba := y_{a_1} \otimes \cdots \otimes y_{a_{k}},
 \qquad
 z_\ba := z_{a_1} \otimes \cdots \otimes z_{a_{k}}.
 \end{align*}
For any objects \(\bba, \bbb\) in $\pWeb$ we have linear maps:
 \begin{align*}
\exp_{\bba,\bbb}:\Hom_{\pWeb}(\bba, \bbb) \to  \Hom_{\pWeb}(1^{|\bba|}, 1^{|\bbb|}),
 \qquad
 f \mapsto y_{\bbb} \circ f \circ z_{\bba},
 \end{align*}
 and
  \begin{align*}
\con_{\bba, \bbb}:\Hom_{\pWeb}(1^{|\bba|}, 1^{|\bbb|}) \to  \Hom_{\pWeb}(\bba, \bbb),
 \qquad
 g \mapsto z_{\bbb} \circ g \circ y_{\bba}.
 \end{align*}
We refer to these maps as {\em explosion} and {\em contraction}, respectively.  See the proof of \cite[Theorem 1.10]{RT} for a picture showing them in use.
 
 \begin{lemma}\label{conexp}
 For every \(f \in \Hom_{\pWeb}(\bba, \bbb)\) we have \(\left(\con_{\bba,\bbb} \circ \exp_{\bba, \bbb}  \right) (f) = \bba!\bbb! \, f\).
 \end{lemma}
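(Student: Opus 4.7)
The plan is to unpack the two definitions and reduce everything to the identity $z_k\circ y_k=k!\cdot\id_k$ already established in Lemma~\ref{Lemzy}. Starting from a morphism $f\in\Hom_{\pWeb}(\bba,\bbb)$, applying explosion and then contraction directly gives
\[
\con_{\bba,\bbb}\!\bigl(\exp_{\bba,\bbb}(f)\bigr)
\;=\; z_{\bbb}\circ\bigl(y_{\bbb}\circ f\circ z_{\bba}\bigr)\circ y_{\bba}
\;=\; (z_{\bbb}\circ y_{\bbb})\circ f\circ (z_{\bba}\circ y_{\bba}),
\]
where the second equality is just associativity of composition. So the whole task is to show that $z_{\bbc}\circ y_{\bbc}=\bbc!\cdot\id_{\bbc}$ for an arbitrary tuple $\bbc\in\Z_{\geq 0}^k$.

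Next I would invoke the interchange law. Since $y_{\bbc}=y_{c_1}\otimes\cdots\otimes y_{c_k}$ and $z_{\bbc}=z_{c_1}\otimes\cdots\otimes z_{c_k}$ are horizontal concatenations of \emph{even} morphisms (splits and merges have parity $\bar 0$), there is no sign to worry about in the super-interchange law, and
\[
z_{\bbc}\circ y_{\bbc}
\;=\;(z_{c_1}\circ y_{c_1})\otimes\cdots\otimes(z_{c_k}\circ y_{c_k}).
\]
Applying Lemma~\ref{Lemzy} to each tensor factor turns this into $(c_1!\cdot\id_{c_1})\otimes\cdots\otimes(c_k!\cdot\id_{c_k})=\bbc!\cdot\id_{\bbc}$.

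Combining these observations for $\bbc=\bba$ and $\bbc=\bbb$ yields
\[
\con_{\bba,\bbb}\!\bigl(\exp_{\bba,\bbb}(f)\bigr)
\;=\;(\bbb!\cdot\id_{\bbb})\circ f\circ(\bba!\cdot\id_{\bba})
\;=\;\bba!\,\bbb!\cdot f,
\]
as desired. There is really no obstacle here; the only point worth highlighting (and which I would mention explicitly in the write-up) is that the interchange-law step is clean precisely because splits and merges are even, so we avoid any Koszul signs that would otherwise appear in a monoidal supercategory.
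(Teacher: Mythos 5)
Your proposal is correct and follows the same route the paper intends: the paper's proof is just the remark that the statement follows from \cref{Lemzy}, which is exactly the reduction you carry out, with the interchange-law and parity observations correctly filling in the details.
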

 \begin{proof}
 Follows from  \cref{Lemzy}.
 \end{proof}
 It should be noted that the explosion and contraction morphisms only involves $\aWeb$ morphisms and calculations.  They first appear in \cite{RT} and are now a standard tool in this area.

\subsection{Putting Things Together, \texorpdfstring{$\pWeb$}{pWeb} Edition}\label{SS:PuttingTogetherPWeb} 

Let \(r,s \in \Z_{\geq 0}\), \(\bba \in \Z_{\geq 0}^r\), \(\bbb \in \Z_{\geq 0}^s\). By \cref{Gthm}, we may define linear maps: 
\begin{gather*}
\widetilde{\exp}_{\bba,\bbb}:\Hom_{\fp(n)}\left( S^\bba(V_{n}), S^\bbb(V_{n})\right) \to  \Hom_{\fp(n)}\left( V_{n}^{|\bba|}, V_{n}^{|\bbb|}\right),\\
 f \mapsto G\left( y_{\bbb}\right) \circ f \circ G\left(z_{\bba}\right),
 \end{gather*}
 and
  
\begin{gather*}
\widetilde{\con}_{\bba, \bbb}:\Hom_{\fp(n)}\left(V_{n}^{|\bba|}, V_{n}^{|\bbb|}\right) \to  \Hom_{\fp(n)}\left(S^\bba(V_{n}), S^\bbb(V_{n})\right),\\
 g \mapsto G\left(z_{\bbb}\right) \circ g \circ G\left(y_{\bba}\right).
 \end{gather*}

\begin{lemma}\label{comdiag} For all objects $\bba$ and $\bbb$ in $\pWeb$ the following diagram commutes:
\begin{center}
\begin{tikzcd}
\Hom_{\pWeb}(\bba,\bbb)
\\
\\
\Hom_{\pWeb}(\bba,\bbb)
\ar[uu, "\bba!\bbb! \cdot \textup{id}"]
\ar[rr, "\exp_{\bba,\bbb}"']
\ar[dd, "G"']
&&
\Hom_{\pWeb_1}(1^{|\bba|},1^{|\bbb|})
\ar[uull, "\textup{\con}_{\bba,\bbb}"']
\ar[dd, "G"']
\\
&&&
\Hom_{\mathcal{B}}([|\bba|],[|\bbb|])
\ar[ul, "F'"']
\ar[dl, "F"]
\\
\Hom_{\fp(n)}(S^{\bba}(V_{n}), S^{\bbb}(V_{n})) 
\ar[rr, "\widetilde{\exp}_{\bba,\bbb}"]
\ar[dd, "\bba!\bbb! \cdot \textup{id}"']
&&
\Hom_{\fp(n)}(V_{n}^{|\bba|}, V_{n}^{|\bbb|}) 
\ar[ddll, "\widetilde{\textup{\con}}_{\bba,\bbb}"]
&\\
\\
\Hom_{\fp(n)}(S^{\bba}(V_{n}), S^{\bbb}(V_{n})) 
\end{tikzcd}
\end{center}
\end{lemma}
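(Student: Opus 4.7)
The diagram decomposes into four independent pieces, none of which requires new input beyond results already established in the paper. My plan is to verify each piece in turn and then assemble them.

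\textbf{Upper triangles.} The two triangles at the top, expressing $\con_{\bba,\bbb}\circ \exp_{\bba,\bbb} = \bba!\,\bbb!\cdot \mathrm{id}$ on $\Hom_{\pWeb}(\bba,\bbb)$, are nothing other than Lemma~\ref{conexp}, which in turn is an immediate consequence of Lemma~\ref{Lemzy} (i.e.\ $z_k\circ y_k = k!\cdot \mathrm{id}_k$) together with the fact that horizontal concatenation of the identities $z_{a_i}\circ y_{a_i} = a_i!\cdot \mathrm{id}_{a_i}$ gives $z_{\bba}\circ y_{\bba} = \bba!\cdot \mathrm{id}_{\bba}$, and similarly for $\bbb$.

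\textbf{Lower triangle and the $G$-square.} For the lower triangle I apply the monoidal functor $G$ to Lemma~\ref{Lemzy}: since $G$ is a strict monoidal functor, $G(z_{\bba})\circ G(y_{\bba}) = G(z_{\bba}\circ y_{\bba}) = \bba!\cdot\mathrm{id}_{S^{\bba}(V_n)}$, and the same holds with $\bbb$ in place of $\bba$. Unwinding the definitions,
\[
\widetilde{\con}_{\bba,\bbb}\circ\widetilde{\exp}_{\bba,\bbb}(f) \;=\; G(z_{\bbb})\circ G(y_{\bbb})\circ f\circ G(z_{\bba})\circ G(y_{\bba}) \;=\; \bba!\,\bbb!\cdot f,
\]
which is the desired identity. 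The commutativity of the square involving $G$ at the top and bottom and $\exp$ (resp.\ $\con$) on the sides is pure functoriality: $G(\exp_{\bba,\bbb}(f)) = G(y_{\bbb}\circ f\circ z_{\bba}) = G(y_{\bbb})\circ G(f)\circ G(z_{\bba}) = \widetilde{\exp}_{\bba,\bbb}(G(f))$, and symmetrically for contraction.

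\textbf{Central triangle.} It remains to check $F = G\circ F'$ as functors $\mathcal{B}(0,-1)\to \fp(n)\textup{-mod}_V$. Both are strict monoidal superfunctors sending $\bullet\mapsto V_n$, so by Theorem~\ref{Bwebfun} it suffices to compare their values on the generating twist, cap, and cup morphisms. For the cap and cup the equality is direct from the definitions of $F$, $F'$, and $G$. For the twist, $F'$ sends it to the $\pWeb$-crossing between two strands of label $1$, and $G$ sends the latter to $\tau_{V_n,V_n}$ by Lemma~\ref{CrossTwist}; this matches $F$'s image of the twist.

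\textbf{Main obstacle.} There is essentially no conceptual obstacle: every arrow in the diagram either comes from a functor, from the defining formulas of $\exp$/$\con$, or from the two identities (Lemmas~\ref{Lemzy} and~\ref{CrossTwist}) already in hand. The only verification that is not completely formal is the central triangle, and it collapses to the single nontrivial observation that $G$ intertwines the braiding-type crossing of $\pWeb$ with the tensor swap on $V_n^{\otimes 2}$, which is precisely Lemma~\ref{CrossTwist}.
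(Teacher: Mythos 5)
Your argument is correct and follows essentially the same route as the paper's proof: the top and bottom triangles and the middle square come from Lemma~\ref{conexp} (via Lemma~\ref{Lemzy}) together with the monoidal functoriality of $G$ from Theorem~\ref{Gthm}, and the right-hand triangle is checked on the generators of $\mathcal{B}(0,-1)$. The only cosmetic difference is that for the twist you cite Lemma~\ref{CrossTwist} (with $a=b=1$) where the paper invokes \cref{tswaprel}; these amount to the same verification.
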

\begin{proof}
The top and bottom triangles and middle rectangle commute by  \cref{conexp} and  \cref{Gthm}. The triangle on the right can be seen to commute by checking the definitions of \(F, F', G\) on generating morphisms, together with \cref{tswaprel}.
\end{proof}

\begin{theorem}\label{HomIsomPlus} If \(\bba,\bbb\) are such that \(|\bba| + |\bbb| \leq 2n\), then the map
\begin{align*}
G:\Hom_{\pWeb}(\bba,\bbb)
\to
\Hom_{\fp(n)}\left(S^{\bba}(V_{n}), S^{\bbb}(V_{n})\right). 
\end{align*}
is injective.  If $\k$ is a field of characteristic zero, then the functor $G$ is full.
\end{theorem}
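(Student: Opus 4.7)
The plan splits into the two claims. For injectivity, I will simply combine the basis theorem \cref{BasisThm} with the linear independence statement \cref{actind}: the set $\{\xi^{(A,B,C,D)} \mid (A,B,C,D)\in \chi(\bba,\bbb)\}$ is a $\k$-basis of $\Hom_{\pWeb}(\bba,\bbb)$, and under the hypothesis $|\bba|+|\bbb|\leq 2n$ its image under $G$ is linearly independent in $\Hom_{\fp(n)}(S^{\bba}(V_n),S^{\bbb}(V_n))$. Since $G$ carries a basis to a linearly independent set, it is injective. This step is immediate.

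For fullness in characteristic zero, I will use the commutative diagram of \cref{comdiag} to bootstrap from the already-known fullness in the ``all-ones'' case. First I observe that $G$ restricted to $\pWeb_1$ is full: by \cref{Bwebfun} the functor $F':\mathcal{B}(0,-1)\to \pWeb_1$ is an isomorphism of supercategories, and by \cref{comdiag} the triangle involving $F$, $F'$ and $G$ commutes, so fullness of $G|_{\pWeb_1}$ is equivalent to fullness of $F:\mathcal{B}(0,-1)\to \fp(n)\textup{-mod}_V$, which is exactly \cref{KTthm1}.

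Now let $f\in \Hom_{\fp(n)}(S^{\bba}(V_n),S^{\bbb}(V_n))$ be arbitrary. Apply explosion on the module side to obtain
\[
\widetilde{\exp}_{\bba,\bbb}(f)\in \Hom_{\fp(n)}(V_n^{\otimes|\bba|},V_n^{\otimes|\bbb|}).
\]
By the fullness of $G|_{\pWeb_1}$ just established, there is some $g\in \Hom_{\pWeb_1}(1^{|\bba|},1^{|\bbb|})$ with $G(g)=\widetilde{\exp}_{\bba,\bbb}(f)$. Setting $h:=\con_{\bba,\bbb}(g)\in \Hom_{\pWeb}(\bba,\bbb)$ and chasing \cref{comdiag} gives
\[
G(h)=G(\con_{\bba,\bbb}(g))=\widetilde{\con}_{\bba,\bbb}(G(g))=\widetilde{\con}_{\bba,\bbb}(\widetilde{\exp}_{\bba,\bbb}(f))=\bba!\,\bbb!\cdot f.
\]
Since $\bba!\,\bbb!$ is invertible in $\k$ when the characteristic is zero, $f$ lies in the image of $G$, proving fullness.

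The one thing worth being careful about is that the commutative diagram in \cref{comdiag} indeed yields the identity $\widetilde{\con}_{\bba,\bbb}\circ\widetilde{\exp}_{\bba,\bbb}=\bba!\,\bbb!\cdot\operatorname{id}$ on the module side, but this follows from \cref{conexp} together with functoriality of $G$ applied to $\con_{\bba,\bbb}\circ\exp_{\bba,\bbb}=\bba!\,\bbb!\cdot\operatorname{id}$. There is no real obstacle here; the mild subtlety is only that fullness has to be imported from \cref{KTthm1} (which requires characteristic zero), and that the scalar $\bba!\,\bbb!$ must be invertible, which is precisely why the characteristic-zero hypothesis appears in the second half of the statement.
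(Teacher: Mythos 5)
Your proposal is correct and follows essentially the same route as the paper: injectivity via \cref{BasisThm} together with \cref{actind}, and fullness by chasing the commutative diagram of \cref{comdiag}, using the isomorphism $F'$ of \cref{Bwebfun} and the fullness of $F$ from \cref{KTthm1} to get surjectivity on the all-ones level, then contracting and dividing by the invertible scalar $\bba!\,\bbb!$. The scalar identity $\widetilde{\con}_{\bba,\bbb}\circ\widetilde{\exp}_{\bba,\bbb}=\bba!\,\bbb!\cdot\operatorname{id}$ that you flag is exactly the commutativity of the bottom triangle in \cref{comdiag}, so there is no gap.
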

\begin{proof}
The injectivity statement follows from  \cref{actind} and  \cref{BasisThm}.

Now assume $\k$ has characteristic zero and consider the diagram in  \cref{comdiag}. Since \(F'\) is an isomorphism by  \cref{Bwebfun} and \(F\) is surjective by  \cref{KTthm1},  the map \(G\) on the right is surjective.  To see surjectivity of \(G\) along the left side, let \(\varphi \in \Hom_{\fp(n)}(S^{\bba}(V_{n}), S^{\bbb}(V_{n}))\). Then by surjectivity of the \(G\) on the right, there exists \(\theta \in \Hom_{\pWeb_1^+}(1^{|\bba|}, 1^{|\bbb|})\) such that
\begin{align*}
G\left(\theta \right) = \widetilde{\exp}_{\bba, \bbb}(\varphi) = G\left(y_{\bbb} \right) \circ \varphi \circ G\left(z_{\bba} \right).
\end{align*}
Then we compute
\begin{align*}
G\left(\frac{\textup{con}_{\bba,\bbb}(\theta)}{\bba!\bbb!}\right)
&=
\frac{1}{\bba!\bbb!} G(z_{\bbb} \circ \theta \circ y_{\bba})
=
\frac{1}{\bba!\bbb!} Gz_{\bbb} \circ G\theta \circ Gy_{\bba}
=
\frac{1}{\bba!\bbb!} Gz_{\bbb} \circ    Gy_{\bbb} \circ \varphi \circ Gz_{\bba}.   \circ Gy_{\bba}\\
&=\frac{1}{\bba!\bbb!} G(z_\bbb \circ y_\bbb) \circ \varphi \circ G(z_{\bba} \circ y_{\bba})
=
\frac{1}{\bba!\bbb!} G\left( \bbb! \cdot \textup{id}_{\bbb}\right) \circ \varphi \circ G\left( \bba!\cdot \textup{id}_{\bba}\right)
=
\varphi,
\end{align*}
as desired.  That is, \(G\) along the left side of the diagram is surjective, completing the proof.
\end{proof}

Via the functor \(G\), \cref{HomIsomPlus} shows that the basis for \(\Hom_{\pWeb}(\bba,\bbb)\) given in \cref{BasisThm} could be considered a `stable basis', or a `basis at infinity' for \(\Hom_{\fp(n)}\left(S^{\bba}(V_{n}), S^{\bbb}(V_{n})\right)\) in characteristic zero, since \(G\) defines an isomorphism whenever \(n \gg 0\).

\begin{remark}\label{FailFull}
The functor \(G\) need not be full over a field \(\k\) of positive characteristic. For example, if \(\textup{char}(\k) = 3\) and \(v\) is a nonzero even vector in \(V_n\), then there is a nonzero \(\fp(n)\)-module homomorphism \(\k \to S^3(V_n)\) given by \(1 \mapsto v^3\), but the image of \(G\) in \(\Hom_{\fp(n)}(\k, S^3(V_n))\) is zero.
\end{remark}

\subsection{Putting Things Together, \texorpdfstring{$\pWeb_{\uparrow\downarrow}$}{p-Web-updown} Edition}

Let \(r,s \in \Z_{\geq 0}\) and \(\bba \in \Z^r\), \(\bbb \in \Z^s\). Given a nonnegative integer $a$ it will be convenient to adopt the notation \(S^{a}(V_{n}):= S^a(V_{n})\),  \(S^{-a}(V_{n}):= S^a(V_{n})^*\), and, more generally,
\begin{align*}
S^{\bba}(V_{n}):= S^{a_1}(V_{n}) \otimes \cdots \otimes S^{a_r}(V_{n}).
\end{align*}
Let \(\bbc, \bbd, \varphi_1, \varphi_2\) be as in  \cref{MixToUp}. By  \cref{OrGthm}, we have an invertible linear map:
\begin{gather*}
\tilde{\Phi}:\Hom_{\fp(n)}\left( S^{\bba}(V_{n}), S^{\bbb}(V_{n})\right) \xrightarrow{\sim} 
\Hom_{\fp(n)}\left(S^\bbc(V_{n}), S^\bbd(V_{n})\right)\\
f \mapsto G_{\uparrow \downarrow}\left(\varphi_2  \right)\circ f \circ G_{\uparrow \downarrow}\left(\varphi_1 \right).
\end{gather*}

\begin{lemma}\label{Orcomdiag}  For any $\bba \in \Z^{r}$ and $\bbb \in \Z^{s}$, let $\bbc$ and $\bbd$ be as above.  Then the following diagram commutes:
\begin{center}
\begin{tikzcd}
\Hom_{\pWeb_{\uparrow \downarrow}}(|_\bba,|_\bbb)
\ar[rr, "\Phi"']
\ar[dd, "G_{\uparrow \downarrow}"']
&&
\Hom_{\pWeb_\uparrow}(\uparrow_\bbc, \uparrow_\bbd)
\ar[dd, "G_{\uparrow \downarrow}"']
\\
&&&
\Hom_{\pWeb}(\bbc, \bbd)
\ar[ul, "\iota_\uparrow"']
\ar[dl, "G"]
\\
\Hom_{\fp(n)}(S^{\bba}(V_{n}), S^{\bbb}(V_{n})) 
\ar[rr, "\tilde{\Phi}"]
&&
\Hom_{\fp(n)}(S^\bbc(V_{n}),S^\bbd(V_{n})) 
&
\end{tikzcd}
\end{center}
\end{lemma}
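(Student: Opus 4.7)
The diagram naturally decomposes into two pieces: the quadrilateral on the left (containing $\Phi$, $G_{\uparrow\downarrow}$, $\tilde{\Phi}$) and the triangle on the right (containing $\iota_\uparrow$, $G_{\uparrow\downarrow}$, $G$). I would handle each piece separately.

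For the left quadrilateral, commutativity is immediate from functoriality of $G_{\uparrow\downarrow}$ together with the explicit formulas for $\Phi$ and $\tilde{\Phi}$. Indeed, given any $f \in \Hom_{\pWeb_{\uparrow\downarrow}}(|_\bba, |_\bbb)$, we compute
\begin{align*}
G_{\uparrow\downarrow}(\Phi(f))
= G_{\uparrow\downarrow}(\varphi_2 \circ f \circ \varphi_1)
= G_{\uparrow\downarrow}(\varphi_2) \circ G_{\uparrow\downarrow}(f) \circ G_{\uparrow\downarrow}(\varphi_1)
= \tilde{\Phi}(G_{\uparrow\downarrow}(f)),
\end{align*}
where the middle equality uses that $G_{\uparrow\downarrow}$ is a functor and the final equality is the definition of $\tilde{\Phi}$ (using the same invertible morphisms $\varphi_1, \varphi_2$ as the definition of $\Phi$ from \cref{MixToUp}).

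For the right triangle, the plan is to verify the stronger statement $G = G_{\uparrow\downarrow} \circ \iota_\uparrow$ as functors of monoidal supercategories from $\pWeb$ to $\pmodS$. Since both sides are monoidal supercategory functors, this reduces to comparing them on generating objects and generating morphisms of $\pWeb$. On objects both send $k$ to $S^k(V_n)$. For morphisms, one checks case by case: $\iota_\uparrow$ sends the split (resp.\ merge) generator to the upward split (resp.\ merge), which $G_{\uparrow\downarrow}$ sends to $\textup{spl}_{a+b}^{a,b}$ (resp.\ $\textup{mer}_{a,b}^{a+b}$), agreeing with $G$. For the cap and cup generators of $\pWeb$, $\iota_\uparrow$ sends them to the upward cap and cup in $\pWeb_{\uparrow\downarrow}$, which by definition are the leftward cap and cup precomposed with (respectively, postcomposed with) the tag-in and tag-out morphisms. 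Under $G_{\uparrow\downarrow}$, these become $\textup{eval}_1 \circ (D \otimes \textup{id}) = \cap$ and $(\textup{id} \otimes D^{-1}) \circ \textup{coeval}_1 = \cup$, which are precisely the images under $G$ recorded in \cref{Gthm}.

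There is no genuine obstacle here; the argument is entirely a matter of unwinding definitions and applying functoriality. The only minor care required is in the cap/cup verification, where one must trace through the auxiliary definitions of upward caps and cups in $\pWeb_{\uparrow\downarrow}$ to confirm the correct appearance of $D$ and $D^{-1}$; but these match the way $\cap$ and $\cup$ are constructed in \cref{SS:BasicHomomorphisms}.
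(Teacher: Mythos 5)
Your proposal is correct and follows essentially the same route as the paper: the left square commutes by functoriality of $G_{\uparrow\downarrow}$ (i.e.\ \cref{OrGthm}) together with the fact that $\Phi$ and $\tilde{\Phi}$ are built from the same $\varphi_1,\varphi_2$, and the right triangle is checked on the generating morphisms of $\pWeb$, including the unwinding of the upward cap/cup through the tag-in/tag-out morphisms to recover $\cap = \operatorname{eval}_1\circ(D\otimes\operatorname{id})$ and $\cup = (\operatorname{id}\otimes D^{-1})\circ\operatorname{coeval}_1$. The paper's proof is just a terser version of exactly this argument.
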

\begin{proof}
The left rectangle commutes by  \cref{OrGthm}. The triangle on the right can be seen to commute by checking the definitions of \(\iota_\uparrow, G, G_{\uparrow\downarrow}\) on generating morphisms.
\end{proof}

\begin{theorem}\label{OrHomIsomPlus}
If \(\bba \in \Z^{r}\) and $\bbb \in \Z^{s}$ are such that \(|\bba| + |\bbb| \leq 2n\), then the map
\begin{align*}
G_{\uparrow \downarrow}:\Hom_{\pWeb_{\uparrow \downarrow}}\left( |_\bba,|_\bbb \right)
\to
\Hom_{\fp(n)}\left( S^{\bba}(V_{n}), S^{\bbb}(V_{n})\right). 
\end{align*}
is injective.  If $\k$ is a field of characteristic zero, then the functor $G_{\uparrow \downarrow}$ is full.
\end{theorem}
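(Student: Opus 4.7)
The plan is to reduce Theorem \ref{OrHomIsomPlus} to its unoriented counterpart, Theorem \ref{HomIsomPlus}, by chasing the commutative diagram of Lemma \ref{Orcomdiag}. The diagram reads
\[
\tilde{\Phi} \circ G_{\uparrow\downarrow} = G_{\uparrow\downarrow}|_{\pWeb_{\uparrow}} \circ \Phi, \qquad G_{\uparrow\downarrow}|_{\pWeb_{\uparrow}} \circ \iota_{\uparrow} = G,
\]
where $\Phi$ and $\tilde{\Phi}$ are isomorphisms of morphism spaces supplied by Lemma \ref{MixToUp} and the definition preceding Lemma \ref{Orcomdiag}. Crucially, the target object $\bbc$ and source object $\bbd$ produced by Lemma \ref{MixToUp} satisfy $|\bbc| + |\bbd| = |\bba| + |\bbb| \leq 2n$, so the hypotheses of Theorem \ref{HomIsomPlus} are in force for the composite pair $(\bbc, \bbd)$.

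For injectivity, I would take $f \in \Hom_{\pWeb_{\uparrow\downarrow}}(|_{\bba}, |_{\bbb})$ with $G_{\uparrow\downarrow}(f) = 0$. The left rectangle gives $G_{\uparrow\downarrow}(\Phi(f)) = \tilde{\Phi}(G_{\uparrow\downarrow}(f)) = 0$. By Proposition \ref{iotaFull} the functor $\iota_{\uparrow}$ is full, so there exists $g \in \Hom_{\pWeb}(\bbc, \bbd)$ with $\iota_{\uparrow}(g) = \Phi(f)$; then $G(g) = G_{\uparrow\downarrow}(\iota_{\uparrow}(g)) = 0$, and by the injectivity half of Theorem \ref{HomIsomPlus} we conclude $g = 0$. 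Hence $\Phi(f) = \iota_{\uparrow}(0) = 0$, and since $\Phi$ is invertible, $f=0$.

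For fullness when $\operatorname{char}(\k)=0$, given $\psi \in \Hom_{\fp(n)}(S^{\bba}(V_n), S^{\bbb}(V_n))$ I would apply $\tilde{\Phi}$ to obtain an element $\tilde{\Phi}(\psi) \in \Hom_{\fp(n)}(S^{\bbc}(V_n), S^{\bbd}(V_n))$, then invoke the fullness half of Theorem \ref{HomIsomPlus} to produce $g \in \Hom_{\pWeb}(\bbc, \bbd)$ with $G(g) = \tilde{\Phi}(\psi)$. Setting $f := \Phi^{-1}(\iota_{\uparrow}(g))$, the commutative diagram yields
\[
\tilde{\Phi}\bigl(G_{\uparrow\downarrow}(f)\bigr) = G_{\uparrow\downarrow}\bigl(\Phi(f)\bigr) = G_{\uparrow\downarrow}\bigl(\iota_{\uparrow}(g)\bigr) = G(g) = \tilde{\Phi}(\psi),
\]
and invertibility of $\tilde{\Phi}$ gives $G_{\uparrow\downarrow}(f) = \psi$.

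There is really no hard step here once Lemma \ref{Orcomdiag} and Proposition \ref{iotaFull} are in hand; the only thing to be careful about is that $\Phi$, $\tilde{\Phi}$ are genuine isomorphisms (so the diagram-chase is legitimate in both directions) and that the bound $|\bbc|+|\bbd|\le 2n$ really is preserved by the reduction, which is precisely the content of the equality $|\bbc|+|\bbd|=|\bba|+|\bbb|$ recorded in Lemma \ref{MixToUp}. The genuine work was already absorbed into (i) the basis theorem feeding Theorem \ref{HomIsomPlus}, (ii) the fullness Proposition \ref{iotaFull} for $\iota_{\uparrow}$, and (iii) the construction of the symmetric braiding and rigid structure that lets one straighten arbitrary tuples of up/down objects into purely upward ones.
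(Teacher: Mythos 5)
Your proposal is correct and follows essentially the same route as the paper: both arguments chase the commutative diagram of Lemma \ref{Orcomdiag}, using the equality $|\bbc|+|\bbd|=|\bba|+|\bbb|\le 2n$ from Lemma \ref{MixToUp} to invoke Theorem \ref{HomIsomPlus}, together with the fullness of $\iota_\uparrow$ from Proposition \ref{iotaFull} and the invertibility of $\Phi$, $\tilde\Phi$. The only difference is cosmetic: you phrase the injectivity step as an element-wise chase, whereas the paper first concludes that $\iota_\uparrow$ is an isomorphism on the relevant morphism spaces and then transfers injectivity and surjectivity across the diagram.
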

\begin{proof}
Consider the diagram in  \cref{Orcomdiag}, with \(\bbc, \bbd\) as in \cref{MixToUp}. We have \(|\bbc| + |\bbd| \leq 2n\), so the map \(G\) is injective by \cref{HomIsomPlus}. Therefore the map \(\iota_\uparrow\) is injective, and is surjective by  \cref{iotaFull}, so the map \(G_{\uparrow \downarrow}\) on the right must be injective. As \(\Phi\) and \(\tilde{\Phi}\) are isomorphisms, we have that the map \(G_{\uparrow \downarrow}\) is injective as well.

 Moreover, if $\k$ is a field of characteristic zero, then by \cref{HomIsomPlus} the map \(G\) is surjective which, in turn, forces all maps in the diagram to be surjective and so $G_{\uparrow \downarrow}$ is full, as claimed.
\end{proof}

\begin{theorem}\label{T:pWebtopWebup}
The functor
\begin{align*}
\iota_\uparrow: \pWeb \to \pWeb_{\uparrow}
\end{align*}
is an isomorphism of categories.
\end{theorem}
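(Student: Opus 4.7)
The plan is to verify that $\iota_{\uparrow}$ is bijective on objects and that the induced map on every hom-space is a bijection. Bijectivity on objects is immediate from the definition $k \mapsto \uparrow_{k}$ in Theorem~\ref{UpFun}, since this is manifestly a bijection between $\Ob(\pWeb) = \bigsqcup_{r \geq 0} \Z_{\geq 0}^{r}$ and $\Ob(\pWeb_{\uparrow})$. Fullness on morphisms is exactly the content of Proposition~\ref{iotaFull}. So the only remaining task is faithfulness, and the plan is to obtain this from the representation-theoretic injectivity of $G$ established in Theorem~\ref{HomIsomPlus}.

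Concretely, fix $\bba \in \Z_{\geq 0}^{r}$ and $\bbb \in \Z_{\geq 0}^{s}$ as objects of $\pWeb$, and let $f \in \Hom_{\pWeb}(\bba, \bbb)$ satisfy $\iota_{\uparrow}(f) = 0$. Choose $n \geq 1$ with $|\bba| + |\bbb| \leq 2n$; such $n$ exists because the bound depends only on $\bba$ and $\bbb$. Since $\bba$ and $\bbb$ already consist of nonnegative integers, the objects $|_{\bba}$ and $|_{\bbb}$ of $\pWeb_{\uparrow \downarrow}$ coincide with $\uparrow_{\bba}$ and $\uparrow_{\bbb}$, so in the notation of Lemma~\ref{MixToUp} we may take $\bbc = \bba$, $\bbd = \bbb$, and $\varphi_{1} = \id$, $\varphi_{2} = \id$. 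The commutative diagram of Lemma~\ref{Orcomdiag} then specializes to the identity $G = G_{\uparrow \downarrow} \circ \iota_{\uparrow}$ on $\Hom_{\pWeb}(\bba, \bbb)$. Applying this to $f$ yields $G(f) = G_{\uparrow \downarrow}(\iota_{\uparrow}(f)) = 0$, and since $|\bba| + |\bbb| \leq 2n$, Theorem~\ref{HomIsomPlus} gives $f = 0$.

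This establishes faithfulness on every hom-space. Together with bijectivity on objects and the fullness supplied by Proposition~\ref{iotaFull}, this shows $\iota_{\uparrow}$ is an isomorphism of (monoidal super)categories. There is no real obstacle in this argument: the preparation is already complete, and the theorem simply packages the observation that the diagrammatic data of $\pWeb$ and of $\pWeb_{\uparrow}$ coincide under the obvious upward orientation. An alternative, purely diagrammatic route would be to check that $\iota_{\uparrow}$ carries the $\k$-basis of Corollary~\ref{BasisThm} for $\Hom_{\pWeb}(\bba, \bbb)$ to the corresponding basis for $\Hom_{\pWeb_{\uparrow}}(\uparrow_{\bba}, \uparrow_{\bbb})$ obtained via \cref{SS:IsomorphicHoms}, but invoking $G$ and Theorem~\ref{HomIsomPlus} is more economical.
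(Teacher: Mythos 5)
Your proof is correct and follows essentially the same route as the paper: bijectivity on objects is immediate, fullness is \cref{iotaFull}, and faithfulness comes from choosing $n$ with $|\bba|+|\bbb|\leq 2n$ and using the commuting triangle $G = G_{\uparrow\downarrow}\circ\iota_\uparrow$ from \cref{Orcomdiag} together with the injectivity of $G$ in \cref{HomIsomPlus}. The only difference is that you spell out the trivial specialization of \cref{MixToUp}, which the paper leaves implicit.
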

\begin{proof}
The functor is bijective on objects, and full by  \cref{iotaFull}. For any fixed pair of objects $\bbc$ and $\bbd$ in $\pWeb$ we may choose $n$ such that \(2n\geq |\bbc| +|\bbd|\) and consider the diagram in \cref{Orcomdiag}. Then the map \(G\) is injective by \cref{HomIsomPlus}, forcing 
\begin{align*}
\Hom_{\pWeb}\left(\bbc,\bbd \right) \xrightarrow{\sim} \Hom_{\pWeb_\uparrow}\left( \uparrow_\bbc, \uparrow_\bbd \right),
\end{align*}
as required.
\end{proof}

\begin{conjecture}  It is interesting to consider the precise conditions which imply the injectivity and fullness statements of \cref{HomIsomPlus,OrHomIsomPlus}.  We conjecture that the fullness statement holds whenever \(\k\) is a field with characteristic greater than \(|\bba| + |\bbb|\).
\end{conjecture}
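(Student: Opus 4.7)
The conjecture extends the fullness half of \cref{HomIsomPlus} from characteristic zero to any field $\k$ with $\textup{char}(\k) > |\bba|+|\bbb|$. My plan is to mirror the characteristic-zero proof, which proceeds via the commutative diagram of \cref{comdiag}, but to upgrade each ingredient to the modular setting with carefully tracked denominators. The strategy has two independent steps: (i) surjectivity of the marked Brauer functor $F\colon\mathcal{B}(0,-1)\to\fp(n)\textup{-mod}_{V}$ on $\Hom([a],[b])\to\Hom_{\fp(n)}(V_n^{\otimes a},V_n^{\otimes b})$ under the hypothesis $\textup{char}(\k)>a+b$, and (ii) the explosion/contraction argument lifting this to $G$.

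Step (ii) is the easier half and I would handle it first. Given $\varphi\in\Hom_{\fp(n)}(S^{\bba}(V_n),S^{\bbb}(V_n))$, apply $\widetilde{\exp}_{\bba,\bbb}$ to produce a morphism $V_n^{\otimes|\bba|}\to V_n^{\otimes|\bbb|}$, lift it to some $\theta\in\Hom_{\pWeb_1}(1^{|\bba|},1^{|\bbb|})$ using step (i) together with the isomorphism $F'$ of \cref{Bwebfun}, and recover $\varphi$ as $G\bigl(\tfrac{1}{\bba!\bbb!}\,\textup{con}_{\bba,\bbb}(\theta)\bigr)$ exactly as in the proof of \cref{HomIsomPlus}. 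The only requirement is that $\bba!\bbb!=\prod a_i!\prod b_j!$ be invertible in $\k$; since every prime factor of $\bba!\bbb!$ is at most $\max(a_i,b_j)\le|\bba|+|\bbb|$, the hypothesis $\textup{char}(\k)>|\bba|+|\bbb|$ is precisely what is needed.

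Step (i) is the main obstacle. The proof of \cref{KTthm1} invokes \cite{DLZ}, which is characteristic-zero-specific, so a different approach is required. My plan is to use a base change / denominator-control argument: choose integral $\Z$-forms of both $\mathcal{B}(0,-1)$ and the target $\Hom$ space, coming from the Brauer-type basis of \cite{KT} and the natural $\Z$-lattice $V_{n,\Z}^{\otimes\bullet}$; then $F_\Q$ is surjective by the base-change portion of \cref{KTthm1}, so the cokernel of $F_\Z$ is torsion, and the task reduces to bounding its exponent. One then analyses the transition matrix between the marked Brauer basis and an explicit spanning set of $\fp(n)$-invariants in $V_n^{\otimes(a+b)}$ (after dualizing and using the odd bilinear form), identifying which primes must be inverted. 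Classical invariant theory for the Brauer/symplectic-orthogonal situation (and more recent Schur-category work, e.g.\ \cite{BEPO}) suggests that only primes dividing $(a+b)!$ appear; carefully verifying this bound and adapting it to the supersymmetric periplectic setting is where the bulk of the technical work would lie. If the bound from this analysis instead turned out to be strictly weaker than $(a+b)!$, the conjecture should be adjusted accordingly; conversely, a sharper bound would suggest that the statement of the conjecture can be improved.
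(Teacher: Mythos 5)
First, note that the statement you are addressing is a \emph{conjecture} in the paper: the authors offer no proof of it, so there is no argument of theirs to compare against, and your text should be judged as a proposed proof standing on its own.

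As such, it has a genuine gap at its core. Your Step (ii) is fine and is just the explosion/contraction argument of \cref{HomIsomPlus,comdiag}: once surjectivity of $F\colon\Hom_{\mathcal{B}(0,-1)}([\,|\bba|\,],[\,|\bbb|\,])\to\Hom_{\fp(n)}(V_n^{\otimes|\bba|},V_n^{\otimes|\bbb|})$ is known, the only denominator is $\bba!\bbb!$, which is invertible when $\textup{char}(\k)>|\bba|+|\bbb|$. But Step (i) \emph{is} the conjecture, and you do not prove it; you only outline a strategy and concede that verifying the key bound ``is where the bulk of the technical work would lie,'' even allowing that the answer might force a different bound. Moreover, the strategy as described has a hidden circularity: arguing that ``the cokernel of $F_\Z$ is torsion, so it suffices to bound its exponent'' implicitly assumes that $\dim_\k\Hom_{\fp(n)_\k}(V_{n,\k}^{\otimes a},V_{n,\k}^{\otimes b})$ equals the characteristic-zero dimension. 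Unlike the field extensions $\Q\subseteq\C$ handled in \cref{KTthm1} via \cite[Lemma 29.3]{CR}, reduction modulo $p$ can make Hom spaces (which are invariants/kernels) strictly larger, so equality of dimensions is not automatic and is essentially equivalent to the fullness you are trying to establish. A correct proof would need either a characteristic-free second-fundamental-theorem statement for the periplectic Brauer situation (no analogue of \cite{DLZ} or \cite{Coulembier1} is available in positive characteristic, and you cite none), or an explicit integral spanning argument with controlled elementary divisors; your appeal to ``classical invariant theory \ldots\ suggests'' such control is a heuristic, not a proof. So the proposal is a reasonable research plan, but it does not prove the conjectured fullness statement.
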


\subsection{A Particular Spanning Set}
In the follow-up paper \cite{DKM} it will be useful to work with a particular spanning set for $\Hom_{\pWeb_{\uparrow \downarrow}}\left(\emptyset, \uparrow^{\bba}\downarrow^{\bbb} \right)$. As the relevant diagrammatics are already defined herein, we request the reader's indulgence in including the necessary result here.  In light of the results of \cref{SS:Brauer}, readers familiar with explosion/contraction arguments will note that that the spanning set is a contraction of the Brauer diagram basis for $\Hom_{\mathcal{B}}\left(\emptyset, |\bba| + |\bbb| \right)$ given in \cite{KT}.

\begin{lemma}\label{0abdiagLem}
Assume \(\k\) is a field of characteristic zero, and $\bba, \bbb  \in \Z_{\geq 0}^{m}$. Then $\Hom_{\pWeb_{\uparrow \downarrow}}\left(\emptyset, \uparrow^{\bba}\downarrow^{\bbb} \right)$ is spanned by diagrams of the form
\begin{align}\label{0abdiag}
\hackcenter{}
\hackcenter{
\begin{tikzpicture}[scale=0.8]
   \draw[thick, color=\clr, fill=red] (-1,1.7)--(4,1.7)--(4,2.2)--(-1,2.2)--(-1,1.7);
    \node at (1.5,1.95) {$\scriptstyle \sigma$};
 \draw[thick, color=\clr,<-] (0,3.4+0.2)--(0,3.4+0);
 \draw[thick, color=\clr] (0,3.4+0) .. controls ++(0,-0.5) and ++(0,0.5) .. (-0.8,3.4+-1)--(-0.8,3.4+-1.2);
  \draw[thick, color=\clr] (0,3.4+0) .. controls ++(0,-0.5) and ++(0,0.5) .. (-0.4,3.4+-1)--(-0.4,3.4+-1.2);
       \draw[thick, color=\clr] (0,3.4+0) .. controls ++(0,-0.5) and ++(0,0.5) .. (0.8,3.4+-1)--(0.8,3.4+-1.2);
                  \node[left] at (0.6,3.4+-1) {$\scriptstyle \cdots $};
            \node[above] at (0,3.4+0.2) {$\scriptstyle a_1 $};
  \node at (1.5,2.8) {$\scriptstyle \cdots $};
   \node at (1.5,3.8) {$\scriptstyle \cdots $};
      \node at (1.5,1.4) {$\scriptstyle \cdots $};
          \node at (7.9,1.4) {$\scriptstyle \cdots $};
            \node at (3.3,1.4) {$\scriptstyle \cdots $};
             \node at (5,1.4) {$\scriptstyle \cdots $};
 \draw[thick, color=\clr,<-] (3+0,3.4+0.2)--(3+0,3.4+0);
 \draw[thick, color=\clr] (3+0,3.4+0) .. controls ++(0,-0.5) and ++(0,0.5) .. (3+-0.8,3.4+-1)--(3+-0.8,3.4+-1.2);
  \draw[thick, color=\clr] (3+0,3.4+0) .. controls ++(0,-0.5) and ++(0,0.5) .. (3+-0.4,3.4+-1)--(3+-0.4,3.4+-1.2);
       \draw[thick, color=\clr] (3+0,3.4+0) .. controls ++(0,-0.5) and ++(0,0.5) .. (3+0.8,3.4+-1)--(3+0.8,3.4+-1.2);
                  \node[left] at (3+0.6,3.4+-1) {$\scriptstyle \cdots $};
            \node[above] at (3+0,3.4+0.2) {$\scriptstyle a_m $};
     \draw[thick, color=\clr, fill=red] (5.2+-1,1.7)--(5.2+4,1.7)--(5.2+4,2.2)--(5.2+-1,2.2)--(5.2+-1,1.7);
    \node at (5.2+1.5,1.95) {$\scriptstyle \tau$};
 \draw[thick, color=\clr] (5.2+0,3.4+0.2)--(5.2+0,3.4+0);
 \draw[thick, color=\clr,->] (5.2+0,3.4+0) .. controls ++(0,-0.5) and ++(0,0.5) .. (5.2+-0.8,3.4+-1)--(5.2+-0.8,3.4+-1.2);
  \draw[thick, color=\clr,->] (5.2+0,3.4+0) .. controls ++(0,-0.5) and ++(0,0.5) .. (5.2+-0.4,3.4+-1)--(5.2+-0.4,3.4+-1.2);
       \draw[thick, color=\clr,->] (5.2+0,3.4+0) .. controls ++(0,-0.5) and ++(0,0.5) .. (5.2+0.8,3.4+-1)--(5.2+0.8,3.4+-1.2);
                  \node[left] at (5.2+0.6,3.4+-1) {$\scriptstyle \cdots $};
            \node[above] at (5.2+0,3.4+0.2) {$\scriptstyle b_1 $};
  \node at (5.2+1.5,2.8) {$\scriptstyle \cdots $};
   \node at (5.2+1.5,3.8) {$\scriptstyle \cdots $};
 \draw[thick, color=\clr] (5.2+3+0,3.4+0.2)--(5.2+3+0,3.4+0);
 \draw[thick, color=\clr,->] (5.2+3+0,3.4+0) .. controls ++(0,-0.5) and ++(0,0.5) .. (5.2+3+-0.8,3.4+-1)--(5.2+3+-0.8,3.4+-1.2);
  \draw[thick, color=\clr,->] (5.2+3+0,3.4+0) .. controls ++(0,-0.5) and ++(0,0.5) .. (5.2+3+-0.4,3.4+-1)--(5.2+3+-0.4,3.4+-1.2);
       \draw[thick, color=\clr,->] (5.2+3+0,3.4+0) .. controls ++(0,-0.5) and ++(0,0.5) .. (5.2+3+0.8,3.4+-1)--(5.2+3+0.8,3.4+-1.2);
                  \node[left] at (5.2+3+0.6,3.4+-1) {$\scriptstyle \cdots $};
            \node[above] at (5.2+3+0,3.4+0.2) {$\scriptstyle b_m $};
    \draw (4.4,1.7) arc (0:-180:0.3cm) [thick, color=\clr,->];
     \draw (4.7,1.7) arc (0:-180:0.6cm) [thick, color=\clr,->];
       \draw (5.3,1.7) arc (0:-180:1.2cm) [thick, color=\clr,->];
        \draw (-0.2+0,0+1.5) arc (0:-180:0.3cm) [thick, color=\clr];
   \draw[thick, color=\clr,<-] (-0.2+-0.6,0.2+1.5)--(-0.2+-0.6,0+1.5);
     \draw[thick, color=\clr,<-] (-0.2+0,0.2+1.5)--(-0.2+0,0+1.5);
  \node at (-0.2+-0.3,-0.3+1.5) {$\scriptstyle\blacklozenge$};
      \draw (0.9+0,0+1.5) arc (0:-180:0.3cm) [thick, color=\clr];
   \draw[thick, color=\clr,<-] (0.9+-0.6,0.2+1.5)--(0.9+-0.6,0+1.5);
     \draw[thick, color=\clr,<-] (0.9+0,0.2+1.5)--(0.9+0,0+1.5);
  \node at (0.9+-0.3,-0.3+1.5) {$\scriptstyle\blacklozenge$};
      \draw (2.6+0,0+1.5) arc (0:-180:0.3cm) [thick, color=\clr];
   \draw[thick, color=\clr,<-] (2.6+-0.6,0.2+1.5)--(2.6+-0.6,0+1.5);
     \draw[thick, color=\clr,<-] (2.6+0,0.2+1.5)--(2.6+0,0+1.5);
  \node at (2.6+-0.3,-0.3+1.5) {$\scriptstyle\blacklozenge$};
        \draw (6.4+-0.2+0,0+1.5) arc (0:-180:0.3cm) [thick, color=\clr];
   \draw[thick, color=\clr,->] (6.4+-0.2+-0.6,0.2+1.5)--(6.4+-0.2+-0.6,0+1.5);
     \draw[thick, color=\clr,->] (6.4+-0.2+0,0.2+1.5)--(6.4+-0.2+0,0+1.5);
  \node at (6.4+-0.2+-0.3,-0.3+1.5) {$\scriptstyle\blacklozenge$};
      \draw (6.4+0.9+0,0+1.5) arc (0:-180:0.3cm) [thick, color=\clr];
   \draw[thick, color=\clr,->] (6.4+0.9+-0.6,0.2+1.5)--(6.4+0.9+-0.6,0+1.5);
     \draw[thick, color=\clr,->] (6.4+0.9+0,0.2+1.5)--(6.4+0.9+0,0+1.5);
  \node at (6.4+0.9+-0.3,-0.3+1.5) {$\scriptstyle\blacklozenge$};
      \draw (6.4+2.6+0,0+1.5) arc (0:-180:0.3cm) [thick, color=\clr];
   \draw[thick, color=\clr,->] (6.4+2.6+-0.6,0.2+1.5)--(6.4+2.6+-0.6,0+1.5);
     \draw[thick, color=\clr,->] (6.4+2.6+0,0.2+1.5)--(6.4+2.6+0,0+1.5);
  \node at (6.4+2.6+-0.3,-0.3+1.5) {$\scriptstyle\blacklozenge$};
  \end{tikzpicture}}
\end{align}
where all strands except the those at the very top of the diagram are thin, the diagram has \(0 \leq j \leq |\bba|/2\) upward-oriented cups, \(|\bbb|/2 - |\bba|/2 + j\) downward-oriented cups, \(|\bba| - 2j\) leftward-oriented cups, \(\sigma\) consists only of upward-oriented crossings, and \(\tau\) consists only of downward-oriented crossings. 
\end{lemma}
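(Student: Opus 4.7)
The plan is to combine an explosion/contraction argument with the basis theorem for $\pWeb$.

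\emph{Step 1 (Reduction to thin strands).} For $\bba \in \Z_{\geq 0}^m$ let $y_\bba^\uparrow \colon \uparrow^\bba \to \uparrow^{1^{|\bba|}}$ denote the iterated upward split exploding each $\uparrow_{a_i}$ into $a_i$ thin upward strands, and let $z_\bba^\uparrow$ denote the corresponding iterated upward merge. Define $y_\bbb^\downarrow, z_\bbb^\downarrow$ analogously on the downward side. Writing $y:=y_\bba^\uparrow \otimes y_\bbb^\downarrow$ and $z:=z_\bba^\uparrow \otimes z_\bbb^\downarrow$, the oriented analogue of \cref{conexp} yields $z \circ y = \bba!\,\bbb!\cdot \id$. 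Since $\k$ has characteristic zero, any $f \in \Hom_{\pWeb_{\uparrow\downarrow}}(\emptyset, \uparrow^\bba \downarrow^\bbb)$ equals $\tfrac{1}{\bba!\bbb!}\, z \circ (y \circ f)$. It thus suffices to span the thin-strand Hom space $\Hom_{\pWeb_{\uparrow\downarrow}}(\emptyset, \uparrow^{1^{|\bba|}}\downarrow^{1^{|\bbb|}})$ by morphisms that, after post-composition by $z$, produce diagrams of the form \eqref{0abdiag}.

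\emph{Step 2 (Thin-strand spanning set).} By \cref{MixToUp} followed by \cref{T:pWebtopWebup}, the thin-strand Hom space is isomorphic to $\Hom_{\pWeb}(\bbc, \bbd)$ for some thin tuples $\bbc, \bbd$ with $|\bbc|+|\bbd|=|\bba|+|\bbb|$. By \cref{BasisThm}, this space has a basis $\{\xi^{(A,B,C,D)} \mid (A,B,C,D) \in \chi(\bbc,\bbd)\}$. Because all entries of $\bbc$ and $\bbd$ equal one, the constraints defining $\chi(\bbc,\bbd)$ force $D=0$ and the entries of $A,B,C$ to lie in $\{0,1\}$, so each basis element is determined by a perfect matching of the $|\bbc|+|\bbd|$ endpoints (equivalently a marked Brauer diagram under \cref{Bwebfun}).

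\emph{Step 3 (Unwinding to the oriented picture).} Pulling each basis element back through the isomorphism chain of Step 2, each matched pair of endpoints becomes a cup in $\pWeb_{\uparrow\downarrow}$ whose orientation is determined by the two strands it joins: a pair of originally upward endpoints yields an upward cup, a pair of originally downward endpoints yields a downward cup, and a mixed pair yields a leftward cup. The permutation needed to bring paired endpoints adjacent decomposes, using \cref{OrBraidThm}, into independent permutations $\sigma$ and $\tau$ acting on the upward and downward thin strands respectively, because the three cup types can be re-ordered past one another freely. If $j$ denotes the number of upward cups, the parity constraints force exactly $|\bba|-2j$ leftward cups and $|\bbb|/2-|\bba|/2+j$ downward cups, with $0\le j \le |\bba|/2$. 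Post-composing with $z$ attaches the thick multimerges at the top, yielding diagrams of precisely the form \eqref{0abdiag}.

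\emph{Principal obstacle.} The main technical task lies in Step 3: carefully tracking the orientation of each arc through the isomorphisms of \cref{MixToUp,T:pWebtopWebup} and invoking \cref{OrBraidThm} and the straightening relations \cref{OrStraightRel} to isotope the pulled-back matching into the clean $(\sigma \otimes \tau) \circ (\text{sorted cups})$ shape displayed in \eqref{0abdiag}, rather than a tangled variant thereof.
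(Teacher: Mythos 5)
Your proposal is correct and follows essentially the same route as the paper: reduce to thin strands via explosion/contraction in characteristic zero, identify the thin-strand morphism space with marked-Brauer-type matchings (via \cref{Bwebfun}/\cref{BasisThm}), and then isotope into the sorted form using \cref{MixToUp} and \cref{OrBraidThm}. The only difference is cosmetic ordering: the paper normalizes the Brauer diagram inside $\Hom_{\pWeb_{\uparrow\downarrow}}(\uparrow^{\bbb'},\uparrow^{\bba})$ first and bends afterward, while you bend first and then pull back the $\pWeb$ basis elements.
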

\begin{proof}
Let \(\bbb' = (b_m, \ldots, b_1)\). 
We first consider $\Hom_{\pWeb_{\uparrow \downarrow}}\left(\uparrow^{\bbb' }, \uparrow^{\bba} \right)$. It follows from 
\cref{Bwebfun,comdiag} that this morphism space is spanned by diagrams of the form 

\begin{align*}
\hackcenter{}
\hackcenter{
\begin{tikzpicture}[scale=0.8]
 \draw[thick, color=\clr] (0,-0.2)--(0,0);
 \draw[thick, color=\clr,->] (0,0) .. controls ++(0,0.5) and ++(0,-0.5) .. (-0.8,1)--(-0.8,1.2);
  \draw[thick, color=\clr,->] (0,0) .. controls ++(0,0.5) and ++(0,-0.5) .. (-0.4,1)--(-0.4,1.2);
       \draw[thick, color=\clr,->] (0,0) .. controls ++(0,0.5) and ++(0,-0.5) .. (0.8,1)--(0.8,1.2);
                  \node[left] at (0.6,1) {$\scriptstyle \cdots $};
            \node[below] at (0,-0.2) {$\scriptstyle b_m $};
  \node at (1.5,-0.4) {$\scriptstyle \cdots $};
   \node at (1.5,0.6) {$\scriptstyle \cdots $};
 \draw[thick, color=\clr] (3+0,-0.2)--(3+0,0);
 \draw[thick, color=\clr,->] (3+0,0) .. controls ++(0,0.5) and ++(0,-0.5) .. (3+-0.8,1)--(3+-0.8,1.2);
  \draw[thick, color=\clr,->] (3+0,0) .. controls ++(0,0.5) and ++(0,-0.5) .. (3+-0.4,1)--(3+-0.4,1.2);
       \draw[thick, color=\clr,->] (3+0,0) .. controls ++(0,0.5) and ++(0,-0.5) .. (3+0.8,1)--(3+0.8,1.2);
                  \node[left] at (3+0.6,1) {$\scriptstyle \cdots $};
            \node[below] at (3+0,-0.2) {$\scriptstyle b_1 $};
   \draw[thick, color=\clr, fill=red] (-1,1.2)--(4,1.2)--(4,2.2)--(-1,2.2)--(-1,1.2);
    \node at (1.5,1.7) {$\scriptstyle D$};
 \draw[thick, color=\clr,<-] (0,3.4+0.2)--(0,3.4+0);
 \draw[thick, color=\clr] (0,3.4+0) .. controls ++(0,-0.5) and ++(0,0.5) .. (-0.8,3.4+-1)--(-0.8,3.4+-1.2);
  \draw[thick, color=\clr] (0,3.4+0) .. controls ++(0,-0.5) and ++(0,0.5) .. (-0.4,3.4+-1)--(-0.4,3.4+-1.2);
       \draw[thick, color=\clr] (0,3.4+0) .. controls ++(0,-0.5) and ++(0,0.5) .. (0.8,3.4+-1)--(0.8,3.4+-1.2);
                  \node[left] at (0.6,3.4+-1) {$\scriptstyle \cdots $};
            \node[above] at (0,3.4+0.2) {$\scriptstyle a_1 $};
  \node at (1.5,2.8) {$\scriptstyle \cdots $};
   \node at (1.5,3.8) {$\scriptstyle \cdots $};
 \draw[thick, color=\clr,<-] (3+0,3.4+0.2)--(3+0,3.4+0);
 \draw[thick, color=\clr] (3+0,3.4+0) .. controls ++(0,-0.5) and ++(0,0.5) .. (3+-0.8,3.4+-1)--(3+-0.8,3.4+-1.2);
  \draw[thick, color=\clr] (3+0,3.4+0) .. controls ++(0,-0.5) and ++(0,0.5) .. (3+-0.4,3.4+-1)--(3+-0.4,3.4+-1.2);
       \draw[thick, color=\clr] (3+0,3.4+0) .. controls ++(0,-0.5) and ++(0,0.5) .. (3+0.8,3.4+-1)--(3+0.8,3.4+-1.2);
                  \node[left] at (3+0.6,3.4+-1) {$\scriptstyle \cdots $};
            \node[above] at (3+0,3.4+0.2) {$\scriptstyle a_1 $};
  \end{tikzpicture}},
\end{align*}
where $D$ is a marked Brauer diagram as defined in \cite[Section 2.2]{KT}.  Using the relations discussed in \emph{loc.\  cit.} one can replace $D$ with $\sigma \circ E \circ \tau'$, where $\sigma$ and $\tau'$ consist of only upward-oriented crossings, and where $E$ is a diagram of the form

\begin{align*}
{}
\hackcenter{
\begin{tikzpicture}[scale=0.8]
 \draw (0,0) arc (0:-180:0.3cm) [thick, color=\clr];
   \draw[thick, color=\clr,<-] (-0.6,0.2)--(-0.6,0);
     \draw[thick, color=\clr,<-] (0,0.2)--(0,0);
         \node[above] at (0,0.2) {$\scriptstyle 1$};
      \node[above] at (-0.6,0.2) {$\scriptstyle 1$};
  \node at (-0.3,-0.3) {$\scriptstyle\blacklozenge$};
    \node at (0.7,0.4) {$\scriptstyle \cdots $};
       \node at (0.7,0) {$\scriptstyle \cdots $};
   \draw (2+0,0) arc (0:-180:0.3cm) [thick, color=\clr];
   \draw[thick, color=\clr,<-] (2+-0.6,0.2)--(2+-0.6,0);
     \draw[thick, color=\clr,<-] (2+0,0.2)--(2+0,0);
         \node[above] at (2+0,0.2) {$\scriptstyle 1$};
      \node[above] at (2+-0.6,0.2) {$\scriptstyle 1$};
  \node at (2+-0.3,-0.3) {$\scriptstyle\blacklozenge$};
   \draw (0,-1.5+0) arc (0:180:0.3cm) [thick, color=\clr];
   \draw[thick, color=\clr,>-] (-0.6,-1.5+-0.2)--(-0.6,-1.5+0);
     \draw[thick, color=\clr,>-] (0,-1.5+-0.2)--(0,-1.5+0);
         \node[below] at (0,-1.5+-0.2) {$\scriptstyle 1$};
      \node[below] at (-0.6,-1.5+-0.2) {$\scriptstyle 1$};
  \node at (-0.3,-1.5+0.3) {$\scriptstyle\blacklozenge$};
    \node at (0.7,-1.5+-0.4) {$\scriptstyle \cdots $};
       \node at (0.7,-1.5+0) {$\scriptstyle \cdots $};
   \draw (2+0,-1.5+0) arc (0:180:0.3cm) [thick, color=\clr];
   \draw[thick, color=\clr,>-] (2+-0.6,-1.5+-0.2)--(2+-0.6,-1.5+0);
     \draw[thick, color=\clr,>-] (2+0,-1.5+-0.2)--(2+0,-1.5+0);
         \node[below] at (2+0,-1.5+-0.2) {$\scriptstyle 1$};
      \node[below] at (2+-0.6,-1.5+-0.2) {$\scriptstyle 1$};
  \node at (2+-0.3,-1.5+0.3) {$\scriptstyle\blacklozenge$};
   \draw[thick, color=\clr,->] (2.6, -1.7)--(2.6, 0.2);
       \node[below] at (2.6,-1.7) {$\scriptstyle 1$};
        \node[above] at (2.6,0.2) {$\scriptstyle 1$};
       \node at (3.1,-1.9) {$\scriptstyle \cdots $};
        \node at (3.1,0.4) {$\scriptstyle \cdots $};
         \node at (3.1,-0.8) {$\scriptstyle \cdots $};
          \draw[thick, color=\clr,->] (1+2.6, -1.7)--(1+2.6, 0.2);
       \node[below] at (1+2.6,-1.7) {$\scriptstyle 1$};
         \node[above] at (1+2.6,0.2) {$\scriptstyle 1$};
\end{tikzpicture}}.
\end{align*}

By \cref{MixToUp}, there is an isomorphism of superspaces

\[
\Hom_{\pWeb_{\uparrow \downarrow}}\left(\uparrow^{\bbb' }, \uparrow^{\bba} \right) \cong \Hom_{\pWeb_{\uparrow \downarrow}}\left(\emptyset, \uparrow^{\bba}\downarrow^{\bbb} \right) 
\] given by:
\begin{align*}
\hackcenter{}
\hackcenter{
\begin{tikzpicture}[scale=0.8]
\draw[thick, color=\clr, fill = green] (0,0)--(0,0.5)--(0.9,0.5)--(0.9,0)--(0,0);
 \draw[thick, color=\clr,->] (0.1,0.5)--(0.1,1);
  \draw[thick, color=\clr,->] (0.8,0.5)--(0.8,1);
      \draw[thick, color=\clr,->] (0.1,-0.5)--(0.1,0);
  \draw[thick, color=\clr,->] (0.8,-0.5)--(0.8,0);
      \node at (0.45,0.25) {$\scriptstyle f$};
         \node at (0.5,0.75) {$\scriptstyle \cdots$};
               \node at (0.5,-0.25) {$\scriptstyle \cdots$};
                \node[above] at (0.1,1) {$\scriptstyle a_1$};
                \node[above] at (0.8,1) {$\scriptstyle a_m$};
                 \node[below] at (0.1,-0.5) {$\scriptstyle b_m$};
                \node[below] at (0.8,-0.5) {$\scriptstyle b_1$};
\end{tikzpicture}}
\;
\mapsto
\hackcenter{
\begin{tikzpicture}[scale=0.8]
 \draw[thick, color=\clr,-<] (0.1,-0.5)--(0.1,1.4);
  \draw[thick, color=\clr,-<] (0.8,-0.5)--(0.8,1.4);
          \draw[thick, color=\clr,->] (-1.2,-0.5)--(-1.2,1.4);
           \draw[thick, color=\clr,->] (-0.5,-0.5)--(-0.5,1.4);
               \node at (0.45,-0.25) {$\scriptstyle \cdots$};
               \node at (-0.85,-0.25) {$\scriptstyle \cdots$};
                   \node at (0.45,0.7) {$\scriptstyle \cdots$};
               \node at (-0.85,0.7) {$\scriptstyle \cdots$};
                  \draw (0.1,-0.5) arc (0:-180:0.3cm) [thick, color=\clr];
                    \draw (0.8,-0.5) arc (0:-180:1cm) [thick, color=\clr];
                      \node[above] at (0.1,1.4) {$\scriptstyle b_1$};
                \node[above] at (0.8,1.4) {$\scriptstyle b_m$};
                 \node[above] at (-0.5,1.4) {$\scriptstyle a_m$};
                \node[above] at (-1.2,1.4) {$\scriptstyle a_1$};
                \draw[thick, color=\clr, fill=green] (-1.3,0)--(-1.3,0.5)--(-0.4,0.5)--(-0.4,0)--(-1.3,0);
                      \node at (-0.85,0.25) {$\scriptstyle f$};
\end{tikzpicture}}.
\end{align*}
Applying this map to the elements of our spanning set and using the relations in \cref{OrBraidThm} to pull \(\tau'\) and the upward-oriented caps to the right side of the diagram, it follows that $\Hom_{\pWeb_{\uparrow\downarrow}}\left(\emptyset, \uparrow^{\bba}\downarrow^{\bbb} \right)$ is spanned by diagrams of the form (\ref{0abdiag}).
\end{proof}

\section{Category Equivalences}\label{S:CategoryEquivalences}

\subsection{Faithfulness}\label{SS:faithfullness1}  Throughout this section $\k$ is an algebraically closed field of characteristic zero.  The assumption that $\k$ is algebraically closed allows us to cite the results from \cite{Coulembier1} used below.  We expect it is not necessary.  We also remark that Coulembier describes how to construct the morphism $f_{n+1}$.  It would be interesting to describe it explicitly in terms of diagrams.  

Let $\mathcal{I}_{n}$ denote the kernel of the functor 
\[
F: \mathcal{B} \to \fp (n)\text{-modules}.
\]  That is, $\mathcal{I}_{n}$ is the tensor ideal given by 
\[
\mathcal{I}_{n}\left([a],[b] \right) = \left\{ g \in \Hom_{ \mathcal{B}}\left([a],[b] \right) \mid F(g) =0 \right\}
\] for all objects $[a]$ and $[b]$ in $\mathcal{B}$.  The following is a reformulation of \cite[Theorem 8.3.1]{Coulembier1} so that it applies to the category $\mathcal{B}$.

\begin{theorem}\label{T:KernelofF} Let $n \geq 1$ and set $\ell = (n+1)(n+2)/2$.  Then, $\mathcal{I}_{n}$ is generated as a tensor ideal by a single morphism which lies in $\End_{\mathcal{B}}\left( [\ell]\right)$.
\end{theorem}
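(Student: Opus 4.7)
The plan is to establish the two inclusions $\langle f_{n+1}\rangle\subseteq\mathcal{I}_n$ and $\mathcal{I}_n\subseteq\langle f_{n+1}\rangle$ separately. The first inclusion is built into the construction of $f_{n+1}$: by its definition via \cite{Coulembier1}, $f_{n+1}$ is engineered to satisfy $F(f_{n+1})=0$, and since $\mathcal{I}_n$ is a tensor ideal and $F$ is a monoidal functor, the entire tensor ideal $\langle f_{n+1}\rangle$ is swallowed. So the real content lies in the reverse inclusion, which amounts to showing that the induced monoidal functor $\bar F\colon\mathcal{B}(0,-1)/\langle f_{n+1}\rangle\to\fp(n)\text{-mod}_V$ is faithful.

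Because $F$ is already known to be full (\cref{KTthm1}), faithfulness of $\bar F$ reduces, for each pair $a,b\in\Z_{\geq 0}$, to the equality
\[
\dim_{\k}\Hom_{\mathcal{B}(0,-1)/\langle f_{n+1}\rangle}([a],[b])=\dim_{\k}\Hom_{\fp(n)}(V_n^{\otimes a},V_n^{\otimes b}).
\]
I would attack this by passing through $\pWeb_1\cong\mathcal{B}(0,-1)$ (via \cref{Bwebfun}) and using the diagrammatic basis of $\Hom_{\pWeb_1}(1^a,1^b)$ provided by \cref{BasisThm}. Each basis element is a marked Brauer diagram, and the Brauer-type algebra $\End_{\mathcal{B}(0,-1)}([r],[r])$ carries a cellular structure whose cell modules are indexed by partitions; the action of this cellular algebra on $V_n^{\otimes r}$ has been analyzed in \cite{Coulembier1}. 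The cell modules that lie in $\ker F$ are exactly those indexed by partitions that fail a certain size condition tied to $n$, and the $(n+1)$-row staircase bound provides the threshold $\ell=(n+1)(n+2)/2$ that appears in the statement.

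The core step is then to show that every cell module in $\ker F$, or equivalently every primitive idempotent whose image under $F$ vanishes, lies in the tensor ideal generated by $f_{n+1}$. For this I would combine three ingredients: (a) $f_{n+1}$ is designed in \cite{Coulembier1} to be (up to a scalar) the primitive idempotent cutting out the minimal ``bad'' cell module; (b) any larger bad cell module is a quotient of a tensor product involving this minimal one, so its corresponding idempotent is built from $f_{n+1}$ by pre- and post-composition with diagrams and by tensoring with identities — precisely the operations allowed in forming a tensor ideal; and (c) the cellular filtration then lets one peel off the vanishing layers one at a time by induction on the partition size, reducing every element of $\mathcal{I}_n([a],[b])$ to a $\k$-linear combination of products of the form $h\circ(f_{n+1}\otimes\mathrm{id})\circ k$.

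The main obstacle will be step (b): matching the abstract cellular/idempotent description of $\ker F$ against the concrete tensor-ideal closure of the specific element $f_{n+1}$. The cellular theory guarantees the right numerical/dimensional behavior, but identifying the generator explicitly with the morphism coming out of \cite{Coulembier1}, and ruling out the possibility that further primitive idempotents of higher rank fail to be captured, requires a careful inductive argument on the rank $r$ of the ambient endomorphism algebra $B_r(0,-1)$. Everything else — linear independence modulo the ideal, fullness of $\bar F$, and the check that the construction respects horizontal and vertical composition — follows routinely from the basis theorem and from \cref{T:KTFunctor,KTthm1}.
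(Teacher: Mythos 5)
Your first inclusion is fine and matches the paper, but the heart of the theorem is the reverse inclusion $\mathcal{I}_n\subseteq\langle f_{n+1}\rangle$, and there your proposal has a genuine gap: the step you label (b) --- that every ``bad'' piece of the kernel is obtained from $f_{n+1}$ by pre-/post-composition and tensoring with identities --- is exactly the content of the theorem, and you neither prove it nor reduce it to anything established. Your sketch of (b) is also on shaky ground as stated: $\mathcal{B}(0,-1)$ and its endomorphism algebras are far from semisimple, so the kernel of $F$ on $\End_{\mathcal{B}(0,-1)}([r],[r])$ is not controlled by primitive idempotents cutting out cell modules, and there is no justification offered for the claims that $f_{n+1}$ is (up to scalar) such an idempotent or that higher ``bad'' cell modules arise as quotients of tensor products with the minimal one. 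The dimension-count reformulation (fullness plus equality of dimensions in the quotient) is not wrong, but it is circular in practice: computing $\dim\Hom_{\mathcal{B}(0,-1)/\langle f_{n+1}\rangle}([a],[b])$ requires already knowing that the tensor ideal generated by $f_{n+1}$ exhausts the kernel.

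The paper's argument is much shorter because the hard algebraic input is already contained in the cited result of Coulembier (Theorem 8.3.1 of \cite{Coulembier1}): for $r\geq\ell$ the kernel of the superalgebra map $\End_{\mathcal{B}(0,-1)}([r],[r])\to\End_{\fp(n)}(V_n^{\otimes r})$ is generated, as a two-sided ideal of that endomorphism algebra, by $f_{n+1}\otimes\Id_{\bullet}^{\otimes(r-\ell)}$, and the map is injective for $r<\ell$. Given that, the only work left is the observation that an arbitrary kernel element $g\in\Hom_{\mathcal{B}(0,-1)}([a],[b])$ can be conjugated by cups and caps (as in \cref{SS:IsomorphicHoms}) into an endomorphism $g'\in\End_{\mathcal{B}(0,-1)}([r],[r])$ with $r=(a+b)/2$, still killed by $F$ since $F$ is monoidal; Coulembier's theorem writes $g'=\sum_i a_i\,(f_{n+1}\otimes\Id_{\bullet}^{\otimes(r-\ell)})\,b_i$, and transporting back along the cup/cap isomorphism (which only involves composition and tensoring, hence preserves tensor ideals) shows $g\in\langle f_{n+1}\rangle$. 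So rather than attempting to rebuild a cellular/idempotent analysis of $\ker F$ --- which would amount to reproving the cited theorem --- you should quote the ideal-generation statement for endomorphism algebras and supply only the cup/cap reduction from general Hom spaces.
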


\begin{proof} By \cite[Theorem 8.3.1]{Coulembier1} there is a morphism $f_{n+1} \in  \End_{\mathcal{B}}\left( [\ell] \right)$ which is in the kernel of the functor $F$.  Since $F$ is a functor of $\k$-linear monoidal categories, the tensor ideal generated by $f_{n+1}$ is also contained in the kernel of $F$. 

 On the other hand, let $g \in  \Hom_{\mathcal{B}}\left( [a], [b] \right)$ be a nonzero element in the kernel of $F$.  By pre- and post-composing with cup and cap diagrams much as in \cref{SS:IsomorphicHoms}, one can define an isomorphism of superspaces 
\begin{equation}\label{E:YetAnotherIsom}
\Hom_{\mathcal{B}}\left( [a], [b] \right) \xrightarrow{\cong} \Hom_{\mathcal{B}}\left( \left[r \right], \left[r \right] \right),
\end{equation} where $r=(a+b)/2$.
Let $g' \in \Hom_{\mathcal{B}}\left( \left[r \right], \left[r \right] \right)$ be the image of $g$ under this isomorphism.  Since $F$ is a monoidal functor, $g'$ is in the kernel of the map $F:\End_{\mathcal{B}}\left( \left[r \right] \right) \to \Hom_{\fp (n)}\left(V^{\otimes r} \right)$.  By   \cite[Theorem 8.3.1]{Coulembier1},  $r \geq \ell$ and the kernel of this superalgebra homomorphism is generated as an ideal by $f_{n+1} \otimes \Id_{\bullet}^{\otimes \left(r-\ell \right)}$. Thus $g' = \sum_{i} a_{i}\left(f_{n+1} \otimes \Id_{\bullet}^{\otimes \left(r-\ell \right)} \right)b_{i}$ for some $a_{i},b_{i} \in \End_{\mathcal{B}}\left( \left[r \right] \right)$.  Applying the inverse of \cref{E:YetAnotherIsom} to this expression shows that $g$ lies in the tensor ideal generated by $f_{n+1}$, proving the claim.
\end{proof}

We abuse notation and write $f_{n+1}$ for the morphism $F'(f_{n+1})$ and $\iota_{\uparrow}(F'(f_{n+1}))$ in $\pWeb$ and $\pWeb_{\uparrow \downarrow}$, respectively.

\begin{theorem}\label{T:GKernels}  For any $n \geq 1$, the tensor ideal generated by the morphism $f_{n+1}$ in $\pWeb$ and $\pWeb_{\uparrow \downarrow}$ is the kernel of the functors $G$ and $G_{\uparrow \downarrow}$, respectively.
\end{theorem}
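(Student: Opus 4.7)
The plan is to reduce both statements to Theorem~\ref{T:KernelofF} using the monoidal isomorphisms $F'$ and $\iota_\uparrow$ together with the explosion/contraction machinery. The easy inclusion is immediate: $F(f_{n+1}) = 0$ by construction, and the right-hand triangles of Lemmas~\ref{comdiag} and~\ref{Orcomdiag} give $G(f_{n+1}) = F(f_{n+1}) = 0 = G_{\uparrow\downarrow}(f_{n+1})$; since $G$ and $G_{\uparrow\downarrow}$ are $\k$-linear monoidal functors, the whole tensor ideal generated by $f_{n+1}$ sits inside the kernel in each case.

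For the reverse inclusion in $\pWeb$, fix $f \in \ker G \cap \Hom_{\pWeb}(\bba, \bbb)$. The central square of Lemma~\ref{comdiag} gives $G(\exp_{\bba,\bbb}(f)) = \widetilde{\exp}_{\bba,\bbb}(G(f)) = 0$. Since $F'$ is an isomorphism of monoidal supercategories (Theorem~\ref{Bwebfun}), we may write $\exp_{\bba,\bbb}(f) = F'(\tilde f)$ for a unique $\tilde f \in \Hom_{\mathcal{B}(0,-1)}([|\bba|],[|\bbb|])$; the right-hand triangle of Lemma~\ref{comdiag} yields $F(\tilde f) = G(F'(\tilde f)) = 0$, so by Theorem~\ref{T:KernelofF} the element $\tilde f$ lies in the tensor ideal of $\mathcal{B}(0,-1)$ generated by $f_{n+1}$. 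Transporting across $F'$ places $\exp_{\bba,\bbb}(f)$ in the tensor ideal of $\pWeb_1 \subseteq \pWeb$ generated by $f_{n+1}$. Finally, Lemma~\ref{conexp} gives $f = \tfrac{1}{\bba!\,\bbb!}\, z_\bbb \circ \exp_{\bba,\bbb}(f) \circ y_\bba$, with $\bba!\,\bbb!$ invertible because $\k$ has characteristic zero; closure of tensor ideals under composition with arbitrary morphisms on either side then places $f$ in the tensor ideal generated by $f_{n+1}$ in $\pWeb$, as required.

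For $G_{\uparrow\downarrow}$, the argument is funneled through the $\pWeb$ case via Lemma~\ref{MixToUp} and Theorem~\ref{T:pWebtopWebup}. Given $f \in \ker G_{\uparrow\downarrow} \cap \Hom_{\pWeb_{\uparrow\downarrow}}(|_\bba,|_\bbb)$, let $\Phi(f) = \varphi_2 \circ f \circ \varphi_1 \in \Hom_{\pWeb_\uparrow}(\uparrow_\bbc,\uparrow_\bbd)$ be its image under the isomorphism of Lemma~\ref{MixToUp}, where $\varphi_1,\varphi_2$ are invertible morphisms of $\pWeb_{\uparrow\downarrow}$ built from cups and caps (their invertibility reflecting the rigidity of $\pWeb_{\uparrow\downarrow}$ from Corollary~\ref{C:pwebupdownbraiding}). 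Lemma~\ref{Orcomdiag} gives $G(\iota_\uparrow^{-1}(\Phi(f))) = G_{\uparrow\downarrow}(\Phi(f)) = \tilde\Phi(G_{\uparrow\downarrow}(f)) = 0$, so by the $\pWeb$-case, $\iota_\uparrow^{-1}(\Phi(f))$ lies in the tensor ideal of $\pWeb$ generated by $f_{n+1}$. Transporting along the monoidal isomorphism $\iota_\uparrow$ (Theorem~\ref{T:pWebtopWebup}) and then pre- and post-composing with $\varphi_1^{-1}$ and $\varphi_2^{-1}$ recovers $f$ inside the tensor ideal of $\pWeb_{\uparrow\downarrow}$ generated by $f_{n+1}$.

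The hard work has already been done in Theorem~\ref{T:KernelofF}, so what remains is essentially a bookkeeping argument. The only subtlety is ensuring that tensor-ideal membership is transported correctly by the monoidal isomorphisms $F'$ and $\iota_\uparrow$ and by pre- and post-composition with invertible morphisms; this is formal, since tensor ideals are by definition closed under composition with arbitrary morphisms and monoidal functors preserve the operations that generate tensor ideals.
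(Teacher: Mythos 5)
Your proposal is correct and follows essentially the same route as the paper: the easy inclusion via the right-hand triangles of the commutative diagrams, the reverse inclusion for $\pWeb$ via explosion, Theorem~\ref{T:KernelofF}, transport across $F'$, and contraction (using invertibility of $\bba!\,\bbb!$ in characteristic zero), and the $\pWeb_{\uparrow\downarrow}$ case reduced to the $\pWeb$ case through $\Phi$, $\iota_\uparrow$, and the diagram of Lemma~\ref{Orcomdiag}. The only difference is that you spell out the $G_{\uparrow\downarrow}$ details that the paper leaves to the reader, and these are carried out correctly.
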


\begin{proof} This follow directly from the previous result and standard explosion/contraction arguments.  To explain, consider $\pWeb$ and the commutative diagram from \cref{comdiag}.  A simple diagram chase along with the fact that $F'$ is an isomorphism and $f_{n+1}$ is in the kernel of $F$ shows this morphism and, hence, the tensor ideal it generates, lies in the kernel of $G$.  

On the other hand, let $\bba$ and $\bbb$ be objects of $\pWeb$ and let $f \in \Hom_{\pWeb}(\bba ,\bbb)$ satisfy $G(f)=0$.  Then $G(\exp_{\bba , \bbb}(f))=0$.  Since $F'$ is an isomorphism, there is a $g \in \Hom_{\mathcal{B}}([|\bba |], [|\bbb |])$ such that $F'(g) = \exp_{\bba ,\bbb}(f)$.  By the commutativity of the right hand triangle, $F(g)=0$ and hence, by \cref{T:KernelofF}, $g$ lies in the tensor ideal of $\mathcal{B}$ generated by $f_{n+1}$. Applying the isomorphism $F'$, this implies $\exp_{\bba , \bbb }(f)$ lies in the tensor ideal of $\pWeb$ generated by $f_{n+1}$.  Since the map $\con_{\ob{a}, \ob{b}}$ is given by pre- and post-composing with morphisms, tensor ideals are preserved.   Therefore, $\con_{\bba , \bbb }(\exp_{\bba , \bbb }(f)) = \bba !\bbb !f$ lies in the tensor ideal of $\pWeb$ generated by $f_{n+1}$.  Hence, so does $f$.  This proves the claim regarding the functor $G$.

Diagram chase arguments using the commutative diagram in \cref{Orcomdiag} along with the fact that relevant the maps are given by pre- and post- composing with morphisms (and, hence, preserve tensor ideals) proves the statement for $G_{\uparrow \downarrow}$.
\end{proof}

We end this section by pointing out that under the assumptions of the present section the injectivity statements of \cref{HomIsomPlus,OrHomIsomPlus} can be made sharp.

\begin{proposition}\label{P:injectivity}  Let $n \geq 1$ and set $\ell = (n+1)(n+2)/2$.  Then, the maps 
\begin{align*}
G &:\Hom_{\pWeb}(\bba,\bbb)
\to
\Hom_{\fp(n)}(S^{\bba}(V_{n}), S^{\bbb}(V_{n})),\\
G_{\uparrow \downarrow} &:\Hom_{\pWeb_{\uparrow \downarrow}}(|_\bba,|_\bbb)
\to
\Hom_{\fp(n)}(S^{\bba}(V_{n}), S^{\bbb}(V_{n})), 
\end{align*}
are injective if and only if $ |\bba | + |\bbb |  < (n+1)(n+2)$.
\end{proposition}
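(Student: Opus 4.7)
The plan is to prove each direction separately, with Coulembier's description of $\ker F$ on $\End_{\mathcal{B}(0,-1)}([r])$ (recalled in \cref{T:KernelofF}) as the essential input.

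For the sufficient direction, assume $|\bba|+|\bbb| < 2\ell$ and suppose $h \in \Hom_{\pWeb}(\bba,\bbb)$ satisfies $G(h) = 0$. Commutativity of the diagram in \cref{comdiag} gives $G(\exp_{\bba,\bbb}(h)) = \tilde{\exp}_{\bba,\bbb}(G(h)) = 0$, so $\exp_{\bba,\bbb}(h)$ lies in $\ker G$ on $\Hom_{\pWeb_1}(1^{|\bba|},1^{|\bbb|})$. Via $F'$ (from \cref{Bwebfun}) and the standard ``rainbow'' cup-cap isomorphism $\Hom_{\mathcal{B}(0,-1)}([|\bba|],[|\bbb|]) \cong \End_{\mathcal{B}(0,-1)}([r])$ with $r = (|\bba|+|\bbb|)/2$ (an isomorphism that intertwines the two incarnations of $F$), this element is identified with one in $\ker F \subseteq \End_{\mathcal{B}(0,-1)}([r])$. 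By \cref{T:KernelofF}, $\ker F$ is generated as a tensor ideal by $f_{n+1} \in \End_{\mathcal{B}(0,-1)}([\ell])$; since $r < \ell$, this ideal cannot meet $\End_{\mathcal{B}(0,-1)}([r])$ nontrivially (any element of the ideal factors through $[\ell]$, and there is no room to sandwich $[\ell]$ inside $[r]$ using caps and cups alone). Hence $\exp_{\bba,\bbb}(h) = 0$, and \cref{conexp} together with characteristic zero forces $h = 0$. The argument for $G_{\uparrow\downarrow}$ is analogous, using \cref{Orcomdiag} and the isomorphism $\iota_{\uparrow}$ of \cref{T:pWebtopWebup}.

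For the necessary direction, assume $|\bba|+|\bbb| \geq 2\ell$; we produce a nonzero element of $\ker G \cap \Hom_{\pWeb}(\bba,\bbb)$. Setting $r = (|\bba|+|\bbb|)/2 \geq \ell$ (when $|\bba|+|\bbb|$ is odd both Hom-spaces vanish and injectivity is vacuous), the element $f_{n+1} \otimes \id^{\otimes (r-\ell)} \in \End_{\pWeb_1}(1^r,1^r)$ is nonzero and lies in $\ker G$ by \cref{T:GKernels}. Transporting it through the rainbow cup-cap isomorphism yields a nonzero element $g \in \Hom_{\pWeb_1}(1^{|\bba|},1^{|\bbb|}) \cap \ker G$. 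In the extreme case $\bba = 1^{|\bba|}$, $\bbb = 1^{|\bbb|}$, the element $g$ already lies in $\Hom_{\pWeb}(\bba,\bbb)$, showing immediately that the bound is sharp.

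The main obstacle lies in extending this to coarser compositions. Under $\exp_{\bba,\bbb}$, the space $\Hom_{\pWeb}(\bba,\bbb)$ embeds as the $(\mathfrak{S}_{\bba} \times \mathfrak{S}_{\bbb})$-invariants of $\Hom_{\pWeb_1}(1^{|\bba|},1^{|\bbb|})$, so the hard part is to produce a nonzero invariant in $\ker G$. Averaging $g$ by the idempotent $e_{\bba} \otimes e_{\bbb}$ gives a candidate lying simultaneously in the invariants and in $\ker G$, but verifying its nonvanishing requires genuine control of $\ker G$ as a $(\mathfrak{S}_{|\bba|} \times \mathfrak{S}_{|\bbb|})$-bimodule. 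I would attack this via a dimension comparison: \cref{BasisThm} pins down $\dim \Hom_{\pWeb}(\bba,\bbb)$ combinatorially and independently of $n$, while Coulembier's theorem governs $\dim \Hom_{\fp(n)}(S^{\bba}(V_n),S^{\bbb}(V_n))$; showing the former strictly exceeds the latter precisely when $r \geq \ell$ would yield nonsurjectivity of the symmetrized $g$ and complete the proof.
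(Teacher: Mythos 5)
Your sufficiency direction follows the same skeleton as the paper's proof (explode via \cref{comdiag}, pass through $F'$ and the rainbow isomorphism to $\End_{\mathcal{B}(0,-1)}([r])$ with $r=(|\bba|+|\bbb|)/2$, then contract back using characteristic zero), but the one step you actually argue is wrong: the claim that the tensor ideal generated by $f_{n+1}$ cannot meet $\End_{\mathcal{B}(0,-1)}([r])$ for $r<\ell$ because ``there is no room to sandwich $[\ell]$ inside $[r]$ using caps and cups alone.'' Cups and caps produce plenty of nonzero morphisms $[r]\to[\ell+m]$ whenever $\ell+m\equiv r\pmod 2$, so elements of the ideal of the form $b\circ(f_{n+1}\otimes \id^{\otimes m})\circ a$ with $a\colon[r]\to[\ell+m]$ and $b\colon[\ell+m]\to[r]$ exist in abundance, and their vanishing in $\End_{\mathcal{B}(0,-1)}([r])$ is not formal; it is exactly the hard content of Coulembier's Theorem 8.3.1 ($F$ is injective on $\End_{\mathcal{B}(0,-1)}([r])$ if and only if $r<\ell$), which is what the paper cites directly in its proof. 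The generation statement of \cref{T:KernelofF} alone does not yield this vanishing, so you must invoke the ``if and only if'' form of Coulembier's result rather than the heuristic.

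For the necessity direction, your observation that $f_{n+1}$ itself gives a nonzero element of $\ker G$ when $\bba=\bbb=1^{\ell}$ is all that is needed: the proposition is the assertion that the uniform bound $|\bba|+|\bbb|<(n+1)(n+2)$ is sharp, and this example (matching the paper's reduction to $\End_{\mathcal{B}(0,-1)}([r])$) establishes it. The ``main obstacle'' you then set yourself---finding a nonzero element of $\ker G$ in $\Hom_{\pWeb}(\bba,\bbb)$ for \emph{every} $\bba,\bbb$ with $|\bba|+|\bbb|\geq(n+1)(n+2)$---is not only left unresolved but cannot be resolved by the dimension comparison you propose: for $\bba=\bbb=(a)$ the basis of \cref{BasisThm} consists of the single element $\id_a$, whose image $\id_{S^a(V_n)}$ is nonzero, so $G$ is injective on $\Hom_{\pWeb}((a),(a))$ for every $a$; in particular the symmetrized kernel element can vanish and the strict dimension inequality you hope for fails in general. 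So the program in your final paragraph should be abandoned, and the ``only if'' should be read and proved as sharpness of the bound, exactly as your thin-strand example already does.
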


\begin{proof}  Doing a diagram chase using \cref{comdiag,Orcomdiag}, one can show the injectivity of these maps is equivalent to the injectivity of the map 
\[
F: \End_{\mathcal{B}}\left([r], [r] \right) \to \End _{\fp (n)}\left(V^{\otimes r}, V^{\otimes r} \right),
\] where $r = \left(|\bba | + |\bbb | \right)/2$.  However, by \cite[Theorem 8.3.1]{Coulembier1} this map is injective if and only if $r < \ell$. This proves the claim.
\end{proof}

\subsection{Category Equivalences}\label{SS:categoryequivalences}

Let 
\[
\mathcal{B}(n),\;\;  \pnWeb,\;\; \text{and} \;\; \pnWeb_{\uparrow \downarrow}
\]  be the quotient of $\mathcal{B}$, $\pWeb$, and $\pWeb_{\uparrow \downarrow}$, respectively, by the tensor ideal generated by $f_{n+1}$,  where $f_{n+1}$ is the morphism given in \cref{SS:faithfullness1}.


 As $f_{n+1}$ is in the kernel of the functors $F$, $G$, and $G_{\uparrow \downarrow}$, they induce well-defined functors which we call by the same name:
\begin{align}\label{E:Equivalences}
\notag F &: \mathcal{B}(n) \to \fp(n)\textup{-Mod}_{V}, \\
G &: \pnWeb \to \pmodS, \\
\notag G_{\uparrow \downarrow} &: \pnWeb_{\uparrow \downarrow} \to \pmodSS.   
\end{align}

\begin{theorem}\label{T:CategoryEquivalences} The functors given in \cref{E:Equivalences} are equivalences of monoidal supercategories.

\end{theorem}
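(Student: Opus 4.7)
The plan is to verify, for each of the three functors in \eqref{E:Equivalences}, the standard three properties that characterize an equivalence of monoidal categories: essential surjectivity, fullness, and faithfulness. Each of these inputs has already been established (or will follow immediately) from earlier results in the paper, so the proof amounts to assembling them correctly and noting that passing to the quotient by $f_{n+1}=0$ turns fullness plus kernel identification into faithfulness.

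First I would dispense with essential surjectivity. By definition, $\fp(n)\textup{-Mod}_V$, $\pmodS$, and $\pmodSS$ are the full monoidal subcategories of $\fp(n)\textup{-mod}$ generated by $V_n$, by the $S^a(V_n)$, and by the $S^a(V_n)$ together with $S^a(V_n)^*$, respectively. Thus every object of the target is, by construction, of the form $F(X)$, $G(X)$, or $G_{\uparrow\downarrow}(X)$ for some object $X$ in the corresponding source. So each of $F$, $G$, $G_{\uparrow\downarrow}$ is essentially surjective, and this remains true after imposing the extra relation $f_{n+1}=0$ since that only modifies morphism spaces.

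Next, I would verify fullness. The functors $F\maps \mathcal B(0,-1)\to \fp(n)\textup{-Mod}_V$, $G\maps \pWeb\to \pmodS$, and $G_{\uparrow\downarrow}\maps \pWeb_{\uparrow\downarrow}\to \pmodSS$ are full on morphism spaces by \cref{T:KTFunctor}, \cref{HomIsomPlus}, and \cref{OrHomIsomPlus}, respectively, using that $\k$ is a field of characteristic zero. Quotienting the source by a tensor ideal can only shrink morphism spaces, not enlarge them, so the induced functors on $\mathcal B(n)(0,-1)$, $\pnWeb$, and $\pnWeb_{\uparrow\downarrow}$ remain full.

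Finally, the faithfulness step is where the extra relation $f_{n+1}=0$ is essential. By \cref{T:KernelofF}, the tensor ideal in $\mathcal B(0,-1)$ generated by $f_{n+1}$ is exactly the kernel of $F$; hence the induced functor $F\maps \mathcal B(n)(0,-1)\to \fp(n)\textup{-Mod}_V$ is faithful by construction. Analogously, \cref{T:GKernels} identifies the kernels of $G$ and $G_{\uparrow\downarrow}$ with the tensor ideals generated by the corresponding image of $f_{n+1}$ in $\pWeb$ and $\pWeb_{\uparrow\downarrow}$, so the induced functors on $\pnWeb$ and $\pnWeb_{\uparrow\downarrow}$ are likewise faithful. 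Combined with essential surjectivity and fullness, this gives the claimed equivalences of monoidal categories. No step here presents a genuine obstacle — the real content has already been pushed into \cref{T:KernelofF,T:GKernels} (where one leverages Coulembier's identification of the kernel of $F$ on $\End([r],[r])$) and into the fullness theorems; the present proof is essentially a formal assembly of those inputs.
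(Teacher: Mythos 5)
Your proposal is correct and follows the same route as the paper: the paper's proof is exactly the observation that essential surjectivity (by construction of the target categories), fullness (\cref{T:KTFunctor,HomIsomPlus,OrHomIsomPlus}), and faithfulness (from the kernel identifications in \cref{T:KernelofF,T:GKernels} after imposing $f_{n+1}=0$) combine to give the equivalences. Your write-up simply spells out this assembly in more detail.
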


\begin{proof}  Taken collectively, the previous theorems show the functors $F$, $G$, and $G_{\uparrow\downarrow}$ are essentially surjective, full, and faithful, proving the claim.
\end{proof}

\bibliographystyle{eprintamsplain}
\bibliography{Biblio}

\end{document}